\documentclass[a4paper,11pt, oneside]{amsart}

\usepackage{amsmath,amssymb,amsthm,enumitem,hyperref, graphicx,mathtools,comment, tikz-cd}
\usepackage[left=2.5cm,right=2.5cm,top=2.6cm,bottom=2.6cm]{geometry}
\setenumerate{label=(\roman*)}
\usetikzlibrary{cd}
\usepackage[capitalize]{cleveref}
\usepackage{stmaryrd}
\usepackage{tikz-cd}
\usepackage{graphicx}
\usepackage{adjustbox}
\usepackage{seqsplit}
\usepackage{thmtools}

\DeclareMathOperator{\Out}{Out}

\DeclareMathOperator{\Id}{Id}

\DeclareMathOperator{\Hom}{Hom}

\DeclareMathOperator{\rank}{rank}
\DeclareMathOperator{\Mat}{Mat}

\DeclareMathOperator{\Tor}{Tor}
\DeclareMathOperator{\tor}{tor}

\DeclareMathOperator{\gr}{\operatorname{th}}
\DeclareMathOperator{\cd}{cd}
\DeclareMathOperator{\coker}{coker}
\DeclareMathOperator{\im}{Im}

\DeclareMathOperator{\pr}{pr}
\DeclareMathOperator{\rk}{rk}
\DeclareMathOperator{\SMRF}{SModRF}
\DeclareMathOperator{\trace}{tr}
\DeclareMathOperator{\rr}{rr}

\newcommand{\fbyz}{(finitely generated free)-by-cyclic }

\DeclareMathOperator{\GL}{GL}
\newcommand{\ZZ}{\mathbb{Z}}
\newcommand{\RR}{\mathbb{R}}
\newcommand{\Z}{\mathbb{Z}}
\newcommand{\CC}{\mathbb{C}}

\newcommand{\QQ}{\mathbb{Q}}
\newcommand{\FF}{\mathbb{F}}

\newcommand{\FP}{\mathrm{FP}}

\newcommand{\UO}{\mathcal{U}}
\newcommand{\VN}{\mathcal{N}}

\newtheorem{problem}{Problem}

\newtheorem{theorem}{Theorem}[section]
\newtheorem{lemma}[theorem]{Lemma}
\newtheorem{proposition}[theorem]{Proposition}
\newtheorem{claim}[theorem]{Claim}
\newtheorem{corollary}[theorem]{Corollary}

\theoremstyle{definition}
\newtheorem{definition}{Definition}[section]

\theoremstyle{remark}
\newtheorem{remark}{Remark}

\usepackage{tikz}
\usetikzlibrary{arrows,quotes}
\tikzstyle{blackNode}=[fill=black, draw=black, shape=circle]

\newcounter{comments}

\title{Thurston norm, polytopes and splitting complexity} 
\author{Andrei Jaikin-Zapirain}
 \address[A. Jaikin-Zapirain]{Instituto de Ciencias Matem\'aticas, CSIC-UAM-UC3M-UCM}
 \email{andrei.jaikin@icmat.es}

\author{Monika Kudlinska}
\address[M.\ Kudlinska]{Emmanuel College, University of Cambridge, St Andrew's Street, Cambridge CB2~3AP, United Kingdom}
\email{mak74@cam.ac.uk}

\author{Pablo S\'{a}nchez-Peralta}
\address[P. S\'{a}nchez-Peralta]{Emmanuel College, University of Cambridge, St Andrew's Street, Cambridge CB2~3AP, United Kingdom}
\email{ps2089@cam.ac.uk}

\begin{document}

\begin{abstract} 
Let $G$ be a finitely generated torsion-free group satisfying the Strong Atiyah
Conjecture. Assuming that \(b_1^{(2)}(G)=0\), we associate to each surjective integral
character \(\phi\) the quantity \(b_1^{(2)}(\ker \phi)\). We prove that the
resulting function extends to a seminorm on \(H^1(G;\mathbb{R})\) and is the
thickness function of a polytope in \(H_1(G;\mathbb{R})\).

More generally, assuming that $G$ is of type
\(\mathrm{FP}_k(\mathbb{Q})\) and  has
\(b_k^{(2)}(G)=0\),  
we prove that the function assigning
\(b_k^{(2)}(\ker \phi)\) to each surjective integral character \(\phi\) is
likewise the thickness function of a polytope. This confirms a conjecture of
Friedl, Lück, and Tillmann.

For torsion-free virtually (finitely generated free)-by-cyclic  groups, we relate the resulting invariant to the
\(L^2\)-Betti complexity of a character, thereby confirming a conjecture of
Gardam and Kielak. As an application, we establish the existence of an
algorithm for computing the Bieri--Neumann--Strebel invariant of
free-by-cyclic groups and discuss connections with the isomorphism problem
for such groups.
\end{abstract}
 
\maketitle

\section{Introduction} 
Let $M$ be a compact, connected, orientable $3$-manifold  with (possibly empty) boundary, and let 
$0\ne \phi \in H^{1}(M; \mathbb{Z}) = \operatorname{Hom}(\pi_{1}(M), \mathbb{Z})$. The \emph{Thurston norm}  is defined as
\[
\|\phi\|_T := \min \{ \chi_{-}(F) \mid F \subset M \text{ properly embedded surface dual to } \phi \},
\]
where, given a surface $F$ with connected components $F_{1}, F_{2}, \ldots, F_{n}$, we define

\[
 \chi_{-}(F) :=  \sum_{i=1}^{n} \max\{-\chi(F_{i}),0\}.
\]
 
Thurston \cite{Thurston1986} proved that this function satisfies the following properties:
\[
\|k\phi\|_T = |k| \|\phi\|_T \quad \text{for } \phi \in H^{1}(M;\mathbb{Z}),\, k \in \mathbb{Z},
\]
\[
\|\phi_1+\phi_2\|_T\le \|\phi_1\|_T+\|\phi_2\|_T\quad \text{for } \phi_{1}, \phi_{2} \in H^{1}(M; \mathbb{Z}).
\]

These properties imply that  $\|\cdot \|_T$  extends to a function on $H^{1}(M;\mathbb{Q})$, which can then be extended by continuity to a seminorm $\|\cdot \|_T$ on $H^{1}(M;\mathbb{R})$.

If $G$ is a finitely generated group and $\phi\in \Hom (G, \QQ)$, there exists a unique $\alpha_\phi\in \QQ_{\ge 0}$ such that $\phi(G)=\alpha_\phi \Z$. A 3-manifold $M$ as above is \emph{admissible} if it is irreducible with (possibly empty) toroidal boundary and its fundamental group is infinite. Friedl and L\"uck~\cite[Theorem 0.2]{FriedlLueck2019a} showed that if $M$ is admissible then for $\phi \in H^{1}(M;\mathbb Q)= H^{1}(\pi_1(M);\mathbb Q)$,
\[
\|\phi\|_T= \alpha_\phi \, b_{1}^{(2)}(\ker \phi).
\]

We say that a group \(G\) is of \emph{type \(\FP_{2}(\ell^{2})\)} if there exists a finitely presented group \(\widetilde{G}\) together with a surjective homomorphism \(\widetilde{G} \twoheadrightarrow G\) inducing an isomorphism
\[
\mathcal{U}(G)\otimes_{\mathbb{Q}\widetilde{G}} I_{\mathbb{Q}\widetilde{G}}
   \;\xrightarrow{\ \cong\ }\;
\mathcal{U}(G)\otimes_{\mathbb{Q} G} I_{\mathbb{Q} G}
\]
where $\UO(G)$ stands for the algebra of unbounded operators affiliated to $G$ (see \cref{sec: l2_betti_nbrs}). Examples of groups of type $\FP_2(\ell^2)$ are groups of type \(\FP_{2}(\mathbb{Q})\) and finitely generated groups satisfying the Weak Atiyah Conjecture  (see \cref{prop:FP2-equivalent}). If \(G\) is of type \(\FP_{2}(\ell^{2})\) and $b_{1}^{(2)}(G)$ vanishes, then according to \cref{propfpl2}(i) for every \(\phi \in H^{1}(G; \mathbb{Q})\),   $b_{1}^{(2)}(\ker \phi)$ is finite. Thus, under these assumptions, we can define a function on $H^{1}(G; \mathbb{Q})$ by
\[
\|\phi\|_{T} := \alpha_\phi \, b_{1}^{(2)}(\ker \phi),
\]
which we will call the \emph{Thurston map}. A natural question is whether the Thurston map satisfies 
\begin{equation}
    \label{convex}
\|\phi_1 + \phi_2\|_T \le \|\phi_1\|_T + \|\phi_2\|_T \quad \text{for } \phi_{1}, \phi_{2} \in H^{1}(G; \mathbb{Q}).
\end{equation}
If the Thurston map is subadditive, it can be extended continuously to a seminorm on $H^{1}(G; \mathbb{R})$, which we call the \emph{Thurston norm}. Our first result provides a large family of groups admitting the Thurston norm.

\begin{theorem}\label{thurston}
Let $G$ be a finitely generated torsion-free group with trivial first $L^{2}$-Betti number and satisfying the Strong Atiyah Conjecture over $\mathbb Q$. Then the Thurston map satisfies \cref{convex}. Moreover, there exists a symmetric integral polytope $P$ in $H_{1}(G; \mathbb{R})$ such that
\[
\|\phi\|_T \;=\;  \max_{x \in P} \phi(x)  
\]
for every character $\phi \colon G \to \mathbb{Q}$.
\end{theorem}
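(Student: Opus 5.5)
The plan is to realise $b_1^{(2)}(\ker\phi)$ as a Dieudonné-type degree of a matrix over the Linnell division ring, and then to obtain the polytope as a Newton polytope.

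Since $G$ is torsion-free and satisfies the Strong Atiyah Conjecture over $\QQ$, the division closure $\mathcal D(G)$ of $\QQ G$ in $\UO(G)$ is a skew field. For a surjective $\phi\colon G\to\Z$ with kernel $K$, fix $t\in G$ with $\phi(t)=1$. Then $\mathcal D(G)$ contains $\mathcal D(K)$, and it is the Ore localisation of the twisted polynomial ring $\mathcal D(K)[t^{\pm1}]$.

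\textbf{Step 1: reduction to a matrix.} Take a finite generating set $s_1,\dots,s_n$ of $G$. Because $b_1^{(2)}(G)=0$, the augmentation ideal satisfies $\mathcal D(G)\otimes I_{\QQ G}\cong \mathcal D(G)$ by Strong Atiyah. The relation module may not be finitely generated, but by the $\FP_2(\ell^2)$ discussion (\cref{prop:FP2-equivalent}, \cref{propfpl2}) we can use a finite set of relators. This gives a matrix $A$ over $\QQ G$ of size $m\times n$ such that $\mathcal D(G)\otimes I$ is the cokernel of $A^t$ composed with the Fox-derivative map. Equivalently, there is a presentation matrix whose rank over $\mathcal D(G)$ is $n-1$.

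Restricting to $K$, we use that $b_1^{(2)}(K)=\dim_{\mathcal D(K)}H_1(K;\mathcal D(K))$. Here $H_1(K;\mathcal D(K))=H_1(G;\mathcal D(K)[t^{\pm1}])$, which is a torsion $\mathcal D(K)[t^{\pm1}]$-module; it is torsion because it dies after tensoring up to $\mathcal D(G)$. Its $\mathcal D(K)$-dimension equals its "degree". Concretely, after deleting a column corresponding to a generator $s_j$ with $\phi(s_j)\ne0$, we get a square $(n-1)\times(n-1)$ submatrix $A_j$ invertible over $\mathcal D(G)$. The formula I would prove is
\[
b_1^{(2)}(\ker\phi)=\deg_\phi(\det A_j)-\deg_\phi(s_j-1),
\]
where $\det$ is the Dieudonné determinant in $\mathcal D(G)^\times_{ab}$ and $\deg_\phi$ is the width of the $t$-support. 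This is a Novikov-style computation: a torsion module over a skew Laurent polynomial ring over a skew field has dimension equal to the degree of the Dieudonné determinant of any square presentation matrix. The proof reduces via elementary operations to diagonal form over the Euclidean ring $\mathcal D(K)[t^{\pm1}]$. Non-surjective rational $\phi$ are then handled by the factor $\alpha_\phi$, which is consistent because $\deg_{k\phi}=|k|\deg_\phi$.

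\textbf{Step 2: polytope.} Every element of $\mathcal D(G)^\times$ is a quotient of elements of $\mathcal D(H)G$-type sums. The key input is the result (à la Kielak, Funke–Kielak) that for $H=[G,G]$-type torsion-free abelian quotient $G\to H_1(G;\Z)/\mathrm{tors}=:Q$, one has $\mathcal D(G)\cong\mathrm{Ore}(\mathcal D(N)Q)$ with $N$ the kernel. This allows us to define a polytope homomorphism $P\colon \mathcal D(G)^\times_{ab}\to \mathcal P_{\Z}(H_1(G;\RR))$ into the Grothendieck group of integral polytopes, sending $x=pq^{-1}$ to $P(p)-P(q)$. This map satisfies $\deg_\phi(x)=\max_{P(x)}\phi-\min_{P(x)}\phi$ for all $\phi$. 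Setting $\mathcal P=P(\det A_j)-P(s_j-1)$, we get that $\|\phi\|_T$ is the thickness function of a formal difference of polytopes.

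The main obstacle is proving that this difference is an actual polytope, i.e.\ that $P(s_j-1)$ is a Minkowski summand of $P(\det A_j)$. I would handle this as in Friedl–Lück's $L^2$-torsion polytope arguments: use all generators simultaneously. The relation $\det A_j\cdot(s_i-1)=\pm\det A_i\cdot(s_j-1)$ holds in $\mathcal D(G)^\times_{ab}$, so $\mathcal P$ is independent of $j$. Since the $P(s_i-1)$ are segments $[0,[s_i]]$ spanning $H_1$, cancellation in the Minkowski semigroup gives that $\mathcal P+\bigl(P(s_i-1)\bigr)$ is a genuine polytope for all $i$, and this should force $\mathcal P$ to be a polytope. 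Alternatively, I could show directly that the thickness function is nonnegative and subadditive from the degree formula, since a difference of polytopes whose support function is convex is a polytope. After translation, $\mathcal P$ becomes symmetric because thickness only sees $\mathcal P-\mathcal P$: replacing $\mathcal P$ by $\tfrac12(\mathcal P+(-\mathcal P))$ and then verifying integrality via the Minkowski structure. Finally, \cref{convex} follows from the polytope description.
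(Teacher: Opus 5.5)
Your Step 1 is where the argument breaks. The formula $b_1^{(2)}(\ker\phi)=\deg_\phi(\det A_j)-\deg_\phi(s_j-1)$, taken for one fixed $(n-1)\times(n-1)$ minor of the Fox matrix, is a deficiency-one statement. It requires $n-1$ relators that generate the relation module over every $T_\phi=\mathcal D(K)[t^{\pm1}]$ simultaneously. A general finitely generated $G$ with $b_1^{(2)}(G)=0$ has no such presentation, and the formula already fails for $G=\mathbb Z^3=\langle a,b,c\mid [a,b],[a,c],[b,c]\rangle$. Delete the column of $a$ and keep the rows of $[a,b],[a,c]$: then $\det A_a=(a-1)^2$, so for $\phi=a^*$ you get $2-1=1$, whereas $b_1^{(2)}(\mathbb Z^2)=0$. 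By symmetry, each choice of two rows fails for one of the three coordinate characters. Over each $T_\phi$ the relation module does become free of rank $n-1$, but the basis depends on $\phi$. So there is no $\phi$-independent element $\det A_j\,(s_j-1)^{-1}\in\mathcal D(G)^\times_{ab}$ for Step 2 to act on.

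The $\FP_2(\ell^2)$ input you cite also does less than you need. It gives finiteness only over $\UO(G)$, and \cref{propfpl2}(i) shows that passing to a finitely presented cover yields only an upper bound for $b_1^{(2)}(\ker\phi)$. Finiteness over the $T_\phi$ comes instead from Noetherianity of $S=\mathcal E*H$, but that still gives no square presentation.

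The missing idea is to attach the polytope directly to the torsion module $N=H_1(G;S)$, which is finitely generated and satisfies $\mathcal D\otimes_S N=0$, without any square presentation. The paper lifts $[N]\in K_0(\mathcal T_S)$ to $K_1(\mathcal D)$ through Quillen's localization sequence, using $K_0(S)\cong K_0(\mathcal D)$; the ambiguity, coming from $K_1(S)$, consists of singleton polytopes. Then \cref{thick}, flatness of $T_\phi$ and Shapiro's lemma give $b_1^{(2)}(\ker\phi)=\gr_{P(N)}(\phi)$.

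Your route to an actual polytope has further problems. The segment-cancellation argument needs the Step 1 formula for every $j$, which is only available in the deficiency-one case. The alternative ``subadditivity from the degree formula'' is not argued: $\phi\mapsto\gr_{P(x)}(\phi)$ for a Dieudonn\'e determinant $x$ is a difference of thickness functions. In any case, subadditivity of $\gr_{\mathcal P}$ would only make $\mathcal P+\overline{\mathcal P}$, not $\mathcal P$, a polytope.

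That weaker statement is exactly what the paper proves; it leaves open whether $P(N)$ itself is a single polytope. Subadditivity of $x_N$ is obtained by passing to rank-two quotients $H/G_0\cong\mathbb Z^2$. There one removes the maximal finite-dimensional submodule so that \cref{pd1} yields a square presentation, and then invokes \cref{kielak}. Next, $P=B^*$ for the unit ball $B$ satisfies $\gr_P=2x_N$, and the face identity $F_\phi(P_1)=F_\phi(P)+F_\phi(P_2)$ shows that $P$ is an integral polytope.

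Finally, your symmetrization is off. A translate of $\mathcal P$ is generally not centrally symmetric. Also, $\tfrac12(\mathcal P+\overline{\mathcal P})$ has $\max\phi=\tfrac12\gr_{\mathcal P}(\phi)$, which is half the Thurston norm, and it need not be integral. The right choice is $\mathcal P+\overline{\mathcal P}$, since $\max_{\mathcal P+\overline{\mathcal P}}\phi=\gr_{\mathcal P}(\phi)$.
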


We can generalize the previous result to higher $L^2$-Betti numbers of the kernels of characters. 
This solves a generalized version of a conjecture of Friedl, L\"uck and Tillmann \cite[Conjecture~6.7]{FriedlLueckTillmann2019}.

\begin{theorem}\label{higher}
Let $k$ be a positive integer and let $G$ be a torsion-free group of type $\mathrm{FP}_k(\QQ)$ with trivial $k$-th $L^{2}$-Betti number and satisfying the Strong Atiyah Conjecture over $\mathbb Q$. Then there exists a symmetric integral  polytope $P$ in $H_{1}(G; \mathbb{R})$ such that
\[
b_k^{(2)}(\ker \phi) \;=\;  \max_{x \in P} \phi(x)  
\]
for every epimorphism $\phi \colon G \to \mathbb{Z}$.
\end{theorem}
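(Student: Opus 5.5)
We will realize $b_k^{(2)}(\ker\phi)$ as a ``Novikov-type'' dimension over a skew field of twisted Laurent series and then use the polytope machinery for the Linnell skew field. Since $G$ is torsion-free and satisfies the Strong Atiyah Conjecture over $\QQ$, the division closure $\mathcal D(G)=\mathcal D_{\QQ G}$ of $\QQ G$ in $\UO(G)$ is a skew field, and $b_j^{(2)}(G)=\dim_{\mathcal D(G)} H_j(G;\mathcal D(G))$. Put $K=\ker\phi$ and fix $t\in G$ with $\phi(t)=1$. Then $\mathcal D(G)$ contains the skew field $\mathcal D(K)$, and the subring generated by $\mathcal D(K)$ and $t^{\pm1}$ is the twisted Laurent polynomial ring $\mathcal D(K)[t^{\pm1}]$, with Ore localization $\mathcal D(G)$. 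Take a free resolution $F_\bullet$ of $\QQ$ over $\QQ G$ with $F_0,\dots,F_k$ finitely generated, which exists because $G$ is of type $\FP_k(\QQ)$. Then $C_\bullet=\mathcal D(K)[t^{\pm1}]\otimes_{\QQ G}F_\bullet$ is a complex of finitely generated free modules over a (noncommutative) PID in degrees $\le k$. Its homology $H_j(C)$ is $\mathcal D(K)\otimes_{\QQ K} H_j(K;\QQ K)$-type data, i.e. $H_j(K;\mathcal D(K))$, and $\dim_{\mathcal D(K)}H_j(C)=b_j^{(2)}(K)$ by the Strong Atiyah property for $K$.

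The key algebraic step is the following. Because $b_k^{(2)}(G)=0$, we have $\mathcal D(G)\otimes H_k(C)=0$, so $H_k(C)$ is a torsion $\mathcal D(K)[t^{\pm1}]$-module, and it is finitely generated. Hence it is $\mathcal D(K)$-finite-dimensional, and its dimension equals the ``degree'' of its order. Concretely, choose bases and write the relevant boundary map $\partial_{k+1}$ restricted modulo a complement of $\operatorname{im}\partial_k$. We would rather use the Dieudonné determinant: $\dim_{\mathcal D(K)}H_k(C)$ equals the $t$-degree width of $\det$ of a square matrix $A$ over $\mathcal D(K)[t^{\pm1}]$ presenting the torsion part of $H_k$ (compare \cref{propfpl2}). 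Then we globalize in $\phi$ by working over $\mathcal D(G)$ with the full Dieudonné determinant in $K_1(\mathcal D(G))^{\rm ab}$. Via the $H_1(G)$-grading, the Newton polytope of this determinant, as developed for the Linnell skew field by Friedl–Lück and Kielak, is a well-defined polytope $P(A)\subset H_1(G;\RR)$ modulo translation. Its thickness in direction $\phi$ recovers the $\phi$-degree of $\det A$ computed over $\mathcal D(K)(t)$.

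The main obstacle is uniformity: we need a single matrix (or a single alternating combination of determinants, giving a difference $P=P_+-P_-$ in the polytope group) that computes $b_k^{(2)}(\ker\phi)$ for every $\phi$ simultaneously. For the top torsion part, the kernel $\ker\partial_k$ need not be free over $\QQ G$, and its presentation depends on $\phi$. We plan to handle this by using the whole truncated complex $F_0\to\dots\to F_{k+1}$, where we replace $F_{k+1}$ by a finitely generated submodule large enough to generate $\ker\partial_k\otimes\mathcal D(G)$. Since only finitely many generators are needed, this is possible over $\mathcal D(G)$, and by a lemma of the type in the earlier sections, the degree-$k$ homology dimension over $\mathcal D(K)[t^{\pm1}]$ is unchanged up to finitely generated effects. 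We then express $\dim_{\mathcal D(K)}H_k(C)$ as an alternating sum of $\phi$-degrees of determinants of square submatrices (chosen via the $\mathcal D(G)$-ranks, which are independent of $\phi$) of the differentials. This makes $b_k^{(2)}(\ker\phi)=\sum_j(-1)^{j}\,\mathrm{th}_\phi(P_j)$ and hence the thickness of the formal difference $P=\sum_j(-1)^jP_j$.

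It remains to show that this difference is a genuine polytope. Since $\phi\mapsto b_k^{(2)}(\ker\phi)$ is nonnegative, we do not get convexity directly. Instead, we argue as in \cref{thurston}: the torsion module $H_k(C)$ is itself presented by a single square matrix over $\mathcal D(G)$'s crossed-product subring $\mathcal D(N)[H_1^{\rm free}]$, where $N$ is the kernel to the free abelianization. By the Ore/Euclidean structure, its $\phi$-degree is the thickness of one Newton polytope, namely that of an element of $\mathcal D(N)[H_1]\setminus\{0\}$. That polytope is integral, and symmetrizing (thickness is symmetric) gives the symmetric integral polytope $P$. We expect this reduction, from the alternating sum to a single polynomial-type determinant over the intermediate ring $\mathcal D(N)[H_1]$, to be the hardest step.
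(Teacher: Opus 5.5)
Your reduction of $b_k^{(2)}(\ker\phi)$ to the $\mathcal D(K)$-dimension of a finitely generated torsion module over the one-variable ring $\mathcal D(K)[t^{\pm1}]$, read as a $t$-width, is fine; it is essentially \cref{thick} plus Shapiro's lemma. The gap is in your last paragraph, which carries the whole content of the theorem.

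You assert two things there. First, that the degree-$k$ homology, viewed over the multivariable crossed product $S=\mathcal E*H$ ($H$ the free abelianization), is presented by a single square matrix. Second, that ``by the Ore/Euclidean structure'' its $\phi$-width is the thickness of the Newton polytope of one nonzero element of $S$.

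Neither is justified, and the first is false in general. Once $\rank H\ge 2$, $S$ is not a principal ideal domain. Finitely generated $S$-modules $M$ with $\mathcal D\otimes_S M=0$ can then have projective dimension $2$. For example, over $S=\QQ[x^{\pm1},y^{\pm1},z^{\pm1}]$ the module $S/(x-1,y-1)$ has projective dimension $2$ and no nonzero finite-dimensional submodules. By contrast, $S^k/S^kA$ with $A$ square and invertible over $\mathcal D$ always has projective dimension at most $1$. So even discarding finite-dimensional submodules does not give a square presentation globally.

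Even when a square presentation exists, the Ore condition only writes its determinant as $s_1s_2^{-1}$, i.e.\ a formal difference $P(s_1)-P(s_2)$. That it is a single polytope is Kielak's theorem (\cref{kielak}), not a formal consequence. In fact, whether the class $P(N)$ of $N=H_k(G;S)$ is a single polytope is posed in the paper as an open problem.

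Your uniformity step is also unsound. A finite subset of $F_{k+1}$ chosen to generate over $\mathcal D(G)$ can enlarge $H_k$ over $\mathcal D(K)[t^{\pm1}]$. Determinants of square submatrices of full $\mathcal D(G)$-rank only bound the torsion of $\operatorname{coker}\partial_{k+1}$ from above. The uniform object is instead the single module $N=H_k(G;S)$, which is finitely generated because $S$ is Noetherian, together with the flat Ore localizations $T_\phi$ of $S$. Its class $P(N)\in\mathcal P_T(H)$ comes from Quillen's localization sequence and Moody's theorem.

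The missing idea is that you do not need $P(N)$ to be a single polytope. You only need $x_N(\phi)=\alpha_\phi\dim_{\mathcal D_\phi}(T_\phi\otimes_S N)$ to be subadditive, and this can be checked one pair of characters at a time (\cref{thm: Thurston_norm_modules}):
\begin{enumerate}
\item For independent $\phi_1,\phi_2$, invert $\mathcal E*(\ker\phi_1\cap\ker\phi_2)$ to obtain a rank-two crossed product $T=\mathcal D_0*\ZZ^2$.
\item Divide $T\otimes_S N$ by its maximal submodule of finite $\mathcal D_0$-dimension, which is invisible to every $x(\phi)$.
\item In rank two the quotient has projective dimension at most $1$ and its projectives are stably free, so it has a square presentation (\cref{pd1}).
\item Only at this point does \cref{kielak} give a single polytope, whose thickness is subadditive.
\end{enumerate}
Once $x_N$ is a seminorm, the dual $P$ of its unit ball is a symmetric compact convex set with $\gr_P=2x_N$. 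Writing $P(N)+\overline{P(N)}=[P_1-P_2]$ and applying \cref{thickfunction} gives $P_1=P+P_2$. This forces $P$ to be an integral polytope, and then $\max_{x\in P}\phi(x)=x_N(\phi)=b_k^{(2)}(\ker\phi)$ for every epimorphism $\phi$.
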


When the group $G$ is of type $\mathrm{FP}_2(\QQ)$ (e.g., finitely presented), then by \cite[Theorem A]{BieriStrebel1978}, every $0 \neq \phi \in H^1(G; \Z)$ induces a group splitting of $G$, namely $G$ decomposes as an HNN extension $G = K \ast_{H}$ over finitely generated subgroups $K$ and $H$ such that $K \le \ker \phi$. We call such a decomposition an \emph{admissible splitting dual to $\phi$}. By \cref{propfpl2}(ii), if $b_1^{(2)}(G) = 0$, we have that  
\[
    b_1^{(2)}(\ker \phi) \leq  b_1^{(2)}(K)\leq b_1^{(2)}(H).
\]
The number $b_1^{(2)}(H)$ is the \emph{complexity} of the splitting $G = K \ast_{H}$.
It was suggested (see, for example, \cite[Section~5.1]{FriedlLueckTillmann2019}) that in order to find a combinatorial interpretation of the Thurston norm, one should look at the \emph{splitting complexity} of the character:
\begin{equation}\label{eq:splitting-complexity}
c(G; \phi) := \inf \left\{ b_1^{(2)}(H) \,\middle|\, G = K \ast_{H} \text{ is an admissible splitting dual to } \phi \right\}.
\end{equation}
Observe that if the group $G$ satisfies the Weak Atiyah Conjecture, then the infimum in the previous definition is in fact a minimum.
This leads us to the following problem:

\begin{problem}\label{problem}
Let $G$ be a  group of type $\mathrm{FP}_2(\QQ)$ with trivial first  $L^2$-Betti number (satisfying the Weak Atiyah Conjecture). Is it true that  
\[
b_1^{(2)}(\ker \phi) =   c(G; \phi)
\]
for every non-trivial $\phi : G \to \mathbb{Z}$?
\end{problem}

We record that for a free-by-cyclic group $G$, this question was asked explicitly in \cite{Oberwolfach} by Gardam and Kielak.
 
 Now we explain our approach to answer the question positively.
Let $K$ be a finitely generated subgroup of $G$ and $H$ a finitely generated subgroup of $K$. We say that $H$ is \emph{$L^2$-independent} in $K$ if the natural map
\[
\mathcal U(K) \otimes_{\mathbb{Q} H} I_{\mathbb{Q} H} \longrightarrow \mathcal U(K) \otimes_{\mathbb{Q} K} I_{\mathbb{Q} K}
\]
is $\dim_{\mathcal U(K)}$-injective (see \cref{subsec:slyv}). We say that  $H$ is \emph{$L^2$-compressed} in $K$ if for any  finitely generated subgroup $L$ of $K$ containing $H$ it holds that
\[
b_1^{(2)}(H) \leq b_1^{(2)}(L).
\]
Clearly, if $H$ is $L^2$-independent in $K$, then $H$ is also $L^2$-compressed in $K$. The following theorem summarizes our approach towards a positive answer to the question in \cref{problem}.

\begin{theorem}\label{approach}
Let $G$ be a torsion-free group of type $\mathrm{FP}_2(\QQ)$ with trivial first  $L^2$-Betti number. 
Let $0 \ne \phi \in H^{1}(G; \mathbb{Z})$.
\begin{enumerate}
    \item \label{item: approach_b1_equality}
  Let  $G = K \ast_{H,\theta}$ be  an {admissible splitting dual to $\phi$}. If $H$ and $\theta(H)$ are $L^2$-independent in $K$, then 
    \[
    b_1^{(2)}(\ker \phi) = b_1^{(2)}(H).
    \]
    \item \label{item: approach_l2indep_splitting}
    Assume $G$ satisfies the Weak Atiyah Conjecture and that for every pair of finitely generated subgroups $H \leq K \leq \ker \phi$, if $H$ is $L^2$-compressed in $K$, then it is also $L^2$-independent. Then $G$ admits an \emph{admissible splitting dual to $\phi$}, namely $G = K \ast_{H,\theta}$, such that both $H$ and $\theta(H)$ are $L^2$-independent in $K$.
\end{enumerate}
\end{theorem}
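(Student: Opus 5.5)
Part \ref{item: approach_b1_equality} rests on the Bass--Serre description of $N=\ker\phi$. Let $t$ be the stable letter of $G=K\ast_{H,\theta}$ with $\phi(t)=1$. Then $N$ is the fundamental group of the infinite line of groups with vertex groups $K_i=t^iKt^{-i}$ and edge groups $H_i=t^iHt^{-i}$. The edge $H_i$ embeds in $K_i$ via $H$ and in $K_{i+1}$ via $t\theta(H)t^{-1}$. Write $N$ as the increasing union of the subgroups $N_n$ carried by the finite segments $[-n,n]$. Each $N_n$ is an iterated amalgam of copies of $K$ over copies of $H$.

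The plan has four steps.
\begin{enumerate}
\item Use the Mayer--Vietoris sequence for augmentation ideals of a graph of groups, tensored up to $\UO(N)$:
\[
0\to \bigoplus_{|i|< n}\UO(N)\otimes_{\QQ H_i} I_{\QQ H_i}\to \bigoplus_{|i|\le n}\UO(N)\otimes_{\QQ K_i} I_{\QQ K_i}\to \UO(N)\otimes_{\QQ N_n} I_{\QQ N_n}\to 0 .
\]
Exactness on the right is clear. Exactness on the left is where $L^2$-independence enters: it says the two edge maps $\UO(K)\otimes I_H\to \UO(K)\otimes I_K$ and $\UO(K)\otimes I_{\theta(H)}\to\UO(K)\otimes I_K$ are $\dim$-injective. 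Inducing up to $\UO(N)$ preserves this by flatness/induction of dimension. I would then have to check that dim-injectivity of each edge map gives dim-injectivity of the whole left map. I expect this to follow by induction on the segment length, adding one vertex at a time, since each new edge group meets only one new vertex.
\item Read off dimensions. Using $b_1^{(2)}(\Gamma)=\dim \UO(\Gamma)\otimes I_\Gamma -1$ for infinite $\Gamma$, this gives
\[
b_1^{(2)}(N_n)=(2n+1)b_1^{(2)}(K)-2n\,b_1^{(2)}(H)+\text{const},
\]
up to the $-1$ corrections.
\item Pass to the union. $b_1^{(2)}(N)$ is a normalized limit along the exhaustion: by L\"uck's continuity for directed unions, $\dim \UO(N)\otimes I_N=\lim_n \dim\UO(N)\otimes I_{N_n}$. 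That limit, however, is infinite. So instead I would work with $G$ directly: $b_1^{(2)}(\ker\phi)$ is the $\UO(N)$-dimension of $H_1$ of $N$, computed as a $\QQ[t^{\pm}]$-type quantity. The cleaner route is the Mayer--Vietoris sequence for the HNN extension over $G$ itself, restricted to $N$. This gives an exact sequence
\[
0\to \UO(N)\otimes_{\QQ N}\QQ N\otimes_{\QQ H}I_H\to \UO(N)\otimes_{\QQ N}\QQ N\otimes_{\QQ K}I_K\to \UO(N)\otimes I_N\to 0,
\]
where $\QQ N\otimes_{\QQ H}I_H$ is induced from $H\le N$. The dimensions are then exactly $b_1^{(2)}(H)+1$, $b_1^{(2)}(K)+1$ and $b_1^{(2)}(N)+1$, so
\[
b_1^{(2)}(N)=b_1^{(2)}(K)-b_1^{(2)}(H).
\]
This would contradict the inequality recalled before the statement unless interpreted correctly, so I must recheck the indexing. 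In the HNN sequence the middle term involves $I_G$, and by the hypothesis $b_1^{(2)}(G)=0$ it should contribute $0$. The correct bookkeeping is the short sequence $0\to\UO(G)\otimes_H I_H\to \UO(G)\otimes_K I_K\oplus\ldots$. Restricting the $G$-line to $N$, the deficiency computation should give $b_1^{(2)}(N)=b_1^{(2)}(H)$.
\item Justify left exactness: $\dim$-injectivity of $\UO(N)\otimes_H I_H\to\UO(N)\otimes_K I_K$ is exactly induced $L^2$-independence, used for both $H$ and $\theta(H)$.
\end{enumerate}
I expect left exactness to be the main obstacle.

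For part \ref{item: approach_l2indep_splitting}, start from any admissible splitting, which exists by Bieri--Strebel. Under the Weak Atiyah Conjecture the values $b_1^{(2)}$ lie in a discrete set $\frac1N\ZZ$, so I can choose a splitting minimizing the complexity $b_1^{(2)}(H)$. Suppose $H$ is not $L^2$-compressed in $K$. Then there is a finitely generated $L$ with $H\le L\le K$ and $b_1^{(2)}(L)<b_1^{(2)}(H)$. I then modify the splitting to $G=K'\ast_{L'}$ with $L'$ related to $L$, for instance $K'=\langle K, tLt^{-1}\rangle$ and edge $tLt^{-1}$. Since $L\le K\le\ker\phi$, this is a standard rearrangement of the line of groups and yields a splitting of strictly smaller complexity, a contradiction. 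Hence $H$, and symmetrically $\theta(H)$, is $L^2$-compressed, and so by hypothesis $L^2$-independent. The delicate point is verifying that the rearranged decomposition really is an HNN splitting with finitely generated pieces; I would handle this by re-choosing the edge group inside a finite segment $N_n$.
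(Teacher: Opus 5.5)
\textbf{Part (i).} There is a genuine gap here: you never use the hypothesis $b_1^{(2)}(G)=0$ in a valid way, and it is indispensable. For instance, take $G=K\ast\langle t\rangle$ with $K$ free of rank $2$ and $H=1$. Both associated subgroups are trivially $L^2$-independent, yet $b_1^{(2)}(\ker\phi)=\infty\ne 0=b_1^{(2)}(H)$.

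Your Steps 1--2 are sound. Adding one vertex at a time, the new edge group enters the new vertex through $H\hookrightarrow K$ when you extend leftwards, and through $\theta(H)\hookrightarrow K$ when you extend rightwards. So the segment Mayer--Vietoris map is $\dim$-injective, and for $H\neq 1$ you get $b_1^{(2)}(N_n)=(2n+1)b_1^{(2)}(K)-2n\,b_1^{(2)}(H)$.

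Step 3, however, fails in three places.
\begin{enumerate}
\item The assertion that the limit is infinite is false under the hypotheses of the theorem.
\item The three-term sequence you write for $N$ is not valid. $N$ is the infinite line of groups, and restricting the $G$-level sequence to $N$ produces modules such as $\UO(G)\otimes_{\QQ H}I_{\QQ H}$, which as $\UO(N)$-modules are infinite direct sums indexed by powers of $t$.
\item The final sentence only asserts the desired conclusion.
\end{enumerate}
Also, the continuity statement $\dim\UO(N)\otimes I_N=\lim_n\dim\UO(N)\otimes I_{N_n}$ holds in general only as an inequality $\le$; equality needs $\dim$-injective transition maps.

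The missing step is exactly where $b_1^{(2)}(G)=0$ enters. Tensor the HNN sequence $0\to{}^GI_{\QQ H}\to{}^GI_{\QQ K}\oplus\QQ G(t-1)\to I_{\QQ G}\to 0$ with $\UO(G)$. Since $\dim_{\UO(G)}(\UO(G)\otimes_{\QQ G}I_{\QQ G})=1+b_1^{(2)}(G)=1$ and $\UO(G)(t-1)\cong\UO(G)$, right exactness and a dimension count give $b_1^{(2)}(K)\le b_1^{(2)}(H)$. The paper phrases this as surjectivity of $1\otimes a\mapsto 1\otimes a-t\otimes\theta(a)$ onto $\UO(G)\otimes_{\QQ K}I_{\QQ K}$. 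Together with $L^2$-independence of $H$, this forces $b_1^{(2)}(H)=b_1^{(2)}(K)$.

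Now consider each one-vertex extension $A\to B\oplus C\to D\to 0$, where $A$ is the edge term, $B$ the old segment, $C$ the new vertex, and $f_1,f_2$ are the components of the first map.
\begin{enumerate}
\item The kernel of $B\to D$ is $f_1(\ker f_2)$, which has dimension $0$.
\item The cokernel of $B\to D$ is $C/f_2(A)$, of dimension $b_1^{(2)}(K)-b_1^{(2)}(H)=0$.
\end{enumerate}
So every transition map is a $\dim$-isomorphism. Composing and passing to the direct limit gives $b_1^{(2)}(\ker\phi)=b_1^{(2)}(K)=b_1^{(2)}(H)$.

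This is the paper's route. It shows that the $\theta$-induced map $\UO(G)\otimes_{\QQ H}I_{\QQ H}\to\UO(G)\otimes_{\QQ K}I_{\QQ K}$ is a $\dim$-isomorphism, being $\dim$-injective between modules of equal dimension. It then deduces the same for each enlargement $N_{n-1}\to N_n$.

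\textbf{Part (ii)} is the paper's argument in outline, but your explicit rearrangement is wrong for the convention $t^{-1}ht=\theta(h)$ you fixed in part (i). With base $\langle K,tLt^{-1}\rangle$ and edge group $tLt^{-1}$, the new relations are $s^{-1}xs=t^{-1}xt$ for $x\in tLt^{-1}$. By Britton's lemma they imply the old relations $s^{-1}hs=\theta(h)$ only if $H\le tLt^{-1}$, i.e.\ $\theta(H)\le L$. That construction is the right one for the $\theta(H)$ case.

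For $H\le L$, take $K'=\langle K,t^{-1}Lt\rangle\le\ker\phi$, edge group $L$, and $\theta'(l)=t^{-1}lt$. Then $s\mapsto t$ and $t\mapsto s$ give mutually inverse homomorphisms between $K'\ast_{L,\theta'}$ and $G$. The only check needed is $s^{-1}hs=\theta'(h)=\theta(h)$ for $h\in H\le L$, so no passage to finite segments is required.
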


In other words, \cref{problem} is reduced via \cref{approach} to the following problem.

\begin{problem}\label{problem2}
    Let $ K$ be a  finitely generated group and $H$ a subgroup of $K$. Assume that $H$ is $L^2$-compressed in $K$. Is it true that $H$ is $L^2$-independent in $K$?
\end{problem}

In \cite{Ja24}, the first author answered the question in \cref {problem2} in the affirmative for free groups. Here we solve \cref{problem2} for torsion-free virtually free-by-cyclic groups.

\begin{theorem}\label{compressedL2}
Let $K$ be a torsion-free virtually free-by-cyclic group.
Let $H$ be a finitely generated subgroup of $K$.  
If $H$ is $L^2$-compressed in $K$, then $H$ is also $L^2$-independent in $K$.
\end{theorem}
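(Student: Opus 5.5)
We plan to reduce to the free case treated in \cite{Ja24}, using the structure of $K$ as (virtually) an extension $1\to F\to K_0\to \Z\to 1$ with $F$ finitely generated free and $K_0$ of finite index in $K$.

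\textbf{Step 1: reformulate.} $L^2$-independence of $H$ in $K$ says that the map $\UO(K)\otimes_{\QQ H} I_{\QQ H}\to \UO(K)\otimes_{\QQ K} I_{\QQ K}$ has kernel of $\dim_{\UO(K)}$ zero. Equivalently,
\[
b_1^{(2)}(H)-b_1^{(2)}(K)=\dim_{\UO(K)}\bigl(\UO(K)\otimes_{\QQ K} I_{\QQ K}\big/\UO(K)\cdot I_{\QQ H}\bigr)-\dim(\text{kernel}),
\]
which uses additivity of dimension and the identifications $\dim \UO(K)\otimes I_{\QQ H}=b_1^{(2)}(H)+1-b_0^{(2)}(H)$ (here $K$ torsion-free satisfies the Strong Atiyah Conjecture, since virtually free-by-cyclic groups do, so $\UO(K)$ may be replaced by the division ring $\mathcal D_K$). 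So we must show that the kernel vanishes whenever $H$ is compressed.

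\textbf{Step 2: finite index and the trivial cases.} Both notions should behave well under passing to finite-index subgroups of $K$. The ring $\mathcal D_K$ is a crossed product of $\mathcal D_{K_0}$ with the finite group $K/K_0$, and $b_1^{(2)}$ scales by the index. We expect to transfer compression from $H$ to $H\cap K_0$ inside $K_0$ by intersecting intermediate subgroups with $K_0$, though this step needs care. If $H\cap F$ has finite index in $H$, then $H$ is virtually free. In that case we want to reduce to intermediate subgroups $L$ with $L\cap F$ free and apply \cite{Ja24} to $H\cap F\le L\cap F$ inside $F$, hoping to obtain independence over $\mathcal D_F$. We then induce up using $\mathcal D_{K_0}\supseteq \mathcal D_F * \Z$ (an Ore localization of the twisted Laurent ring $\mathcal D_F[t^{\pm1}]$), where flatness preserves injectivity.

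\textbf{Step 3: the main case.} Here $H$ maps onto a nontrivial subgroup of $\Z$, so $H$ is itself free-by-cyclic when finitely generated. If $H\cap F$ is finitely generated, then $b_1^{(2)}(H)=0$, and the kernel must be shown to vanish by an explicit Laurent-series computation. Otherwise $H$ is an ascending or general HNN extension of finitely generated free groups. The key tool is the Linnell/Novikov-type ring $\mathcal D_F((t))$ containing $\mathcal D_{K_0}$, together with the \emph{Hughes-free} uniqueness of $\mathcal D_{K_0}$. We will express the kernel of $\mathcal D_{K_0}\otimes I_H\to \mathcal D_{K_0}\otimes I_{K_0}$ in terms of a $\mathcal D_F$-linear map between finitely generated modules. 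Compression then gives a dimension inequality for the intermediate subgroups $\langle H, F'\rangle$ with $F'\le F$ finitely generated. We apply \cite{Ja24}'s free-group result to these $F'$ and pass to the limit using that $F$ is an ascending union of free factors and that dimension is continuous. This limiting argument is where we expect the main obstacle. Compression is only given for finitely generated $L$, whereas the natural comparison subgroup $H F$ is not finitely generated. We would first try to approximate $HF$ by the subgroups $\langle H, x_1,\dots,x_n\rangle$ with $x_i\in F$ and prove semicontinuity of $b_1^{(2)}$ along this chain, using the Sylvester rank function on $\mathcal D_{K_0}$.
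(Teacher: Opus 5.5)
\textbf{There is a genuine gap.} Your plan never gives a mechanism that turns the compression hypothesis, which only bounds $b_1^{(2)}$ of finitely generated overgroups, into the vanishing of the kernel. The one concrete reduction you propose cannot work.

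In Step 2, applying \cite{Ja24} inside the fibre (or inside some free group $L\cap F$) and inducing up by flatness gives at best $\dim$-injectivity of $\UO(K)\otimes_{\QQ H}I_{\QQ H}\to\UO(K)\otimes_{\QQ F}I_{\QQ F}$. The target you need is $\UO(K)\otimes_{\QQ K}I_{\QQ K}$, and the fibre is far from $L^2$-independent in $K$. For $K=F\rtimes\ZZ$ with $F$ free of rank $r\ge 2$, the kernel of $\UO(K)\otimes_{\QQ F}I_{\QQ F}\to\UO(K)\otimes_{\QQ K}I_{\QQ K}$ has dimension $r-1$. So the implication ``independent in the fibre $\Rightarrow$ independent in $K$'' behind your induction step is false: $H=F$ is independent in $F$ but not in $K$. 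Compression in $K$ has to be applied to a specific overgroup leaving the fibre, and your plan never says which one.

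Step 3 places the difficulty in the wrong spot. With your $F$ finitely generated, $HF=\langle H,F\rangle$ \emph{is} finitely generated. For $H\not\le F$ (working in $K_0$) it has finite index in $K_0$, so $b_1^{(2)}(HF)=0$. Compression then forces $b_1^{(2)}(H)=0$, and independence is automatic: $\UO(K)\otimes_{\QQ H}I_{\QQ H}$ has dimension $1$, while its image in $\UO(K)\otimes_{\QQ K}I_{\QQ K}$ maps onto $\UO(K)$ under multiplication, since $h-1$ is invertible.

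So in your setting the only hard case is $H$ (virtually) inside the fibre, which is exactly where Step 2 breaks. The theorem also covers free-by-cyclic groups with infinitely generated free normal subgroup, such as nonabelian free groups or the groups supplied by \cite{KL24} in the proof of \cref{goodsplittings}. There $b_1^{(2)}(HF)$ can be positive, and your Novikov-ring and limit argument is not actually specified. The transfer of compression from $H$ to $H\cap K_0$ is also left unjustified.

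\textbf{The missing idea is $L^2$-subgroup rigidity, \cref{thm:L2rigid}.} There is a subgroup $\widetilde H\ge H$, finitely generated by coherence (\cref{cor: coh_tors_fg}), with $\tor_{\QQ K}(I_{\QQ}(K,H))={}^K I_{\QQ}(\widetilde H,H)$. The Linnell division ring of $K$ has weak dimension at most $1$ and vanishing $\Tor_1$ with itself (\cref{prop: properties_D}). Hence independence is equivalent to $\beta_1^K(I_{\QQ}(K,H))=0$. By \cref{lem: properties_tors,lem: monotone_b1},
$\beta_1^K(I_{\QQ}(K,H))=\beta_1^K({}^K I_{\QQ}(\widetilde H,H))=b_1^{(2)}(H)-b_1^{(2)}(\widetilde H)$,
and applying compression to the single overgroup $L=\widetilde H$ makes this $\le 0$.

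The real work is the rigidity theorem. The paper proves it first over $\FF_2$, using a Zorn-minimal intermediate subgroup, the index-two trick of \cref{lem: index2_aug_ideal}, and universality of the Hughes-free division ring from \cref{thm: univ_vFbyZ}. That universality in turn rests on goodness, Farrell--Jones and the Henneke--L\'opez-\'Alvarez criterion. The result is then transferred to $\QQ$ via $\rk_K\ge\rk_{\FF_2 K}$ (\cref{prop: dim_QG_F2G}). Without a closure of this kind, there is no way to identify the overgroup against which compression should be tested.
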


By combining \cref{approach,compressedL2}, we obtain the following corollary.

\begin{corollary}\label{goodsplittings}
Let $G$ be either
\begin{enumerate}
    \item a torsion-free virtually (finitely generated free)-by-cyclic group; or
    \item the fundamental group $\pi_1(M)$ of an admissible $3$-manifold $M$.   
\end{enumerate}
Then for every non-trivial homomorphism $\phi \colon G \to \mathbb{Z}$, the group $G$ admits an admissible splitting dual to $\phi$, namely
\[
G = K \ast_H,
\]
such that
\[
b_1^{(2)}(\ker \phi) = b_1^{(2)}(H).
\]
\end{corollary}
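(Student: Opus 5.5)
The plan is to combine \cref{approach} with \cref{compressedL2}, checking each of its hypotheses for the two classes of groups. In both cases $G$ has to be torsion-free, of type $\mathrm{FP}_2(\QQ)$, satisfy the Weak Atiyah Conjecture, and have $b_1^{(2)}(G)=0$.

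\textbf{Case (1).} Let $G$ be torsion-free virtually free-by-cyclic.
\begin{enumerate}
\item $G$ is finitely presented, since it is a finite extension of a finitely presented group; in particular it is of type $\mathrm{FP}_2(\QQ)$.
\item $G$ satisfies the Strong (hence Weak) Atiyah Conjecture. It is virtually free-by-cyclic, hence virtually locally indicable, and it lies in Linnell's class, which contains free groups and is closed under elementary amenable extensions. This fact is standard and we would cite it.
\item $b_1^{(2)}(G)=0$. The finite index subgroup $F\rtimes\ZZ$ has vanishing $L^2$-Betti numbers by Lück's mapping torus theorem, and $b_1^{(2)}$ scales by the index.
\end{enumerate}

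\textbf{Case (2).} Let $G=\pi_1(M)$ with $M$ admissible.
\begin{enumerate}
\item $G$ is finitely presented.
\item $b_1^{(2)}(G)=0$, by Lott--Lück.
\item $G$ is torsion-free, because $M$ is irreducible with infinite fundamental group.
\item $G$ satisfies the Atiyah Conjecture. This follows from virtual fibering results (Agol, Przytycki--Wise, Liu), which place $G$ in Linnell's class. Unless $M$ is a graph manifold that is not virtually fibred, $G$ is virtually (surface or free)-by-cyclic. In the remaining cases the conjecture is known via Schick / Linnell--Schick, and we would cite it.
\end{enumerate}

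\textbf{Applying \cref{approach}.} Now apply \cref{approach}(ii). We must check that for all finitely generated $H\le K\le\ker\phi$, $L^2$-compressed implies $L^2$-independent. This is where the two cases differ.

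In case (1), $\ker\phi$ is a subgroup of a torsion-free virtually free-by-cyclic group. A finitely generated subgroup $K$ of it is again torsion-free and virtually free-by-cyclic, or virtually free, or cyclic. This holds because finitely generated subgroups of free-by-cyclic groups are either virtually free (if they meet the free part in an infinitely generated way, or trivially in the cyclic direction), or are themselves finitely generated free-by-cyclic. The cleanest route is Feighn--Handel coherence: a finitely generated subgroup of $F_n\rtimes\ZZ$ is finitely presented and is either free or a (f.g. free)-by-cyclic group. Passing to the finite-index subgroup and then back up gives that $K$ is torsion-free and virtually free or virtually free-by-cyclic. Free groups are covered by \cite{Ja24}, and torsion-free virtually free groups are free by Stallings. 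Hence \cref{compressedL2} (or \cite{Ja24}) gives the required implication for $H\le K$.

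In case (2), $\ker\phi$ is a subgroup of infinite index in a 3-manifold group. By Scott's core theorem, a finitely generated $K\le\ker\phi$ is the fundamental group of a compact 3-manifold. The plan is to argue via tameness and the structure of infinite-index subgroups that such $K$ is either free, a surface group, or virtually (free)-by-cyclic/fibred in a way that reduces to case (1). This is the step I expect to be the main obstacle, since infinite-index finitely generated subgroups of 3-manifold groups need not be free-by-cyclic. A fallback is to observe that these subgroups are fundamental groups of compact 3-manifolds with nonempty boundary and infinite-index in $G$. Via virtual specialness they are virtually free-by-cyclic (compact 3-manifolds with boundary whose fundamental group has $b_1^{(2)}$ and Euler characteristic compatible with fibering), or more safely, virtually RFRS. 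Then one would extend the argument of \cref{compressedL2} to these cases.

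Once the hypothesis holds, \cref{approach}(ii) produces an admissible splitting $G=K\ast_{H,\theta}$ with $H$ and $\theta(H)$ both $L^2$-independent in $K$. \cref{approach}(i) then yields $b_1^{(2)}(\ker\phi)=b_1^{(2)}(H)$, as claimed.
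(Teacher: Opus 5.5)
\textbf{The gap is in case (2).} You never verify the hypothesis of \cref{approach}(ii) for finitely generated $H\le K\le\ker\phi$, and neither route you sketch closes it.

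Reducing to case (1), i.e.\ showing that $K$ is virtually (finitely generated free)-by-cyclic, is impossible in general. Suppose $M$ fibres over the circle with fibre a closed surface $\Sigma$ of genus at least two, and $\phi$ is the fibred class. Then $K=\ker\phi=\pi_1(\Sigma)$ is itself a finitely generated subgroup of $\ker\phi$. Every finite-index subgroup of it has nonzero Euler characteristic, so it is not virtually (f.g.\ free)-by-cyclic. Your fallback, ``extend the argument of \cref{compressedL2} to virtually RFRS groups'', is an unproved claim, not a step.

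The missing observation is that no extension is needed. \cref{compressedL2}, like all of \cref{sec:Lewin} and \cref{sec: l2_rigid} (compare \cref{prop: goodness_FbyZ}), concerns virtually free-by-cyclic groups whose free normal subgroup may be \emph{infinitely} generated. The paper argues as follows:
\begin{enumerate}
\item By Scott's core theorem, $K\cong\pi_1(N)$ with $N$ compact.
\item Since $K$ has infinite index in the fundamental group of the aspherical manifold $M$, the core $N$ has nonempty boundary and aspherical interior, so $\cd K\le 2$.
\item Then \cite[Theorem~1.1]{KL24} shows that $K$ is virtually free-by-cyclic in this broad sense. Closed surface groups qualify, since their infinite cyclic covers are open surfaces.
\item Hence \cref{compressedL2} applies directly to every such $K$.
\end{enumerate}
With this in place, your checks on $G$ itself and your final application of \cref{approach} go through as in the paper.

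\textbf{Case (1)} is essentially the paper's argument. The only difference is that the paper gets the Strong Atiyah Conjecture from local indicability (\cref{lem: vFbyZ_li}) together with \cite{JaikinLopez2020}, rather than from Linnell's class; either works.

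However, your ``cleanest route'' contains a false statement: a finitely generated subgroup of $F_n\rtimes\ZZ$ need not be free or (f.g.\ free)-by-cyclic. For example, $\ZZ^2*\ZZ$:
\begin{enumerate}
\item is free-by-$\ZZ$ with infinitely generated fibre;
\item is not free;
\item has Euler characteristic $-1$, so it is not (f.g.\ free)-by-cyclic;
\item embeds in a (f.g.\ free)-by-cyclic group by Linton's embedding theorem, used in the proof of \cref{prop: goodness_FbyZ}.
\end{enumerate}
Fortunately this claim is not load-bearing. Let $G_0=F_n\rtimes\ZZ$ be a finite-index subgroup of $G$. Then $K\cap G_0$ is free-by-(trivial or $\ZZ$), simply because $K\cap F_n$ is free and the quotient embeds in $\ZZ$. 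That is all \cref{compressedL2} needs, so the Feighn--Handel dichotomy should be dropped.
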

We note that the case of $3$-manifold groups is also a consequence of the result of Thurston \cite{Thurston1986} mentioned at the beginning of the introduction and its interpretation by Friedl and L\"uck \cite{FriedlLueck2019a}.

In the proof of \cref{compressedL2}, we also establish the following result, which is of independent interest.

\begin{theorem}\label{thm: univ_vFbyZ}
Let $E$ be a division ring, let $G$ be a torsion-free virtually free-by-cyclic group, and let $E * G$ be a crossed product. Then $E * G$ is a pseudo-Sylvester domain. In particular, $G$ is a Lewin group.
\end{theorem}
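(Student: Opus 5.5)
The plan is to reduce to the free-by-cyclic case. Then I would build the pseudo-Sylvester property of $E*G$ out of the corresponding property for free groups, propagating it along the HNN structure and finally through a finite-index extension.

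\textbf{Step 1 (free groups).} For a free group $F$ and any crossed product $E*F$, the ring $E*F$ is a free ideal ring. This is Cohn's theorem for group rings, and it extends to crossed products over division rings because free groups have cohomological dimension one. Firs are Sylvester domains, hence pseudo-Sylvester. They also have universal division rings of fractions in which inner rank equals stable rank over the division ring.

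\textbf{Step 2 (free-by-cyclic).} Write $G_0=F\rtimes\langle t\rangle$ with $F$ finitely generated free. Then
\[
E*G_0\cong (E*F)[t^{\pm1};\tau],
\]
a skew Laurent polynomial ring over a fir. Let $\mathcal D$ be the universal field of fractions of $E*F$. The automorphism $\tau$ extends to $\mathcal D$ by universality, so $E*G_0$ embeds in $\mathcal D(t;\tau)$.

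I would then show that $\mathcal D(t;\tau)$ is the universal field of fractions of $E*G_0$ and that $E*G_0$ is pseudo-Sylvester. The tool is the criterion that a ring $R$ is pseudo-Sylvester exactly when it has an epic embedding into a division ring under which stable rank over $R$ equals rank over that division ring. Concretely, given a matrix $A$ over $(E*F)[t^{\pm1};\tau]$ that is full, up to stabilization, I would prove it becomes invertible over $\mathcal D(t;\tau)$. I would do this by degree filtration in $t$, multiplying by powers of $t$ to clear negative degrees and reducing to the leading coefficient matrices over $E*F$. There fullness transfers because firs are semihereditary. This is the \textbf{main obstacle}. Fullness of a matrix over a skew Laurent ring is not controlled by its coefficient matrices alone, and one must deal with stable rank rather than inner rank. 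A plausible alternative route is cohomological: $E*G_0$ has weak global dimension $\le 2$, and all finitely generated projective modules are stably free, because $K_0$ is computed by the Waldhausen/Bass–Heller–Swan sequence with $\widetilde K_0(E*F)=0$. A ring of weak dimension $\le 2$ with such a $K$-theory and an epic embedding into $\mathcal D(t;\tau)$ that is $\operatorname{Tor}_1$-flat should be pseudo-Sylvester by the known Dicks–Sontag-type criterion. I would try this cohomological route first.

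\textbf{Step 3 (finite index).} Let $G$ be torsion-free with a finite-index normal free-by-cyclic subgroup $G_0$. Then
\[
E*G=(E*G_0)*(G/G_0).
\]
Here I would use the Linnell-type argument: the universal field $\mathcal D_0$ of $E*G_0$ is $G$-invariant by universality, and the crossed product $\mathcal D_0*(G/G_0)$ is Artinian. Torsion-freeness of $G$ together with the pseudo-Sylvester property of $E*G_0$ should force this Artinian ring to be a division ring, and then rank functions transfer. One possible argument is that the virtual cohomological dimension is finite and the rank function is integer valued, as in the proof of the Atiyah conjecture for elementary amenable extensions. Alternatively I would cite known closure of the Lewin/pseudo-Sylvester property under such extensions, as for free-by-cyclic groups in work of Jaikin-Zapirain and Linnell.

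\textbf{Step 4 (Lewin).} Finally, take $E=\mathbb Q$ with the trivial crossed product. The universal field then coincides with the Linnell division ring, and universality for every subgroup gives that $G$ is a Lewin group. This step should be formal once Steps 2 and 3 are in place.
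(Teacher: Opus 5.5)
\textbf{Verdict: there is a genuine gap in Step 3.} Step 3 is where the whole difficulty of the theorem lies, and you give no argument for its key claim.

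You need the Artinian ring $\mathcal{D}_0*(G/G_0)$ to be a division ring. The rank function it inherits from $\mathcal{D}_0$ (namely $\frac{1}{[G:G_0]}\rk_{\mathcal{D}_0}$ of the restricted matrix) is integer valued exactly when this ring has no zero divisors. So the ``integer-valued rank function'' argument presupposes its conclusion.

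Linnell's argument for elementary amenable extensions does not help in the form you state. Its engine is Moody's induction theorem, which reduces to finite subquotients, so it says nothing when the quotient is the finite group $G/G_0$ itself. You also give no mechanism by which finite virtual cohomological dimension would exclude zero divisors. Nor is there a closure theorem for the Lewin or pseudo-Sylvester property under torsion-free finite extensions that you could quote. The free-by-cyclic result you allude to is due to Henneke--L\'opez-\'Alvarez, not Jaikin-Zapirain and Linnell. The paper itself notes that, in general, even the Strong Atiyah Conjecture is not known to pass to finite-index overgroups.

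The missing idea is the one the paper supplies in Section 4: finitely generated free-by-cyclic groups are \emph{good} in the sense of Serre. This is proved from $\cd\le 2$, $b_2^{(2)}=0$, virtual residual $p$-finiteness and L\"uck approximation. Goodness is what makes the Fisher--S\'anchez-Peralta argument applicable, and it yields that $\mathcal{D}_{E*H}*G/H$ is a division ring, indeed the Hughes-free division ring of $E*G$.

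Even granting a division ring $\mathcal{D}=\mathcal{D}_0*(G/G_0)$, ``rank functions transfer'' does not give that $E*G$ is pseudo-Sylvester. That property concerns factorizations over $E*G$ itself. For a square matrix $A$ over $E*G$ that is singular over $\mathcal{D}$, you must show $A$ is not stably full over $E*G$. The pseudo-Sylvester property of $E*G_0$ only factors the associated $[G:G_0]$-times-larger matrix over $E*G_0$, and such factorizations do not descend. In particular you need every finitely generated projective $E*G$-module to be stably free, which does not follow formally from the same fact for $E*G_0$.

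This is why the paper does not induct from $G_0$. Instead it verifies the Henneke--L\'opez-\'Alvarez criterion directly for $E*G$ and $\mathcal{D}_{E*G}$, using:
\begin{enumerate}
\item vanishing of $\Tor_1$, coherence, weak dimension at most $1$, and projective dimension at most $1$ for finitely generated submodules of $\mathcal{D}_{E*G}$;
\item $K_0(E)\cong K_0(E*G)$, obtained from the Farrell--Jones conjecture for $G$ itself.
\end{enumerate}

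Step 4 is not formal either. Being Lewin requires $G$ to be locally indicable and requires the \emph{Hughes-free} division ring of \emph{every} crossed product $E*G$ to be universal. Specializing to $E=\QQ$ is not enough. You never show that your universal field is Hughes-free, nor that $G$ is locally indicable; the paper deduces the latter from $\cd\le 2$ and $b_2^{(2)}(G)=0$.

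Finally, Step 2 is correct in outcome only because it is the Henneke--L\'opez-\'Alvarez theorem, which you should simply cite. Neither of your sketched routes proves it. Weak global dimension $\le 2$ together with $\Tor_1$-vanishing does not provide the finite presentation and projective dimension one of submodules of the division ring that their criterion requires.
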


This generalizes the $3$-manifold group case proved in \cite{SanchezPeralta_Knots}, complementing earlier work that addressed the free-by-cyclic special case \cite{HL22} and the pro-$p$ analogue \cite{JS26}.

We end the article with several applications of \cref{goodsplittings}. Recall that for a finitely generated group $G$, the Bieri--Neumann--Strebel (BNS) invariant is the set of non-trivial characters $\phi \colon G \to \mathbb{R}$ such that the monoid $\{g\in G \mid \phi(g) \geq 0\}$ is of type $\FP_1$. The BNS invariant detects finite generation of kernels of integral characters \cite[Proposition~4.4]{BieriNeumannStrebel1987}. 

If $M$ is an admissible 3-manifold, then the BNS invariant of $\pi_1(M)$ can be computed algorithmically as in \cite{CooperTillmann2009, Schleimer2001}. We outline the procedure.

By the work of Bieri, Neumann and Strebel \cite[Theorem~E]{BieriNeumannStrebel1987}, $\Sigma(\pi_1(M))$ is precisely the union of cones over the interiors of certain top-dimensional faces of the Thurston norm ball $B_T(M)$. Consequently, to compute $\Sigma(\pi_1(M))$, it suffices to compute $B_T(M)$ and check a finite number of integral characters $\phi \colon \pi_1(M)\to \mathbb{Z}$, one for each top-dimensional face. Moreover, an integral character $\phi \colon \pi_1(M) \to \mathbb{Z}$ is an element of $\Sigma(\pi_1(M))$ if and only if it is represented by a fibration $M \to \mathbb{S}^1$. To check whether this is the case, one constructs an embedded surface $F \subseteq M$ dual to $\phi$ and checks whether the complement $\overline{M \setminus F}$ is homeomorphic to $F \times [0,1]$ via Haken's algorithm. Finally, an algorithm to compute $B_T(M)$ is given by Tollefson and Wang in \cite[Algorithm~5.9]{TollefsonWang1996}.

If $G$ is a (finitely generated free)-by-cyclic group, it was shown by Kielak that the BNS invariant of $G$ is also determined by an integral polytope $P \subseteq H_1(G; \mathbb{R})$ \cite{Kielak2020polytopes}. More precisely, there exists a marking of vertices of $P$ such that a character $\phi \in H^1(G;\mathbb{R})$ is in the BNS invariant if and only if $\phi|_P$ attains its minimum precisely at a marked vertex. 

Consequently, in \cite{Oberwolfach} Gardam--Kielak sketched an algorithm which computes the BNS invariant of \fbyz groups, conditional on a positive answer to \cref{problem} for such groups. We fill in the details of their sketch and combine it with \cref{goodsplittings} to obtain the following: 

\begin{corollary}[Gardam--Kielak]\label{GardamKielak}
Let $G$ be a \fbyz group. Then, there exists an algorithm to compute the BNS invariant of $G$.
\end{corollary}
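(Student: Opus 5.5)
The plan is to use Kielak's marked polytope, which reduces the computation of $\Sigma(G)$ to two tasks: compute the polytope $P$ up to translation, and decide for finitely many integral characters whether they are fibred, meaning their kernel is finitely generated.

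\textbf{Step 1: compute the norm from splittings.} Since $G$ is free-by-cyclic, it is torsion-free, finitely presented, satisfies the Strong Atiyah Conjecture, and has $b_1^{(2)}(G)=0$. So \cref{thurston} applies: $\|\phi\|_T=b_1^{(2)}(\ker\phi)$ for primitive $\phi$, and this is the thickness function of a symmetric integral polytope. Kielak's theorem identifies this polytope with $P-P$ in the relevant sense; I expect it to be the $L^2$-torsion/Newton polytope. By \cref{goodsplittings}, $b_1^{(2)}(\ker\phi)=c(G;\phi)$ for every nontrivial integral $\phi$, and the minimum in the definition of $c(G;\phi)$ is attained. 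Admissible splittings dual to $\phi$ are recursively enumerable: enumerate finite sets of words generating candidate $K,H$ and a conjugating element $t$ with $\phi(t)=1$, and verify the HNN presentation via Tietze transformations from a finite presentation of $G$. For a given splitting, $b_1^{(2)}(H)$ of a finitely generated subgroup $H$ of $G$ cannot be computed exactly in general. We need it only as an upper bound, though, and $b_1^{(2)}(\ker\phi)$ can be approximated from below by Lück approximation or by Linnell's division ring computations. The key difficulty is turning these into an exact value. My plan is to use that $b_1^{(2)}(\ker\phi)$ is an integer, since it equals the thickness of an integral polytope in direction $\phi$, and that for these groups the complexities $b_1^{(2)}(H)$ are integers as well, since $H$ is itself free-by-cyclic or free with $b_1^{(2)}=\mathrm{rk}-1$. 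A cleaner route is to compute $b_1^{(2)}(\ker\phi)$ directly as the degree of the Dieudonné determinant of the Fox matrix over the Novikov-type completion. This is algorithmic because $\mathcal D_{\QQ G}$ is an explicit Ore localisation of a twisted Laurent ring over $\mathcal D_{\QQ F}$, and Linnell's skew field for free groups has decidable word problem.

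\textbf{Step 2: recover $P$.} Compute $\|\phi\|_T$ for the finitely many integral $\phi$ in a sufficiently rich set: for instance, all $\phi$ with entries in $[-N,N]$ for growing $N$, until the computed convex hull stabilises. Certify stabilisation using the bound on vertex coordinates coming from the Fox-matrix degrees. From these values obtain the dual norm ball, and hence the vertices of $P$ up to translation.

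\textbf{Step 3: compute the marking.} For each vertex $v$ of $P$, pick an integral $\phi$ that attains its minimum on $P$ only at $v$. Decide whether $\phi\in\Sigma(G)$, equivalently whether $\ker\phi$ is finitely generated, i.e.\ whether $G$ is free-by-cyclic with respect to $\phi$. Run two semi-algorithms in parallel. The first searches for an admissible splitting $G=K\ast_{H}$ dual to $\phi$ with $H=K$ free, which certifies fibring. The second certifies non-fibring. For the second, I would use Sikorav's criterion: $\phi\notin\Sigma$ if and only if $H_1(G;\widehat{\QQ G}^{\phi})\neq0$. Alternatively, for free-by-cyclic groups the vertex is marked iff the corresponding coefficient in the Novikov determinant is a unit, and this is computable from the explicit matrix.

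I expect the main obstacle to be Step 1, namely making $b_1^{(2)}(\ker\phi)$ exactly computable. \cref{goodsplittings} is what makes the upper-bound side (searching for good splittings) meet the lower-bound side (approximation from below), and integrality then closes the gap.
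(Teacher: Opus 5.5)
Step 2 contains an error that breaks Step 3. The norm $\phi\mapsto b_1^{(2)}(\ker\phi)$ is $\gr_P$. By \cref{thickfunction}, $\gr_P$ determines $Q=P+\overline{P}$, but it does not determine $P$ up to translation, nor even the normal fan of $P$. For a lattice triangle $P$, the polytopes $P$ and $\overline{P}$ have the same thickness function. Yet every vertex cone $D_w(\overline{P})$ is cut by a ray of the normal fan of $P$, and so meets two vertex cones $D_v(P)$ and $D_{v'}(P)$. Suppose you reconstruct $\overline{P}$ instead of $P$, and exactly one of $v,v'$ is marked. Then testing one character per ``vertex of $P$'' misclassifies part of $\Sigma(G)$.

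The missing idea is to work with the faces of $Q$ itself. Since $F_\phi(Q)=F_\phi(P)+F_\phi(\overline{P})$ with a unique decomposition, each dual region $D_F(Q)$ lies inside a single dual region of $P$; this is \eqref{eq:partition}. By Kielak's theorem, membership in $\Sigma(G)$ is therefore constant on $D_F(Q)$. So it suffices to test one integral character in $D_F(Q)$ for each of the finitely many proper faces $F$ of $Q$. Alternatively, you would need genuinely one-sided data, such as the Newton polytope of the Dieudonn\'e determinant itself rather than its $\phi$-thicknesses.

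Your single-character test is also not an algorithm. First, $\phi\in\Sigma(G)$ is not equivalent to finite generation of $\ker\phi$; the latter means $\pm\phi\in\Sigma(G)$. Your positive certificate, a dual splitting with $H=K$, actually certifies $\phi\in\Sigma(G)$, which is what you want. The problem is the negative side. You give no semi-decision procedure for non-vanishing of Novikov homology: non-invertibility of a matrix over a completed ring cannot be verified by a finite search in any evident way. The claim that the marking can be read off as a ``unit coefficient'' of the determinant is unsupported. This is exactly where the Gardam--Kielak sketch was conditional on \cref{problem}.

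The paper closes this gap with \cref{goodsplittings}, in \cref{prop:algo-single-epi}:
\begin{enumerate}
\item Compute $k=b_1^{(2)}(\ker\phi)$ exactly by Chen's algorithm.
\item Some dual splitting has $b_1^{(2)}(K)=k$. Running Feighn--Handel with input $k$ on all dual splittings in parallel therefore eventually yields a finite graph-of-free-groups presentation of such a $K$.
\item Diao--Feighn then decides whether $K$ is free. A non-free $K$ certifies $\phi\notin\Sigma(G)$. For free $K$, Marshall Hall decides whether $H_1=K$, which settles the question by \cref{lem:BNS-alt}.
\end{enumerate}

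Relatedly, in your Step 1, \cref{goodsplittings} cannot do the job you assign it. It makes the upper bounds from splittings eventually sharp. But Lück approximation gives no certified lower bound at any finite stage, so integrality never tells you that you have arrived. The exact norm must come from an actual algorithm: Chen's in the paper, while your Dieudonn\'e-determinant route would need its own proof.
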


The second application concerns the isomorphism problem in (finitely generated free)-by-cyclic groups. Outside of the hyperbolic and toral-relatively-hyperbolic setting, little is known about the isomorphism problem for this family.

Let $\mathbb{F}_n$ denote the free group of rank $n$. We apply the methods used in the proof of \cref{GardamKielak} to show that there exists an algorithm to compute the set of epimorphisms $\phi \colon G \to \mathbb{Z}$ with $\ker \phi \cong \mathbb{F}_n$ for any \fbyz group $G$ and $n \in \mathbb{N}$. In the case that $G$ does not split as an HNN extension over a \fbyz subgroup, this allows us to reduce the isomorphism problem to the conjugacy problem in $\mathrm{Out}(\mathbb{F}_n)$ (\cref{cor:conjugacy-implies-iso}). We also discuss the general case. While a complete solution to the conjugacy problem in $\Out(\mathbb{F}_n)$ is currently unknown, much progress has been made in recent years towards partial solutions \cite{CohenLustig1995,Los1996, KLM2001, FeighnHandel2025, DFMT2025, Bartlett2026}.

\smallskip

The paper is organized as follows. In \cref{sec:prelims} we outline the necessary preliminaries. In \cref{FP2L2}, we establish bounds on the first $L^2$-Betti number of kernels of integral characters in terms of the first $L^2$-Betti numbers of the subgroups in an admissible dual splitting, and prove a generalisation of \cref{approach} (\cref{propfpl2}). In \cref{sec:thurston} we prove \cref{higher} from which \cref{thurston} follows immediately. The aim of \cref{sec:good} is to establish cohomological goodness for finitely generated free-by-cyclic groups (\cref{prop: goodness_FbyZ}). In \cref{sec:Lewin} we study the properties of crossed products of division rings with torsion-free virtually free-by-cyclic groups, in particular establishing \cref{thm: univ_vFbyZ}. In \cref{sec: l2_rigid} we show that every torsion-free virtually free-by-cyclic group is $L^2$-subgroup rigid (\cref{thm:L2rigid}) and deduce \cref{compressedL2}. We end this section by proving \cref{goodsplittings}. Finally in \cref{sec:applications} we establish \cref{GardamKielak} on computing the BNS invariant for \fbyz groups. We also discuss applications to the isomorphism problem for \fbyz groups.

\subsection*{Acknowledgments} 
We thank Giles Gardam and Dawid Kielak for allowing us to reproduce their sketch of \cref{GardamKielak} and for helpful discussions.
The second author would like to thank Misha Schmalian for helpful discussions about 3-manifolds. The work of the first and third authors is partially supported by the grants  \seqsplit{PID2024-155800NB-C33} and \seqsplit{EUR2025-164928} of the Ministry of Science, Innovation and Universities of Spain.

\section{Preliminaries}\label{sec:prelims}
\subsection{Notation}

    Throughout, groups are countable, rings are assumed to be associative and unital, ring homomorphisms preserve the unit and modules are left modules unless otherwise specified.

    If $H$ is a subgroup of $G$ and $M$ is an $RH$-module, we denote by $^G M$ the induced $RG$-module $RG \otimes_{RH} M$.

    We use $R^{\times}$ to denote the multiplicative group of units in a ring $R$.

\subsection{Augmentation ideals}

    Given a group $G$ and a ring $R$, we denote the augmentation ideal of $RG$ by $I_{RG}$. If $H$ is a subgroup of $G$, we note that $^G I_{RH}$ agrees with the $RG$-submodule of $RG$ generated by $I_{RH}$. We set $I_R(G,H)$ to be the $RG$-module $I_{RG}/^G I_{RH}$.
    
    Due to its independent interest, we record two properties of augmentation ideals that are central to our work. The first property examines the behaviour of the augmentation ideal with respect to descending chains of subgroups.
    
    \begin{lemma} \label{lem: inter_aug_ideals}
        Let $G$ be a group and $\{L_i\}_{i \in I}$ a linearly ordered descending chain of subgroups of $G$. Set $L = \cap_{i \in I} L_i$. Then
        \[
            \bigcap_{i \in I} {}^G I_{RL_i} = {}^G I_{RL}.
        \]
    \end{lemma}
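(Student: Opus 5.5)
The inclusion $\supseteq$ is immediate: $L \le L_i$ gives $I_{RL}\subseteq I_{RL_i}$, hence ${}^G I_{RL}\subseteq {}^G I_{RL_i}$ for every $i$. For the reverse inclusion I will describe ${}^G I_{RH}$ explicitly for a subgroup $H\le G$. Fix a set $T$ of representatives of the left cosets $gH$. Then $RG=\bigoplus_{t\in T} tRH$ as right $RH$-modules, so ${}^G I_{RH}=RG\cdot I_{RH}=\bigoplus_{t} t I_{RH}$. Consequently an element $x=\sum_g a_g g\in RG$ lies in ${}^G I_{RH}$ if and only if for every left coset $C=gH$ the coefficient sum $\sum_{g\in C} a_g$ vanishes. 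Equivalently, ${}^G I_{RH}$ is the kernel of the natural map $RG\to R[G/H]$, $g\mapsto gH$. This kernel description is the key tool.

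Now let $x=\sum_{g\in S} a_g g$ lie in $\bigcap_i {}^G I_{RL_i}$, where $S$ is the finite support of $x$. I need to show that for each coset $gL$ the sum of $a_h$ over $h\in S\cap gL$ is zero. The main step uses the fact that $S$ is finite. Consider the finitely many pairs $(g,h)\in S\times S$ with $g^{-1}h\notin L$. For each such pair there is some index $i$ with $g^{-1}h\notin L_i$, since $L=\bigcap L_i$. Because the chain is linearly ordered and descending, and there are only finitely many pairs, I can choose a single index $i_0$ below all of these indices (the smallest subgroup among finitely many in a chain). Then for all $g,h\in S$ we have $g^{-1}h\in L$ if and only if $g^{-1}h\in L_{i_0}$.

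It follows that the partition of $S$ induced by the left cosets of $L_{i_0}$ coincides with the partition induced by the cosets of $L$. That is, $S\cap gL_{i_0}=S\cap gL$ for every $g\in S$. Since $x\in {}^G I_{RL_{i_0}}$, each $L_{i_0}$-coset sum vanishes, so each $L$-coset sum vanishes, and therefore $x\in {}^G I_{RL}$.

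The only real obstacle is the correct identification of ${}^G I_{RH}$ with the kernel of $RG\to R[G/H]$, which relies on the coset decomposition of $RG$ as a free right $RH$-module. Once that is in place, the finiteness-plus-linear-order argument is routine. If $I$ is empty the statement is vacuous or conventional, so I assume $I\neq\emptyset$.
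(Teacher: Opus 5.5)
Your proof is correct and follows essentially the same route as the paper: both use the finiteness of the support of $x$ and the linear order of the chain to pick a single index $i_0$ at which the relevant finitely many elements fall into the same coset pattern for $L_{i_0}$ as for $L$, and then compare coset-wise augmentations. The only difference is presentation: the paper argues by contradiction from a decomposition $x=\sum_j g_j a_j$ with $a_j\in RL$, whereas you argue directly through the description of ${}^G I_{RH}$ as the kernel of $RG\to R[G/H]$.
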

    \begin{proof}
        The right inclusion is clear, so suppose for a contradiction that $a \in \cap_{i \in I} {}^G I_{RL_i}$ is such that $a \notin {}^{G} I_{RL}$. Then we can write
        \[
        a = \sum_{j=1}^n g_j a_j,
        \]
        where $0 \ne a_j \in RL$, and the elements $g_j$ lie in distinct left $L$-cosets with at least one coefficient, say $a_1$, not belonging to $I_{RL}$. Since $L = \bigcap_{i \in I} L_i$, there exists $i_0 \in I$ such that the elements $\{ g_j : 1 \leq j \leq n \}$ lie in distinct left $L_{i_0}$-cosets. Hence, $a \notin {}^{G} I_{RL_{i_0}}$, a contradiction.
    \end{proof}

    The second property shows the advantage of working with coefficients in $\FF_2$ in \cref{subsec: F2_rigid}.

    \begin{lemma}(\cite[Lemma 3.6]{Ja24}) \label{lem: index2_aug_ideal}
        Let $G$ be a finitely generated group and let $M$ be an $\FF_2 G$-submodule of $I_{\FF_2 G}$ such that $I_{\FF_2 G} / M$ is isomorphic to $\FF_2$ with trivial $G$-action. Then there exists an index $2$ subgroup $U$ of $G$ satisfying $M = {}^G I_{\FF_2 U}$.
    \end{lemma}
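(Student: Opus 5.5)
We have to prove \cref{lem: index2_aug_ideal}: given an $\FF_2G$-submodule $M\le I_{\FF_2G}$ with $I_{\FF_2G}/M\cong\FF_2$ (trivial action), we must produce an index $2$ subgroup $U$ with $M={}^GI_{\FF_2U}$. The plan is to turn the quotient map into a homomorphism $G\to\FF_2$ and take $U$ to be its kernel.

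Let $\pi\colon I_{\FF_2G}\to\FF_2$ be the $\FF_2G$-linear surjection with kernel $M$, and define $f\colon G\to\FF_2$ by $f(g)=\pi(g-1)$. Using $gh-1=g(h-1)+(g-1)$ and the triviality of the $G$-action on $\FF_2$, we get $f(gh)=f(h)+f(g)$. So $f$ is a group homomorphism. It is nonzero, because the elements $g-1$ span $I_{\FF_2G}$ over $\FF_2$ and $\pi$ is surjective. Set $U=\ker f$; this is a subgroup of index $2$. Finite generation of $G$ is not even needed for this step.

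It remains to show $M={}^GI_{\FF_2U}$.

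\emph{Inclusion ${}^GI_{\FF_2U}\subseteq M$.} For $u\in U$ we have $u-1\in M$, and $M$ is a $G$-submodule. Hence $g(u-1)\in M$ for all $g\in G$, and these elements span ${}^GI_{\FF_2U}$.

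\emph{Reverse inclusion.} We compare codimensions in $I_{\FF_2G}$; $M$ has codimension $1$ by hypothesis.
\begin{enumerate}
\item By definition, $I_{\FF_2G}/{}^GI_{\FF_2U}=I_{\FF_2}(G,U)$.
\item Since $\FF_2G\otimes_{\FF_2U}I_{\FF_2U}\to \FF_2G$ is injective (as $\FF_2G$ is free over $\FF_2U$), the short exact sequence
\[
0\to {}^GI_{\FF_2U}\to \FF_2G\to \FF_2[G/U]\to 0
\]
holds.
\item Therefore $I_{\FF_2}(G,U)$ is the augmentation ideal of $\FF_2[G/U]$.
\item Since $|G/U|=2$, this ideal is $1$-dimensional.
\end{enumerate}
So both $M$ and ${}^GI_{\FF_2U}$ have codimension $1$, and the first inclusion forces equality.

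The only mildly delicate point is the exact sequence identifying $\FF_2G/{}^GI_{\FF_2U}$ with $\FF_2[G/U]$; I expect no real obstacle there.
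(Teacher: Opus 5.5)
Your proof is correct and follows the paper's approach: the same homomorphism $g\mapsto \pi(g-1)$ into $\FF_2$, with $U$ its kernel. Your codimension count, which identifies $I_{\FF_2}(G,U)$ with the one-dimensional augmentation ideal of $\FF_2[G/U]$, spells out the step the paper compresses into the remark that $G$ acts trivially on $I_{\FF_2}(G,U)$. You are also right that finite generation of $G$ is never used.
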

    \begin{proof}
        Consider the group homomorphism $\varphi \colon G \to \FF_2$ that sends $g$ to the class of $g - 1$ in $I_{\FF_2 G} / M$. Then $U = \ker \varphi$ since over $\FF_2$ the action of $G$ on $I_{\FF_2}(G, U)$ is trivial.
    \end{proof}

\subsection{Von Neumann regular and \texorpdfstring{$\ast$}--regular rings}

    A ring $\UO$ is called {\it von Neumann regular} if for every $x\in\mathcal U$ there exists $y\in\mathcal U$ such that $xyx =x$. From a homological point of view, these rings possess a nice feature:

    \begin{proposition}[{\cite[Theorem 1.11 \& Proposition 2.6]{GoodearlvNr}}] \label{prop: vNr_proj_mod}
        Let $\UO$ be a von Neumann regular ring and $M$ a finitely presented $\UO$-module. Then $M$ is projective and admits a direct sum decomposition into cyclic projective $\UO$-modules.
    \end{proposition}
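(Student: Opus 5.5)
The idea is to prove the two assertions one after the other, beginning with the case of a finitely generated module and then using \cref{prop: vNr_proj_mod}'s finite presentation hypothesis to finish.

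\textbf{Cyclic decomposition.} Let $M$ be finitely presented. Choose a presentation
\[
\UO^m \xrightarrow{A} \UO^n \to M \to 0 .
\]
Over a von Neumann regular ring, every finitely generated submodule of a projective module is a direct summand. To see this, first treat a finitely generated left ideal $\UO x_1+\dots+\UO x_k$ by induction on $k$:
\begin{itemize}[label=$\cdot$]
\item For a principal ideal $\UO x$, pick $y$ with $xyx=x$. Then $e=yx$ is idempotent and $\UO x=\UO e$, so $\UO x$ is a summand.
\item For the induction step, write $\UO x_1=\UO e$. Then $\UO x_1+\UO x_2=\UO e+\UO x_2(1-e)$, and this sum reduces to a principal idempotent-generated ideal by the standard Goodearl argument.
\end{itemize}
The case of $\UO^n$ then follows by passing to the matrix ring $M_n(\UO)$, which is again von Neumann regular. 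The point is that finitely generated submodules of $\UO^n$ correspond to principal right/left ideals of $M_n(\UO)$ via row spaces.

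Applying this to the image of $A$ gives that $\mathrm{Im}(A)$ is a summand of $\UO^n$. Hence $M\cong \UO^n/\mathrm{Im}(A)$ is isomorphic to a complement, so $M$ is projective and finitely generated.

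\textbf{Splitting into cyclic pieces.} Realize $M$ as $\UO^n e$ for an idempotent matrix $e\in M_n(\UO)$. We split off cyclic summands inductively. Let $N\le M$ be the submodule generated by the first generator. By the summand property above, $N$ is a direct summand of $\UO^n$, hence also of $M$, and it is cyclic projective, isomorphic to $\UO f$ for an idempotent $f$. The complement of $N$ in $M$ is again finitely presented, being a summand of a free module, and needs one fewer generator. Induction on the number of generators then yields $M\cong \bigoplus_i \UO e_i$.

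\textbf{Main obstacle.} The key step is the claim that finitely generated submodules of free modules are summands. This requires the matrix-ring trick: one must know that $M_n(\UO)$ is von Neumann regular. I would prove that by induction on $n$ using the standard $2\times 2$ block argument, which is exactly Goodearl's Theorem 1.7. Everything else is routine bookkeeping.
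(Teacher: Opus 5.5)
Your argument is correct and follows essentially the standard route behind the results the paper cites (the paper gives no proof of its own): regularity of $M_n(\UO)$ shows that finitely generated submodules of $\UO^n$ are direct summands, so finitely presented modules are projective, and then cyclic projective summands are split off one generator at a time. One detail to tidy: if $\mathrm{Im}(A)$ needs more than $n$ generators, work in $M_{\max(m,n)}(\UO)$ and use the modular law to get a summand of $\UO^n$. Once you do this, your separate induction on generators of left ideals becomes unnecessary.
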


    The von Neumann regular rings we shall be working with have an additional structure, namely they are $\ast$-regular rings. A {\it $\ast$-ring} is a ring $R$ with a map $\ast: R \rightarrow R$ that is an involution, i.e. $(x^{\ast})^{\ast} = x, (x+y)^{\ast} = x^{\ast} + y^{\ast}$ and $(xy)^{\ast} = y^{\ast} x^{\ast}$ for $x,y\in R$. The involution is called {\it proper} if $x^{\ast} x = 0$ implies $x = 0$. A {\it $\ast$-regular ring} is a von Neumann regular ring with a proper involution.

    If $\UO$ is a $\ast$-regular ring, then for each $x \in\mathcal U$ there is a unique element $y \in\mathcal U$, called the \textit{relative inverse of} $x$, such that $xyx = x$, $yxy = y$ and both $xy$ and $yx$ are projections, that is idempotents and self-adjoints; see \cite[Proposition 51.4(i)]{Berberian72}. For a $\ast$-subring $R$ of a $\ast$-regular ring $\mathcal{U}$, P. Ara and K. R. Goodearl realized that one can construct the smallest $\ast$-regular subring of $\mathcal{U}$ containing $R$, termed the \textit{$\ast$-regular closure} and denoted by $\mathcal{R}(R, \mathcal{U})$, in the same spirit of the division closure of a subring of a ring. The construction in \cite[Proposition 6.2]{AraGoodearlRegClosure} goes as follows:
    \begin{enumerate}[label=(\roman*)]
        \item Set $\mathcal{R}_0(R, \mathcal{U}) := R$, a $\ast$-subring of $\UO$.
        \item Suppose that $n \geq 0$ and that we have constructed the $\ast$-subring $\mathcal{R}_n(R,\mathcal U)$ of $\UO$. Let $\mathcal{R}_{n+1}(R,\mathcal U)$ be the subring of $\UO$ generated by the elements of $\mathcal{R}_n(R,\mathcal U)$ and their relative inverses in $\UO$, which is a $\ast$-subring.
    \end{enumerate}
    Then $\mathcal{R}(R,\mathcal U) = \cup_{n = 0}^{\infty} \mathcal{R}_n(R,\mathcal U)$.

\subsection{Sylvester rank functions}\label{subsec:slyv}

    Let $R$ be a ring. A \textit{Sylvester matrix rank function} $\rk$ on $R$ is a function that assigns a non-negative real number to each matrix over $R$ and satisfies the following conditions:
    \begin{enumerate}[label=(\roman*)]
        \item $\rk(A)=0$ if $A$ is any zero matrix and $\rk(1)=1$;
        \item $\rk(AB)\leq \min\{\rk(A),\rk(B)\}$ for any matrices $A$ and $B$ which can be multiplied;
        \item $\rk\begin{psmallmatrix}
             A & 0 \\
             0 & B
        \end{psmallmatrix}=\rk(A)+\rk(B)$ for any matrices $A$ and $B$; and
        \item $\rk \begin{psmallmatrix}
             A & C \\
             0 & B
        \end{psmallmatrix} \geq \rk(A)+\rk(B)$ for any matrices $A$, $B$ and $C$ of appropriate sizes.
    \end{enumerate}
    
    We denote by $\mathbb{P}(R)$ the set of Sylvester matrix rank functions on $R$. Note that a ring homomorphism $\varphi\colon R\rightarrow S$ induces a map $\varphi^{\#}\colon \mathbb{P}(S)\rightarrow \mathbb{P}(R)$. Indeed, we can pull back any rank function $\rk$ on $S$ to a rank function $\varphi^{\#}(\rk)$ on $R$ by setting
    \[
    \varphi^{\#}(\rk)(A):=\rk(\varphi(A))
    \]
    for every matrix $A$ over $R$. We shall often abuse notation and write $\rk$ instead of $\varphi^{\#}(\rk)$ when it is clear that we are referring to the rank function on $R$. There is a dual notion of Sylvester module rank function. 
    
    \begin{definition}\label{def: SMRF}
        A {\it Sylvester module rank function}, or simply {\it $\SMRF$}, $\dim$ on $R$ is a map that assigns a non-negative real number to each finitely presented $R$-module and satisfies the following properties:
        \begin{enumerate}[label=(\roman*)]
            \item $\dim(0)=0$ and $\dim(R)=1$;
            \item $\dim(M_1\oplus M_2)=\dim(M_1)+\dim(M_2)$; and
            \item if $M_1\rightarrow M_2\rightarrow M_3\rightarrow 0$ is exact, then
            \[
            \dim(M_1)+\dim(M_3)\geq \dim(M_2)\geq \dim(M_3). 
            \]
        \end{enumerate}
    \end{definition}

    As one might expect, there is a bijective correspondence between these two notions defined as follows. Given a Sylvester matrix rank function $\rk$ on $R$, we can define an $\SMRF$ $\dim$ on $R$ by assigning to any finitely presented $R$-module with presentation $M=R^m/R^nA$ for some $A\in \Mat_{n\times m}(R)$, the value $\dim(M):=m-\rk(A)$; conversely, if $\dim$ is a $\SMRF$ on $R$, then we obtain a Sylvester matrix rank function setting $\rk(A):=m-\dim(R^m/R^nA)$ (cf. \cite[Theorem 4]{Malcolmson} and \cite[Proposition 1.2.8]{DLopezAlvarezThesis}).

    Given an $\SMRF$ $\dim$ on $R$, for a finitely generated $R$-module $M$ one can define
    \[
        \dim(M) := \inf \{ \dim(M') : M' \twoheadrightarrow M \mbox{ and } M' \mbox{ is a finitely presented $R$-module} \}.
    \]
    The definition is well defined and extends $\dim$ for finitely presented $R$-modules (see \cite[Lemma 3.8]{Li_SMRF}). The $\SMRF$ $\dim$ is \emph{exact} if for any exact sequence of $R$-modules 
    \[
        0 \to M_1 \to M_2 \to M_3 \to 0
    \]
    such that $M_2$ and $M_3$ are finitely presented (and hence, $M_1$ is finitely generated), it holds that $\dim(M_2) = \dim(M_1) + \dim(M_3)$. Note that by \cref{prop: vNr_proj_mod} this is always the case for von Neumann regular rings. 

    The definition of exact $\SMRF$ was introduced by S. Virili in \cite{Virili_LengthFunc}, where he proved that an exact $\SMRF$ extends uniquely to a normalized length function, and thus, for an arbitrary $R$-module $M$
    \[
        \dim(M) := \sup \{ \dim(M') : M' \mbox{ is a finitely generated $R$-submodule of $M$} \}
    \]
    is well defined. In general, Li showed in \cite{Li_SMRF} that any $\SMRF$ can be extended to a function that computes the dimension of arbitrary modules via bivariant Sylvester module rank functions extending the work of Virili on exact rank functions. Thus, from now on we shall abuse notation and denote by $\dim$ the extension to arbitrary modules of a $\SMRF$ $\dim$. For a general $\SMRF$ only the following holds (see \cite[Lemma 3.1]{SP_L2}).

    \begin{lemma} \label{lem: dim_direct_lim}
        Let $R$ be a ring with $\SMRF$ $\dim_R$ and $M$ an $R$-module. If $M = \varinjlim_{i\in I} M_i$ is a direct limit of $R$-modules, then
        \[
            \dim_R(M)\leq \sup_{i\in I} \{ \dim_R(M_i) \}.
        \]
    \end{lemma}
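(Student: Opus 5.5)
The plan is to reduce to finitely generated pieces and use the two monotonicity properties of Li's extension of $\dim_R$ to arbitrary modules: it does not increase under quotients, and it does not increase when passing to submodules. Recall that for a finitely generated module the extension is the infimum over finitely presented covers, so a surjection $N'\twoheadrightarrow N$ of finitely generated modules gives $\dim_R(N)\le \dim_R(N')$. For an arbitrary module $M$, Li's value is a supremum over finitely generated submodules $N\le M$ of the bivariant quantity $\dim_R(N\mid M)$. This quantity is bounded above by $\dim_R(N)$ and is monotone in the second variable, so that $\dim_R(N\mid M)\le \dim_R(N\mid M')$ whenever $N\le M\le M'$. I also take from Li's construction the finite-generation continuity property
\[
\dim_R(N\mid M)=\inf_{N\le M''\le M,\ M''\text{ f.g.}}\dim_R(N\mid M'').
\]
I will take these facts from \cite{Li_SMRF}.

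Fix a finitely generated submodule $N\le M=\varinjlim M_i$, write $\iota_i\colon M_i\to M$ for the canonical maps, and fix $\varepsilon>0$. By the formula above there is a finitely generated $M''$ with $N\le M''\le M$ and $\dim_R(N\mid M'')<\dim_R(N\mid M)+\varepsilon$. Since $I$ is directed and $M''$ is finitely generated, there is an index $i$ and a finitely generated submodule $P\le M_i$ with $\iota_i(P)=M''$; shrinking $P$ gives a finitely generated $Q\le P$ with $\iota_i(Q)=N$. It then suffices to prove
\[
\dim_R(N\mid M'')\le \dim_R(Q\mid P)\le \dim_R(Q\mid M_i)\le \dim_R(M_i).
\]
The second inequality is monotonicity in the second variable, and the third is the definition of $\dim_R(M_i)$ as a supremum. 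Chaining these gives $\dim_R(N\mid M)<\dim_R(M_i)+\varepsilon\le \sup_j\dim_R(M_j)+\varepsilon$, and taking the supremum over $N$ and letting $\varepsilon\to 0$ proves the lemma.

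The main obstacle is the first inequality: the bivariant dimension does not increase under the surjection of pairs $(P,Q)\twoheadrightarrow (M'',N)$. When $P$ and $M''$ are finitely presented, $\dim_R(Q\mid P)$ is expressed through $\dim_R(P)-\dim_R(P/Q)$, and the required bound follows from inequality (iii) of \cref{def: SMRF} applied to the exact sequence $P\to M''\to M''/N\to 0$ together with $P/Q\twoheadrightarrow M''/N$. In general I would first replace $P$ and $M''$ by finitely presented covers, which is allowed by the infimum definition for finitely generated modules, and then pass to the limit using the continuity of $\dim_R(\,\cdot\mid\cdot\,)$ established by Li. Note that only $\le$ can be expected, not equality, since in general the maps $\iota_i$ are not injective and $\dim_R$ is not exact.
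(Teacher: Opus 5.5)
Your proposal has a genuine error: the monotonicity in the second variable is reversed. For $N\le M\le M'$ the correct inequality is $\dim_R(N\mid M)\ge \dim_R(N\mid M')$, so enlarging the ambient module can only lower the relative dimension. This is the inclusion case of the pushforward monotonicity you use for your first inequality. It is also what your own continuity formula says. With your direction, that formula would force $\dim_R(N\mid M)=\dim_R(N\mid N)$ for every $M$, because the infimum would be attained at $M''=N$.

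Here is a concrete counterexample. Take $R=\mathbb{Z}$ with $\dim_R(M)=\dim_{\mathbb{F}_p}(\mathbb{F}_p\otimes_{\mathbb{Z}}M)$, and let $Q=P=p\mathbb{Z}\le M_i=\mathbb{Z}$. Then $\dim_R(Q\mid P)=1$, but $\dim_R(Q\mid M_i)=1-\dim_R(\mathbb{Z}/p\mathbb{Z})=0$. So the middle inequality $\dim_R(Q\mid P)\le \dim_R(Q\mid M_i)$ of your chain is false, and the chain gives no bound. You also cannot fall back on $\dim_R(Q\mid P)\le\dim_R(P)\le\dim_R(M_i)$, since $\dim_R$ is not monotone under submodules. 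For example, with $R=k[x,y]$ and $\dim_R$ given by $k\otimes_R-$, the ideal $(x,y)$ has dimension $2$ while $R$ has dimension $1$.

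The fix is to place the $\varepsilon$-approximation on the $M_i$ side rather than the $M$ side. Given a finitely generated $N\le M$, choose $i$ and a finitely generated $Q\le M_i$ with $\iota_i(Q)=N$. By continuity, choose a finitely generated $P'$ with $Q\le P'\le M_i$ and $\dim_R(Q\mid P')<\dim_R(Q\mid M_i)+\varepsilon$. Then $\iota_i(P')$ is a finitely generated submodule of $M$ containing $N$, and
\[ \dim_R(N\mid M)\le \dim_R(N\mid \iota_i(P'))\le \dim_R(Q\mid P')<\dim_R(Q\mid M_i)+\varepsilon\le \dim_R(M_i)+\varepsilon. \]
The first step is the correctly oriented second-variable monotonicity; it is literally a term of your infimum formula. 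The second step is pushforward along the surjection $P'\twoheadrightarrow\iota_i(P')$. If you use that Li's bivariant function is non-increasing under pushforward along arbitrary homomorphisms, you get $\dim_R(N\mid M)\le\dim_R(Q\mid M_i)\le \dim_R(M_i)$ in one line. The paper itself gives no proof and simply cites \cite[Lemma 3.1]{SP_L2}.

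Separately, your justification of pushforward monotonicity in the finitely presented case is garbled. The sequence $P\to M''\to M''/N\to 0$ is not exact. The two bounds $\dim_R(M'')\le\dim_R(P)$ and $\dim_R(M''/N)\le\dim_R(P/Q)$ pull in opposite directions for the difference. What works is condition (iii) applied to the Mayer--Vietoris sequence $P\to M''\oplus P/Q\to M''/N\to 0$, with maps $p\mapsto (f(p),p+Q)$ and $(x,y+Q)\mapsto x-f(y)+N$. This gives $\dim_R(M'')+\dim_R(P/Q)\le \dim_R(P)+\dim_R(M''/N)$, which is the inequality you need.
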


    Let $R$ be a ring with $\SMRF$ $\dim_R$. An $R$-homomorphism $f \colon M \rightarrow N$ between $R$-modules $M,N$ is {\it $\dim_R$-injective}, respectively {\it $\dim_R$-surjective}, if $\dim_R(\ker f)$, respectively $\dim_R(\coker f)$, vanishes. If $f$ is both $\dim_R$-injective and $\dim_R$-surjective, we say that $f$ is a {\it $\dim_R$-isomorphism} and that the modules $M$ and $N$ are {\it $\dim_R$-isomorphic}. We record a convenient result.

    \begin{lemma}[{\cite[Lemma 3.5]{SP_L2}}]\label{lem: comp_dim_isomph}
        Let $R$ be a ring with exact $\SMRF$ $\dim_R$ and consider two $\dim_R$-isomorphisms $f \colon M_1\rightarrow M_2$ and $g \colon M_2\rightarrow M_3$. Then $g \circ f \colon M_1\rightarrow M_3$ is a $\dim_R$-isomorphism.
    \end{lemma}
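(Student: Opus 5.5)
We need to prove: $R$ has an exact $\SMRF$ $\dim_R$, and $f\colon M_1\to M_2$ and $g\colon M_2\to M_3$ are $\dim_R$-isomorphisms. We want $g\circ f$ to be a $\dim_R$-isomorphism. The plan is to bound the kernel and the cokernel of $g\circ f$ separately, using the short exact sequences that relate them to the kernels and cokernels of $f$ and $g$. The tool is additivity of the extended dimension on short exact sequences of arbitrary modules. For an exact $\SMRF$ this holds because, by Virili's result quoted above, $\dim_R$ extends uniquely to a length function on all $R$-modules. In particular it is additive on short exact sequences and monotone under submodules and quotients.

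\emph{Kernel.} There is a natural exact sequence
\[
0 \to \ker f \to \ker(g\circ f) \xrightarrow{\ f\ } \ker g .
\]
Its last map is the restriction of $f$, which sends $\ker(g\circ f)$ into $\ker g$, and its kernel is exactly $\ker f$. Hence $\ker(g\circ f)/\ker f$ is isomorphic to a submodule of $\ker g$. By additivity and monotonicity,
\[
\dim_R\ker(g\circ f) = \dim_R \ker f + \dim_R\bigl(\ker(g\circ f)/\ker f\bigr) \le \dim_R\ker f + \dim_R \ker g = 0 .
\]

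\emph{Cokernel.} Similarly there is an exact sequence
\[
\coker f \xrightarrow{\ g\ } \coker(g\circ f) \to \coker g \to 0 .
\]
Here the first map is induced by $g$, since $g(\im f)=\im(g\circ f)$, and the second is the projection $M_3/\im(g\circ f)\to M_3/\im g$. The kernel of the projection is $\im g/\im(g\circ f)$, which is a quotient of $\coker f$. So
\[
\dim_R \coker(g\circ f) = \dim_R\bigl(\im g/\im(g\circ f)\bigr) + \dim_R\coker g \le \dim_R\coker f + \dim_R\coker g = 0 .
\]

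The only delicate point is that these modules need not be finitely generated or finitely presented. We therefore need additivity of $\dim_R$ for arbitrary modules, and it must be the extended dimension defined as a supremum over finitely generated submodules. This is exactly where exactness of the $\SMRF$ is used, via Virili's extension to a length function. For a general $\SMRF$ only the weaker inequality of \cref{lem: dim_direct_lim} is available, and the argument above would not go through. Once additivity is granted, both estimates are immediate.
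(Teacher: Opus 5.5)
Your proof is correct, and it is the standard argument behind \cite[Lemma~3.5]{SP_L2}, which the paper only cites without proof. Exactness of $\dim_R$ gives, via Virili, a length function that is additive and monotone on arbitrary short exact sequences, and your sequences $0\to\ker f\to\ker(g\circ f)\to\ker g$ and $\coker f\to\coker(g\circ f)\to\coker g\to 0$ then yield $\dim_R\ker(g\circ f)\le\dim_R\ker f+\dim_R\ker g=0$ and $\dim_R\coker(g\circ f)\le\dim_R\coker f+\dim_R\coker g=0$; you also correctly identify exactness as the hypothesis that makes this additivity available for the non-finitely-generated kernels and cokernels involved.
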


\subsection{\texorpdfstring{$L^2$}--Betti numbers} \label{sec: l2_betti_nbrs}

    Let $G$ be a countable group and let $\ell^2(G)$ denote the Hilbert space with basis the elements of $G$, that is, $\ell^2(G)$ consists of all square-summable formal sums
    \[
        \sum_{g \in G} a_g g
    \]
    with $a_g \in \mathbb{C}$, and inner product
    \[
        \left\langle \sum_{g \in G} a_g g, \sum_{g \in G} b_g g \right\rangle = \sum_{g \in G} a_g \overline{b_g}.
    \]
    The left and right multiplication actions of $G$ on itself extend to left and right actions of $G$ on $\ell^2(G)$. We can further extend the right action of $G$ on $\ell^2(G)$ to an action of $\mathbb{C}G$ on $\ell^2(G)$, and hence consider $\CC G$ as a subalgebra of $\mathcal{B}(\ell^2(G))$, namely the {\it bounded linear operators on} $\ell^2(G)$.

    The {\it group von Neumann algebra} $\VN(G)$ of $G$ is the algebra of left $G$-equivariant bounded operators on $\ell^2(G)$
    \[
        \VN(G):=\{\phi\in \mathcal{B}(\ell^2(G)): \phi(g\cdot v) = g \cdot \phi(v) \mbox{ for all }g\in G,v\in \ell^2(G)\}.
    \]
    The ring $\VN(G)$ is a finite von Neumann algebra. Moreover, $\VN(G)$ satisfies the left (and right) Ore condition with respect to the set of non-zero-divisors (a result proved by S. K. Berberian in \cite{Berberian82}). We denote by $\UO(G)$ the left (resp. right) classical ring of fractions named the {\it ring of unbounded operators affiliated to} $G$. The reader can consult the Ph.D thesis of H. Reich \cite{ReichThesis} for the following standard facts. The ring $\UO(G)$ can be defined alternatively as the set of densely defined (unbounded) operators which are closed, i.e. its graph is closed in $\ell^2(G)^2$, and commute with the left action of $G$. It is a $\ast$-regular ring where the involution is given by taking the adjoint operator. Note that we have the following chain of $\ast$-embeddings
    \[
        \CC G \subseteq \VN(G) \subseteq\mathcal U(G).
    \]

    A finitely generated {\it Hilbert} $G$-module is a closed subspace $V\leq \ell^2(G)^n$ invariant under the left action of $G$. Given such $V$, there exists $\pr_V$, the orthogonal projection onto $V$. We set 
    \[
    \dim_G(V):=\trace_G(\pr_V):=\sum_{i=1}^n \langle \pr_V(1_i),1_i\rangle,
    \]
    where $1_i$ is the element of $\ell^2(G)^n$ having $1$ in the $i$th entry and $0$ in the rest of the entries. We call this number the {\it von Neumann dimension} of $V$. This dimension is independent of the $G$-isometric embedding and induces a Sylvester matrix rank function on $\CC G$. Concretely, given $A\in \Mat_{n\times m}(\CC G)$ we can then associate the natural bounded linear operator $\phi_G^A:\ell^2(G)^n\rightarrow\ell^2(G)^m$ given by right multiplication, and define $\rk_G(A)=\dim_G \overline{\im \phi_G^A}$ where the bar stands for the closure of the image subspace. The Sylvester matrix rank function $\rk_G$ can be extended to $\UO(G)$ as follows
    \[
        \rk_G(s^{-1}r):=\rk_G(r)=\dim_G \left( \overline{\ell^2(G) r} \right).
    \]
    This Sylvester matrix rank function is indeed the one associated to the $L^2$-dimension $\dim_{\scalebox{0.8}{$\UO(G)$}}$ \cite[Definition 8.28]{Lueck2002} (cf. \cite[Lemma 4.2.4]{DLopezAlvarezThesis}). The computation of $L^2$-Betti numbers has been algebraized through the seminal works of L\"uck \cite{Lu88I, Lu88II} and the thesis of Reich \cite{ReichThesis}. The basic observation is that since $\UO(G)$ is von Neumann regular, one can define as in \cref{subsec:slyv} the $L^2$-dimension over all modules of $\mathcal{U}(G)$. For the convenience of the reader we record two properties that we will use throughout the text.

    \begin{proposition}\label{prop: properties_L2_dim}
        Let $G$ be a group. Then the following holds:
        \begin{enumerate}
            \item For any subgroup $H$ in $G$ and any $\UO(H)$-module $M$, it holds that
            \[
                \dim_{\UO(H)}(M) = \dim_{\UO(G)} \left(\UO(G)\otimes_{\UO(H)}M \right).
            \]
            \item For any finitely presented $\UO(G)$-module $M$, if $\dim_{\UO(G)}(M)$ vanishes, then $M = \{0\}$.
        \end{enumerate}
    \end{proposition}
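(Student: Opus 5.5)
Both items are standard facts about the $L^2$-dimension; the plan is to reduce each to results of Lück and Reich (\cite{Lueck2002}, \cite{ReichThesis}).

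\textbf{Item (i).} Since $\UO(H)\subseteq \UO(G)$ and $\dim_{\UO(\cdot)}$ is the Sylvester module rank function coming from $\rk_G$, I first treat finitely presented $\UO(H)$-modules $M=\UO(H)^m/\UO(H)^nA$. Then $\UO(G)\otimes_{\UO(H)}M=\UO(G)^m/\UO(G)^nA$, so it suffices to show $\rk_G(A)=\rk_H(A)$ for matrices $A$ over $\UO(H)$. Write $A=s^{-1}r$ with $r$ over $\VN(H)$; this reduces the claim to $\VN(H)$. Decompose $\ell^2(G)=\bigoplus_{t\in H\backslash G}\ell^2(H)t$ using right cosets, so that the right-multiplication operator by $r$ acts diagonally. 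The closure of the image is then the orthogonal sum of translates of $\overline{\ell^2(H)^n r}$. Its $G$-trace is therefore computed on the coset of $1$ and equals $\dim_H\overline{\ell^2(H)^n r}$.

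Next I pass to finitely generated modules by taking the infimum over finitely presented covers, and to arbitrary modules by taking the supremum over finitely generated submodules. For this extension step I will use two facts. First, $\UO(G)$ is flat over $\UO(H)$, since both rings are von Neumann regular, so every module over them is flat. Second, the dimensions are exact (Virili's length-function extension). Flatness keeps submodules as submodules after induction. Induction also commutes with direct limits, so the sup and inf formulas match; \cref{lem: dim_direct_lim} together with exactness gives equality. The main obstacle is this bookkeeping to pass from finitely presented modules to arbitrary ones. It needs the cofinality of the images of finitely generated submodules after induction, which holds because every finitely generated $\UO(G)$-submodule of the induced module lies in one generated by elements $1\otimes m$.

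\textbf{Item (ii).} By \cref{prop: vNr_proj_mod}, a finitely presented $\UO(G)$-module $M$ is a finite direct sum of cyclic projectives $\UO(G)e$, where $e$ is an idempotent. Since $\UO(G)$ is $\ast$-regular, $e$ may be replaced by a projection $p$ generating the same left ideal. Then $\dim(\UO(G)p)=\rk_G(p)=\trace_G(p)$. Because the trace is faithful on $\VN(G)$, we have $\trace_G(p)=\trace_G(p^\ast p)=0$ only if $p=0$. Summing over the decomposition, $\dim M=0$ forces every summand to vanish, so $M=0$.
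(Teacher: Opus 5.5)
The paper does not prove this proposition. It records both items as standard facts and refers to \cite{ReichThesis} and \cite{Lueck2002}: item (i) is the induction property of the extended $L^2$-dimension, and item (ii) is faithfulness on finitely generated projective modules. Your argument is the standard one from that literature, and it is correct.
\begin{enumerate}
\item For finitely presented modules, you use the coset decomposition of $\ell^2(G)$ and compute the trace on the trivial coset.
\item You extend to arbitrary modules using that $\UO(G)$ is flat over the von Neumann regular ring $\UO(H)$, together with additivity and continuity along directed unions of the length function.
\item For (ii), you reduce to a projection $p$ and use faithfulness of $\trace_G$.
\end{enumerate}

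There are two small corrections.

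First, the coset decomposition is on the wrong side for the paper's conventions. There $\CC G$, and with it $\VN(H)$, acts on $\ell^2(G)$ by right multiplication. The summands preserved by $r$ are therefore $g\,\ell^2(H)$ with $g$ running over the left cosets $G/H$, not $\ell^2(H)t$ with $t\in H\backslash G$. Indeed, right multiplication by $h\in H$ sends $\ell^2(H)t$ to $\ell^2(H)th$. Your version is correct if operators are written on the left instead. For $r\in\VN(H)$, the induced operator is $g x\mapsto g\,(xr)$. It is well defined because $r$ commutes with the left $H$-action.

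Second, in (ii) you should note that a projection $p\in\UO(G)$ is automatically bounded, so $p\in\VN(G)$. This is what makes $\trace_G(p)$ meaningful and lets you apply faithfulness of the trace.
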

    
    Given a non-negative integer $i$ and a $\mathbb{C}G$-module $M$, we define the \emph{$i$th $L^2$-Betti number of $M$} using the following formula:
    \[
        \beta^{\mathbb{C}G}_i(M) := \dim_{\mathcal{U}(G)} \left( \operatorname{Tor}^{\mathbb{C}G}_i(\mathcal{U}(G), M) \right).
    \]

    There is another chain of $\ast$-embeddings produced by the $\ast$-regular closure. Specifically, since $\UO(G)$ is a $\ast$-regular ring, given $k \subseteq \CC$ a field closed under complex conjugation, we set $\mathcal{R}_{k G}$ to be the $\ast$-regular closure of $kG$ in $\mathcal{U}(G)$, or in other words, $\mathcal{R}_{kG}:= \mathcal{R}(k G,\mathcal U(G))$. Then 
    \[
        k G \subseteq \mathcal{R}_{k G} \subseteq\mathcal U(G)
    \]
    is another such chain. We will denote $\mathcal{R}_{\CC G}$ by $\mathcal{R}_G$. Similarly, if $k$ is a subfield of $\CC$ and $M$ is a $k G$-module, then its $L^2$-Betti numbers are computed using the formula
    \[
        \beta^{k G}_i(M) = \dim_{\mathcal{R}_G} \left( \operatorname{Tor}^{k G}_i(\mathcal{R}_G, M) \right),
    \]
    where $\dim_{\mathcal{R}_G}$ is induced from the $L^2$-dimension.
    
    The {\it Strong Atiyah Conjecture} (over $k$) predicts that if $\operatorname{lcm}(G)$, the least common multiple of the orders of finite subgroups of $G$, is finite, then for every finitely presented $k G$-module $M$
    \[
        \beta^{{k}G}_0(M) \in \frac{1}{\operatorname{lcm}(G)} \mathbb{Z}.
    \]
    Given a subfield $k$ of $\CC$, \emph{the Linnell ring over $k$}, denoted $\mathcal{D}(k[G])$, is the division closure of $k[G]$ in $\UO(G)$. We recall that when $\operatorname{lcm}(G)=1$, the conjecture is equivalent to the Linnell ring being a division ring. In particular, if $k$ is closed under complex conjugation, then $\mathcal{R}_{kG}=\mathcal{D}(kG)$ is a division ring.

    In this paper we will also use a weaker version of the Atiyah conjecture. A group $G$ satisfies the {\it Weak Atiyah Conjecture} (over $k$) if there exists $l \in \mathbb{N}$ such that for every finitely presented $kG$-module $M$
    \begin{equation}
        \label{wAtiyah}
        \beta^{{k}G}_0(M) \in \frac{1}{l} \mathbb{Z}.
        \end{equation}
    Note that if $k$ is closed under complex conjugation, this is equivalent to the ring $\mathcal{R}_{kG}$ being a semisimple (and hence Artinian) algebra. Indeed, by \cite[Corollary~6.2]{Ja19}, the condition (\ref{wAtiyah}) implies that 
    \[
        \rk_{G}(a)\in \frac{1}{l}\mathbb{Z} \quad \text{for every } a\in \mathcal{R}_{kG}.
    \]
    Since $\mathcal{R}_{k G}$ is von Neumann regular and $\rk_{G}$ is faithful, \cref{prop: vNr_proj_mod} implies that in this case  $\mathcal{R}_{k G}$ is semisimple.

    There exist groups for which the Weak Atiyah Conjecture is known to hold, while the strong version remains open like linear groups over a field of characteristic $0$ or virtually locally indicable groups. This distinction arises because a group that virtually satisfies the Weak Atiyah Conjecture automatically satisfies it itself, whereas this inheritance property is not known for the Strong Atiyah Conjecture.

\subsection{Division ring embeddings} \label{sec: div_ring_embed}

    An \textit{$R$-ring} is a pair $(S, \varphi)$ where $\varphi \colon R \rightarrow S$ is a ring homomorphism. We will often omit $\varphi$ if it is clear from the context. Two $R$-rings $(S_1, \varphi_1)$ and $(S_2, \varphi_2)$ are {\it isomorphic} if there exists a ring isomorphism $\alpha \colon S_1 \to S_2$ such that $\varphi_2 = \alpha \circ \varphi_1$. 

    A \emph{division} $R$-ring $(\mathcal{D}, \varphi)$ is an $R$-ring such that $\mathcal{D}$ is a division ring. Note that $\mathcal D$ inherits the structure of an $R$-bimodule, and therefore given an $R$-module $M$ we can compute its homology with coefficients in $\mathcal D$. Recall that every module over $\mathcal D$ has a well-defined dimension.

    \begin{definition}
        Let $\mathcal D$ be a division $R$-ring, $M$ an $R$-module and $n$ a non-negative integer. The $n$th \emph{$\mathcal D$-Betti number of $M$} is the (possibly infinite) value
        \[
            \beta_n^\mathcal D (M) := \dim_\mathcal D \left( \Tor_n^R (\mathcal D, M) \right) \in \Z_{\geq 0} \cup \{\infty\}.
        \]
    \end{definition}

    Observe that when $G$ is a torsion-free group satisfying the Strong Atiyah Conjecture, the Linnell ring $\mathcal{D}(\CC G)$ is a division $\CC[G]$-ring and for every non-negative integer $n$
    \[
        \beta_n^{\mathcal{D}(\CC G)}(\CC) = b_n^{(2)}(G)
    \]
    where $G$ acts trivially on $\CC$. In this case, we shall denote by $\beta_n^G$ the $\mathcal{D}(\QQ[G])$-Betti numbers of $\QQ[G]$-modules.

    An \emph{epic} division $R$-ring is a division $R$-ring $(\mathcal{D}, \varphi)$ such that the division closure of $\im \varphi$ equals $\mathcal{D}$. It is further called \emph{division $R$-ring of fractions} if $\varphi$ is injective.

    Let $G$ be a group. A ring $S$ is $G$-\emph{graded} if $S = \bigoplus_{g\in G} S_g$ as an additive group, where $S_g$ is an additive subgroup for every $g\in G$, and $S_g S_h \subseteq S_{gh}$ for all $g,h \in G$. If $S_g$ contains an invertible element $u_g$ for each $g\in G$, then we say that $S$ is a \emph{crossed product} of $S_e$ and $G$ and we shall denote it by $S = S_e * G$.

    Let $E$ be a division ring, $G$ a locally indicable group and $E * G$ a crossed product. An epic division $E * G$-ring $(\mathcal{D}, \varphi)$ is called \emph{Hughes-free} if for every non-trivial finitely generated subgroup $H \leqslant G$ and every $U \triangleleft H$ such that $H/U \cong \Z$, the multiplication map 
    \[
        \mathcal D_U \otimes_{E * U} E * H \rightarrow \mathcal D, \qquad x \otimes y \mapsto x \cdot \varphi(y)
    \]
    is injective. Here, $\mathcal D_U$ denotes the division closure of $\varphi(E * U)$ in $\mathcal D$. In this case, we call $\mathcal D$ a \emph{Hughes-free division ring} for $E * G$. Hughes \cite{HughesDivRings1970} proved that if $E * G$ admits a Hughes-free embedding, then it is unique up to $E * G$-ring isomorphism. In this situation, we thus speak of \emph{the} Hughes-free division ring of $E * G$, denote it by $\mathcal{D}_{E * G}$, and view $E * G$ as a subring of $\mathcal{D}_{E * G}$. If $H$ is any subgroup of $G$, then $\mathcal{D}_{E * H}$ is isomorphic to the division closure of $E * H$ in $\mathcal{D}_{E * G}$, so we view $\mathcal{D}_{E * H}$ as a subring of $\mathcal{D}_{E * G}$. If the Hughes-free division ring exists for every for every division ring $E$ and every crossed product $E * G$, we say that $G$ is \emph{Hughes-free embeddable}.

    When $G$ is locally indicable, $\mathcal{D}_{\QQ G}$ always exists and coincides with $\mathcal{D}(\QQ G)$ by a result of Jaikin-Zapirain and L\'opez-\'Alvarez \cite{JaikinLopez2020}. Note that by uniqueness up to $\QQ G$-isomorphism, if $G$ is Hughes-free embeddable then $\mathcal{D}(\QQ G)$ is in fact the Hughes-free division $\QQ G$-ring of fractions.
    
    For a general countable group $G$, an epic division $E * G$-ring $(\mathcal{D}, \varphi)$ is called \emph{Linnell-free} if for every pair of subgroups $U \leqslant H$ of $G$, the multiplication map 
    \[
        \mathcal D_U \otimes_{E * U} E * H \rightarrow \mathcal D, \qquad x \otimes y \mapsto x \cdot \varphi(y)
    \]
    is injective. It is shown in \cite{Linnell_localizations} (cf. the discussion after Problem 4.5) that elements in different cosets of $H$ in $G$ are right $\UO(H)$-linearly independent in $\UO(G)$. Thus, if $G$ satisfies the Strong Atiyah Conjecture, $\mathcal{D}_G$ is Linnell-free. However, while \cite{HughesDivRings1970} ensures the uniqueness of the division ring for locally indicable groups, it remains unknown whether Linnell-free division $E*G$-rings are unique up to $E*G$-isomorphism for a general group $G$. Moreover, Gr\"ater showed in \cite[Corollary 8.3]{Grater20} that if $G$ is a locally indicable group and the Hughes-free division $E* G$-ring $\mathcal D_{E*G}$ exists, then it is in fact a Linnell-free division $E*G$-ring.

    A division $R$-ring $\mathcal{D}$ is \emph{universal} if for every division $R$-ring $\mathcal{E}$ and every finitely presented $R$-module $M$ it holds that 
    \[
        \dim_{\mathcal{D}} \left( \mathcal{D} \otimes_{R} M \right) \leq \dim_{\mathcal{E}} \left( \mathcal{E} \otimes_{R} M \right).
    \]
    If it exists, it is unique up to $R$-isomorphism \cite[Theorem 7.2.7]{Cohn2006}. A locally indicable group $G$ is a \emph{Lewin group} if it is Hughes-free embeddable and for all possible crossed products $E * G$, where $E$ is a division ring, $\mathcal{D}_{E * G}$ is universal. Prominent examples of rings having universal division rings are pseudo-Sylvester domains. In the case of crossed products $E * G$ these come, for instance, from free and free-by-cyclic groups \cite{Cohn_Universality,HL22}. More recently, it was shown in \cite{SanchezPeralta_Knots} that locally indicable $3$-manifold groups are Lewin groups.

\subsection{Universal localizations}
    
    Recall that an $R$-module $M$ is of \emph{weak dimension at most $1$} if for every right $R$-module $N$ the module $\Tor_2^R(N, M)$ vanishes. This property is enjoyed by the Linnell ring of torsion-free one-relator groups \cite[Theorem 1.7]{JL23} and free-by-cyclic groups \cite[Lemma 3.7(4)]{HL22}. The weak-dimension bound is particularly useful together  with the vanishing of $\Tor_1$, which is a natural condition in the theory of universal localizations. More specifically, recall that given a set $\Sigma$ of matrices over $R$ there is a unique $R$-ring $R_{\Sigma}$ called \emph{the universal localization of $R$ at $\Sigma$} which is universal with respect to the property of inverting (the image of) each matrix $A$ in $\Sigma$ (see the discussion in \cite{Cohn2006} before Theorem 7.2.4). The construction is as follows: for each $n \times m$ matrix $A = (a_{ij})$ in $\Sigma$ we take a set of $mn$ symbols, arranged as an $m \times n$ matrix $A' = (a'_{ji})$ and let $R_{\Sigma}$ be the ring generated by the elements of $R$ as well as all the $a'_{ji}$ and as defining relations all the relations holding in $R$, together with the relations, in matrix form,
    \[
        A A' = \Id_n, \quad  A'A = \Id_m \quad \mbox{for each $A \in \Sigma$}.
    \]
    By \cite[Theorems 4.7 and 4.8]{Schofield85} $\Tor_1^R(R_{\Sigma}, R_{\Sigma})$ vanishes. Note that for a division $R$-ring $\mathcal{D}$ the latter condition is equivalent to the vanishing of $\beta_1^{\mathcal{D}}(\mathcal{D})$.
    
    \begin{lemma} \label{lem: monotone_b1}
        Let $\mathcal{D}$ be a division $R$-ring and $M$ an $R$-module. Assume that $\mathcal{D}$ has weak dimension at most $1$ as right $R$-module. Then $\beta_n^{\mathcal{D}}(M)$ vanishes for all $n \geq 2$ and $\beta_1^{\mathcal D}(\cdot)$ is a monotonically increasing function on $R$-modules, that is, for any two $R$-modules $M_1 \subseteq M_2$ it holds that
        \[
            \beta_1^{\mathcal{D}}(M_1) \leq \beta_1^{\mathcal{D}}(M_2).
        \]
        In particular, if $\Tor_1^R(\mathcal{D}, \mathcal{D})$ vanishes and $M$ is in addition an $R$-submodule of $\mathcal D$, then
        \[
            \beta_1^{\mathcal{D}}(M) = 0.
        \]
    \end{lemma}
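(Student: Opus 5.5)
The proof is a dimension-shifting argument. Throughout, write $\beta_n^{\mathcal D}(M)=\dim_{\mathcal D}\Tor_n^R(\mathcal D,M)$. Since $\mathcal D$ is a division ring, every $\mathcal D$-module has a well-defined dimension, and that dimension is additive on short exact sequences of $\mathcal D$-modules.

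\emph{Vanishing in degrees $n\ge 2$.} The hypothesis that $\mathcal D$ has weak dimension at most $1$ as a right $R$-module means that $\Tor_n^R(\mathcal D,N)=0$ for every left $R$-module $N$ and every $n\ge 2$. This is the definition of weak dimension applied on the correct side; it is equivalent to flat dimension at most one, so all higher $\Tor$ groups vanish, not only $\Tor_2$. Hence $\beta_n^{\mathcal D}(M)=0$ for all $n\geq 2$.

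\emph{Monotonicity of $\beta_1^{\mathcal D}$.} Let $M_1\subseteq M_2$ and consider the short exact sequence $0\to M_1\to M_2\to M_2/M_1\to 0$. The long exact $\Tor$ sequence contains the segment
\[
\Tor_2^R(\mathcal D, M_2/M_1)\to \Tor_1^R(\mathcal D,M_1)\to \Tor_1^R(\mathcal D,M_2).
\]
The left-hand term vanishes by the previous step, so $\Tor_1^R(\mathcal D,M_1)\to\Tor_1^R(\mathcal D,M_2)$ is an injective map of $\mathcal D$-vector spaces. Monotonicity of dimension under injections then gives $\beta_1^{\mathcal D}(M_1)\le \beta_1^{\mathcal D}(M_2)$. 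This holds even when the dimensions are infinite, since a $\mathcal D$-subspace of a space of dimension $d$ has dimension at most $d$.

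\emph{The final assertion.} Now suppose $\Tor_1^R(\mathcal D,\mathcal D)=0$ and that $M$ is an $R$-submodule of $\mathcal D$, where $\mathcal D$ is regarded as a left $R$-module via $\varphi$. By monotonicity,
\[
\beta_1^{\mathcal D}(M)\le \beta_1^{\mathcal D}(\mathcal D)=\dim_{\mathcal D}\Tor_1^R(\mathcal D,\mathcal D)=0.
\]

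No step is a genuine obstacle. The only points that need care are the bookkeeping that $\Tor_2^R(\mathcal D,-)$ vanishes on the relevant modules, which follows from the sidedness convention (right module $\mathcal D$ tensored with left modules $M$), and that $\dim_{\mathcal D}$ is monotone on subspaces.
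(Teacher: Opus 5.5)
Your proposal is correct and follows the paper's proof essentially verbatim: the higher $\Tor$ groups vanish by the weak dimension hypothesis, and the segment $\Tor_2^R(\mathcal D, M_2/M_1)\to \Tor_1^R(\mathcal D,M_1)\to \Tor_1^R(\mathcal D,M_2)$, whose first term is zero, yields an injection of $\mathcal D$-vector spaces. The one addition is that you explicitly derive the final assertion by applying monotonicity to $M\subseteq\mathcal D$, a step the paper leaves implicit.
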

    \begin{proof}
        The fact that $\beta_n^{\mathcal{D}}(M) = 0$ for every $n \geq 2$ follows from the definitions. Next, let $M_1 \subseteq M_2$ be two $R$-modules and consider the natural short exact sequence
        \[
            0 \to M_1 \to M_2 \to M_2/M_1 \to 0.
        \]
        Extending scalars to $\mathcal{D}$ yields the exact sequence
        \[
            \Tor_2^R(\mathcal{D}, M_2/M_1) \to \Tor_1^R(\mathcal{D}, M_1) \to \Tor_1^R(\mathcal{D}, M_2)
        \]
        Since the module on the left vanishes and $\mathcal{D}$ is a division ring, we conclude that $\beta_1^{\mathcal{D}}(M_1) \leq \beta_1^{\mathcal{D}}(M_2)$.
    \end{proof}

\subsection{Pseudo-Sylvester domains}

    A ring $R$ is said to be \emph{stably finite} if whenever $A$ and $B$ are two $n \times n$-matrices over $R$ such that $AB = I_n$, then also $BA = I_n$. We remark that if $E$ is a field of characteristic zero or $E$ is of positive characteristic and $G$ is sofic, then $E G$ is stably finite \cite{Kaplansky72,ElekSzabo04}.

    Let $R$ be a ring, and $A$ an $m \times n$ matrix over $R$. Recall that the \emph{inner rank} $\rho(A)$ is defined as the least $k$ such that $A$ admits a decomposition $A = B_{m \times k} C_{k \times n}$. We say that a square matrix $A$ of size $n \times n$ is \emph{full} if $\rho(A) = n$. Recall also that the \emph{stable rank} $\rho^{*}(A)$ is given by
    \[
        \rho^{*}(A) = \lim_{s \to \infty} (\rho(A \oplus I_s) - s),
    \]
    whenever the limit exists, where $A \oplus I_s$ denotes the block diagonal matrix with blocks $A$ and $I_s$. We analogously say that a square matrix is \emph{stably full} if it has maximum stable rank. When $R$ is stably finite, $\rho^{*}(A)$ is well-defined and non-negative, and it is positive if $A$ is a non-zero matrix \cite[Proposition 0.1.3]{Cohn2006}.
    
    A non-zero ring $R$ is a \emph{pseudo-Sylvester domain} if $R$ is stably finite and satisfies the law of nullity with respect to the stable rank, that is, if $A \in \Mat_{m \times n}(R)$ and $B \in \Mat_{n \times k}(R)$ are such that $A B = 0$, then
    \[
        \rho^{*}(A) + \rho^{*}(B) \leq n.
    \]

    \begin{theorem}[{\cite[Theorem 7.5.18]{Cohn2006}}] \label{thm: univ_pseudoSyl}
        Let $R$ be a pseudo-Sylvester domain. The universal localization of $R$ at the set of stably full matrices is the universal division $R$-ring of fractions.
    \end{theorem}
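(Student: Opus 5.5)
The plan is to follow Cohn's theory of prime matrix ideals. Let $\Sigma$ be the set of stably full square matrices over $R$ and let $\mathcal P$ be its complement among square matrices, i.e.\ the non-stably-full ones. The argument has three parts: show that $\mathcal P$ is a prime matrix ideal, deduce that $R_\Sigma$ is a division ring, and then check that it is universal.

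\emph{Step 1: $\mathcal P$ is a prime matrix ideal.} Since $R$ is stably finite, $\rho^*$ is well defined, and $\rho^*(1)=1$, so $1\notin\mathcal P$. Next we need that $\rho^*$ is additive on diagonal sums and that the law of nullity upgrades to a Sylvester-type inequality $\rho^*(AB)\ge\rho^*(A)+\rho^*(B)-n$. The standard trick is to apply the nullity law to the block factorisation
\[
\begin{pmatrix} A & 0\\ 0& I\end{pmatrix}\begin{pmatrix} I & B\\ -A & 0\end{pmatrix}\quad\text{(suitably arranged)},
\]
which amounts to showing that $\rho^*$ satisfies the axioms of a Sylvester matrix rank function in Section~\ref{subsec:slyv}, up to normalisation. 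From this we get the required closure properties:
\begin{itemize}
\item $\mathcal P$ is closed under determinantal sums, i.e.\ adding two matrices that agree outside one column (or row);
\item $\mathcal P$ is closed under diagonal sums with arbitrary matrices;
\item if $A\oplus B\in\mathcal P$ then $A\in\mathcal P$ or $B\in\mathcal P$;
\item $\mathcal P$ contains every non-stably-full product.
\end{itemize}
This is the technical heart of the argument. I expect the closure under determinantal sums to be the main obstacle: it needs a careful stable-rank computation, showing that a determinantal sum of two stably-non-full matrices admits a factorisation through fewer columns after stabilisation by $I_s$.

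\emph{Step 2: $R_\Sigma$ is a field of fractions.} Cohn's localisation theorem for a prime matrix ideal $\mathcal P$ says that $R_\Sigma$ is a local ring, with residue division ring $R_{\mathcal P}$. To show $R_\Sigma$ is itself a division ring, I would use Cramer's rule. Every $u\in R_\Sigma$ is a component of a solution $A^{-1}b$ with $A\in\Sigma$. Then $u$ is invertible whenever the associated numerator matrix $A_u$ lies in $\Sigma$. Conversely, if $A_u\in\mathcal P$, then $A_u\oplus I_s$ factors through fewer columns over $R$. Since $A$ is invertible over $R_\Sigma$, this factorisation forces $u=0$, by the stable finiteness of $R_\Sigma$ inherited through the factorisation. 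So every nonzero element is a unit. Injectivity of $R\to R_\Sigma$ follows because a nonzero $a\in R$ has $\rho^*(a)>0$, so the $1\times1$ matrix $(a)$ lies in $\Sigma$ and $a$ is inverted.

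\emph{Step 3: universality.} Let $\mathcal E$ be any division $R$-ring. If $A\in\mathcal P$ has size $n$, then $A\oplus I_s=BC$ with inner dimension $<n+s$. Over $\mathcal E$ this gives $\operatorname{rank}(A\oplus I_s)<n+s$, so $A$ is singular over $\mathcal E$. Hence $\mathcal P$ is contained in the singular kernel of $\mathcal E$, which says that $R_\Sigma$ specialises to $\mathcal E$. Via the correspondence between rank functions and module dimensions from Section~\ref{subsec:slyv}, the inclusion of singular kernels gives $\dim_{R_\Sigma}(R_\Sigma\otimes_R M)\le\dim_{\mathcal E}(\mathcal E\otimes_R M)$ for every finitely presented $M$. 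This is exactly the definition of universality.
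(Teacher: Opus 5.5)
The paper gives no proof of this statement; it is quoted from Cohn's book. Your route is the standard one and is sound:
\begin{itemize}
\item $\rho^*$ is a $\mathbb Z$-valued Sylvester rank function, so the non-stably-full matrices form a prime matrix ideal $\mathcal P$;
\item Cohn's theorem then makes $R_\Sigma$ local, with residue field of singular kernel $\mathcal P$;
\item Cramer's rule kills the maximal ideal;
\item universality follows because non-stably-full matrices are singular over every division $R$-ring.
\end{itemize}
Two steps need fixing as written.

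\emph{Determinantal sums.} The closure under determinantal sums, which you single out as the main obstacle, is short once $\rho^*$ satisfies axiom (iv). You get (iv), and with it additivity, as follows: factorise the stabilised block-triangular matrix as $PQ$ through its inner rank, and apply the nullity law to the off-diagonal product $P_2Q_1=0$. The product you display does not do this. Now apply (iv), in lower-triangular form, to
\[
\begin{pmatrix} C & a & b & 0\\ 0 & 0 & b & C\end{pmatrix}.
\]
This matrix arises from $(C,a)\oplus(C,b)$ by elementary operations and a column permutation, so you get
\[
\rho^*(C,a,b)+\rho^*(C)\le\rho^*(C,a)+\rho^*(C,b).
\]
Hence if $\rho^*(a,C),\rho^*(b,C)\le n-1$, there are two cases:
\begin{itemize}
\item $\rho^*(C)=n-1$, and then $\rho^*(a+b,C)\le\rho^*(a,b,C)\le n-1$;
\item $\rho^*(C)\le n-2$, and then $\rho^*(a+b,C)\le\rho^*(C)+1\le n-1$.
\end{itemize}
Rows are handled symmetrically. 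Alternatively, cite Malcolmson's correspondence between $\mathbb Z$-valued rank functions and epic division $R$-rings.

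\emph{Stable finiteness in Step 2.} This is the step whose stated justification does not work. Stable finiteness of $R_\Sigma$ is not automatic for a localisation, and ``inherited through the factorisation'' does not supply it. The correct reason is that $R_\Sigma$ is local: if $XY=I$, then $YX\equiv I$ modulo the Jacobson radical, so $YX$ is invertible and hence $YX=I$.

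With this in hand, write $A=(A_*,A_n)$, $x=(x_*,u)$ and $A_u=(A_*,a)$. Then $A_u=A\begin{pmatrix} I & x_*\\ 0 & u\end{pmatrix}$, so a factorisation of $A_u\oplus I_s$ through $n+s-1$ columns over $R$ gives one of $u\oplus I_m$ over $R_\Sigma$, where $m=n+s-1$. Write it as $\begin{pmatrix} p\\ P'\end{pmatrix}\begin{pmatrix} q & Q'\end{pmatrix}$. Then $P'Q'=I_m$, hence $Q'P'=I_m$, hence $q=Q'P'q=0$, and so $u=pq=0$.

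You can also bypass locality here. Suppose $A_u=BC$ with $C=(C_*,c)$. Then $A=(B,A_n)(C_*\oplus 1)$ writes a stably full matrix as a product of square matrices. Both factors are therefore stably full, hence invertible over $R_\Sigma$. The equation $Ax=a$ becomes $(B,A_n)\begin{pmatrix} C_*x_*-c\\ u\end{pmatrix}=0$, which forces $u=0$. For $s>0$, replace $A$ by $I_s\oplus A$.

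\emph{Step 3.} Passing from the containment of singular kernels to the rank inequality for rectangular matrices needs one more fact: the rank over a division ring equals the size of a largest non-singular square submatrix.
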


\section{Groups of type \texorpdfstring{$\FP_2(\ell^2)$}{FP2(l2)}}\label{FP2L2}
Recall that a group \(G\) is said to be of \emph{type \(\FP_{2}(\ell^{2})\)} if there exists a finitely presented group
\(\widetilde{G}\) together with a surjective homomorphism
\(\widetilde{G} \twoheadrightarrow G\) inducing an isomorphism
\[
\mathcal{U}(G)\otimes_{\mathbb{Q}\widetilde{G}} I_{\mathbb{Q}\widetilde{G}}
   \;\xrightarrow{\ \cong\ }\;
\mathcal{U}(G)\otimes_{\mathbb{Q} G} I_{\mathbb{Q} G}.
\]

Note that $G$ is automatically finitely generated. The following proposition gives an equivalent formulation of this
condition.

\begin{proposition}\label{prop:FP2-equivalent}
A group \(G\) is of type \(\FP_{2}(\ell^{2})\) if and only if $G$ is finitely generated and 
\[
\mathcal{U}(G)\otimes_{\mathbb{Q} G} I_{\mathbb{Q} G}
\]
is a finitely presented \(\mathcal{U}(G)\)-module.
\end{proposition}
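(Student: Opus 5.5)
The plan is to prove the two implications separately, working throughout with the von Neumann regular ring $\UO(G)$ and using \cref{prop: vNr_proj_mod}. Write $\mathcal{U}=\UO(G)$ and $\mathcal{I}_G=\mathcal{U}\otimes_{\QQ G} I_{\QQ G}$.

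For the forward direction, let $\widetilde G\twoheadrightarrow G$ be as in the definition, with $\widetilde G$ finitely presented. Then $G$ is finitely generated, being a quotient of a finitely generated group. Since $\widetilde G$ is finitely presented, it is of type $\FP_2(\QQ)$, so $I_{\QQ\widetilde G}$ is a finitely presented $\QQ\widetilde G$-module. Choose a presentation
\[
\QQ\widetilde G^m\to\QQ\widetilde G^n\to I_{\QQ\widetilde G}\to 0 .
\]
Applying the right exact functor $\mathcal{U}\otimes_{\QQ\widetilde G}-$ (with $\mathcal{U}$ a $\QQ\widetilde G$-module via $\widetilde G\to G$) gives a finite presentation $\mathcal{U}^m\to\mathcal{U}^n\to \mathcal{U}\otimes_{\QQ\widetilde G}I_{\QQ\widetilde G}\to0$. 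By hypothesis the last module is isomorphic to $\mathcal{I}_G$, which is therefore finitely presented.

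For the converse, assume $G$ is finitely generated and $\mathcal{I}_G$ is finitely presented. Fix a finite generating set $S$ and let $F$ be the free group on $S$, with $\pi\colon F\twoheadrightarrow G$ and $N=\ker\pi$. Enumerate $N=\{r_1,r_2,\dots\}$ and set $\widetilde G_k=F/\langle\langle r_1,\dots,r_k\rangle\rangle$. Each $\widetilde G_k$ is finitely presented and surjects onto $G$. Each map
\[
\mathcal{U}\otimes_{\QQ\widetilde G_k}I_{\QQ\widetilde G_k}\to\mathcal{I}_G
\]
is surjective, since $I_{\QQ\widetilde G_k}\to I_{\QQ G}$ is onto and tensoring is right exact. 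We must find a $k$ for which it is injective. The key facts are these.
\begin{enumerate}
\item The modules $M_k=\mathcal{U}\otimes_{\QQ\widetilde G_k}I_{\QQ\widetilde G_k}=\mathcal{U}\otimes_{\QQ F}I_{\QQ F}/(\text{relations from } r_1,\dots,r_k)$ form a direct system of surjections with colimit $\mathcal{I}_G$. Concretely, $\mathcal{U}\otimes_{\QQ F}I_{\QQ F}\cong\mathcal{U}^{S}$, and $\mathcal{I}_G$ is its quotient by the $\mathcal{U}$-span of the images of the Fox derivative vectors of all $r\in N$. This uses the standard relation-module sequence $\QQ G\otimes_{\QQ N} I_{\QQ N}\to\QQ G\otimes_{\QQ F}I_{\QQ F}\to I_{\QQ G}\to0$, which is exact, and the analogous sequence for each $\widetilde G_k$.
\item Since $\mathcal{I}_G$ is finitely presented, the kernel $K$ of $\mathcal{U}^S\to\mathcal{I}_G$ is finitely generated. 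Indeed, $\mathcal{U}$ is von Neumann regular, so $\mathcal{I}_G$ is projective by \cref{prop: vNr_proj_mod}, and hence $K$ is a direct summand of $\mathcal{U}^S$, thus finitely generated. Alternatively, Schanuel's lemma gives the same conclusion.
\item $K$ is the union of the increasing chain of submodules $K_k$ spanned by the Fox vectors of $r_1,\dots,r_k$. A finitely generated module equal to such a union equals some $K_k$, and then $M_k\to\mathcal{I}_G$ is an isomorphism.
\end{enumerate}

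The main obstacle is step (i): identifying the kernels precisely, in particular checking that the kernel of $M_k\to\mathcal{I}_G$ is generated by the images of Fox derivatives of elements of $N$, and that elements of $N$ not among $r_1,\dots,r_k$ are accounted for through the chain. One must also verify that $\mathcal{U}\otimes_{\QQ G}(\QQ G\otimes_{\QQ N}I_{\QQ N})$ maps onto the span of the Fox vectors of a generating set of $N$ as a normal subgroup. This holds because $N/[N,N]\otimes\QQ$ surjects onto $\QQ\otimes_{\QQ N}I_{\QQ N}$, and the latter is generated as a $G$-module by the images of the normal generators.
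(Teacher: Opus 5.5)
Your proposal is correct and follows essentially the paper's argument: finite presentability of $\mathcal{U}(G)\otimes_{\mathbb{Q}G} I_{\mathbb{Q}G}$ forces the kernel of $\mathcal{U}(G)^{d}\to\mathcal{U}(G)\otimes_{\mathbb{Q}G} I_{\mathbb{Q}G}$ to be finitely generated, so finitely many relators already give the required isomorphism. The only differences are cosmetic. You exhaust all of $N$ by an increasing chain, whereas the paper picks a finite subset $R_0$ of a fixed presentation. You also spell out, via the relation-module sequence and Fox calculus, why the truncated tensored sequence is the tensored presentation of the finitely presented quotient, a step the paper leaves implicit.
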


\begin{proof}
Suppose first that \(G\) is of type \(\FP_{2}(\ell^{2})\).  
By definition, we may choose a finitely presented group  
\(\widetilde{G}\) with a surjection \(\widetilde{G} \twoheadrightarrow G\)  
such that the induced morphism of \(\mathcal{U}(G)\)-modules
\[
\mathcal{U}(G)\otimes_{\mathbb{Q}\widetilde{G}} I_{\mathbb{Q}\widetilde{G}}
   \;\longrightarrow\;
\mathcal{U}(G)\otimes_{\mathbb{Q} G} I_{\mathbb{Q} G}
\]
is an isomorphism.  
Since \(\widetilde{G}\) is finitely presented, the augmentation ideal
\(I_{\mathbb{Q}\widetilde{G}}\) is a finitely presented \(\mathbb{Q}\widetilde{G}\)-module.  
Tensoring with \(\mathcal{U}(G)\) over \(\mathbb{Q}\widetilde{G}\)
preserves finite presentation.  
Hence 
$$
\mathcal{U}(G)\otimes_{\mathbb{Q} G} I_{\mathbb{Q} G}\cong \mathcal{U}(G)\otimes_{\mathbb{Q}\widetilde{G}} I_{\mathbb{Q}\widetilde{G}}
$$
is finitely presented.

Conversely, suppose that
\[
\mathcal{U}(G)\otimes_{\mathbb{Q} G} I_{\mathbb{Q} G}
\]
is finitely presented as a $\mathcal{U}(G)$-module. Let $G=\langle X \mid R\rangle$ be a group presentation with generating set $X= \{x_1,\dots,x_d \}$. The presentation induces an exact sequence of $\mathbb{Q}G$-modules
\[
\bigoplus_{r\in R} \mathbb{Q}G \cdot e_r
\;\longrightarrow\;
\mathbb{Q}G^{\,d}
\xrightarrow{\ \varphi\ }
I_{\mathbb{Q}G}
\;\longrightarrow\; 0,
\]
where
\[
\varphi(a_1,\dots,a_d)=\sum_{i=1}^d a_i(x_i-1).
\]

Applying the right exact functor $\mathcal{U}(G)\otimes_{\mathbb{Q}G} -$ gives an exact sequence of $\mathcal{U}(G)$-modules
\[
\bigoplus_{r\in R} \mathcal{U}(G)\cdot e_r
\;\longrightarrow\;
\mathcal{U}(G)^{\,d}
\xrightarrow{\ \widetilde{\varphi}\ }
\mathcal{U}(G)\otimes_{\mathbb{Q}G} I_{\mathbb{Q}G}
\;\longrightarrow\; 0.
\]

Since $\mathcal{U}(G)\otimes_{\mathbb{Q}G} I_{\mathbb{Q}G}$ is finitely presented, there exists a finite subset $R_0\subseteq R$ such that the truncated sequence
\[
\bigoplus_{r\in R_0} \mathcal{U}(G)\cdot e_r
\;\longrightarrow\;
\mathcal{U}(G)^{\,d}
\xrightarrow{\ \widetilde{\varphi}\ }
\mathcal{U}(G)\otimes_{\mathbb{Q}G} I_{\mathbb{Q}G}
\;\longrightarrow\; 0
\]
remains exact.

Set $\widetilde{G} :=\langle X \mid R_0\rangle$. The natural epimorphism
\[
\widetilde{G} \twoheadrightarrow G
\]
induces an isomorphism
\[
\mathcal{U}(G)\otimes_{\mathbb{Q}\widetilde{G}} I_{\mathbb{Q}\widetilde{G}}
\;\xrightarrow{\ \cong\ }\;
\mathcal{U}(G)\otimes_{\mathbb{Q}G} I_{\mathbb{Q}G}.
\]
This finishes the proof.
\end{proof}

\begin{remark}
    Note that a group $G$ is of type $\FP_2(\ell^2)$ if there exists a group \(\widetilde{G}\) of type $\FP_2(\QQ)$ (not necessarily finitely presented) together with a surjective homomorphism \(\widetilde{G} \twoheadrightarrow G\) inducing an isomorphism
    \[
        \mathcal{U}(G)\otimes_{\mathbb{Q}\widetilde{G}} I_{\mathbb{Q}\widetilde{G}} \;\xrightarrow{\ \cong\ }\; \mathcal{U}(G)\otimes_{\mathbb{Q} G} I_{\mathbb{Q} G}.
    \]
\end{remark}

Using the alternative formulation in \cref{prop:FP2-equivalent}, we show that the following natural classes of groups are of type $\FP_2(\ell^2)$.

\begin{corollary}
    Let $G$ be a finitely generated group. If \(G\) is of type \(\FP_{2}(\mathbb{Q})\), or if \(G\) satisfies the Weak Atiyah Conjecture, then \(G\) is of type \(\FP_{2}(\ell^{2})\).
\end{corollary}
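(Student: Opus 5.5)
By \cref{prop:FP2-equivalent}, in both cases it suffices to show that $\mathcal{U}(G)\otimes_{\mathbb{Q}G} I_{\mathbb{Q}G}$ is a finitely presented $\mathcal{U}(G)$-module; finite generation of $G$ is part of the hypothesis.

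\emph{Case $\FP_2(\mathbb{Q})$.} Here $G$ is of type $\FP_2(\mathbb{Q})$, so $\mathbb{Q}$ admits a projective resolution over $\mathbb{Q}G$ that is finitely generated in degrees $0,1,2$. The augmentation sequence $0\to I_{\mathbb{Q}G}\to \mathbb{Q}G\to\mathbb{Q}\to 0$ and Schanuel's lemma then show that $I_{\mathbb{Q}G}$ is a finitely presented $\mathbb{Q}G$-module. Concretely, there is an exact sequence $\mathbb{Q}G^n\to\mathbb{Q}G^d\to I_{\mathbb{Q}G}\to 0$. Applying the right exact functor $\mathcal{U}(G)\otimes_{\mathbb{Q}G}-$ gives a finite presentation of $\mathcal{U}(G)\otimes_{\mathbb{Q}G} I_{\mathbb{Q}G}$. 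This case is routine.

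\emph{Case Weak Atiyah.} I would pass through the $\ast$-regular closure $\mathcal{R}:=\mathcal{R}_{\mathbb{Q}G}$. Since $\mathbb{Q}$ is closed under complex conjugation, the discussion after \eqref{wAtiyah} shows that $\mathcal{R}$ is semisimple Artinian. Over a semisimple ring every module is projective and every submodule is a direct summand, so any finitely generated $\mathcal{R}$-module is finitely presented: the kernel of a surjection $\mathcal{R}^d\to M$ is a direct summand of $\mathcal{R}^d$, hence cyclic-generated by an idempotent-type element, hence finitely generated. The module $M:=\mathcal{R}\otimes_{\mathbb{Q}G} I_{\mathbb{Q}G}$ is finitely generated, being generated by the images of $x_i-1$ for a finite generating set $\{x_i\}$. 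It is therefore finitely presented over $\mathcal{R}$. Finally, using $\mathcal{R}\subseteq \mathcal{U}(G)$,
\[
\mathcal{U}(G)\otimes_{\mathbb{Q}G} I_{\mathbb{Q}G}\cong \mathcal{U}(G)\otimes_{\mathcal{R}} M,
\]
and extension of scalars preserves finite presentation. This completes the second case.

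The only point requiring care is the use of semisimplicity of $\mathcal{R}_{\mathbb{Q}G}$, which the paper deduced from \cite[Corollary~6.2]{Ja19} and \cref{prop: vNr_proj_mod}. The hypothesis there is exactly the Weak Atiyah Conjecture over $\mathbb{Q}$, and that is how I read ``satisfies the Weak Atiyah Conjecture'' here. If the conjecture is meant over $\mathbb{C}$, I would use $\mathcal{R}_{\mathbb{C}G}$ instead and run the same argument with $\mathbb{C}G\supseteq\mathbb{Q}G$, noting that $\mathcal{U}(G)\otimes_{\mathbb{Q}G}I_{\mathbb{Q}G}\cong \mathcal{U}(G)\otimes_{\mathbb{C}G}I_{\mathbb{C}G}$.
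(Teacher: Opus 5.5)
Your proposal is correct and follows the paper's proof essentially verbatim. In the $\FP_2(\mathbb{Q})$ case you tensor a finite presentation of $I_{\mathbb{Q}G}$ up to $\mathcal{U}(G)$; in the Weak Atiyah case you pass through the semisimple ring $\mathcal{R}_{\mathbb{Q}G}$ and then extend scalars; both cases conclude via \cref{prop:FP2-equivalent}. One small slip: a direct summand of $\mathcal{R}^d$ need not be cyclic, but it is finitely generated as a quotient of $\mathcal{R}^d$, which is all you need.
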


\begin{proof}
If \(G\) is of type \(\FP_{2}(\mathbb{Q})\), then the module
\(  I_{\mathbb{Q}G}\) is finitely presented over
\(\mathbb{Q}G\).  
Tensoring with \(\mathcal{U}(G)\) preserves finite presentation, and therefore
$
\mathcal{U}(G)\otimes_{\mathbb{Q}G} I_{\mathbb{Q}G}
$
is a finitely presented \(\mathcal{U}(G)\)-module.  
Hence \(G\) is of type \(\FP_{2}(\ell^{2})\) by  \cref{prop:FP2-equivalent}.

If \(G\) satisfies the Weak Atiyah Conjecture, then the $*$-regular closure
\(\mathcal{R}_{\mathbb{Q}G}\subset \mathcal{U}(G)\) is a semisimple ring (see \cref{sec: l2_betti_nbrs}).
Consequently, the finitely generated module
\[
\mathcal{R}_{\mathbb{Q}G}\otimes_{\mathbb{Q}G} I_{\mathbb{Q}G}
\]
is finitely presented over \(\mathcal{R}_{\mathbb{Q}G}\).  
Therefore,
\[
\mathcal{U}(G)\otimes_{\mathbb{Q}G} I_{\mathbb{Q}G}
   \;\cong\;
\mathcal{U}(G)\otimes_{\mathcal{R}_{\mathbb{Q}G}}
   \bigl(\mathcal{R}_{\mathbb{Q}G}\otimes_{\mathbb{Q}G} I_{\mathbb{Q}G}\bigr)
\]
is finitely presented over \(\mathcal{U}(G)\).
Thus \(G\) is of type \(\FP_{2}(\ell^{2})\) by  \cref{prop:FP2-equivalent}.
\end{proof}

The following theorem is the main result of this section.  
It subsumes several statements made in the introduction;  
in particular, part \ref{item: equality_b1_l2indep} corresponds to item \ref{item: approach_b1_equality} of \cref{approach}.

\begin{theorem}\label{propfpl2}
    Let $G$ be a group of type $\FP_2(\ell^2)$ with associated map \(\alpha : \widetilde{G} \to G\). Let \(0 \neq \phi \in H^{1}(G;\mathbb{Z})\) and let \(\widetilde{G} = K *_{H,\theta}\) be an admissible splitting dual to $\phi \circ \alpha$. If \(b_{1}^{(2)}(G)=0\), then:
\begin{enumerate}
    \item
    We have the inequalities
    \begin{equation}\label{inequalities}
        b_{1}^{(2)}(\ker \phi)
        \;\le\;
        \dim_{\mathcal{U}(G)}
        \bigl( \mathcal{U}(G)\otimes_{\mathbb{Q} K} I_{\mathbb{Q} K} \bigr)
        -1+\frac{1}{|\alpha(K)|}
        \;\le\;
        \dim_{\mathcal{U}(G)}
        \bigl( \mathcal{U}(G)\otimes_{\mathbb{Q} H} I_{\mathbb{Q} H} \bigr)
        -1+\frac{1}{|\alpha(H)|}.
    \end{equation}

    \item
    If \(\widetilde{G}=G\) and $\alpha$ is the identity, then
    \[
        b_{1}^{(2)}(\ker \phi)
        \;\le\;
        b_{1}^{(2)}(K)
        \;\le\;
        b_{1}^{(2)}(H).
    \]

    \item \label{item: equality_b1_l2indep}
    If \(\widetilde{G}=G\), $\alpha$ is the identity and both \(H\) and $\theta (H)$ are \(L^{2}\)-independent in \(K\), then
    \[
        b_{1}^{(2)}(\ker \phi)
        \;=\;
        b_{1}^{(2)}(K)
        \;=\;
        b_{1}^{(2)}(H).
    \]
\end{enumerate}
\end{theorem}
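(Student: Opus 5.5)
We work throughout with $N=\ker\phi$ and $\widetilde N=\ker(\phi\circ\alpha)$, and compare both with the Mayer--Vietoris sequence of the HNN extension $\widetilde G=K*_{H,\theta}$. Write $\mathcal U=\mathcal U(G)$ and let $t$ be the stable letter. The splitting yields the exact sequence of $\mathbb Q\widetilde G$-modules
\[
0\to {}^{\widetilde G}\mathbb Q[\widetilde G/H]\to {}^{\widetilde G}\mathbb Q[\widetilde G/K]\to \mathbb Q\to 0,
\]
which can be rewritten as the short exact sequence of augmentation ideals
\[
0\to {}^{\widetilde G}I_{\mathbb Q H}\oplus \mathbb Q\widetilde G\to {}^{\widetilde G}I_{\mathbb Q K}\oplus\mathbb Q\widetilde G\to I_{\mathbb Q\widetilde G}\to 0 .
\]
Here the extra free summand $\mathbb Q\widetilde G$ maps onto $t-1$. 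This is the standard relation-module presentation of an HNN extension.

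\textbf{Step 1: Tensor with $\mathcal U$.} Apply $\mathcal U\otimes_{\mathbb Q\widetilde G}-$. By hypothesis $\mathcal U\otimes_{\mathbb Q\widetilde G}I_{\mathbb Q\widetilde G}\cong\mathcal U\otimes_{\mathbb QG}I_{\mathbb QG}$. Since $b_1^{(2)}(G)=0$, this module has $\mathcal U$-dimension $1$, because $0\to I\to\mathbb QG\to\mathbb Q\to0$ gives $\dim=b_1^{(2)}-b_0^{(2)}+1$ and $b_0^{(2)}(G)=0$ for infinite $G$.

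Next, $\mathcal U\otimes_{\mathbb Q K}I_{\mathbb QK}$ has dimension
\[
\dim_{\mathcal U(\alpha(K))}\bigl(\mathcal U(\alpha(K))\otimes_{\mathbb Q K}I_{\mathbb Q K}\bigr)
\]
by \cref{prop: properties_L2_dim}(i). The sequence then gives
\[
1 \le \dim\bigl(\mathcal U\otimes I_{\mathbb QK}\bigr)+1-\bigl(\text{image of the }H\text{-term}\bigr).
\]
This is too crude by itself, so the real argument has to go through the kernel.

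\textbf{Step 2: Pass to $\ker\phi$.} Restrict everything to $N$. Because $\phi(G)=\mathbb Z$ is amenable, $b_1^{(2)}(N)$ is computed by $\Tor_1^{\mathbb QN}(\mathcal U(N),\mathbb Q)$, which is the same as $\Tor_1^{\mathbb QG}(\mathcal U,\mathbb Q[G/N])$ after induction. The module $\mathbb Q[G/N]=\mathbb Q[t^{\pm1}]$ sits in
\[
0\to\mathbb Q[G/N]\xrightarrow{t-1}\mathbb Q[G/N]\to\mathbb Q\to 0.
\]
The Mayer--Vietoris sequence of the splitting, tensored with $\mathbb Q[G/N]$, exhibits $\widetilde N$ as a graph of groups: the vertex groups are $t^iKt^{-i}$ and the edge groups are $t^iHt^{-i}$, indexed by $\mathbb Z$. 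Its long exact sequence in $\Tor^{\mathbb Q\widetilde G}(\mathcal U,-)$ reads
\[
\Tor_1(\mathcal U,{}^{\widetilde G}\mathbb Q[\widetilde G/K]\otimes\mathbb Q[t^{\pm1}])\to\Tor_1(\mathcal U,\mathbb Q[G/N])\to \Tor_0(\text{edge})\to\Tor_0(\text{vertex}).
\]
For $\widetilde G=G$ this bounds $b_1^{(2)}(N)$ by $b_1^{(2)}(K)$ plus the dimension of the kernel of $\Tor_0$ of the edge term into the vertex term. That kernel term equals $b_0(H)$-type contributions. We control it via the $\Tor_0$ term: the map on $\Tor_0$ is ${}^G\mathcal U\otimes\mathbb Q[G/H]\to{}^G\mathcal U\otimes\mathbb Q[G/K]$ with dimensions $1/|H|$ and $1/|K|$, where these are zero unless finite. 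This is where the terms $-1+1/|\alpha(K)|$ come from. The second inequality in (i) follows the same way from the exact sequence $I_H\to I_K\to I(K,H)$, using the fact that the HNN structure forces $\dim \mathcal U\otimes I_{\mathbb Q}(K,H)$ to be controlled by $\dim\mathcal U\otimes I_{\mathbb Q\widetilde G}-1=0$ modulo the $t$-translates.

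Part (ii) is part (i) specialized to $\alpha=\mathrm{id}$. Here $\dim(\mathcal U\otimes I_{\mathbb QK})=b_1^{(2)}(K)-b_0^{(2)}(K)+1$, and $b_0^{(2)}(K)=1/|K|$.

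\textbf{Step 3: Equality under $L^2$-independence, part (iii).} $L^2$-independence of $H$ and $\theta(H)$ makes the maps $\mathcal U\otimes I_{\mathbb QH}\to\mathcal U\otimes I_{\mathbb QK}$ and $\mathcal U\otimes I_{\mathbb Q\theta(H)}\to\mathcal U\otimes I_{\mathbb QK}$ $\dim$-injective. In the $\mathbb Z$-indexed graph of groups for $N$, the edge-to-vertex map in degree $1$ is then injective up to dimension zero. So the Mayer--Vietoris for $N$ shows
\[
b_1^{(2)}(N)\ge b_1^{(2)}(K)\cdot(\text{per period})-b_1^{(2)}(H)\cdot(\text{per period})+\dots
\]
Combined with $b_1^{(2)}(N)\le b_1^{(2)}(K)\le b_1^{(2)}(H)$, equality should follow once $b_1^{(2)}(H)=b_1^{(2)}(K)$ is shown. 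I would obtain that from the vanishing of $b_1^{(2)}(G)$ together with the injectivity, using the Euler characteristic of the short sequence in Step 1: injectivity gives $\dim\mathcal U\otimes I_{\mathbb QG}=\dim(I_K)-\dim(I_H)+1$, hence $\dim I_K=\dim I_H$.

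The main obstacle is the bookkeeping in Step 2, namely correctly identifying the $N$-Mayer--Vietoris terms with $\mathcal U(G)$-dimensions while working only with the $\FP_2(\ell^2)$ comparison $\widetilde G\to G$, which is an isomorphism only in degree $1$ after tensoring. I would first handle $\widetilde G=G$ carefully and then transfer via the given isomorphism.
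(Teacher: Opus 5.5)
There is a genuine gap, and it begins in Step~1, where you set the HNN sequence aside as ``too crude.'' The missing idea is that the hypotheses force the \emph{twisted} map $\gamma_1\colon \mathcal{U}(G)\otimes_{\QQ H}I_{\QQ H}\to \mathcal{U}(G)\otimes_{\QQ K}I_{\QQ K}$, $1\otimes a\mapsto 1\otimes a-t\otimes\theta(a)$, to be surjective. The argument runs as follows.
\begin{enumerate}
\item By $\FP_2(\ell^2)$, $\mathcal{U}(G)\otimes_{\QQ G}I_{\QQ G}$ is finitely presented, hence projective over the von Neumann regular ring $\mathcal{U}(G)$.
\item Since $t-1$ is a unit in $\mathcal{U}(G)$, the multiplication map $\mathcal{U}(G)\otimes_{\QQ G}I_{\QQ G}\to\mathcal{U}(G)$ is onto. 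Both sides have dimension $1$ (this is where $b_1^{(2)}(G)=0$ enters), so it is an isomorphism.
\item Hence the summand $\mathcal{U}(G)(t-1)$ maps isomorphically onto $\mathcal{U}(G)\otimes_{\QQ\widetilde G}I_{\QQ\widetilde G}$.
\item Right exactness applied to $0\to{}^{\widetilde G}I_{\QQ H}\to{}^{\widetilde G}I_{\QQ K}\oplus\QQ\widetilde G(t-1)\to I_{\QQ\widetilde G}\to 0$ then gives surjectivity of $\gamma_1$.
\end{enumerate}
Your version of this sequence, with an extra free summand on the left, is not exact. For $H=K=1$ it would exhibit the free module $I_{\QQ\ZZ}$ as the cokernel of an injective endomorphism of $\QQ\ZZ$. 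Surjectivity of $\gamma_1$ immediately gives $\dim\mathcal{U}(G)\otimes_{\QQ K}I_{\QQ K}\le\dim\mathcal{U}(G)\otimes_{\QQ H}I_{\QQ H}$, and hence the second inequality of (i). Your substitute, controlling $\dim\mathcal{U}\otimes I_{\QQ}(K,H)$ ``modulo the $t$-translates,'' concerns the untwisted map and is not an argument.

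Step~2 cannot produce the first inequality. In your Mayer--Vietoris sequence for the $\ZZ$-indexed line of groups, the vertex term is a direct sum over $i\in\ZZ$ of copies of $H_1(K;\mathcal{U}(G))$. This is infinite-dimensional as soon as $b_1^{(2)}(K)>0$, so no finite bound comes out. Moreover, the hypothesis $b_1^{(2)}(G)=0$ never enters, and it is indispensable. For $G=F_2*\langle t\rangle$ with $\phi(F_2)=0$, $\phi(t)=1$, $K=F_2$, $H=1$, one gets $b_1^{(2)}(\ker\phi)=\infty$, $b_1^{(2)}(K)=1$ and $b_1^{(2)}(H)=0$.

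The paper instead exhausts $\widetilde N=\ker(\phi\circ\alpha)$ by the finite lines $N_{[i,j]}=\langle K^{t^i},\dots,K^{t^j}\rangle$. Applying surjectivity of $\gamma_1$ to consecutive pairs of vertex coordinates shows inductively that $\mathcal{U}(G)\otimes I_{\QQ N_{[-n,0]}}$ is a quotient of $\mathcal{U}(G)\otimes I_{\QQ N_{[-n+1,0]}}$. So every finite piece has dimension at most $\dim\mathcal{U}(G)\otimes_{\QQ K}I_{\QQ K}$. Then \cref{lem: dim_direct_lim}, together with the surjection $\mathcal{U}(G)\otimes I_{\QQ\widetilde N}\to\mathcal{U}(G)\otimes I_{\QQ N}$, gives (i) for arbitrary $\alpha$, with no separate transfer step.

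In Step~3, $b_1^{(2)}(H)=b_1^{(2)}(K)$ follows at once from $L^2$-independence of $H$ and (ii). However, ``combined with $b_1^{(2)}(N)\le b_1^{(2)}(K)$, equality should follow'' is a non sequitur. The real content of (iii) is the lower bound $b_1^{(2)}(N)\ge b_1^{(2)}(K)$, and your ``per period'' inequality does not supply it. The paper obtains it in three steps:
\begin{enumerate}
\item $L^2$-independence of $\theta(H)$, together with the equality of Betti numbers, makes $1\otimes a\mapsto t\otimes\theta(a)$ a $\dim_{\mathcal{U}(G)}$-isomorphism.
\item Consequently each inclusion $N_{[-n+1,0]}\le N_{[-n,0]}$ induces a $\dim_{\mathcal{U}(G)}$-isomorphism on $\mathcal{U}(G)\otimes I$.
\item Composing these (\cref{lem: comp_dim_isomph}) and passing to the direct limit, using exactness of $\dim_{\mathcal{U}(G)}$, shows that $\mathcal{U}(G)\otimes_{\QQ K}I_{\QQ K}\to\mathcal{U}(G)\otimes_{\QQ N}I_{\QQ N}$ is a $\dim$-isomorphism.
\end{enumerate}
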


\begin{proof} Since $\widetilde{G}=K *_{H,\theta}$, we may write
\[
\widetilde{G}
   = \big\langle\, t, K \ \big|\  h t = t\,\theta(h),\; h\in H \,\big\rangle.
\]
The map \(\theta : H \to K\) extends naturally to a homomorphism 
\(\theta : \mathbb{Q}H \to \mathbb{Q}K\).  
In what follows, we will abuse notation and also denote by \(t\) the image of \(t\) in \(G\).

From the splitting of $\widetilde{G}$ we obtain the exact sequence
\[
0 \longrightarrow {}^{\widetilde G} I_{\mathbb{Q}H}
   \xrightarrow{\ \gamma\ }
   {}^{\widetilde G} I_{\mathbb{Q}K} \oplus \mathbb{Q}\widetilde{G}\,(t-1)
   \longrightarrow I_{\mathbb{Q}\widetilde{G}}
   \longrightarrow 0,
\]
where the map $\gamma$ is given on generators by
\[
\gamma(1\otimes a)
   =
   \bigl(1\otimes a - t\otimes\theta(a),\; a(t-1)\bigr)
   \qquad (a\in I_{\QQ H}).
\]

After applying the functor $\mathcal{U}(G)\otimes_{\mathbb{Q}\widetilde G} -$, we obtain the exact sequence
\begin{multline}\label{preseq}
H_2(\widetilde G;\mathcal U(G))  \longrightarrow 
\mathcal{U}(G)\otimes_{\QQ H} I_{\mathbb{Q}H}
   \xrightarrow{\ \widetilde{\gamma}\ }
\mathcal{U}(G)\otimes_{\QQ K} I_{\mathbb{Q}K}
   \oplus
   \mathcal{U}(G)\,(t-1)
   \longrightarrow\\
\mathcal{U}(G)\otimes_{\QQ\widetilde G} I_{\mathbb{Q}\widetilde{G}}
   \longrightarrow 0 ,
\end{multline}
where $\widetilde{\gamma}=\mathrm{id}\otimes \gamma$. 

Now $G$ is of type $\FP_2(\ell^2)$, so $\UO(G) \otimes_{\QQ G} I_{\QQ G}$ is projective by \cref{prop: vNr_proj_mod}. Since $t - 1$ is invertible in $\UO(G)$ we get that the multiplication map
\[
   \mathcal U(G) \otimes_{\QQ G} I_{\QQ G} \to\mathcal U(G)
\]
is surjective. Moreover, since $b_1^{(2)}(G)$ vanishes and $\dim_{\UO(G)}$ is faithful over projective modules, the multiplication map is actually an isomorphism. In particular, the map
\[
    \mathcal U(G)\otimes_{\mathbb{Q}\widetilde{G}} I_{\mathbb{Q}\widetilde{G}} \;\longrightarrow\; \mathcal U(G)
\]
is an isomorphism. Furthermore, observe that the natural inclusion $\UO(G) (t -1) \hookrightarrow\mathcal U(G)$ is an isomorphism (being \(t-1\) is invertible in \(\mathcal U(G)\)) and factors through
\[
    \mathcal U(G)\,(t-1) \;\longrightarrow\;\mathcal U(G)\otimes_{\mathbb{Q}\widetilde{G}} I_{\mathbb{Q}\widetilde{G}},\ t-1\mapsto 1\otimes (t-1).
\]
It follows then that the latter map is also an isomorphism.

Thus, from \cref{preseq} we obtain the exact sequence 
\begin{equation} \label{tildegamma1}
H_2(\widetilde G;\mathcal U(G))  \longrightarrow 
\mathcal{U}(G)\otimes_{\QQ H} I_{\mathbb{Q}H}
   \xrightarrow{\ {\gamma_1}\ }
\mathcal{U}(G)\otimes_{\QQ K} I_{\mathbb{Q}K}
   \longrightarrow 0 
\end{equation}
given by ${\gamma_1}(1\otimes a)=1\otimes a - t\otimes \theta(a)$ for $a\in I_{\QQ H}$.
This implies that
\begin{equation}\label{desig} \dim_{\mathcal{U}(G)}\!
    \left( \mathcal{U}(G)\otimes_{\mathbb{Q} K} I_{\mathbb{Q} K} \right) 
    \;\le\;
    \dim_{\mathcal{U}(G)}\!
    \left( \mathcal{U}(G)\otimes_{\mathbb{Q} H} I_{\mathbb{Q} H} \right).
    \end{equation}
    In particular, if $\alpha(H)$ is finite
    \[
        1 - \frac{1}{|\alpha(K)|}\leq \! \dim_{\mathcal{U}(G)}\!
    \left( \mathcal{U}(G)\otimes_{\mathbb{Q} K} I_{\mathbb{Q} K} \right) \leq \!  \dim_{\mathcal{U}(G)}\!
    \left( \mathcal{U}(G)\otimes_{\mathbb{Q} H} I_{\mathbb{Q} H} \right) = 1 - \frac{1}{\lvert \alpha(H) \rvert},
    \]
    and so, $\alpha(K)$ is also finite and has the same order as $\alpha(H)$. This implies that $\alpha(H)=\alpha(K)$, and so $\ker \phi$ is finite. Therefore,
\[
b_1^{(2)}(\ker \phi)=0,
\]
and \emph{(i), (ii)} and \emph{(iii)} hold if $\alpha(H)$ is finite. 

From now on we assume that $\alpha(H)$ is infinite. Note that in this case the second inequality in \emph{(i)} follows from \cref{desig}.

Let $\widetilde{N}=\ker(\phi\circ \alpha)$.  
For integers $i\le j$, denote by $N_{[i,j]}$ the subgroup of $\widetilde{N}$ generated by the conjugates 
$K^{t^k}$ with $i\le k\le j$.  
Note that $\widetilde{N}$ is the direct limit of the $N_{[i,j]}$, and each $N_{[i,j]}$ is the fundamental group of a line of groups where the vertex groups are $K^{t^k}$ with $i\le k\le j$ and the edge groups are $H^{t^k}$ with $i + 1\le k\le j$.

\begin{claim}\label{conjugate} Let $U$ be a subgroup of $\widetilde G$.
Then for each integer $i$, the map
\[
\tau_i \colon 
\mathcal U(G)\otimes_{\mathbb{Q} U^{t^i}} I_{\mathbb{Q} U^{t^i}}
\;\longrightarrow\;
\mathcal U(G)\otimes_{\mathbb{Q} U} I_{\mathbb{Q} U},
\qquad
\tau_i(u\otimes a)= ut^{-i}\otimes a^{\,t^{-i}},
\]
is an isomorphism of $\mathcal U(G)$-modules.
\end{claim}

\begin{proof}
Define a biadditive map
\[
\pi:\mathcal U(G)\times I_{\mathbb{Q} U^{t^{i}}}
\longrightarrow
\UO(G)\otimes_{\mathbb{Q} U} I_{\mathbb{Q} U},
\qquad
(u,a)\longmapsto u t^{-i}\otimes a^{\,t^{-i}}.
\]
This map satisfies the additional condition that for every \(r\in \mathbb{Q} U^{t^{i}}\),
\[
\pi(ur, a)=u rt^{-i}\otimes a^{\,t^{-i}}=ut^{-i} r^{t^{-i}}\otimes a^{\,t^{-i}}=ut^{-i} \otimes r^{t^{-i}}a^{\,t^{-i}}=ut^{-i} \otimes (ra)^{\,t^{-i}}=\pi(u, ra).
\]
By the universal property of the tensor product, there exists a unique map
\[
\tau_i:\mathcal U(G)\otimes_{\mathbb{Q} U^{t^{i}}} I_{\mathbb{Q} U^{t^{i}}}
\longrightarrow
\mathcal U(G)\otimes_{\mathbb{Q} U} I_{\mathbb{Q} U}
\]
induced by \(\pi\). Similarly, one can define an inverse of $\tau_i$.
\end{proof}
Observe that if \(j_{1}-i_{1}=j_{2}-i_{2}\), then
\[
N_{[i_{1},j_{1}]} = N_{[i_{2},j_{2}]}^{\,t^{\,i_{1}-i_{2}}}.
\]
In particular,
\begin{equation}
\label{iso}
\mathcal U(G)\otimes_{\mathbb{Q} N_{[i_{1},j_{1}]}} I_{\mathbb{Q} N_{[i_{1},j_{1}]}}
\;\cong\;
\mathcal U(G)\otimes_{\mathbb{Q} N_{[i_{2},j_{2}]}} I_{\mathbb{Q} N_{[i_{2},j_{2}]}}.
\end{equation}

For each $n\ge 0$ put $N_n=N_{[-n,0]}$. Thus $N_0\cong K$.
\begin{claim}\label{exactNn}
For each $n \ge 1$ there is an exact sequence of $\mathcal U(G)$-modules
\[
\bigl(\mathcal U(G)\otimes_{\mathbb{Q} H} I_{\mathbb{Q}H}\bigr)^n
\xrightarrow{\ \sigma_n\ }
\bigl(\mathcal U(G)\otimes_{\mathbb{Q} K} I_{\mathbb{Q}K}\bigr)^{\,n+1}
\xrightarrow{\ \rho_n\ }
\mathcal U(G)\otimes_{\mathbb{Q} N_n} I_{\mathbb{Q} N_n}
\longrightarrow 0,
\]
where the map $\sigma_n$ is given on generators by
\[
\sigma_n(1\otimes a_1,\ldots, 1\otimes a_n)
=
\bigl(
-t\otimes \theta(a_1),\;
1\otimes a_1 -t\otimes \theta(a_2),\;
\ldots,\;
1\otimes a_{n-1} -t\otimes \theta(a_n),\;
1\otimes a_n
\bigr)
\]
and the map $\rho_n$ is given on generators by
\[
    \rho_n(1 \otimes b_1, \ldots, 1 \otimes b_n, 1 \otimes b_{n+1}) = \sum_{i = 1}^{n + 1} \tau_{i - n - 1} (1 \otimes b_i).
\]
\end{claim}

\begin{proof} The group \(N_{n}\) is the line of groups
\[
K^{t^{-n}}
\;*_{\,H^{t^{-(n-1)}}}\;
K^{t^{-(n-1)}}
\;\ast_{\,H^{t^{-(n-2)}}}\;
\cdots
\;\ast_{\,H^{t^{-1}}}\;
K^{t^{-1}}
\;*_{\,H}\;
K,
\]
where for each \(0\le i\le n-1\), the embedding
\[
H^{t^{-i}} \longrightarrow K^{t^{-(i+1)}}
\]
is given by \(h^{t^{-i}}\mapsto \theta(h)^{\,t^{-(i+1)}}\), while the embedding
\[
H^{t^{-i}} \longrightarrow K^{t^{-i}}
\]
is the natural inclusion.  
This yields the exact sequence
\[
0 \longrightarrow
{}^{\widetilde G} I_{\mathbb{Q} H^{t^{-(n-1)}}}\oplus \cdots \oplus {}^{\widetilde G} I_{\mathbb{Q} H}
\xrightarrow{\;\eta_{n}\;}
{}^{\widetilde G} I_{\mathbb{Q} K^{t^{-n}}}\oplus \cdots \oplus {}^{\widetilde G} I_{\mathbb{Q} K}
\longrightarrow
{}^{\widetilde G} I_{\mathbb{Q} N_{n}}
\longrightarrow 0,
\]
where
\[
\eta_{n}\!\left(1\otimes a_{1}^{\,t^{-(n-1)}},\ldots,1\otimes a_{n}\right)
=
\bigl(
-1\otimes \theta(a_{1})^{\,t^{-n}},\;
1\otimes a_{1}^{\,t^{-(n-1)}} - 1\otimes \theta(a_{2})^{\,t^{-(n-1)}},\;
\ldots,\;
1\otimes a_{n}
\bigr).
\]

Applying the functor \(\mathcal U(G)\otimes_{\mathbb{Q}\widetilde G} -\), we obtain the exact sequence
\begin{multline}\label{witht}
\mathcal U(G)\otimes_{\mathbb{Q} H^{t^{-(n-1)}}} I_{\mathbb{Q} H^{t^{-(n-1)}}}
\;\oplus\;
\cdots
\;\oplus\;
\mathcal U(G)\otimes_{\mathbb{Q} H} I_{\mathbb{Q} H}
\xrightarrow{\;\widetilde{\eta}_{n}\;}\\
\mathcal U(G)\otimes_{\mathbb{Q} K^{t^{-n}}} I_{\mathbb{Q} K^{t^{-n}}}
\;\oplus\;
\cdots
\;\oplus\;
\mathcal U(G)\otimes_{\mathbb{Q} K} I_{\mathbb{Q} K}
\longrightarrow 
\mathcal U(G)\otimes_{\mathbb{Q} N_{n}} I_{\mathbb{Q} N_{n}}
\longrightarrow 0,    
\end{multline}
where
\[
\widetilde{\eta}_{n}\!\left(1\otimes a_{1}^{\,t^{-(n-1)}},\ldots,1\otimes a_{n}\right)
=
\bigl(
-1\otimes \theta(a_{1})^{\,t^{-n}},\;
1\otimes a_{1}^{\,t^{-(n-1)}} - 1\otimes \theta(a_{2})^{\,t^{-(n-1)}},\;
\ldots,\;
1\otimes a_{n}
\bigr).
\]
Using the isomorphisms established in \cref{conjugate}, the sequence in \cref{witht} transforms into the sequence stated in the claim.
\end{proof}
\begin{claim}\label{subD}
Let
\[
D=\{\, (u,-u) \;\mid\; u \in \mathcal U(G)\otimes_{\mathbb{Q} K} I_{\mathbb{Q}K} \,\}
   \subset 
\bigl(\mathcal U(G)\otimes_{\mathbb{Q} K} I_{\mathbb{Q}K}\bigr)^{2}.
\]
Then
\[
\rho_1(D)=\mathcal U(G)\otimes_{\mathbb{Q} N_1} I_{\mathbb{Q} N_1}.
\]
\end{claim}

\begin{proof}
It suffices to show that 
\[
D + \im\sigma_{1}
= \bigl(\mathcal U(G)\otimes_{\mathbb{Q} K} I_{\mathbb{Q}K}\bigr)^{2}.
\]
Observe that \(D + \im \sigma_{1}\) contains the submodule \((0,\, \im  {\gamma}_{1})\). 
However, as established in \cref{tildegamma1}, we have 
\(\im {\gamma}_{1}
= \mathcal U(G)\otimes_{\mathbb{Q} K} I_{\mathbb{Q}K}\).
This proves the claim.
\end{proof}

\begin{claim}\label{bound}
For all $i\le j$ we have
\[
\dim_{\mathcal U(G)}
\bigl( \mathcal U(G)\otimes_{\mathbb{Q} N_{[i,j]}}
   I_{\mathbb{Q} N_{[i,j]}} \bigr)
\;\le\;
\dim_{\mathcal U(G)}
\bigl( \mathcal U(G)\otimes_{\mathbb{Q} K} I_{\mathbb{Q} K} \bigr).
\]
\end{claim}

\begin{proof} By \cref{iso}, we may assume without loss of generality that \(j=0\).
We proceed by induction on \(i\).

If \(i=0\), then \(N_{0}=K\), and the claim follows immediately. If $i= - 1$, the claim follows from \cref{subD}.
Assume now that the claim holds for \(i=1-n\le -1\), and consider the case \(i=-n\le -2\).

For each \(1\le k\le n\), let
\[
\mu_{k} : 
\mathcal U(G)\otimes_{\mathbb{Q} K} I_{\mathbb{Q}K}
\longrightarrow 
\bigl(\mathcal U(G)\otimes_{\mathbb{Q} K} I_{\mathbb{Q}K}\bigr)^{\,n+1}
\]
denote the map sending an element \(r\) to the vector whose \(k\)-th coordinate is \(r\), whose \((k+1)\)-st coordinate is \(-r\), and whose remaining coordinates are \(0\).  
Set \(D_{k}=\im\mu_{k}\).

By \cref{subD}, we have
\begin{equation}
\label{sumDi}
\rho_{n}(D_{1}+\cdots + D_{n})
=
\mathcal U(G)\otimes_{\mathbb{Q} N_{n}} I_{\mathbb{Q} N_{n}}.
\end{equation}
Fix \(a\in I_{\mathbb{Q} H}\) and $1\le k\le n-1$ and consider the element 
\[
m \;=\; \mu_{k}\bigl( 1\otimes a\bigr) -\mu_{k + 1}\bigl(t\otimes \theta(a)\bigr)\in D_k+D_{k+1}.
\]
By \cref{exactNn}, we have 
\begin{multline}
\label{kernel}
\rho_{n}(m)
= \rho_n\!\bigl(\mu_{k}(1\otimes a) - \mu_{k+1}(t\otimes \theta(a))\bigr) \\
= \tau_{k-1-n}(1\otimes a)
  - \tau_{k-n}\bigl(1\otimes a + t\otimes \theta(a)\bigr)
  + \tau_{k+1-n}(t\otimes \theta(a)) \\
= t^{n+1-k}\otimes a^{t^{n+1-k}}
  - t^{n-k}\otimes a^{t^{n-k}}
  - t^{n+1-k}\otimes \theta(a)^{t^{n-k}}
  + t^{n-k}\otimes \theta(a)^{t^{n-k-1}} \\
= t^{n+1-k}\otimes a^{t^{n+1-k}} - t^{n-k}\otimes a^{t^{n-k}} - t^{n+1-k}\otimes a^{t^{n+1-k}} + t^{n-k}\otimes a^{t^{n-k}} 
= 0.
\end{multline}
Define
\[
\mu:\bigl(\mathcal U(G)\otimes_{\mathbb{Q}K} I_{\mathbb{Q}K}\bigr)^{n}
\longrightarrow D_1+\cdots+D_n
\]
by
\[
\mu(m_1,\ldots,m_n)= \sum_{i=1}^n \mu_{n + 1 -i}(m_i).
\]
Then, combining \cref{exactNn},  \cref{kernel} and  \cref{sumDi}, we obtain the following commutative diagram:

\[
\begin{tikzcd}
\bigl(\mathcal U(G)\otimes_{\mathbb{Q}H} I_{\mathbb{Q}H}\bigr)^{n-1}
\arrow[r, "\sigma_{n-1}"]
\arrow[d, "\mu \circ \sigma_{n-1}"']
&
\bigl(\mathcal U(G)\otimes_{\mathbb{Q}K} I_{\mathbb{Q}K}\bigr)^{n}
\arrow[r, "\rho_{n-1}"]
\arrow[d, "\mu"]
&
\mathcal U(G)\otimes_{\mathbb{Q}N_{n-1}} I_{\mathbb{Q}N_{n-1}}
\arrow[r]
&
0 \\
(D_1+\cdots+D_n)\cap \ker \rho_n
\arrow[r]
&
D_1+\cdots+D_n
\arrow[r, "\rho_n"]
&
\mathcal U(G)\otimes_{\mathbb{Q}N_n} I_{\mathbb{Q}N_n}
\arrow[r]
&
0
\end{tikzcd}.
\]
Since   $\mu$ is surjective,  we  deduce that
\[
\mathcal U(G)\otimes_{\mathbb{Q} N_n} I_{\mathbb{Q} N_n}
\]
is a quotient of
\[
\mathcal U(G)\otimes_{\mathbb{Q} N_{n-1}} I_{\mathbb{Q} N_{n-1}}.
\]
Applying the inductive hypothesis completes the inductive step and therefore the proof of the claim.
\end{proof}

Since \(\widetilde N\) is the direct limit of the subgroups \(N_{[i,j]}\), the module
\(\mathcal{U}(G)\otimes_{\mathbb{Q}\widetilde N} I_{\mathbb{Q}\widetilde N}\) is the direct limit of the modules
\(\mathcal{U}(G)\otimes_{\mathbb{Q} N_{[i,j]}} I_{\mathbb{Q} N_{[i,j]}}\). Hence by \cref{lem: dim_direct_lim}
\[
    \dim_{\mathcal{U}(G)}
    \bigl(\mathcal{U}(G)\otimes_{\mathbb{Q}\widetilde N} I_{\mathbb{Q}\widetilde N}\bigr)
    \;\le\;
    \sup_{i,j}\,
    \dim_{\mathcal{U}(G)}
    \bigl(\mathcal{U}(G)\otimes_{\mathbb{Q} N_{[i,j]}} I_{\mathbb{Q} N_{[i,j]}}\bigr).
\]
Applying \cref{bound}, we obtain
\[
    \dim_{\mathcal{U}(G)}
    \bigl(\mathcal{U}(G)\otimes_{\mathbb{Q}\widetilde N} I_{\mathbb{Q}\widetilde N}\bigr)
    \;\le\;
    \dim_{\mathcal{U}(G)}
    \bigl(\mathcal{U}(G)\otimes_{\mathbb{Q} K} I_{\mathbb{Q} K}\bigr).
\]

Let \(N=\ker \phi\).  
Since \(\mathcal{U}(G)\otimes_{\mathbb{Q} N} I_{\mathbb{Q} N}\) is a quotient of 
\(\mathcal{U}(G)\otimes_{\mathbb{Q}\widetilde N} I_{\mathbb{Q}\widetilde N}\), it follows that
\begin{multline*}
    b_{1}^{(2)}(N)
    \;=\;
    \dim_{\mathcal{U}(G)}
    \bigl(\mathcal{U}(G)\otimes_{\mathbb{Q} N} I_{\mathbb{Q} N}\bigr)
    -1
    \;\le\;\\
    \dim_{\mathcal{U}(G)}
    \bigl(\mathcal{U}(G)\otimes_{\mathbb{Q}\widetilde N} I_{\mathbb{Q}\widetilde N}\bigr)-1
    \;\le\;
    \dim_{\mathcal{U}(G)}
    \bigl(\mathcal{U}(G)\otimes_{\mathbb{Q} K} I_{\mathbb{Q} K}\bigr)-1.
\end{multline*}
This establishes the first inequality in \emph{(i)}.

The two inequalities in \emph{(ii)} follow directly from the corresponding inequalities in \emph{(i)}.

Now assume that $\widetilde{G}=G$ and that \(H\) is \(L^{2}\)-independent in \(K\).  
This implies that $$b_{1}^{(2)}(H)\le b_{1}^{(2)}(K).$$  
Since we have already established the opposite inequality, we conclude that
\[
    b_{1}^{(2)}(H)= b_{1}^{(2)}(K).
\]

In addition, assume that $\theta(H)$ is \(L^{2}\)-independent in \(K\).  

\begin{claim}\label{UG-iso}
    Let \(i_{2}\le i_{1}\le j_{1}\le j_{2}\). Then the natural map
    \[
        \mathcal U(G)\otimes_{\mathbb{Q} N_{[i_{1},j_{1}]}}
        I_{\mathbb{Q} N_{[i_{1},j_{1}]}}
        \;\longrightarrow\;
        \mathcal U(G)\otimes_{\mathbb{Q} N_{[i_{2},j_{2}]}}
        I_{\mathbb{Q} N_{[i_{2},j_{2}]}}
    \]
    is a $\dim_{\mathcal U(G)}$-isomorphism.
\end{claim}

\begin{proof}
    It suffices to treat the cases
    \[
        (i_{2},j_{2})=(i_{1}-1,\, j_{1})
        \qquad\text{and}\qquad
        (i_{2},j_{2})=(i_{1},\, j_{1}+1).
    \]
    We handle only the first case; the second is analogous.

    By \cref{conjugate}, we may assume without loss of generality that
    \(j_{1}=j_{2}=0\).  Set \(n=-i_{2}\).
 From    the sequence in \cref{exactNn}  we obtain an exact sequence
    \[
        \mathcal U(G)\otimes_{\mathbb{Q} H} I_{\mathbb{Q} H}
        \xrightarrow{\;\epsilon\;}
        \mathcal U(G)\otimes_{\mathbb{Q} K} I_{\mathbb{Q} K}
        \;\oplus\;
        \mathcal U(G)\otimes_{\mathbb{Q} N_{n-1}} I_{\mathbb{Q} N_{n-1}}
        \longrightarrow
        \mathcal U(G)\otimes_{\mathbb{Q} N_{n}} I_{\mathbb{Q} N_{n}}
        \longrightarrow 0,
    \]
    where \(\epsilon=(\epsilon_{1},\epsilon_{2})\) and
    \(\epsilon_1(1\otimes a)=t\otimes \theta(a)\).

    Since \(\theta(H)\) is \(L^{2}\)-independent in \(K\) and \emph{(i)} holds, the homomorphism
    \(\theta : H\to K\) induces a $\dim_{\mathcal U(G)}$-isomorphism
    \[
      \widetilde \theta:  \mathcal U(G)\otimes_{\mathbb{Q} H} I_{\mathbb{Q} H}
        \;\longrightarrow\;
        \mathcal U(G)\otimes_{\mathbb{Q} K} I_{\mathbb{Q} K}.
    \]
    Since $\im \epsilon_1=\im \widetilde \theta$, the homomorphism \(\epsilon_{1}\) is also a $\dim_{\mathcal U(G)}$-isomorphism.
    Therefore, this implies that the map
    \[
        \mathcal U(G)\otimes_{\mathbb{Q} N_{n-1}} I_{\mathbb{Q} N_{n-1}}
        \;\longrightarrow\;
        \mathcal U(G)\otimes_{\mathbb{Q} N_{n}} I_{\mathbb{Q} N_{n}}
    \]
    is a $\dim_{\mathcal U(G)}$-isomorphism as well.
\end{proof}

Since \(N\) is the direct limit of the subgroups \(N_{[i,j]}\), the module
\(\mathcal{U}(G)\otimes_{\mathbb{Q}N} I_{\mathbb{Q}N}\) is the direct limit of the modules
\(\mathcal{U}(G)\otimes_{\mathbb{Q} N_{[i,j]}} I_{\mathbb{Q} N_{[i,j]}}\).
By \cref{UG-iso} along with \cref{lem: comp_dim_isomph}, the natural map
\[
    \mathcal{U}(G)\otimes_{\mathbb{Q} K} I_{\mathbb{Q} K}
    \;\longrightarrow\;
    \mathcal{U}(G)\otimes_{\mathbb{Q} N} I_{\mathbb{Q} N}
\]
is a $\dim_{\mathcal{U}(G)}$-isomorphism.  
Consequently,
\[
    b_{1}^{(2)}(N)=b_{1}^{(2)}(K),
\]
which completes the proof of \emph{(iii)}.
\end{proof}

We conclude this section with the proof of item \ref{item: approach_l2indep_splitting} of \cref{approach}.
\begin{proof}[Proof of \cref{approach}(ii)]
    Since \(G\) satisfies the Weak Atiyah Conjecture, there exists a splitting  
\(G = K \ast_{H,\theta}\) that realises \(c(G;\phi)\).  
Suppose that $H$ is not \(L^{2}\)-independent in $K$.  
Then, by our assumption, $H$ is not \(L^{2}\)-compressed in $K$.  
Hence there exists a finitely generated subgroup $H'$ of $K$ containing \(H\) such that 
\[
    b_{1}^{(2)}(H') < b_{1}^{(2)}(H).
\]

Write the original
HNN extension as
\[
G=\left\langle K,t\ \middle|\ t^{-1}ht=\theta(h),\ h\in H\right\rangle.
\]
 
Put
\[
 \overline{H'}:=t^{-1}H't
\]
and, for \(h'\in H'\), write
\(\overline{h'}=t^{-1}h't\). Define
\[
 K':=\langle K,\overline{H'}\rangle
     =\langle K,t^{-1}H't\rangle\leq \ker\phi,
 \qquad
 \theta'\colon H'\longrightarrow K',
 \quad
 \theta'(h')=\overline{h'}.
\]
Since both \(K\) and \(H'\) are finitely generated, so is \(K'\).
Consider the abstract HNN extension
\[
 G':=K'\ast_{H',\theta'}
   =\left\langle K',s\ \middle|\
       s^{-1}h's=\theta'(h'),\ h'\in H'\right\rangle.
\]

There is a homomorphism
\[
 p\colon G'\longrightarrow G,
 \qquad
 p(k')=k'\quad(k'\in K'),
 \qquad
 p(s)=t.
\]
It is well defined because, for every \(h'\in H'\),
\[
 p(s)^{-1}p(h')p(s)
   =t^{-1}h't
   =\theta'(h')
   =p\bigl(\theta'(h')\bigr).
\]
Moreover, \(p\) is surjective, since \(K\leq K'\) and
\(G=\langle K,t\rangle\).

Conversely, define
\[
 q\colon G\longrightarrow G',
 \qquad
 q(k)=k\quad(k\in K),
 \qquad
 q(t)=s.
\]
To see that \(q\) is well defined, let \(h\in H\). Since
\(H\leq H'\), the defining relation of \(G'\) gives
\[
 s^{-1}hs=\theta'(h).
\]
On the other hand, in the concrete subgroup \(K'\leq G\), we have
\[
 \theta'(h)=t^{-1}ht=\theta(h).
\]
Consequently,
\[
 s^{-1}hs=\theta(h),
\]
so the defining relations of the original HNN presentation of \(G\)
are satisfied in \(G'\).

The homomorphisms \(p\) and \(q\) are inverse. Indeed,
\(p\circ q\) fixes \(K\) and \(t\), and hence
\[
 p\circ q=\operatorname{id}_G.
\]
The group \(G'\) is generated by \(s\), by \(K\), and by the elements
\(\overline{h'}\), where \(h'\in H'\). The map \(q\circ p\) fixes
\(s\) and \(K\), while
\[
 \begin{aligned}
 (q\circ p)(\overline{h'})
   &=q(t^{-1}h't)\\
   &=s^{-1}h's\\
   &=\theta'(h')\\
   &=\overline{h'}.
 \end{aligned}
\]
Therefore,
\[
 q\circ p=\operatorname{id}_{G'},
\]
and hence \(G'\cong G\).

The base group \(K'\) is contained in \(\ker\phi\), and under the
above isomorphism the stable letter \(s\) is sent to \(t\).
Consequently,
\[
 G\cong K'\ast_{H',\theta'}
\]
is an admissible splitting dual to \(\phi\). Its complexity is
\(b_1^{(2)}(H')\), and
\[
 b_1^{(2)}(H')
   <b_1^{(2)}(H)
   =c(G;\phi),
\]
contradicting the choice of the original splitting. Thus \(H\) is
\(L^2\)-independent in \(K\).
 
A similar argument shows that $\theta(H)$ is also \(L^{2}\)-independent in $K$.
\end{proof}

\section{The convexity of the Thurston map}\label{sec:thurston}
In this section we prove \cref{thurston} and its generalization \cref{higher}.  
Both results will follow from  \cref{thm: Thurston_norm_modules,existencepolytope}.

\subsection{Polytopes}\label{sec:polytopes}
Let $H$ be  a free abelian group of finite rank. A \emph{polytope} is the convex hull of a finite set of points in $\mathbb{R}\otimes H$. In particular, the empty set is also a polytope.

Given a non-empty convex compact set $P$ and a character $\phi \in H^1(H; \mathbb{R})$, we define a \emph{face associated with $\phi$} as 
\[
F_{\phi}(P) := \left\{ \, p \in P \,\middle|\, \phi(p) = \min_{q \in P} \phi(q) \, \right\}.
\]
 A face is called a \emph{vertex} if and only if it has dimension~0. Given a face $F$ of the polytope $P$, we define the \emph{dual region in $H^1(H;\mathbb{R})$ to $F$} as
\[ D_F(P) := \left\{ \phi \in H^1(H; \RR) \mid F_{\phi}(P) = F\right\}.\]
Note that if $v$ is a vertex of $P$ then $D_v(P)$ is an open subset of $H^1(H;\RR)$.

 We denote by $\overline{P}$ the set symmetric to $P$ with respect to the origin, that is
\[
\overline{P} = \{\, -p \mid p \in P \,\}.
\]
We will say that $P$ is \emph{symmetric} if $P=\overline P$.

Let \(P\subset \mathbb{R}\otimes H\) be a nonempty  closed convex set with \(0\in P\).
Define the \emph{dual} of \(P\) by
\[
P^\ast :=\{\,\phi\in H^1(H;\RR)\mid \phi(p)\le 1 \text{ for all } p\in P\,\}.
\]

Symmetrically, if  \(B\subset H^1(H;\RR)\) is a nonempty  closed  convex set with \(0\in  B\), we define the \emph{dual} of \(B\) by
\[
B^\ast :=\{\,x\in \mathbb{R}\otimes H \mid \phi(x)\le 1 \text{ for all } \phi \in B\,\}.
\]
\begin{lemma}\label{dual}
The dual of \(P^\ast\) and $B^{*}$ equals \(P\) and $B$ respectively, that is
\[
    \bigl(P^\ast\bigr)^\ast = P \quad \mbox{and} \quad \bigl(B^\ast\bigr)^\ast = B.
\]
\end{lemma}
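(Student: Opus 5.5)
This is the bipolar theorem in finite dimensions, and I would prove it directly with the Hahn--Banach separation theorem. Because $H$ is free abelian of finite rank, $V:=\mathbb{R}\otimes H$ is a finite-dimensional real vector space and $H^1(H;\RR)=\Hom(H,\RR)$ is canonically its dual $V^\vee$. The two statements are therefore the same claim: one applied to $V$ with the closed convex set $P\ni 0$, the other to $V^\vee$ with $B\ni 0$, where we identify $(V^\vee)^\vee=V$. It suffices to prove the following. For a nonempty closed convex set $C$ in a finite-dimensional real vector space $W$ with $0\in C$, we have $(C^\ast)^\ast=C$, where $C^\ast\subseteq W^\vee$ and $(C^\ast)^\ast\subseteq W$.

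The inclusion $C\subseteq (C^\ast)^\ast$ is immediate. If $x\in C$, then every $\phi\in C^\ast$ satisfies $\phi(x)\le 1$ by definition, so $x\in (C^\ast)^\ast$.

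For the reverse inclusion, take $x_0\notin C$. Since $C$ is closed and convex and $\{x_0\}$ is compact, strict separation in finite dimensions gives a linear functional $\psi\in W^\vee$ and a real number $c$ with $\psi(y)\le c<\psi(x_0)$ for all $y\in C$. Concretely, one can let $y_0$ be the nearest point of $C$ to $x_0$ in some Euclidean norm and set $\psi=\langle x_0-y_0,\cdot\rangle$.

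The only point needing care is normalizing the constant to $1$, and this is where $0\in C$ is used. Since $0\in C$, we get $c\ge\psi(0)=0$. If $c>0$, put $\phi=\psi/c$. Then $\phi\le 1$ on $C$, so $\phi\in C^\ast$, while $\phi(x_0)>1$. If $c=0$, then $\psi\le 0$ on $C$ and $\psi(x_0)>0$. Put $\phi=\lambda\psi$ with $\lambda>1/\psi(x_0)$. Then $\phi\le 0\le 1$ on $C$, so $\phi\in C^\ast$, and $\phi(x_0)>1$. In both cases $x_0\notin (C^\ast)^\ast$, which proves $(C^\ast)^\ast\subseteq C$.

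Applying this with $W=V$ and $C=P$, and then with $W=V^\vee$ and $C=B$, gives both equalities in \cref{dual}. The identification $V^{\vee\vee}=V$ holds because $V$ is finite-dimensional.
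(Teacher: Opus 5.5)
Your proof is correct and matches the paper's argument: the trivial inclusion, then separating a point $x\notin P$ from $P$ by a hyperplane and rescaling the functional using $0\in P$, with the case of $B$ handled symmetrically. The only cosmetic difference is that the paper separates with the strict inequality $\phi(p)<c$ on all of $P$, so $c>0$ follows immediately from $0\in P$; your separate treatment of the case $c=0$ is therefore harmless but not needed.
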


\begin{proof}
First note that \(P^\ast\) is closed and convex.
By the definition of the dual we immediately have \(P\subseteq (P^\ast)^\ast\): if \(p\in P\) and \(\phi\in P^\ast\) then \(\phi(p)\le 1\), hence \(p\) satisfies the defining inequalities of \((P^\ast)^\ast\).

For the reverse inclusion suppose \(x\notin P\). Since \(P\) is closed and convex and \(x\notin P\), the (finite-dimensional) hyperplane separation theorem (see for example, \cite[Theorem 7]{Ti87}) provides      \(\phi \in H^1(H;\RR)\) and a real number \(c\) such that
\[
\phi(x)> c \quad\text{and}\quad \phi(p) < c \quad\text{for all } p\in P.
\]
Because \(0\in P\), we have \(c > 0\). Set \(\psi   := \phi/c\). Then for all \(p\in P\) we have \(\psi( p) < 1\), so \(\psi\in P^\ast\). But
\[
\psi(x)  = \frac{\phi(x)}{c} > 1,
\]
so \(x\) does not satisfy the inequalities defining \((P^\ast)^\ast\). Hence \(x\notin (P^\ast)^\ast\). Combining the two inclusions yields \((P^\ast)^\ast = P\), as required.

A symmetric argument shows the other equality.
\end{proof}

The \emph{Minkowski sum} of two convex non-empty closed sets $P$ and $Q$ is defined by
\[
P + Q := \{\, p + q \mid p \in P,\ q \in Q \,\}.
\]
A polytope $P$ is called \emph{integral} if and only if all its vertices lie in $H \subseteq H_1(H; \mathbb{R})$.  
We define $\mathcal P(H)$ to be the Grothendieck group of the monoid of non-empty integral polytopes in $H_1(H; \mathbb{R})$. The group $\mathcal P(H)$ is constructed from the free abelian group with basis given by the set of non-empty integral polytopes, modulo the relations induced by the Minkowski sum. It is easy to see that every element in $\mathcal{P}(H)$ can be represented as a formal difference $P - Q$ of integral polytopes. We say that an element is a \emph{single polytope} if and only if we can take $Q$ to be a singleton. We also define $\mathcal P_T (H )$ to be the quotient of $\mathcal P(H )$ by the subgroup consisting of  singletons. The class of $U\in \mathcal P (H )$ in $\mathcal P_T (H )$ is denoted by $[U]$.

For each non-empty convex compact set $P$, define the thickness function $\gr_P \colon H^1(H; \mathbb{R}) \to \mathbb{R}_{\ge 0}$ by
\[
\gr_P(\phi)
    := \max_{p \in P} \phi(p) \;-\; \min_{p \in P} \phi(p).
\]
Observe that $\gr_{P+Q} = \gr_P + \gr_Q$.  
Therefore, for any $U \in \mathcal{P}(H)$ represented as $U = P - Q$, where $P$ and $Q$ are non-empty polytopes, 
we define
\[
\gr_U := \gr_P - \gr_Q.
\]
Observe that $\gr$ factors through $\mathcal P_T(H)$. Thus, we will also write $\gr_{[U]}$ for $[U]\in \mathcal P_T(H)$.
\begin{lemma}\label{thickfunction}
Let $P$ and $Q$ be two compact, convex and symmetric sets. If $\gr_P = \gr_Q$, then $P = Q$.
\end{lemma}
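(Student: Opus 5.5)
For a symmetric compact convex set $P$ we have $\min_{p\in P}\phi(p) = -\max_{p\in P}\phi(p)$, since $p\in P$ if and only if $-p\in P$. Hence
\[
\gr_P(\phi) = 2\max_{p\in P}\phi(p) = 2h_P(\phi),
\]
where $h_P$ is the support function of $P$. The hypothesis $\gr_P=\gr_Q$ therefore gives $h_P=h_Q$ on all of $H^1(H;\RR)$, and it suffices to show that a compact convex set is determined by its support function.

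To prove this, suppose $x\in P\setminus Q$. Since $Q$ is compact and convex, the finite-dimensional hyperplane separation theorem (as used in the proof of \cref{dual}, \cite[Theorem 7]{Ti87}) gives $\phi\in H^1(H;\RR)$ and $c\in\RR$ with $\phi(x)>c$ and $\phi(q)<c$ for all $q\in Q$. Then
\[
h_P(\phi)\ge \phi(x)>c\ge h_Q(\phi),
\]
which contradicts $h_P=h_Q$. So $P\subseteq Q$, and by symmetry of the argument $Q\subseteq P$.

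Alternatively, one could route the argument through \cref{dual}. Since $0\in P$ by symmetry and convexity, $P^\ast=\{\phi \mid h_P(\phi)\le 1\}=\{\phi\mid \gr_P(\phi)\le 2\}$. Thus $P^\ast=Q^\ast$, and dualising again gives $P=Q$. Either route is short; there is no real obstacle beyond noting that symmetry is what turns the thickness function into a support function, which is why the symmetry hypothesis is essential (translates have equal thickness).
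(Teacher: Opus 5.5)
Your proof is correct and is essentially the paper's. The paper uses exactly your second route: by symmetry $P^\ast=\{\phi\mid \gr_P(\phi)\le 2\}$, so $P^\ast=Q^\ast$, and then \cref{dual} gives $P=Q$. Your first route applies the same hyperplane-separation argument that proves \cref{dual} directly, phrased via support functions rather than through polar duality, so it is the same idea.
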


\begin{proof}
Observe that we have the following equality:
\[
P^*=\{\phi \in H^1(H; \RR) \mid \gr_P(\phi)\le 2\}.
\]
Thus, $P^*=Q^*$.
Therefore, by  \cref{dual}, $P=Q$.
\end{proof}

\subsection{The Thurston norm and the Newton polytope of a module} 
Throughout this section let $\mathcal{E}$ be a division ring and $H$ a free abelian group of finite rank. Let $S$ denote the crossed product $\mathcal{E} * H$, and let $\mathcal{D}$ be the Ore ring of fractions of $S$. Consider the category $\mathcal{T}_S$ consisting of all finitely generated $S$-modules $N$ such that
\[
\mathcal{D} \otimes_S N = \{0\}.
\]
In this section we will define the Thurston norm $x_N$ and the Newton polytope $P(N) \in \mathcal{P}(H)$ of a module $N \in \mathcal{T}_S$ and show that $P(N)+\overline{P(N)}$ can be represented by a  single symmetric integral polytope.

Let $s$ be a non-zero element of $S$. The \emph{Newton polytope} of $s$, denoted $P(s)$, is the convex hull of $\operatorname{supp}(s)$,
considered inside the $\mathbb{R}$-vector space $\mathbb{R} \otimes H$.  
By \cite[Lemma~5.16]{Kielak2020polytopes}, we have
\[
P(s_1 s_2) = P(s_1) + P(s_2).
\]
Thus, for any $d \in \mathcal{D}^{\times}$ we may write $d = s_1 s_2^{-1}$ with $s_1, s_2 \in S \setminus \{0\}$,  
and define
\[
P(d) := P(s_1) - P(s_2).
\]

We recall that for a ring $R$, the \emph{projective class group} $K_0(R)$ is the free abelian group on finitely generated projective $R$-modules modulo the relation $[L] = [L_1] + [L_2]$ if there is a short exact sequence of $R$-modules $0 \to L_1 \to L \to L_2 \to 0$. Here $[L]$ denotes the element of $K_0(R)$ corresponding to the projective $R$-module $L$. One can define $K_0(\mathcal{T}_S)$ similarly over the $S$-modules in $\mathcal{T}_S$. The $K_1$-group of $R$ is defined as the abelianisation of the infinite general linear group over $R$, that is, $K_1(R):= \GL(R)^{\text{ab}}$. 

In \cite[Theorem~5]{Qu72} Quillen showed that there exists the following long exact sequence in algebraic $K$-theory:
\[
\cdots \longrightarrow K_1(S) \longrightarrow K_1(\mathcal{D}) \longrightarrow K_0(\mathcal{T}_S) \longrightarrow K_0(S) \longrightarrow K_0(\mathcal{D}) \longrightarrow 0.
\]
Observe that $K_0(\mathcal{D}) \cong \mathbb{Z}$, and by \cite{Moody89}, the map $K_0(S) \to K_0(\mathcal{D})$ is bijective. Hence, the map
\[
K_1(\mathcal{D}) \longrightarrow K_0(\mathcal{T}_S)
\]
is surjective. Therefore, given $W \in K_0(\mathcal{T}_S)$, there exists an element
\[
d \in K_1(\mathcal{D})
\cong
\mathcal{D}^{\times}/[\mathcal{D}^{\times},\mathcal{D}^{\times}]
\]
that maps to $W$, and we define $P(W) \in \mathcal P_T(H)$ to be the image of $P(d)$ in $\mathcal P_T(H)$. This definition is well defined since the Newton polytope of any element of $K_1(S)$ is a singleton (see the argument in the proof of \cite[Corollary~5.16]{Kielak2020polytopes}). Similarly, given $N \in \mathcal{T}_S$ we define 
\[
    P(N) := P([N]).
\]
Observe that if $s\in S \setminus \{0\}$, then $P(S/Ss)$ is the image of $P(s)$ in $\mathcal P_T(H)$.

In \cite{Kielak2020polytopes} Kielak proved the following theorem.
\begin{theorem}[{\cite[Theorem 3.14]{Kielak2020polytopes}}] \label{kielak}
    Let $A\in \Mat_k(S)$ be a square matrix over $S$ that is invertible over $\mathcal D$. Then 
    $P(S^k/S^k A)$ is a single integral polytope.
\end{theorem}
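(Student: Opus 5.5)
The plan is to reduce the statement to the Dieudonné determinant and then argue by induction on $\rank H$, using the twisted Laurent polynomial structure of $S$.

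First, identify the class. Since $A$ is invertible over $\mathcal D$, the module $S^k/S^kA$ lies in $\mathcal T_S$, and its class in $K_0(\mathcal T_S)$ is the image of $[A]\in K_1(\mathcal D)$ under the boundary map of Quillen's sequence. Under $K_1(\mathcal D)\cong \mathcal D^\times/[\mathcal D^\times,\mathcal D^\times]$, the class $[A]$ corresponds to the Dieudonné determinant $\det A$. Hence $P(S^k/S^kA)$ is the class of $P(\det A)$, and it suffices to show that $P(\det A)=P(s_1)-P(s_2)$ is equal, modulo singletons, to a single integral polytope.

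Second, use faces. For a character $\phi\colon H\to\ZZ$ with kernel $H'$, write $S=(\mathcal E*H')*\langle t\rangle$. Then $S$ embeds in the twisted Laurent polynomial ring $\mathcal D'[t^{\pm1}]$, where $\mathcal D'$ is the Ore field of $\mathcal E*H'$. The face map $F_\phi$ is additive on $\mathcal P(H)$, because $F_\phi(P+Q)=F_\phi(P)+F_\phi(Q)$. For $s\in S$, the face $F_\phi(P(s))$ is the Newton polytope of the lowest $t$-degree part of $s$. The ring $\mathcal D'[t^{\pm1}]$ is a (non-commutative) Euclidean domain, so $A$ can be brought to diagonal form $\operatorname{diag}(p_1,\dots,p_k)$ by elementary operations and multiplication by units of $\mathcal D'[t^{\pm1}]$. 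The lowest-degree coefficients of the $p_i$ then give a matrix over $\mathcal D'$. I would like to show that $F_\phi(P(\det A))$ equals, up to translation, the Newton polytope (taken in $\mathbb R\otimes H'$) of a Dieudonné determinant of a square matrix over $\mathcal E*H'$ that is invertible over $\mathcal D'$. Doing so requires clearing denominators from $\mathcal D'$ and checking that the denominators contribute the same polytope on both sides. Granting this, the inductive hypothesis (for $\rank H'<\rank H$) shows that every face $F_\phi(P(\det A))$ is a single polytope. In addition, the width of $P(\det A)$ in the $\phi$-direction equals $\sum_i \deg_t p_i=\dim_{\mathcal D'}(\mathcal D'\otimes_{\mathcal E*H'} S^k/S^kA)\ge 0$. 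So the thickness function is non-negative, and it is given by a $\mathcal D'$-dimension.

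Third, apply a convex-geometric criterion: an element $U=P-Q\in\mathcal P(H)$ is a single polytope provided that, for every $\phi$ in a dense set of directions, $F_\phi(U)$ is a single polytope (equivalently, $F_\phi(Q)$ is a Minkowski summand of $F_\phi(P)$). To prove this criterion, I would take $\phi$ generic, so that the faces are vertices, and define the candidate polytope as the convex hull of the points $F_\phi(P)-F_\phi(Q)$. Then I would check, using support functions, that $h_P-h_Q$ is convex: it is piecewise linear, and at each codimension-one wall the summand condition on the corresponding faces forces the kink to have the right sign. A convex piecewise-linear support function is the support function of the convex hull, which is integral because the vertices are differences of lattice points. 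The base case $\rank H=1$ reduces to the Euclidean-domain case, where $P(\det A)$ is the segment $[0,\sum_i\deg p_i]$ after translation.

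The main obstacle is the second step, namely making "face of the determinant equals determinant of the face matrix" precise. Diagonalisation over $\mathcal D'[t^{\pm1}]$ introduces coefficients in $\mathcal D'$ rather than in $\mathcal E*H'$, so one must control Newton polytopes of elements of $\mathcal D'$ via the induction and verify compatibility with $K_1$. My first attempt would be to work with matrices over the Novikov-type completion of $\mathcal D'[t^{\pm1}]$ in $t$. There, taking leading terms is a ring homomorphism on the subring of elements of non-negative valuation. This would show that $F_\phi$ applied to $P(\det A)$ equals $P(\det \bar A)$, where $\bar A$ is the lowest-order part of $A$ after a valuation-normalising diagonal change of basis.
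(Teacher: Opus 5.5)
The paper gives no proof of this statement; it quotes Kielak. Judged on its own, your outline has sound pieces but leaves its central step unproven.

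\textbf{What works.} Three parts are fine: passing to the Dieudonn\'e determinant, the rank-one base case, and the support-function argument. The Novikov leading-term map is the right tool, since $F_\phi(P(x))=P(\mathrm{lt}_\phi(x))$ for $x\in\mathcal D^\times$. It already settles the case where the degree-zero part $A_0$ of $A$ (after shifting rows by powers of $t$) is invertible over $\mathcal D'$. One correction: your convex criterion cannot be phrased through a dense set of directions. For generic $\phi$ the faces are vertices, so the condition is vacuous. What is needed is the summand condition for $\phi$ in the relative interior of each codimension-one cone of the normal fan of $P+Q$. These cones are rational, so integral $\phi$ suffice when $\operatorname{rk}H\ge 2$.

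\textbf{The gap.} Step 2 is the entire content of the theorem, and the mechanism you propose does not deliver it.
\begin{enumerate}
\item Diagonalising over $\mathcal D'[t^{\pm1}]$ uses units $dt^n$ with $d\in\mathcal D'^\times$ arbitrary. Their Newton polytopes are arbitrary formal differences, so this discards exactly the information you need.
\item A diagonal valuation-normalising change of basis is not enough. Take $A=\begin{pmatrix}1&1\\1&1+t\end{pmatrix}$. Every rescaling $\mathrm{diag}(t^{a_i})\,A\,\mathrm{diag}(t^{b_j})$ whose entries have non-negative valuation has singular leading part over $\mathcal D'$.
\item So you need genuine row operations $R_i\mapsto R_i-cR_j$, and in general $c\in\mathcal D'$. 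Over $\mathcal E*H'$ these require Ore denominators. Clearing them gives $\mathrm{lt}_\phi(\det A)\equiv\det(B)\det(X)^{-1}$ with $B,X$ over $\mathcal E*H'$ and invertible over $\mathcal D'$.
\end{enumerate}
The inductive hypothesis then only says the face is a difference of two single polytopes. So the induction on $\operatorname{rk}H$ does not close as you have set it up.

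\textbf{A repair.}
\begin{enumerate}
\item Reduce first to $\operatorname{rk}H=2$ by changing coefficients. Take a direct summand $L$ with $H/L\cong\ZZ^2$, and view $A$ over $\mathcal D_L*(H/L)$, where $\mathcal D_L$ is the Ore field of $\mathcal E*L$. Newton polytopes commute with the projection $H\to H/L$. Each codimension-one condition for $P(\det A)$ is detected by a rank-two quotient: choose it dual to a rational plane containing the edge normal $\phi$ and a character that does not vanish on the edge.
\item In rank two, $\mathcal E*\ker\phi$ is a twisted Laurent polynomial ring over a division ring, hence a PID. So you can choose $U\in\mathrm{GL}_k(\mathcal E*\ker\phi)$ such that the last $k-r$ rows of $UA_0$ vanish. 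Such $U$ is a product of elementary and monomial diagonal matrices, so $P(\det U)$ is a point.
\item Factor $t$ out of those last rows and iterate. This terminates, because the $t$-valuation of the determinant drops by $k-r$ at each step and stays non-negative.
\item At the end, $F_\phi(P(\det A))$ is, up to translation, $P(\det B)$ for some $B$ over $\mathcal E*\ker\phi$ that is invertible over $\mathcal D'$. This is a single polytope by the rank-one case.
\end{enumerate}
The paper itself only ever applies the statement when $H\cong\ZZ^2$.
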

The purpose of this section is to prove the following variation of the previous theorem.
\begin{theorem}\label{existencepolytope} 
Let $N\in \mathcal T_S$. Then $P(N)+\overline{P(N)}=[P]$, where $P$ is a single symmetric integral polytope. 
\end{theorem}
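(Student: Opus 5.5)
The plan is to compare thickness functions and use the fact that a virtual polytope is a genuine polytope exactly when its support function is convex. Write $P(N)=[P-Q]$ for non-empty integral polytopes $P,Q$. Then $P(N)+\overline{P(N)}=[(P+\overline P)-(Q+\overline Q)]$, and $P+\overline P$, $Q+\overline Q$ are symmetric integral polytopes. The support function of this symmetric virtual polytope is
\[
h(\phi)=\max_{P+\overline P}\phi-\max_{Q+\overline Q}\phi=\gr_P(\phi)-\gr_Q(\phi)=:f_N(\phi).
\]
So it suffices to prove that $f_N$ is non-negative and convex, that is, subadditive on $H^1(H;\RR)$.

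Given that, a standard argument finishes the proof. The function $h$ is piecewise linear with finitely many integral linear pieces, because the domains of linearity are cut out by the normal fans of $P$ and $Q$. If $h$ is convex, it is the support function of the compact convex set $R=\{x \mid \phi(x)\le h(\phi)\ \forall\phi\}$, and $R$ is symmetric since $h$ is even. Its vertices correspond to the linear pieces of $h$ on full-dimensional cones of the common refinement of the two normal fans. Each such piece is a difference of vertices of $P+\overline P$ and $Q+\overline Q$, hence integral. Then $R+Q+\overline Q$ and $P+\overline P$ have the same support function, so they are equal, and $[R]=P(N)+\overline{P(N)}$. \cref{thickfunction} also gives uniqueness of $R$.

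Step 1 is an intrinsic formula for $f_N$ that is additive on $K_0(\mathcal T_S)$. For an integral $\phi\neq0$, let $S_\phi$ be the Ore localization of $S$ at $\mathcal E*\ker\phi\setminus\{0\}$. This is a twisted Laurent polynomial ring $\mathcal D_{\ker\phi}[t^{\pm1}]$ and is flat over $S$. I will set
\[
d_N(\phi)=\dim_{\mathcal D_{\ker\phi}}\bigl(S_\phi\otimes_S N\bigr),
\]
which is finite because $\mathcal D\otimes_S N=0$ forces $S_\phi\otimes N$ to be $t$-torsion and finitely generated over a PID-like ring. Flatness makes $d$ additive on short exact sequences, so it factors through $K_0(\mathcal T_S)$. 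For $N=S/Ss$ it is the classical computation $d(\phi)=\gr_{P(s)}(\phi)$, the degree spread in $t$. Since these classes generate $K_0(\mathcal T_S)$ through $K_1(\mathcal D)\to K_0(\mathcal T_S)$, we get $f_N=d_N\ge 0$ on integral $\phi$. Homogeneity and continuity then extend the conclusions to all of $H^1(H;\RR)$.

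Step 2, convexity, is the main obstacle. My first attempt is a reduction to Kielak's theorem. $S$ is Noetherian, so $N$ is finitely presented, $S^m\xrightarrow{B}S^k\to N\to0$, and $B$ has rank $k$ over $\mathcal D$. Choose $k$ rows forming $A\in\Mat_k(S)$ invertible over $\mathcal D$, and put $M=S^k/S^kA$. By \cref{kielak}, $P(M)$ is a single polytope, so $f_M=\gr_{P(M)}$ is convex. Now $N$ is a quotient of $M$ by a module $N'\in\mathcal T_S$, and $f_N=f_M-f_{N'}$ with $f_{N'}$ of the same form. That alone does not give convexity.

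Instead I would prove subadditivity $d_N(\phi_1+\phi_2)\le d_N(\phi_1)+d_N(\phi_2)$ directly. Restrict to the rank-two subgroup spanned by the duals, which is legitimate by passing to the crossed product over a complementary splitting of $H$. Filter $N$ by cyclic quotients $S/L$; $d$ is additive along the filtration, so it suffices to treat cyclic $N$. Then realize $S_{\phi_1+\phi_2}\otimes N$ as a quotient of a module with a two-step filtration whose pieces are controlled by the $\phi_1$- and $\phi_2$-localizations. This is a Friedl--Lück-type Thurston-norm argument, transplanted to the Noetherian crossed product. If this becomes too technical, the fallback is to show $f_N=\inf_s \gr_{P(s)}$-type expressions over suitable $s$ via monotonicity of $d$ under quotients. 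An infimum of convex functions need not be convex, however, so I expect the genuine work to lie in this step.
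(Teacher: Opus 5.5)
Your outer framework is sound and agrees with the paper. Step~1 is \cref{thick}, though you need the normalising factor $\alpha_\phi$, or must restrict to primitive $\phi$, since $\ker(2\phi)=\ker\phi$ while $\gr_{P(s)}(2\phi)=2\gr_{P(s)}(\phi)$. Your closing argument via support functions and the common refinement of the normal fans is a valid replacement for the paper's use of $B^*$, \cref{thickfunction} and the face identity $F_\phi(P_1)=F_\phi(P)+F_\phi(P_2)$.

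The genuine gap is Step~2, where the whole content of the theorem lies, and you do not prove it. Reducing to cyclic modules is harmless, since a sum of subadditive functions is subadditive, but it gains nothing: a cyclic $S/L$ with $L$ non-principal is no easier. The ``two-step filtration'' is never specified. As you note yourself, the infimum fallback cannot give convexity. Your first attempt fails for the reason you identify: a square presentation of some module mapping onto $N$ says nothing about $f_N$.

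The missing idea is to make \cref{kielak} apply to $N$ itself, up to a piece that no Thurston map can see. Start from your rank-two reduction:
\begin{itemize}
\item set $G_0=\ker\phi_1\cap\ker\phi_2$;
\item let $\mathcal D_0$ be the Ore field of $\mathcal E*G_0$;
\item put $T=\mathcal D_0*(H/G_0)\cong\mathcal D_0*\ZZ^2$ and $\widetilde M=T\otimes_S N$.
\end{itemize}
Let $M_0$ be the largest submodule of $\widetilde M$ of finite $\mathcal D_0$-dimension. For every non-zero character $\phi$ of $H/G_0$, $M_0$ is torsion over $\mathcal D_0*\ker\phi$. Hence $T_\phi\otimes_T M_0=0$, and $x_{\widetilde M}=x_M$ for $M=\widetilde M/M_0$.

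The key point is that $M$ has projective dimension at most $1$. Take a basis $a,b$ of $\ZZ^2$ and put $R_a=\mathcal D_0*\langle a\rangle$.
\begin{itemize}
\item The $R_a$-torsion submodule $M_a$ is $R_b$-torsion-free; otherwise $M$ would contain a non-zero submodule of finite $\mathcal D_0$-dimension.
\item The quotient $M/M_a$ is $R_a$-torsion-free.
\item Over $T=R_a*\langle b\rangle$, a module whose finitely generated $R_a$-submodules are free has projective dimension at most $1$.
\end{itemize}
Projective $T$-modules are stably free (Moody), so $M\cong T^k/T^kA$ with $A$ square and invertible over $\mathcal D$; this is \cref{pd1}. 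Then $P(M)$ is a single polytope by \cref{kielak}. On the span of $\phi_1,\phi_2$ the map $x_N$ coincides with $\gr_{P(M)}$, which is subadditive. This is the content of \cref{thm: Thurston_norm_modules}, and without it your proof has no argument for convexity.
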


The proof of \cref{existencepolytope}, which is given after
\cref{thm: Thurston_norm_modules}, combines three ingredients developed below.
First, \cref{thick} identifies the Thurston map \(x_{W}\) with the thickness
function \(\gr_{P(W)}\) of the Newton-polytope class. Second, \cref{pd1}
shows that a torsion module over a rank-two crossed product, provided that it
has no non-trivial finite-dimensional submodules, admits a square presentation;
hence \cref{kielak} applies and its Newton-polytope class is represented by a
single polytope. Third, \cref{thm: Thurston_norm_modules} reduces an arbitrary
pair of characters to this rank-two situation and proves the subadditivity of
\(x_N\), so that \(x_N\) extends to a seminorm and convex duality produces a
centrally symmetric compact convex set with thickness
function \(2x_N\). The symmetrization
is essential because thickness
functions are invariant under translations and
reflection, and satisfy
\(\gr_{P+\overline{P}}=2\gr_P\). Consequently,
\(x_N=\gr_{P(N)}\) canonically detects the symmetrized class
\(P(N)+\overline{P(N)}\), but does not in general retain enough one-sided
support data to recover the unsymmetrized class \(P(N)\).

 We start the proof by  introducing the Thurston map of $N$. For each character $\phi \colon H \to \mathbb{Z}$, define
\[
R_{\phi} := \mathcal{E} * \ker(\phi),
\]
and let $\mathcal{D}_{\phi}$ denote its Ore localization. Note that $\mathcal{D}_{\phi}$ is naturally a subring of $\mathcal{D}$ and let $T_{\phi}$ be the subring of $\mathcal{D}$ generated by $\mathcal{D}_{\phi}$ and $S$. Observe that $T_{\phi}$ has a crossed product structure
\[
T_{\phi} \cong \mathcal{D}_{\phi} * \left(H / \ker \phi\right).
\]
Observe also that $T_\phi$ is an Ore localization of $S$: indeed, it suffices to invert the nonzero elements of $R_\phi$. Consequently, $T_\phi$ is flat as an $S$-module.

For each $N \in \mathcal{T}_S$, the \emph{Thurston map of $N$}, denoted $x_N$, is the function on $H^1(H; \mathbb{Q})$ that assigns to each $\phi \in H^1(H; \mathbb{Q})$ the value
\[
    x_N(\phi) := \alpha_\phi\cdot \dim_{\mathcal{D}_{\phi}}\!\bigl(T_{\phi} \otimes_S N\bigr)
\]
where $\alpha_{\phi}$ is the unique non-negative rational number such that $\phi(H)=\alpha_\phi \Z$. Since any arbitrary element in $K_0(\mathcal{T}_S)$ can be represented as $[S/Ss_1] - [S/Ss_2]$ for some non-trivial elements $s_1, s_2 \in S$, we can also define the Thurston map $x_{W}$ for every $W \in K_0(\mathcal{T}_S)$. Indeed, observe that if $0 \to N_1 \to N_2 \to N_3 \to 0$ is an exact sequence, then $x_{N_2} = x_{N_1} + x_{N_3}$, because $T_\phi$ is flat as an $S$-module.

A natural question is whether the Thurston map of $N \in \mathcal{T}_S$ is subadditive, that is
\[
    x_{N}(\phi_1 + \phi_2) \leq x_N(\phi_1) + x_N(\phi_2) \quad \mbox{for $\phi_1, \phi_2 \in H^1(H; \mathbb{Q})$.}
\]

We will show in \cref{thm: Thurston_norm_modules} that this is the case. 

\begin{lemma}\label{thick}
For each $W \in K_0(\mathcal T_S)$ and $\phi \in H^1(H; \mathbb{Q})$ we have
\[
x_{W}(\phi) = \gr_{P(W)}(\phi).
\]
\end{lemma}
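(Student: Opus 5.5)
Both sides of the identity are additive in $W$, so the plan is to reduce to cyclic modules $S/Ss$ and then compute. For additivity: $x_W$ is additive on short exact sequences by flatness of $T_\phi$, and $W\mapsto \gr_{P(W)}$ is additive because $P$ is induced by a homomorphism $K_0(\mathcal T_S)\to\mathcal P_T(H)$ and $\gr$ is additive on $\mathcal P_T(H)$. Every $W\in K_0(\mathcal T_S)$ can be written as $[S/Ss_1]-[S/Ss_2]$, so it suffices to prove $x_{S/Ss}(\phi)=\gr_{P(s)}(\phi)$ for $0\neq s\in S$, using that $P(S/Ss)$ is the class of $P(s)$. Both sides are homogeneous of degree one in $\phi$: for $x$ this is the factor $\alpha_\phi$, and for $\gr$ it is linearity of $\phi$. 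Hence we may assume $\phi\colon H\to\ZZ$ is surjective, so $\alpha_\phi=1$. The case $\phi=0$ is trivial: $T_0=\mathcal D$, the module $\mathcal D\otimes_S N$ vanishes, and the thickness is $0$.

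Now fix a surjective $\phi$ and choose $t\in H$ with $\phi(t)=1$. Write $T_\phi=\mathcal D_\phi * \langle t\rangle$, which is a skew Laurent polynomial ring $\mathcal D_\phi[t^{\pm1}]$ over the division ring $\mathcal D_\phi$. Group the support of $s$ by $\phi$-value:
\[
s=\sum_{k=m}^{M} a_k t^k, \qquad a_k\in R_\phi\subseteq \mathcal D_\phi, \quad a_m,a_M\neq 0,
\]
where $m=\min_{P(s)}\phi$ and $M=\max_{P(s)}\phi$. The nonvanishing of $a_m$ and $a_M$ holds because the $\phi$-extremal parts of the support are nonempty, and $a_k$ is the sum of the terms of $s$ in $\phi^{-1}(k)$ multiplied by $t^{-k}$. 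By flatness, $T_\phi\otimes_S S/Ss\cong T_\phi/T_\phi s$. The key step is the standard division-algorithm fact for skew Laurent polynomials over a division ring: $\mathcal D_\phi[t^{\pm1}]/\mathcal D_\phi[t^{\pm1}]\,f$ has left $\mathcal D_\phi$-dimension equal to $\deg_{\max}f-\deg_{\min}f$ whenever the leading and trailing coefficients are nonzero, hence units. To prove it, multiply by $t^{-m}$ and by a unit so that $f$ is monic of degree $M-m$ with invertible constant term. The elements $1,t,\dots,t^{M-m-1}$ then form a $\mathcal D_\phi$-basis of the quotient: every Laurent monomial reduces into this range from above using the leading coefficient and from below using the trailing coefficient, and they are independent by a degree comparison. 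This gives $x_{S/Ss}(\phi)=M-m=\gr_{P(s)}(\phi)$.

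The main obstacle is not computational but checking well-definedness in the reduction. Two points need verifying. First, $P(W)$, defined through a lift $d\in K_1(\mathcal D)$, must agree with $P(s_1)-P(s_2)$ for $W=[S/Ss_1]-[S/Ss_2]$. This holds because the connecting map sends $s\in\mathcal D^\times$ to $[S/Ss]$, which is the observation recorded just before \cref{kielak}. Second, the left/right conventions of the skew multiplication must not break the division argument; this is fine because the coefficients lie in a division ring and $t$ acts by an automorphism of $\mathcal D_\phi$.
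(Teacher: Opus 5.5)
Your proposal is correct and follows essentially the same route as the paper. Both arguments use additivity on $K_0(\mathcal T_S)$ to reduce to $W=[S/Ss]$, identify $T_\phi$ with a skew Laurent polynomial ring over $\mathcal D_\phi$, and then show by reduction from both ends plus a degree-comparison independence argument that $T_\phi/T_\phi s$ has $\mathcal D_\phi$-dimension equal to the $\phi$-width of $\operatorname{supp}(s)$. The only differences are cosmetic: you normalize $\phi$ to be surjective by homogeneity where the paper picks $h_\phi$ with $\phi(h_\phi)=\alpha_\phi$, and you make $s$ monic with invertible constant term before the division argument.
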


\begin{proof}

When $\phi$ is trivial chracter, the statement is clear. We assume that $\phi$ is not trivial.
Since \(T_\phi\) is an Ore localization of \(S\), it is flat as a right
\(S\)-module. Consequently, the assignment
\[
W\longmapsto x_{W}(\phi)
\]
is additive on \(K_0(\mathcal T_S)\). The assignment
\[
W\longmapsto \gr_{P(W)}(\phi)
\]
is also additive, by the definition of \(P(W)\) and the identity
\(\gr_{P+Q}=\gr_P+\gr_Q\). Since every element of
\(K_0(\mathcal T_S)\) can be written as a difference of classes of the
form \([S/Ss]\), where \(0\neq s\in S\), it is enough to prove the
statement for
\[
W=[S/Ss].
\]

Choose \(h_\phi\in H\) such that
\[
\phi(h_\phi)=\alpha_\phi.
\]
Then
\[
H=\ker(\phi)\oplus \langle h_\phi\rangle.
\]
Let \(z\) be a homogeneous unit of \(S\) corresponding to \(h_\phi\).
Conjugation by \(z\) induces an automorphism
\(\sigma\in\operatorname{Aut}(\mathcal D_\phi)\), and we obtain an
identification
\[
T_\phi\cong \mathcal D_\phi[z^{\pm1};\sigma],
\qquad
za=\sigma(a)z
\quad\text{for every }a\in\mathcal D_\phi.
\]

For a nonzero element
\[
f=\sum_{i=p}^{q} b_i z^i\in T_\phi,
\qquad b_p,b_q\neq 0,
\]
set
\[
\deg_-(f)=p,
\qquad
\deg_+(f)=q.
\]
If \(f,g\in T_\phi\setminus\{0\}\), then the extremal terms in the
product \(fg\) cannot cancel.  
In particular,
\[
\deg_+(fg)-\deg_-(fg)
=
\bigl(\deg_+(f)-\deg_-(f)\bigr)
+
\bigl(\deg_+(g)-\deg_-(g)\bigr).
\]

Write
\[
s=\sum_{i=m}^{n}a_i z^i,
\qquad
a_m,a_n\neq 0.
\]
We claim that
\[
\dim_{\mathcal D_\phi}(T_\phi/T_\phi s)=n-m.
\]
If \(m=n\), then \(s=a_mz^m\) is a unit in \(T_\phi\), so both sides
are zero. Suppose, therefore, that \(m<n\).

We first show that the images of
\[
z^m,z^{m+1},\ldots,z^{n-1}
\]
span \(T_\phi/T_\phi s\) over \(\mathcal D_\phi\). For every \(k\geq 0\),
the relation
\[
z^ks
=
\sum_{i=m}^{n}\sigma^k(a_i)z^{i+k}
\in T_\phi s
\]
gives
\[
z^{n+k}
\equiv
-\sigma^k(a_n)^{-1}
\sum_{i=m}^{n-1}\sigma^k(a_i)z^{i+k}
\pmod{T_\phi s}.
\]
An induction on \(k\) therefore reduces every power \(z^j\) with
\(j\geq n\) to a \(\mathcal D_\phi\)-linear combination of
\(z^m,\ldots,z^{n-1}\). Similarly, for every \(k\geq 1\), the relation
\[
z^{-k}s
=
\sum_{i=m}^{n}\sigma^{-k}(a_i)z^{i-k}
\in T_\phi s
\]
gives
\[
z^{m-k}
\equiv
-\sigma^{-k}(a_m)^{-1}
\sum_{i=m+1}^{n}\sigma^{-k}(a_i)z^{i-k}
\pmod{T_\phi s}.
\]
An induction on \(k\) reduces every power \(z^j\) with \(j<m\) to the
same spanning set.

It remains to prove linear independence. Suppose that
\[
0\neq q=\sum_{i=m}^{n-1}b_i z^i\in T_\phi s.
\]
Then \(q=rs\) for some nonzero \(r\in T_\phi\). By additivity of the
extremal degrees,
\[
\deg_+(q)-\deg_-(q)
=
\deg_+(r)-\deg_-(r)+n-m
\geq n-m.
\]
On the other hand, since \(q\) is supported in
\(\{m,\ldots,n-1\}\), we have
\[
\deg_+(q)-\deg_-(q)\leq n-m-1,
\]
a contradiction. Thus the images of
\(z^m,\ldots,z^{n-1}\) form a
\(\mathcal D_\phi\)-basis of \(T_\phi/T_\phi s\), proving the claim.

There is a natural isomorphism
\[
T_\phi\otimes_S(S/Ss)\cong T_\phi/T_\phi s.
\]
Therefore,
\[
x_{S/Ss}(\phi)
=
\alpha_\phi
\dim_{\mathcal D_\phi}(T_\phi/T_\phi s)
=
\alpha_\phi(n-m).
\]
Every element in the support of \(a_i z^i\) has \(\phi\)-value
\(i\alpha_\phi\). Hence
\[
\max_{h\in\operatorname{supp}(s)}\phi(h)
=
n\alpha_\phi,
\qquad
\min_{h\in\operatorname{supp}(s)}\phi(h)
=
m\alpha_\phi.
\]
Since \(P(s)\) is the convex hull of \(\operatorname{supp}(s)\), it
follows that
\[
\gr_{P(s)}(\phi)
=
\alpha_\phi(n-m)
=
x_{S/Ss}(\phi).
\]
Finally, \(P(S/Ss)=P(s)\), and additivity on
\(K_0(\mathcal T_S)\) gives
\[
x_{W}(\phi)=\gr_{P(W)}(\phi)
\]
for every \(W\in K_0(\mathcal T_S)\).
\end{proof}
 
The next result allows us to detect modules with single polytope.
\begin{lemma}\label{pd1}
Let $\mathcal{E}$ be a division ring, let $T$ denote a crossed product $\mathcal{E} * \mathbb{Z}^2$, and let $\mathcal{D}$ be the Ore ring of fractions of $T$. Let $M$ be a $T$-module that belongs to $\mathcal{T}_{T}$ with no non-trivial $T$-submodules of finite $\mathcal{E}$-dimension. Then there exists a square matrix $A \in \mathrm{Mat}_{k}(T)$ that is invertible over $\mathcal D$ and such that 
\[
M \cong T^k / T^k A.
\]
\end{lemma}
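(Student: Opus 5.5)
The plan is to show that $M$ has projective dimension at most $1$ over $T$ and that its finitely generated projective modules are stably free. A resolution $0\to P_1\to P_0\to M\to 0$ with $P_0,P_1$ free of the same rank then gives the required square matrix $A$.

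\textbf{Step 1: reduce to vanishing of $\operatorname{Ext}^2_T(M,T)$.} The ring $T=\mathcal E*\mathbb Z^2$ is Noetherian of global dimension $2$, being an iterated skew Laurent extension of a division ring. Since $M$ is finitely generated, it has a finite resolution $0\to F_2\to F_1\to F_0\to M\to 0$ by finitely generated free modules. By Moody's theorem (already used above for $K_0(S)\to K_0(\mathcal D)$), every finitely generated projective $T$-module is stably free, so projective modules can be replaced by free ones after adding free summands. It therefore suffices to show $\operatorname{pd}_T M\le 1$, and this is equivalent to $\operatorname{Ext}^2_T(M,T)=0$.

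\textbf{Step 2: show $\operatorname{Ext}^2_T(M,T)$ has finite $\mathcal E$-dimension.} I would use the Auslander--Gorenstein duality available for $T$, which is Auslander regular. For a finitely generated module $M$, the right module $E=\operatorname{Ext}^2_T(M,T)$ has grade $2$, meaning $\operatorname{Ext}^i_T(E,T)=0$ for $i<2$. For crossed products of a division ring with $\mathbb Z^2$, grade $2$ should mean finite $\mathcal E$-dimension. One way to see this is to filter $E$ and use that $\mathcal E$-torsion modules which are pseudo-null are finite dimensional over $\mathcal E$.

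\textbf{Step 3: produce a finite-dimensional submodule of $M$ and conclude.} The key is the double-Ext spectral sequence (Roos/Björk). The module $\operatorname{Ext}^2_T(\operatorname{Ext}^2_T(M,T),T)$ embeds into $M$ as its largest submodule of grade $\ge 2$, and it is nonzero whenever $\operatorname{Ext}^2_T(M,T)\ne0$. This embedded submodule is then a nonzero submodule of finite $\mathcal E$-dimension, contradicting the hypothesis. Hence $\operatorname{Ext}^2_T(M,T)=0$, so $\operatorname{pd}M\le1$, and $0\to T^k\xrightarrow{A}T^{k'}\to M\to0$ after stabilizing. Tensoring with $\mathcal D$ and using $\mathcal D\otimes_T M=0$ together with the flatness of $\mathcal D$ gives $k=k'$ and $A$ invertible over $\mathcal D$.

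\textbf{Main obstacle.} The hardest part is Steps 2--3: justifying the Auslander-regularity machinery for a general crossed product $\mathcal E*\mathbb Z^2$, and identifying grade $2$ with finite $\mathcal E$-dimension. If that is too heavy, I would instead argue directly that $\operatorname{Ext}^2_T(M,T)\ne0$ forces a finite-dimensional submodule. To do this I would localize at characters $\phi$, using $T_\phi=\mathcal D_\phi[z^{\pm1};\sigma]$, a principal ideal domain, over which $M$ has a square presentation. I would then compare the $\ker\phi$-torsion part of $M$ with its saturation along both directions of $\mathbb Z^2$.
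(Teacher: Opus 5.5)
There is a genuine gap. Your outer frame is sound and matches how the paper finishes. That frame consists of: $\operatorname{gldim} T\le 2$ (it can also be $1$, e.g.\ for a quantum torus with generic parameter, which is harmless); Moody's theorem to pass to free modules; the equivalence $\operatorname{pd}_T M\le 1\iff\operatorname{Ext}^2_T(M,T)=0$; and the final count via flatness of $\mathcal D$, giving a square matrix invertible over $\mathcal D$.

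The gap is that all of the lemma's content sits in your Step~2, and Step~2 is only asserted (``should mean''). That step is the claim that a nonzero finitely generated $T$-module of grade $2$ is finite-dimensional over $\mathcal E$. The justification you sketch is circular: ``pseudo-null'' just means grade $\ge 2$, so ``pseudo-null modules are finite-dimensional'' is the claim itself. If you grant Auslander regularity of $\mathcal E*\mathbb Z^2$ (itself a nontrivial citation), Step~3 is correct. It produces a nonzero submodule of $M$ of grade $2$ whenever $\operatorname{Ext}^2_T(M,T)\neq 0$. But without Step~2 this does not contradict the hypothesis on $M$.

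The missing idea is the one-variable reduction the paper uses.
\begin{enumerate}
\item Put $R_a=\mathcal E*\langle a\rangle$ and $R_b=\mathcal E*\langle b\rangle$, both principal ideal domains, and write $T=R_a*\langle b\rangle$ as a skew Laurent extension.
\item If a finitely generated $T$-module is $R_a$-torsion-free, its finitely generated $R_a$-submodules are free. The McConnell--Robson result the paper cites then gives $\operatorname{pd}_T\le 1$, and likewise for $R_b$.
\item The $R_a$-torsion part $M_a$ is a $T$-submodule by the Ore condition, and $M/M_a$ is $R_a$-torsion-free.
\item $M_a$ is $R_b$-torsion-free. A finitely generated module torsion over both $R_a$ and $R_b$ is finite-dimensional over $\mathcal E$, because for $0\neq r\in R_a$ the module $T/Tr$ is a finitely generated free $R_b$-module. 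Such a submodule is therefore zero by hypothesis.
\item The sequence $0\to M_a\to M\to M/M_a\to 0$ then gives $\operatorname{pd}_T M\le 1$ directly.
\end{enumerate}

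This is exactly what you would need to prove your Step~2 anyway. By exactness of grade, a grade-$2$ module has only grade-$2$ nonzero quotients, hence of projective dimension $2$. So it cannot have a nonzero $R_a$- or $R_b$-torsion-free quotient, and it is therefore torsion over both and finite-dimensional. At that point the Auslander--Gorenstein machinery is superfluous.

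Your fallback via $T_\phi$ does not supply this ingredient either. A square presentation over the localization $T_\phi$ gives no control of $\operatorname{Ext}^2$ over $T$ itself.
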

\begin{proof}
We first show that the projective dimension of $M$ is at most $1$. By \cite[Theorem 5.3]{McCR01}, the global dimension of $T$ is 1 or 2. If the global dimension of $T$ is $1$, there is nothing to prove. Thus, assume that the global dimension of $T$ is $2$.

Let $a,b$ be generators of $\mathbb{Z}^2$. Set $R_a = \mathcal{E}*\langle a\rangle$ and $R_b = \mathcal{E}*\langle b\rangle$. Observe that the global dimensions of both $R_a$ and $R_b$ are equal to $1$.

Consider first the case where $M$ is $R_a$-torsion-free, that is there are no non-zero elements $m \in M$ and $r \in R_a$ such that $rm = 0$. Note that $T = R_a * \langle b\rangle$. Since $M$ is $R_a$-torsion-free,   all finitely generated $R_a$-submodules of $M$ are free. Therefore, by \cite[Theorem~9.11]{McCR01}, we have $\operatorname{pd}_T(M) \le 1$. Similarly, we obtain $\operatorname{pd}_T(M) \le 1$ whenever $M$ is $R_b$-torsion-free.

Define
\[
M_a = \{\, m \in M \mid \text{there exists } 0 \ne r \in R_a \text{ such that } rm = 0 \,\}.
\]
Since $R_a\setminus 0$ is a left Ore subset of $T$, $M_a$ is a $T$-submodule and since $M$ does not contain non-trivial $T$-submodules of finite $\mathcal{E}$-dimension, the module $M_a$ is $R_b$-torsion-free. On the other hand, $M/M_a$ is $R_a$-torsion-free. Thus, both $M_a$ and $M/M_a$ have projective dimension at most $1$. Hence, $M$ itself has projective dimension at most $1$.

Therefore, we can write $M \cong P_1/P_2$ for finitely generated projective $T$-modules $P_1$ and $P_2$. By \cite[Theorem 1]{Moody89}, every projective $T$-module is stably free. Hence, we may assume that $P_1 \cong T^m$ and $P_2 \cong T^k$ for some integers $m,k$. Since $\mathcal{D}$ is a flat $T$-module and $\mathcal{D} \otimes_T M = \{0\}$, it follows that $m = k$.
\end{proof}

We can now show the first principal result of this section.

\begin{theorem} \label{thm: Thurston_norm_modules}
    Let $\mathcal{E}$ be a division ring and $H$ a free abelian group of finite rank. Let $S$ denote the crossed product $\mathcal{E} * H$ and let $\mathcal{D}$ be the Ore ring of fractions of $S$. For every finitely generated $S$-module $N$ such that $\mathcal{D} \otimes_S N$ is trivial, the Thurston map $x_N$ is subadditive, that is
    \[
        x_{N}(\phi_1 + \phi_2) \leq x_N(\phi_1) + x_N(\phi_2) \quad \mbox{for $\phi_1, \phi_2 \in H^1(H; \mathbb{Q})$.}
    \]
\end{theorem}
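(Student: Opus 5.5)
Fix $N$ and characters $\phi_1,\phi_2$; I will reduce to the rank-two situation of \cref{pd1} and then use \cref{kielak} together with \cref{thick}. If $\phi_1,\phi_2$ are linearly dependent over $\mathbb{Q}$, subadditivity follows from the homogeneity $x_N(\lambda\phi)=|\lambda|x_N(\phi)$ and the triangle inequality for absolute values. Homogeneity itself comes from \cref{thick}: $x_N=\gr_{P(N)}$, and a thickness function of a formal difference of polytopes is positively homogeneous and even. So assume $\phi_1,\phi_2$ are independent, and by homogeneity rescale them to be integral.

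Let $L=\ker\phi_1\cap\ker\phi_2$, so $H/L$ has rank two, and let $\mathcal{E}'$ be the Ore division ring of fractions of $\mathcal{E}*L$. Then $T:=\mathcal{E}'*(H/L)\cong\mathcal{E}'*\mathbb{Z}^2$ is an Ore localization of $S$, hence flat, and $M:=T\otimes_S N$ is a finitely generated $T$-module with $\mathcal{D}\otimes_T M=0$. The key compatibility is that for every character $\psi$ of $H$ vanishing on $L$ (in particular $\phi_1,\phi_2,\phi_1+\phi_2$), the localization $T_\psi$ of $S$ factors through $T$: inverting $\mathcal{E}*L$ and then $\mathcal{E}'*(\ker\psi/L)$ gives the same $\mathcal{D}_\psi$, because $\ker\psi\supseteq L$. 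Therefore $x_N(\psi)=x_M(\psi)$, where $x_M$ is computed over $T$ with $\psi$ regarded as a character of $H/L$. It thus suffices to prove subadditivity for $M$ over $\mathcal{E}'*\mathbb{Z}^2$.

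Over $T$, let $M_0$ be the largest submodule of $M$ of finite $\mathcal{E}'$-dimension; it is well defined because $M$ is noetherian and sums of finite-dimensional submodules are finite-dimensional. For any nontrivial $\psi$, the module $T_\psi\otimes_T M_0$ vanishes. Indeed, $M_0$ is finite-dimensional over $\mathcal{E}'$ and is a module over the group ring of $\ker\psi/L\cong\mathbb{Z}$, so every element is killed by a nonzero element of $\mathcal{E}'*(\ker\psi/L)$ (a characteristic-polynomial-type argument in a finite-dimensional space). By additivity of $x$ on short exact sequences, $x_M=x_{M/M_0}$ on nonzero characters. Moreover $M/M_0$ has no nonzero finite-dimensional submodules: an extension of finite-dimensional modules is finite-dimensional, so any such submodule would pull back to enlarge $M_0$. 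Now \cref{pd1} gives $M/M_0\cong T^k/T^kA$ with $A$ invertible over $\mathcal{D}$. By \cref{kielak}, $P(M/M_0)$ is represented by a single polytope $Q$. By \cref{thick}, applied over $T$ (which is legitimate, since $\mathcal{E}'$ is a division ring), $x_{M/M_0}=\gr_Q$. A genuine thickness function is subadditive, since $\max(\phi_1+\phi_2)\le\max\phi_1+\max\phi_2$ and $\min(\phi_1+\phi_2)\ge\min\phi_1+\min\phi_2$. This gives the inequality for $\phi_1,\phi_2$.

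The main obstacle is the identification $x_N(\psi)=x_M(\psi)$. It requires checking that the dimension over $\mathcal{D}_\psi$ (for $S$) agrees with the dimension over the corresponding division ring for $T$, and that $\alpha_\psi$ is the same whether $\psi$ is viewed on $H$ or on $H/L$. The second point holds because $\psi(H)=\psi(H/L)$. The first I would settle by showing the two rings $T_\psi$ are equal as subrings of $\mathcal{D}$, both being generated by $S$ and $\mathcal{D}_\psi$, and that $T_\psi\otimes_S N\cong T_\psi\otimes_T(T\otimes_S N)$.
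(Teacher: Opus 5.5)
Your proposal is correct and follows essentially the same route as the paper. You localize at $\mathcal{E}*(\ker\phi_1\cap\ker\phi_2)$ to pass to a crossed product with $\mathbb{Z}^2$, then discard the maximal finite-dimensional submodule $M_0$, which is killed by every $T_\psi\otimes -$, and finally combine \cref{pd1}, \cref{kielak} and \cref{thick} to identify the Thurston map with the thickness function of a single polytope. The details you add are the points the paper leaves implicit: the dependent case via homogeneity, the identification of the two rings $T_\psi$ inside $\mathcal{D}$, and the linear-dependence argument showing that $M_0$ is $R_\psi$-torsion. You handle all three correctly.
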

\begin{proof}
    The statement is trivial if $\phi_1$ and $\phi_2$ are linearly dependent over $\mathbb{Q}$, so assume they are linearly independent. Set
\[
G_0 = \ker \phi_1 \cap \ker \phi_2.
\]
Then $\overline H=H / G_0 \cong \mathbb{Z}^2$. 

Let $\mathcal{D}_0$ be the Ore localization of $\mathcal{E} * G_0$, and let $T$ be the subring of $\mathcal{D}$ generated by $\mathcal{D}_0$ and $S$. Then $T$ is isomorphic to the crossed product $\mathcal{D}_0 * \overline H$,
and $\mathcal{D}$ is the Ore localization of $T$.

Consider the $T$-module $\widetilde{M} = T \otimes_S N$. We have
\[
\mathcal{D} \otimes_T \widetilde{M} \cong \mathcal{D} \otimes_S N = \{0\}.
\]
Therefore, $\widetilde M$   belongs to $\mathcal{T}_T$.
Since $T$ is Noetherian, there exists $M_0$ the maximal $T$-submodule of $\widetilde{M}$ of finite $\mathcal{D}_0$-dimension. Set $M = \widetilde{M} / M_0$. Note that $M$   belongs to $\mathcal{T}_T$ because $\widetilde M$ does. Then, by \cref{pd1}, there exists a square matrix $A \in \operatorname{Mat}_k(T)$ that is invertible over $\mathcal D$ and such that
\[
M \cong T^k / T^k A.
\]

So, by \cref{kielak}, $P(M)$ is a single polytope. 
  
For each $L \in \mathcal{T}_T$ and $\phi \in H^1(\overline H; \mathbb{Q})$, define
\[
x_L(\phi) = \alpha_\phi\cdot \dim_{\mathcal{D}_{\phi}}\!\bigl(T_{\phi} \otimes_T L\bigr).
\]
Since $M_0$ has finite $\mathcal{D}_0$-dimension, it follows that for every $m\in M_0$ there exists $r\in R_\phi\setminus \{0\}$ such that $rm=0$. Therefore,
$$T_{\phi} \otimes_T M_0 \cong  D_{\phi} \otimes_{R_\phi} M_0 =\{0\}.$$  This implies that $x_{\widetilde M}=x_M+x_{M_0}=x_M$, and so by \cref{thick}, $x_{\widetilde M}=\gr_{P(M)}$, which is subadditive being $P(M)$ a single polytope. Therefore
\[
    x_N(\phi_1 + \phi_2) = x_{\widetilde{M}}\left(\overline{\phi_1 + \phi_2}\right) \leq x_{\widetilde{M}}\left(\overline{\phi_1}\right) + x_{\widetilde{M}}\left(\overline{\phi_2}\right) = x_N(\phi_1) + x_N(\phi_2)
\]
where the bar $\bar{}$ stands for the natural maps induced on $H^1(\overline H; \mathbb{Q})$. This shows that $x_N$ is subadditive. 
\end{proof}

As a consequence, we deduce that $x_N$ is continuous on $H^1(H; \mathbb{Q})$, and hence,  $x_N$ extends uniquely by continuity (using rational absolute homogeneity and subadditivity)   to a  seminorm on $H^1(H; \mathbb{R})$, which we call the \emph{Thurston norm of $N$}. We can now prove the second principal result of this section.

\begin{proof}[Proof of \cref{existencepolytope}]
Put
\[
B=\{\phi\in H^1(H;\RR)\mid x_N(\phi)\le 1\} \textrm{\ and\ }
P = B^{*}.
\]
Note that by the definition of the Thurston norm of $N$, the set $P$ is symmetric with respect to the origin.

\begin{claim}
    We have 
    \[
      \gr_{[P]} = \gr_{\,P(N) + \overline{P(N)}}.
    \]
\end{claim}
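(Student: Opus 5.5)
The plan is to reduce the claim to an identity between support functions and then apply Hahn--Banach. By \cref{thick}, applied to $W=[N]$, we have $x_N(\phi)=\gr_{P(N)}(\phi)$ for every $\phi\in H^1(H;\mathbb{Q})$. Since $\gr_{P+Q}=\gr_P+\gr_Q$ and $\gr_{\overline{Q}}=\gr_Q$, it follows that
\[
\gr_{P(N)+\overline{P(N)}}=2\gr_{P(N)}=2x_N \quad \text{on } H^1(H;\mathbb{Q}).
\]
Both sides of the claim are continuous on $H^1(H;\RR)$: thickness functions of compact convex sets are continuous, and $x_N$ is a seminorm by \cref{thm: Thurston_norm_modules}. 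So it is enough to show $\gr_P(\phi)=2x_N(\phi)$ for all real $\phi$.

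\textbf{$P$ is compact, convex and symmetric.} Convexity and closedness are immediate from the definition of $B^*$, and symmetry was already noted. Since $x_N$ is a seminorm on a finite-dimensional space, there is a constant $C$ with $x_N(\phi)\le C\|\phi\|$ for some fixed norm $\|\cdot\|$. Hence $B$ contains a ball of radius $1/C$ around $0$, and so $P=B^*$ is bounded. Because $P$ is symmetric, $\min_{p\in P}\phi(p)=-\max_{p\in P}\phi(p)$. The claim therefore reduces to showing
\[
\max_{p\in P}\phi(p)=x_N(\phi)\quad\text{for all } \phi\in H^1(H;\RR).
\]

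\textbf{Upper bound.} Let $p\in P$. If $x_N(\phi)>0$, then $\phi/x_N(\phi)\in B$, so $\phi(p)\le x_N(\phi)$. If $x_N(\phi)=0$, then $t\phi\in B$ for every $t>0$, so $\phi(p)\le 1/t$ for all $t$, giving $\phi(p)\le 0=x_N(\phi)$.

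\textbf{Lower bound.} This is the only step needing an actual construction. On the line $\RR\phi$, define the linear functional $\lambda\phi\mapsto\lambda x_N(\phi)$. It is dominated by the seminorm $x_N$, because $\lambda x_N(\phi)\le|\lambda|x_N(\phi)=x_N(\lambda\phi)$. By Hahn--Banach it extends to a linear functional $\ell$ on $H^1(H;\RR)$ with $\ell\le x_N$ everywhere. Since $H^1(H;\RR)$ is finite dimensional, $\ell$ is evaluation at some $p_0\in\RR\otimes H$. For $\psi\in B$ we get $\psi(p_0)=\ell(\psi)\le x_N(\psi)\le1$, so $p_0\in P$. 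Moreover $\phi(p_0)=x_N(\phi)$, which gives the reverse inequality.

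Combining the two bounds, $\gr_P=2x_N=\gr_{P(N)+\overline{P(N)}}$, which is the claim.

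I expect no serious obstacle. The point needing care is that $x_N$ is only a seminorm, so $B$ may be unbounded. This is handled by the case $x_N(\phi)=0$ in the upper bound, and by bounding $P$ via continuity of $x_N$ rather than compactness of $B$.
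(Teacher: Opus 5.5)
Your proof is correct and follows the paper's route. Both arguments identify each side with $2x_N$: the right-hand side via \cref{thick} together with $\gr_{\overline{Q}}=\gr_Q$, and the left-hand side via the duality between $B$ and $P=B^*$. The paper simply asserts $\max_{p\in P}\phi(p)=1/\max\{c\mid c\phi\in B\}=x_N(\phi)$ ``by definition'' (implicitly a bipolar-type fact, cf.\ \cref{dual}) and treats only the norm case. You supply the nontrivial inequality via Hahn--Banach, and you handle directly the seminorm case, the compactness of $P$, and the passage from rational to real characters.
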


\begin{proof} We prove the claim assuming that $x_N$ is a norm. In the general case, where $x_N$ is a seminorm, the proof is similar but should be realized in $H^1(H; \mathbb R)/\ker x_N$.

    By definition of \(P\), we have that if $0\ne \phi\in H^1(H; \mathbb R)$, then
    \[
        \gr_{[P]}(\phi)
        = 2 \cdot \frac{1}{\max \{c \in \RR \mid c \phi \in B\}} 
        = 2\, x_N(\phi).
    \]
    On the other hand, taking into account \cref{thick}, we obtain that
    \[
        \gr_{\,P(N) + \overline{P(N)}}(\phi)
        = \gr_{P(N)}(\phi) + \gr_{\overline{P(N)}}(\phi)
        = x_N(\phi) + x_N(\phi)
        = 2\,x_N(\phi),
    \]
    where we used that \(\gr_{\overline{P(N)}} = \gr_{P(N)}\).

    Thus,
    \[
        \gr_{[P]} = \gr_{\,P(N) + \overline{P(N)}}. \qedhere
    \]
\end{proof}

We can write 
\[
P(N) + \overline{P(N)} = [P_1 - P_2]
\]
for some symmetric integral polytopes \(P_1\) and \(P_2\).  
Hence 
\[
\gr_{[P_1]} = \gr _{P(N) + \overline{P(N)}+[P_2]}=\gr_{P(N) + \overline{P(N)}}+\gr_{[P_2]}=\gr_{[P]}+\gr_{[P_2]}= \gr_{[P + P_2]}.
\]
Hence $\gr_{P_1}=\gr_{P + P_2}$.
By \cref{thickfunction}, this implies 
\[
P_1 = P+ P_2.
    \]
 Recall that \(P\) is a closed convex subset. Therefore, to show that \(P\) is a polytope it suffices to prove that it has only finitely many faces, since \(P\) is the convex hull of its vertices.

For any character \(\phi \in H^{1}(H;\mathbb{R})\), we have  
\begin{equation}
   \label{faces} 
F_{\phi}(P_1) = F_{\phi}(P) + F_{\phi}(P_{2}).
\end{equation}

Notice that $F_{\phi}(P)$ is determined uniquely by this equality. Because both \(P_{1}\) and \(P_{2}\) have only finitely many faces, it follows that \(P\) also has finitely many faces, since each face of $P_1$ is determined by the corresponding pair of faces of $P$ and $P_2$.

By \cref{faces}, if $F_{\phi}(P)$  is a vertex, it should have integral coefficientes because $F_{\phi}(P_1)$ and $F_{\phi}(P_2)$ are integral polytopes.  Thus, $P$ is an integral polytope. Finally we have
\[
[P]=[P_1-P_2]= P(N)+\overline{P(N)}.
\]
\end{proof}

\begin{problem}
   Is it true that, under the assumptions of \cref{existencepolytope},   the class $P(N)$ is represented by a single polytope?
\end{problem}

\subsection{Proof of \texorpdfstring{\cref{thurston}}{Theorem \ref*{thurston}} and \texorpdfstring{\cref{higher}}{Theorem \ref*{higher}}}

We prove \cref{higher}, which encompasses \cref{thurston} as a special case.

Since $G$ satisfies the Strong Atiyah Conjecture over $\mathbb Q$, the Linnell ring $\mathcal{D}(\QQ G)$ is a division ring. Set $\mathcal{D}:= \mathcal{D}(\QQ G)$. Let $U$ be the intersection of the kernels of all homomorphisms $G \to \mathbb{Z}$ arising from $H^1(G;\mathbb{Z})$. Denote by $\mathcal{E}$ the division closure of $\QQ U$ in $\mathcal{D}$, and let $S$ be the subring of $\mathcal{D}$ generated by $\mathcal{E}$ and $G$. Observe that $S$ is isomorphic to a crossed product $\mathcal{E} * H$, where $H = G/U$, and that $\mathcal{D}$ is the Ore division ring of fractions of $S$.

Since $G$ is of type $\FP_k(\QQ)$, there exists an exact sequence
\[
0 \to I_k \xrightarrow{\varphi_k} \QQ G^{d_{k-1}} \xrightarrow{\varphi_{k-1}} \cdots \to \QQ G^{d_0} \to \QQ \to 0,
\]
where $I_k$ is a finitely generated $\QQ G$-module. In particular, we obtain a short exact sequence
\[
0 \to I_k \xrightarrow{\varphi_k} \QQ G^{d_{k-1}} \xrightarrow{\varphi_{k-1}} \operatorname{Im} \varphi_{k-1} \to 0.
\]

Consider the module
\[
N = H_k(G;S) = \operatorname{Tor}_k^{\QQ G}(S,\QQ) \cong \operatorname{Tor}_1^{\QQ G}(S, \operatorname{Im} \varphi_{k-1}).
\]
It follows that $N$ is an $S$-submodule of $S \otimes_{\QQ G} I_k$. Since $S \otimes_{\QQ G} I_k$ is a finitely generated $S$-module and $S$ is Noetherian, we conclude that $N$ is finitely generated. By flatness of the Ore localization, $$\dim_{\mathcal D} \mathcal D\otimes_S N=\dim_{\mathcal D} H_k(G;\mathcal D)=b_k^{(2)}(G)=0.$$ Since $\mathcal D$ is a division ring $\mathcal D\otimes_S N=0$. 

As in the previous subsection, for each non-trivial character $\phi \colon H \to \mathbb{Z}$, define
\[
R_{\phi} := \mathcal{E} * \ker(\phi),
\]
and let $\mathcal{D}_{\phi}$ denote its Ore localization. Let $T_{\phi}$ be the subring of $\mathcal{D}$ generated by $\mathcal{D}_{\phi}$ and $S$. Then $T_{\phi}$ is a crossed product
\[
T_{\phi} \cong \mathcal{D}_{\phi} * \bigl(H / \ker \phi\bigr).
\]

Observe also that $T_\phi$ is an Ore localization of $S$: indeed, it suffices to invert the nonzero elements of $R_\phi$. Consequently, $T_\phi$ is flat as an $S$-module.

Let $\widetilde{\phi}$ be the map obtained as a composition of the projection $G \to H$ and $\phi \colon H \to \ZZ$. Note that as $\QQ \ker \widetilde{\phi}$-rings the division rings
\[
    \mathcal{D}_{\phi} \quad \mbox{and} \quad \mathcal{D}\left(\QQ \ker  {\phi}\right)
\]
are isomorphic, and note also that as right $\QQ G$-modules the projection $G \to H$ induces the following isomorphism
\[
    \mathcal{D}_{\phi} \otimes_{\QQ \ker \widetilde{\phi}} \QQ G \longrightarrow \mathcal{D}_{\phi} \otimes_{\QQ \ker \phi} \QQ H \cong T_{\phi}.
\]

Define a function on $H^1(H;\mathbb{Q})$ by assigning to each $\phi \in H^1(H;\mathbb{Q})$ the value
\[
x_N(\phi) := \alpha_\phi \cdot \dim_{\mathcal{D}_{\phi}}\!\bigl(T_{\phi} \otimes_S N\bigr).
\]

Since $T_\phi$ is flat as an $S$-module, we obtain
\[
\dim_{\mathcal{D}_{\phi}}\!\bigl(T_{\phi} \otimes_S N\bigr)
= \dim_{\mathcal{D}_{\phi}} \left( \operatorname{Tor}_k^{\QQ G}(T_\phi, \QQ) \right)
= \dim_{\mathcal{D}_{\phi}} \left( \operatorname{Tor}_k^{\QQ \ker \widetilde{\phi}}(\mathcal{D}_\phi, \QQ) \right)
= b_k^{(2)}\left(\ker \widetilde{\phi}\right).
\]
Notice that in the second equality we use Shapiro's lemma.
By \cref{thick}, we have
\[
2x_N(\phi) = 2\gr_{P(N)}(\phi) 
= \gr_{ P(N)+\overline{P(N)} )}(\phi).
\]

Finally, by \cref{existencepolytope},   $  P(N)+\overline{P(N)} $ is represented by a single symmetric integral polytope. This completes the proof of \cref{higher}.

\section{A Criterion for Cohomological Goodness}\label{sec:good}

A group $G$ is called \emph{$p$-good} for a given prime number $p$ if the homomorphism of cohomology groups
\[
    H^n(\widehat{G}, M) \to H^n(G, M)
\]
induced by the natural homomorphism $G \to \widehat{G}$ of $G$ to its profinite completion $\widehat{G}$ is an isomorphism for every finite $p$-primary $G$-module $M$. The group $G$ is called \emph{good} if it is $p$-good for every prime $p$. This important concept was introduced by J-P. Serre in \cite[Section I.2.6]{Serre1997}.

In this section we will provide a criterion for $p$-goodness that is inspired in \cite[Theorem 5.6]{GrunewaldJaikinPintoZalesskii2014}.

\begin{proposition}\label{prop: p-goodness}
   Let $G$ be a group of type $\mathrm{FP}_2(\mathbb{Z})$ with $\mathrm{cd}_{\mathbb{Z}}(G) \leq 2$ and $b_2^{(2)}(G) = 0$, and let $p$ be a prime number. If $G$ is virtually residually $p$-finite, then $G$ is $p$-good.
\end{proposition}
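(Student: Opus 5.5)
We follow the strategy of \cite[Theorem 5.6]{GrunewaldJaikinPintoZalesskii2014}. First we reduce to the case where $G$ itself is residually $p$-finite. If $U\le G$ has finite index and is residually $p$-finite, we may pass to a normal finite-index subgroup, which is still residually $p$-finite. The hypotheses (type $\FP_2(\ZZ)$, $\cd_\ZZ\le 2$, and $b_2^{(2)}=0$, by multiplicativity of $L^2$-Betti numbers) all pass to finite-index subgroups. Moreover, $p$-goodness of $G$ follows from $p$-goodness of a finite-index normal subgroup $U$. To see this, compare the Lyndon--Hochschild--Serre spectral sequences for $1\to U\to G\to G/U\to 1$ and $1\to \widehat U\to\widehat G\to G/U\to 1$, using that $\widehat U$ is open in $\widehat G$. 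So from now on $G$ is residually $p$-finite.

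Since $\cd(G)\le 2$, it suffices to control degree $2$: the map $H^n(\widehat G,M)\to H^n(G,M)$ is always an isomorphism for $n\le1$, and injective for $n=2$. The standard criterion (Serre) says $G$ is $p$-good provided every class in $H^2(G,\FF_p)$ (and, via dévissage, for finite $p$-primary $M$) vanishes on some finite-index subgroup; in the pro-$p$ version it suffices to show that for every $\alpha\in H^2(G,\FF_p)$ there is a normal subgroup $U$ of $p$-power index with $\mathrm{res}^G_U\alpha=0$. The plan is therefore to show that $H^2(U,\FF_p)$ becomes small relative to the corestriction image. This is where $b_2^{(2)}(G)=0$ enters.

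The key step is the following. Take a chain $G=U_0> U_1>\cdots$ of normal subgroups of $p$-power index with trivial intersection. By Lück approximation in characteristic $p$ (valid in this form for $p$-chains: Bergeron--Linnell--Lück--Sauer type results give $b_2(U_i;\FF_p)/[G:U_i]\to b_2^{(2)}(G)$ when $\cd\le2$, because $b_2$ is governed by the kernel of the last differential of a finite free resolution of length $2$, which is upper semicontinuous, and $\FF_p$-Betti numbers dominate rational ones), we get
\[
\frac{\dim_{\FF_p} H_2(U_i;\FF_p)}{[G:U_i]}\longrightarrow 0 .
\]
Here the length-$2$ resolution is crucial: $H_2(U_i;\FF_p)=\ker(\FF_p[G/U_i]^{d_2}\to\FF_p[G/U_i]^{d_1})$. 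This is the step I expect to be the main obstacle, namely justifying that the mod-$p$ normalized Betti numbers tend to $b_2^{(2)}(G)$ along a $p$-chain. In general one only has $\limsup\ge$. I would argue instead via $\FF_p[G/U_i]$-module structure: $H_2(U_i;\FF_p)$ is a submodule of a free $\FF_p[G/U_i]$-module. If for a fixed class $\alpha$ its restrictions never vanish, then the $\FF_p[G/U_i]$-submodule dual to the restriction of $\alpha$ must be "large" (contain a copy of a nonzero free summand after passing to the pro-$p$ limit, since $\FF_p[P]$ for a $p$-group $P$ is local with socle of dimension $1$, and the norm element detects nonvanishing under corestriction). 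This yields $\dim H_2(U_i;\FF_p)\ge c[G:U_i]$ for some $c>0$, contradicting the limit above.

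Finally, we conclude as follows. If $\alpha\in H^2(G,\FF_p)$ had nonzero restriction to every $U_i$, then passing to $\varprojlim$ we obtain a nonzero element of $H_2$ of the pro-$p$ completion tower, i.e.\ a nonzero element of $\varprojlim H_2(U_i;\FF_p)$ along corestrictions. Such an element forces linear growth as above, which contradicts the vanishing. Hence $\alpha$ dies on some $U_i$, and so $H^2(\widehat G_p,\FF_p)\to H^2(G,\FF_p)$ is surjective. By dévissage over finite $p$-primary modules (filtering $M$ by $\FF_p$-quotients after restricting to a subgroup acting trivially, as allowed by the first reduction), $G$ is $p$-good.
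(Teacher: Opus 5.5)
\textbf{There is a genuine gap at your key step.} The claim that $\dim_{\mathbb{F}_p}H_2(U_i;\mathbb{F}_p)/[G:U_i]\to b_2^{(2)}(G)=0$ along a $p$-chain is a mod-$p$ L\"uck approximation statement. It is not available for the groups in the proposition. Results of Bergeron--Linnell--L\"uck--Sauer type concern the existence of such limits (in $p$-adic analytic towers), not their equality with $L^2$-Betti numbers.

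Your justification also points the wrong way. That $\mathbb{F}_p$-Betti numbers dominate rational ones (the kernel of the last differential can only grow under reduction mod $p$) gives only $\liminf_i \dim H_2(U_i;\mathbb{F}_p)/[G:U_i]\ge b_2^{(2)}(G)$, which is vacuous here.

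Your fallback does not remove the issue. Made precise, it is correct:
\begin{itemize}
\item $H_2(U_i;\mathbb{F}_p)$ is the kernel of $\mathbb{F}_p[G/U_i]\otimes_{\mathbb{Z}G}P\to\mathbb{F}_p[G/U_i]^d$, and the first module is free over the local ring $\mathbb{F}_p[G/U_i]$.
\item If $\mathrm{res}_{U_i}\alpha\neq 0$, some $z$ in this kernel has nonzero augmentation image in $H_2(G;\mathbb{F}_p)$.
\item So a coordinate of $z$ is a unit, $\mathbb{F}_p[G/U_i]z$ is free of rank one, and $\dim H_2(U_i;\mathbb{F}_p)\ge[G:U_i]$.
\end{itemize}
But this is only the lower bound; the contradiction still needs the missing upper bound. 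Rewriting via the Euler characteristic does not help either. Since $\dim H_2(U;\mathbb{F}_p)=\dim H_1(U;\mathbb{F}_p)-1-[G:U]\,b_1^{(2)}(G)$, sublinear growth of $H_2$ is equivalent to mod-$p$ approximation for $b_1$, which is again unknown in this generality.

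\textbf{How the paper avoids this.} The paper never uses mod-$p$ approximation. From the resolution $0\to P\to\mathbb{Z}H^d\to\mathbb{Z}H\to\mathbb{Z}\to 0$ and Eckmann's theorem, it obtains the exact identity $\dim H_1(H;\mathbb{F}_p)-\dim H_2(H;\mathbb{F}_p)-1=b_1^{(2)}(H)$ for every finite-index $H$. Eckmann's theorem identifies the $L^2$-rank of $\mathbb{Q}H\otimes_{\mathbb{Z}H}P$ with the rank of $\mathbb{Z}\otimes_{\mathbb{Z}H}P$, and this identity is where $b_2^{(2)}=0$ enters. The argument then runs as follows.
\begin{itemize}
\item If the (injective) inflation $H^2(H_{\hat p};\mathbb{F}_p)\to H^2(H;\mathbb{F}_p)$ were not onto, the pro-$p$ completion would have deficiency $\delta(H_{\hat p})>b_1^{(2)}(H)+1$.
\item The pro-$p$ Schreier inequality then gives $\dim_{\mathbb{Q}_p}(\mathbb{Q}_p\otimes_{\mathbb{Z}_p}U^{\mathrm{ab}})\ge(\delta-1)[H_{\hat p}:U]$ for open $U$.
\item Density of $H$ transfers this to $\dim_{\mathbb{Q}}H_1(U\cap H;\mathbb{Q})\ge(\delta-1)[H:U\cap H]$.
\item Ordinary characteristic-zero L\"uck approximation in degree one yields $b_1^{(2)}(H)\ge\delta-1$, a contradiction.
\end{itemize}
So the only approximation used is rational, in degree $1$, and in the direction in which it actually holds. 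Your reduction to the residually $p$-finite case and the d\'evissage to $\mathbb{F}_p$ coefficients are fine. It is the growth estimate that must be replaced by this deficiency argument.
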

\begin{proof} Since $p$-goodness is a commensurability invariant, we can replace $G$ with a subgroup of finite index and assume that $G$ is residually $p$-finite.

Since $\operatorname{cd}_{\mathbb{Z}}(G) \le 2$, following \cite[Section~I.2.6]{Serre1997}, we know that in order to show that $G$ is $p$-good, it is enough to show that for any subgroup $H$ of finite index, the map
\begin{equation}
\label{pgoodcondition}
    H^2(\widehat{H}, \mathbb{F}_p) \to H^2(H, \mathbb{F}_p)
\end{equation}
is bijective. In the same section, Serre also proves that this map is injective. Thus, it suffices to show the equality of the dimensions of the involved $\mathbb{F}_p$-vector spaces.

   Since $H$ is of cohomological dimension at most $2$ and of type $\operatorname{FP}_2(\mathbb{Z})$, there exists the following resolution of the trivial $\mathbb{Z}H$-module $\mathbb{Z}$:
\[ 0 \to P \to \mathbb{Z}H^d \to \mathbb{Z}H \to \mathbb{Z} \to 0, \]
where $P$ is a finitely generated projective $\mathbb{Z}H$-module. Let $k$ be such that $\mathbb{Z} \otimes_{\mathbb{Z}H} P \cong \mathbb{Z}^k$. Then we also have $\mathbb{F}_p \otimes_{\mathbb{Z}H} P \cong \mathbb{F}_p^k$.

By \cite[Theorem~8]{Eckmann_StrongBass}, we have $\beta_0^{\mathbb{Q}H}(\mathbb{Q}H \otimes_{\mathbb{Z}H} P) = k$. In particular, we obtain:
\begin{multline*}
\dim H_1(H, \mathbb{F}_p) - \dim H_2(H, \mathbb{F}_p) - 1 = d - \dim_{\mathbb{F}_p} (\mathbb{F}_p \otimes_{\mathbb{Z}H} P) - 1 \\
= d - \beta_0^{\mathbb{Q}H}(\mathbb{Q}H \otimes_{\mathbb{Z}H} P) - 1 = b_1^{(2)}(H) - b_2^{(2)}(H) - b_0^{(2)}(H) = b_1^{(2)}(H).
\end{multline*}

Let $H_{\hat{p}}$ be the pro-$p$ completion of $H$. Since $H$ is residually $p$-finite, we can see $H$ as a subgroup of $H_{\hat p}$.
In order to show that \cref{pgoodcondition} is surjective, it is enough to show that 
\begin{equation}
\label{propgoodcondition}
    H^2(H_{\hat{p}}, \mathbb{F}_p) \to H^2(H, \mathbb{F}_p)
\end{equation}
is surjective. Since  the inflation map $H^2(H_{\hat{p}}, \mathbb{F}_p) \to H^2(\widehat H, \mathbb{F}_p)$ is always injective, the inflation map $H^2(H_{\hat{p}}, \mathbb{F}_p) \to H^2(  H, \mathbb{F}_p)$ is also injective.

Suppose for the sake of contradiction that $\dim_{\mathbb{F}_p} H^2(H_{\hat{p}}, \mathbb{F}_p) < \dim_{\mathbb{F}_p} H^2(H, \mathbb{F}_p)$. Since $H^1(H_{\hat{p}}, \mathbb{F}_p) \cong H^1(H, \mathbb{F}_p)$, it follows that:
\[
\dim_{\mathbb{F}_p} H^1(H_{\hat{p}}, \mathbb{F}_p) - \dim_{\mathbb{F}_p} H^2(H_{\hat{p}}, \mathbb{F}_p) > \dim_{\mathbb{F}_p} H^1(H, \mathbb{F}_p) - \dim_{\mathbb{F}_p} H^2(H, \mathbb{F}_p) = b_1^{(2)}(H) + 1.
\]
Define $\delta(H_{\hat{p}}) = \dim_{\mathbb{F}_p} H^1(H_{\hat{p}}, \mathbb{F}_p) - \dim_{\mathbb{F}_p} H^2(H_{\hat{p}}, \mathbb{F}_p)$. Recall that $\dim_{\mathbb{F}_p} H^1(H_{\hat{p}}, \mathbb{F}_p)$ is the minimal number of generators of $H_{\hat{p}}$, and $\dim_{\mathbb{F}_p} H^2(H_{\hat{p}}, \mathbb{F}_p)$ is the minimal number of relations of $H_{\hat{p}}$. Using the pro-$p$ version of the Schreier formula, we obtain that for every open subgroup $U$ of $H_{\hat{p}}$:
\[
\dim_{\mathbb{F}_p} H^1(U, \mathbb{F}_p) - \dim_{\mathbb{F}_p} H^2(U, \mathbb{F}_p) - 1 \geq  (\delta(H_{\hat{p}}) - 1) [H_{\hat{p}} : U].
\]
In particular, we have:
\[
\dim_{\mathbb{Q}_p} (\mathbb{Q}_p \otimes_{\mathbb{Z}_p} U/[U,U]) \geq (\delta(H_{\hat{p}}) - 1) [H_{\hat{p}} : U].
\]
Since $U \cap H$ is dense in $U$, it follows that:
\[
\dim_{\mathbb{Q}} H_1(U \cap H, \mathbb{Q}) \geq (\delta(H_{\hat{p}}) - 1) [H : U \cap H].
\]
Fix a chain of open normal subgroups $U_i$ of $H_{\hat{p}}$ such that $U_{i+1} \leq U_i$ and $\bigcap U_i = \{1\}$. Let $H_i = U_i \cap H$. Since $H$ is of type $\operatorname{FP}_2(\mathbb{Z})$, we can apply Lück's Approximation Theorem  with respect to the chain $\{H_i\}$ to obtain:
\[
b_1^{(2)}(H) = \lim_{i \to \infty} \frac{\dim_{\mathbb{Q}} H_1(H_i, \mathbb{Q})}{[H : H_i]} \geq \delta(H_{\hat{p}}) - 1 > b_1^{(2)}(H).
\]
This is a contradiction, which completes the proof.
\end{proof}

\begin{proposition}\label{prop: goodness_FbyZ}
    Finitely generated free-by-cyclic groups are good.
\end{proposition}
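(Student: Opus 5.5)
The plan is to apply \cref{prop: p-goodness} for every prime $p$. Let $G = F \rtimes \mathbb{Z}$ with $F$ a finitely generated free group, and consider the three hypotheses in turn.

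\emph{Finiteness and dimension.} $G$ is the mapping torus of an endomorphism of a finite graph, so it has a finite $2$-dimensional $K(G,1)$. Hence $G$ is of type $\FP_2(\ZZ)$ (indeed of type $\mathrm{F}$) and $\cd_{\ZZ}(G)\le 2$. Explicitly, $G$ is an HNN extension of $F$ over $F$, and the Mayer--Vietoris sequence for HNN extensions gives a length-two resolution by finitely generated free modules.

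\emph{Vanishing of $b_2^{(2)}(G)$.} Since $G$ contains the infinite normal subgroup $F$ with $G/F\cong\mathbb{Z}$ infinite amenable, and $F$ has finite $b_1^{(2)}$, Lück's results (the mapping torus theorem, or Gaboriau's normal subgroup theorem) give $b_n^{(2)}(G)=0$ for all $n$. Alternatively, the Euler characteristic of a mapping torus is $0$, and $b_0^{(2)}=b_1^{(2)}=0$, so $b_2^{(2)}(G)=\chi(G)=0$. Either route works; I would cite Lück's mapping torus theorem.

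\emph{Virtual residual $p$-finiteness for every $p$.} This is the main obstacle. Write $G=F\rtimes_\varphi \mathbb{Z}$. The approach is to pass to a finite-index subgroup $F\rtimes\langle t^m\rangle$, where $t^m$ acts trivially on $H_1(F;\mathbb{F}_p)$, i.e.\ the automorphism $\varphi^m$ lies in the kernel of $\operatorname{Aut}(F)\to \GL(H_1(F;\mathbb{F}_p))$. Hence it acts unipotently on the quotients of the lower $p$-central series of $F$, so $\varphi^m$ induces an automorphism of $p$-power order on each $F/\gamma_i^{p}(F)$. Then $F\rtimes\langle t^{m}\rangle$ maps onto $(F/\gamma_i^p F)\rtimes \mathbb{Z}/p^{k}$ for suitable $k$. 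These finite $p$-quotients separate points, because $F$ is residually $p$ (the lower $p$-central series intersects trivially), and $\mathbb{Z}$ is residually $p$. This is the standard fact that an extension of a finitely generated residually-$p$ group by a residually-$p$ group with action by a pro-$p$-compatible automorphism is residually $p$. I would check the details with the classical argument: the kernel of $\operatorname{Aut}(F/\gamma_i^pF)\to\operatorname{Aut}(H_1(F;\mathbb{F}_p))$ is a $p$-group.

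Combining the three steps, \cref{prop: p-goodness} shows that $G$ is $p$-good for every prime $p$, and hence good.
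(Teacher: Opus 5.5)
Your argument is sound for $G=F\rtimes\mathbb{Z}$ with $F$ a finitely generated free group. However, that is only the (finitely generated free)-by-cyclic case, which the paper deliberately distinguishes from the class in the statement. In the statement, ``finitely generated free-by-cyclic'' means that $G$ is finitely generated and has a free normal subgroup $N$ with $G/N$ cyclic, where $N$ may have infinite rank. This broader class is the one that is needed. The goodness input in \cref{thm: HF_vFbyZ} has to cover finitely generated subgroups of a free-by-cyclic group, and these typically have $N$ infinitely generated. Already $F_2=\langle a,b\rangle$ is $\ker(a\mapsto 0,\ b\mapsto 1)\rtimes\mathbb{Z}$ with kernel of infinite rank.

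For such $G$, each of your three steps fails as written.
\begin{enumerate}
\item There is no finite graph whose mapping torus is a $K(G,1)$, so type $\FP_2(\ZZ)$ is not automatic.
\item Your claim that all $L^2$-Betti numbers vanish is false, because $b_1^{(2)}(N)=\infty$; the example $F_2$ has $b_1^{(2)}=1$. What survives is $b_2^{(2)}(G)=0$, via Gaboriau's normal-subgroup theorem using $b_2^{(2)}(N)=0$, and that is all you need.
\item Most seriously, the residual-$p$ argument collapses. $H_1(N;\FF_p)$ is infinite-dimensional, so no power of the monodromy need act trivially on it, and the quotients $N/\gamma_i^p(N)$ are infinite.
\end{enumerate}

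The missing idea is to reduce to your case via an embedding. By Linton's theorem, every finitely generated free-by-cyclic group $G$ embeds in a (finitely generated free)-by-cyclic group $G'$. Your argument shows that $G'$ has a residually $p$-finite subgroup $U$ of finite index. Then $U\cap G$ is a residually $p$-finite subgroup of finite index in $G$. Type $\FP_2(\ZZ)$ for $G$ comes from Feighn--Handel's theorem that $G'$ is coherent, so its finitely generated subgroup $G$ is finitely presented. The bound $\cd_{\ZZ}(G)\le 2$ is immediate. With these inputs, \cref{prop: p-goodness} applies for every prime $p$. This is exactly the paper's proof, except that the paper cites Jankiewicz--Schreve for virtual residual $p$-finiteness of $G'$, where your classical argument would serve equally well.
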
 
\begin{proof}
    Let $G$ be a finitely generated free-by-cyclic group. Then it is of cohomological dimension is at most 2. By \cite[Theorem 1.2]{Linton_FbyZembedding}, $G$ embeds into a (finitely generated free)-by-cyclic group $G'$. Let $p$ be a prime number, according to \cite[Theorem 1.1]{JankiewiczSchreve_vRp} $G'$, and hence $G$, is virtually residually $p$-finite. Since $G$ is finitely presented by \cite{FeighnHandel1999}, $G$ is $p$-good by \cref{prop: p-goodness}. Therefore, $G$ is good.
\end{proof}

\section{Torsion-free virtually free-by-cyclic groups are Lewin}\label{sec:Lewin}

In this section we show that a torsion-free virtually free-by-cyclic group $G$ is a Lewin group. Furthermore, given a division ring $E$ and a crossed product $E * G$, we prove that $E * G$ is a pseudo-Sylvester domain. To do so, we follow the strategy of the third author in \cite{SanchezPeralta_Knots}, that is, first we will construct a Hughes-free division ring embedding and then use the following criterion of Henneke and L\'opez-\'Alvarez.

\begin{theorem}[{\cite[Theorem 3.4]{HL22}}] \label{thm: pseudoSyl_criterion}
    Let $\mathcal{D}$ be a division $R$-ring of fractions. Assume that
    \begin{enumerate}
        \item $\Tor_1^R(\mathcal{D}, \mathcal{D}) = 0$ and
        \item for any finitely generated left or right $R$-submodule $M$ of $\mathcal{D}$ and any exact sequence $0 \to J \to R^n \to M \to 0$, the $R$-module $J$ is finitely generated stably free.
    \end{enumerate}
    Then $R$ is a pseudo-Sylvester domain and $\mathcal{D}$ is the universal division $R$-ring of fractions.
\end{theorem}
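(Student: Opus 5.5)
The plan is to identify the stable inner rank over $R$ with the rank over $\mathcal D$. Concretely, for every $A\in\Mat_{n\times m}(R)$ I want to show that
\[
\rho^*(A)=\rk_{\mathcal D}(A),
\]
where $\rk_{\mathcal D}$ is the rank of $A$ viewed as a matrix over the division ring $\mathcal D$. Once this holds, the conclusions follow in order.
\begin{enumerate}
\item Stable finiteness: $R$ embeds in the division ring $\mathcal D$, which is stably finite, so $R$ is stably finite.
\item Law of nullity: if $AB=0$ over $R$, then $AB=0$ over $\mathcal D$, and Sylvester's nullity law over a division ring gives $\rk_{\mathcal D}(A)+\rk_{\mathcal D}(B)\le n$. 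Hence $\rho^*(A)+\rho^*(B)\le n$, so $R$ is a pseudo-Sylvester domain.
\item Universality: by \cref{thm: univ_pseudoSyl}, the universal division $R$-ring of fractions is the universal localization at the stably full matrices. By the rank identity, these are exactly the square matrices invertible over $\mathcal D$. Since $\mathcal D$ is epic and is generated by entries of inverses of such matrices, the universal localization maps onto $\mathcal D$, and Cohn's theory identifies the two.
\end{enumerate}

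For the rank identity, the inequality $\rho^*(A)\ge\rk_{\mathcal D}(A)$ is immediate: any factorization $A\oplus I_s=BC$ over $R$ with inner size $k$ is also one over $\mathcal D$, so $k\ge \rk_{\mathcal D}(A)+s$. For the reverse inequality, set $r=\rk_{\mathcal D}(A)$ and $M=R^nA\subseteq R^m\subseteq\mathcal D^m$.
\begin{enumerate}
\item Choose $r$ coordinates so that the projection $\mathcal D^nA\to\mathcal D^r$ is an isomorphism. Then $M$ is isomorphic to a finitely generated submodule of $\mathcal D^r$.
\item Filter $\mathcal D^r$ by the coordinate flags $\mathcal D^{i}$. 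This exhibits $M$ as an iterated extension of finitely generated submodules of $\mathcal D$.
\item Hypothesis (ii) gives, for each of these pieces, a presentation with finitely generated stably free kernel. A horseshoe-type argument then shows that the kernel $J$ of $R^n\to M$ is finitely generated and stably free; possibly the pieces need to be arranged carefully so that each one appears as a cyclic-quotient step.
\end{enumerate}

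Next I would compute the stable rank of $J$. Tensor $0\to J\to R^n\to M\to 0$ with $\mathcal D$. If $\Tor_1^R(\mathcal D,M)=0$, then $\mathcal D\otimes_R J$ has dimension $n-\dim_{\mathcal D}(\mathcal D\otimes_R M)=n-r$, so $J\oplus R^s\cong R^{n-r+s}$. This splits $R^{n+s}\to M\oplus R^s$ through a free module of rank $r+s$. Read in matrix form, it yields a factorization of $A\oplus I_s$ of inner size $r+s$, so $\rho^*(A)\le r$. One also has to check that $\mathcal D\otimes_R M\to\mathcal D^r$ has image of dimension exactly $r$, which holds because the image spans $\mathcal D^nA$.

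The main obstacle is the Tor-vanishing $\Tor_1^R(\mathcal D,M)=0$ for finitely generated $M\subseteq\mathcal D^r$. By the long exact sequence along the filtration, this reduces to the case $M\subseteq\mathcal D$. There, $\Tor_1^R(\mathcal D,\mathcal D)=0$ yields $\Tor_1^R(\mathcal D,M)\cong\Tor_2^R(\mathcal D,\mathcal D/M)$, which must be shown to vanish. My first attempt would use hypothesis (ii) in the form of projective dimension at most one for finitely generated submodules of $\mathcal D$. I would write $\mathcal D$ as the directed union of its finitely generated submodules $M'\supseteq M$, note that each quotient $M'/M$ has projective dimension at most $2$ with a controlled second syzygy, and pass to the direct limit. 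This limiting step is where I expect the real work to lie, and I would check it against Henneke and L\'opez-\'Alvarez's own argument.
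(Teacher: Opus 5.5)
The paper does not prove this statement; it quotes it from [HL22, Theorem 3.4]. Your skeleton is sound: prove $\rho^*(A)=\rk_{\mathcal D}(A)$, then read off stable finiteness, the law of nullity and universality via Cohn. The lower bound $\rho^*\ge\rk_{\mathcal D}$ and the filtration/horseshoe step are also fine. The upper bound $\rho^*(A)\le r$, however, has two genuine gaps.

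\emph{Gap 1: the factorization step.} Knowing that the syzygy $J$ of $M=R^nA$ is stably free of rank $n-r$ does not yield a factorization of $A\oplus I_s$ through $R^{r+s}$. Splitting $R^{n+s}\to M\oplus R^s$ off its kernel would force $M\oplus R^s$ to be projective, but $M$ generally has projective dimension $1$. For example, $R=k[x,y]\subseteq\mathcal D=k(x,y)$ satisfies both hypotheses. For $A=\begin{pmatrix}x\\ y\end{pmatrix}$ you get $J=R(y,-x)\cong R$, of rank $n-r=1$, while $M=(x,y)$ is not projective. Here $A$ factors through $R^1$ only because $M$ sits inside the free module $R^m\cap\mathcal D M=R$.

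That is the fix: apply your filtration/horseshoe argument not to $M$ but to $R^m/N$, where $N:=R^m\cap\mathcal D^nA$ is the saturation of $M$.
\begin{enumerate}
\item $R^m/N$ embeds in $\mathcal D^m/\mathcal D^nA\cong\mathcal D^{m-r}$, so your argument plus Schanuel shows that $N$ is finitely generated and stably free.
\item If $\Tor_1^R(\mathcal D,R^m/N)=0$, then $\mathcal D\otimes_R N\to\mathcal D^m$ is injective with image $\mathcal D N=\mathcal D^nA$. Hence $N\oplus R^s\cong R^{r+s}$.
\item The rows of $A$ lie in $N$, so $A\oplus I_s$ factors through $N\oplus R^s$.
\end{enumerate}
This also removes your unproved claim $\dim_{\mathcal D}(\mathcal D\otimes_R M)=r$. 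That claim needs injectivity of $\mathcal D\otimes_R M\to\mathcal D^r$, not merely that the image is $r$-dimensional.

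\emph{Gap 2: the Tor-vanishing.} You leave this open, and the route you sketch is circular. For finitely generated $M'\supseteq M$ in $\mathcal D$ you have $\Tor_2^R(\mathcal D,M')=0$, so $\Tor_2^R(\mathcal D,M'/M)=\ker\bigl(\Tor_1^R(\mathcal D,M)\to\Tor_1^R(\mathcal D,M')\bigr)$. Passing to the colimit over $M'$ and using (i) just returns $\Tor_1^R(\mathcal D,M)$.

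The missing idea is to use hypothesis (ii) for \emph{right} submodules, which your plan never touches. $\mathcal D_R$ is the directed union of its finitely generated right submodules, each of projective dimension at most $1$ by (ii). Hence $\Tor_2^R(\mathcal D,-)=0$, i.e.\ $\mathcal D$ has weak dimension at most $1$ as a right $R$-module. Then for every left submodule $X\subseteq\mathcal D^k$, the sequence $0\to X\to\mathcal D^k\to\mathcal D^k/X\to 0$ together with (i) gives $\Tor_1^R(\mathcal D,X)\hookrightarrow\Tor_1^R(\mathcal D,\mathcal D^k)=0$. Taking $X=R^m/N$ supplies exactly the vanishing needed in Gap 1.
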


We recall that an $R$-module $M$ is called \emph{stably free} if there exists a non-negative integer $n$ such that $M \oplus R^n$ is a free $R$-module.

Note that if $G$ is virtually free, then by \cite[Theorem B]{Swan1969} $G$ is actually a free group. In this case, $E * G$ is known to be a free ideal ring \cite{Cohn_Universality}. Consequently, the results listed below--including \cref{thm: univ_vFbyZ}--are already established and admit more straightforward proofs. For the remainder of the section, let $H$ denote a finite-index normal subgroup of $G$ that is free-by-cyclic (with the free-by-$\Z$ case being of primary interest).  

First of all, we start showing that $G$ is indeed a locally indicable group. The following is a generalization of \cite[Theorem 3.11]{JL23}.

\begin{proposition}\label{prop: vanishing_b2_li}
    Let $G$ be a torsion-free group of cohomological dimension at most $2$ over $\QQ$ satisfying the Weak Atiyah Conjecture. If $b_2^{(2)}(G)$ vanishes, then $G$ is locally indicable. 
\end{proposition}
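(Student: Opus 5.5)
To prove that $G$ is locally indicable, I will take a non-trivial finitely generated subgroup $L\le G$ and show that $b_1^{(2)}(L)>0$. A positive first $L^2$-Betti number forces a surjection onto $\mathbb Z$, as explained below, so this suffices. The proof follows the pattern of \cite[Theorem 3.11]{JL23}.

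\textbf{Step 1: $b_2^{(2)}$ is monotone along subgroups.} Since $\cd_{\QQ}(G)\le 2$, there is a projective resolution of $\QQ$ over $\QQ G$ of length $2$. Restricting it to $L$, we have $\cd_\QQ(L)\le 2$. Moreover,
\[
H_2(L;\mathcal U(G))\cong \ker\bigl(\mathcal U(G)\otimes_{\QQ G} P_2\to \mathcal U(G)\otimes_{\QQ G} P_1\bigr)
\]
after inducing up, so $H_2(L;\mathcal U(G))$ embeds into $H_2(G;\mathcal U(G))$. The induction here is compatible with dimensions by \cref{prop: properties_L2_dim}(i) together with Shapiro's lemma. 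Concretely, $\mathcal U(G)\otimes_{\QQ L}\QQ\to \mathcal U(G)\otimes_{\QQ G}\QQ G$ is handled by $\Tor$ with top degree $2$: the kernel in top degree is left exact, so $H_2$ of $L$ injects. Hence $b_2^{(2)}(L)\le b_2^{(2)}(G)=0$.

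\textbf{Step 2: Euler characteristic argument.} The group $L$ is finitely generated but not necessarily finitely presented, so I will avoid using $\chi$ directly. Instead I use the partial resolution $0\to J\to \QQ L^d\to \QQ L\to\QQ\to 0$ coming from $d$ generators. Since $\cd\le 2$, the vanishing of $H_2$ gives that $\mathcal U(L)\otimes J\to \mathcal U(L)^d$ is injective. Therefore
\[
b_1^{(2)}(L)=d-1-\dim_{\mathcal U(L)}(\mathcal U(L)\otimes_{\QQ L} J)+b_0^{(2)}(L).
\]
Here $b_0^{(2)}(L)=0$ because $L$ is infinite, being a non-trivial torsion-free group.

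\textbf{Step 3: positivity of $b_1^{(2)}(L)$ (main obstacle).} This is where I expect the difficulty. I will use the Weak Atiyah Conjecture: $\mathcal R_{\QQ L}$ is semisimple, so $b_1^{(2)}(L)\in\frac1l\mathbb Z$. Following \cite{JL23}, I will argue by passing to finite-index subgroups, which are scaled by the index, or by using $\mathbb Z$-acyclicity. Suppose $b_1^{(2)}(L)=0$. Then $L$ is $L^2$-acyclic, since $b_0^{(2)}$, $b_1^{(2)}$ and $b_2^{(2)}$ all vanish. The plan is to show that this forces $\mathcal R_{\QQ L}\otimes_{\QQ L} I_{\QQ L}\cong \mathcal R_{\QQ L}$. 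I will then use the semisimplicity and torsion-freeness of $L$ to conclude that $\mathcal R_{\QQ L}$ is a division ring, following Linnell's argument via the weak Atiyah bound combined with torsion-freeness; I may have to invoke \cite[Theorem 3.11]{JL23} directly for this part. The intended conclusion is that $\QQ L\to\mathcal D$ with $\Tor_1$ vanishing makes $I_{\QQ L}$ behave like that of a group with a one-relator-like deficiency. This is the step I am least sure of.

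\textbf{Step 4: producing the map to $\mathbb Z$.} Once $b_1^{(2)}(L)>0$, the lower bound $b_1^{(2)}(L)\le \dim_\QQ H_1(L;\QQ)$ from Step 2 does not follow directly, since $L$ need not be residually finite. So I will instead follow \cite{JL23}. From the injectivity established above I get $\dim \mathcal D\otimes J\le d-1-b_1^{(2)}(L)<d-1$. Comparing this with the specialization to $\QQ$, and using that $J$ is a finitely generated projective-like module with $\mathcal D$-rank at most $d-1$, yields $\dim_\QQ H_1(L;\QQ)\ge 1$, hence a surjection $L\twoheadrightarrow\mathbb Z$. I would try the Kaplansky/Bass-type rank comparison first, namely $\mathcal D$-rank versus $\QQ$-rank for the projective modules involved, in the same way as in the proof of \cref{prop: p-goodness} via \cite{Eckmann_StrongBass}.
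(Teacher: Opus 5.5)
There is a genuine gap, and it sits in Steps~3 and~4.

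\textbf{Step~3 aims at a false statement.} Under the hypotheses of the proposition, $b_1^{(2)}(L)$ can vanish. Take $G=\ZZ^2$ with $L=\ZZ^2$ or $L=\ZZ$, or take $G=L$ any finitely generated free-by-cyclic group. These groups are torsion-free, have cohomological dimension at most $2$, satisfy the Atiyah conjecture, have $b_2^{(2)}=0$, and are $L^2$-acyclic. So no argument by contradiction from $b_1^{(2)}(L)=0$ can work. Also, the Weak Atiyah Conjecture only makes $\mathcal R_{\QQ L}$ semisimple, not a division ring, so there is no $\mathcal D$ to work over.

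\textbf{Positivity is not needed.} From Step~2, $b_2^{(2)}(L)=0$ and $b_0^{(2)}(L)=0$ give
$\beta_0^L(J)=d-\beta_0^L(I_{\QQ L})=d-1-b_1^{(2)}(L)\le d-1$.
Tensoring $0\to J\to \QQ L^d\to I_{\QQ L}\to 0$ with $\QQ$ shows $b_1(L)\ge d-\dim_\QQ(\QQ\otimes_{\QQ L}J)$. Eckmann's rank comparison \cite[Theorem 8]{Eckmann_StrongBass} says $\dim_\QQ(\QQ\otimes_{\QQ L}J)=\beta_0^L(J)$. Hence $b_1(L)\ge \beta_0^L(I_{\QQ L})=1+b_1^{(2)}(L)\ge 1$. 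Your Step~4, with ``$\le d-1$'' in place of ``$<d-1$'', is therefore the whole argument, and this is exactly the paper's proof.

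\textbf{What Step~4 is missing is a proof that $J$ is finitely generated.} Eckmann's theorem requires this, and you only call $J$ ``projective-like''. Since $L$ is merely finitely generated, this is the one place where the Weak Atiyah Conjecture is genuinely used. The paper argues as follows:
\begin{enumerate}
\item $J$ is projective, because $\cd_\QQ(L)\le 2$.
\item So $\mathcal R_{\QQ L}\otimes_{\QQ L}J$ is a projective module of dimension $\beta_0^L(J)\le d$ over the semisimple ring $\mathcal R_{\QQ L}$.
\item The dimension function there is discrete and faithful on projectives, so this module is finitely generated.
\item Then \cite[Lemma 4]{LinnellLuckSchick_Ore} gives that $J$ itself is finitely generated.
\end{enumerate}

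\textbf{Your justification in Step~1 is also invalid.} There is no injection $H_2(L;\UO(G))\hookrightarrow H_2(G;\UO(G))$ in general. For example, an index-$2$ subgroup of $F_2\times F_2$ has $b_2^{(2)}=2>1=b_2^{(2)}(F_2\times F_2)$. The relevant coefficient map $\QQ[G/L]\to\QQ$ is a surjection, so left exactness of the top $\Tor$ gives nothing. The conclusion $b_2^{(2)}(L)=0$ is correct when $b_2^{(2)}(G)=0$, but it is a genuine theorem, \cite[Theorem 3.9]{JL23}, which is what the paper cites.
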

\begin{proof}
    Let $H$ be a finitely generated non-trivial subgroup of $G$. Consider the exact sequence
    \[
        0 \to P \to \QQ H^d \to I_{\QQ H} \to 0
    \]
    where $d$ is a positive integer and $P$ a projective $\QQ H$-module since  $H$ is of cohomological dimension at most $2$ over $\QQ$. By \cite[Theorem 3.9]{JL23} $b_2^{(2)}(H)$ vanishes, and hence
    \[
        d = \beta_0^H(\QQ H^d) = \beta_0^H(P) + \beta_0^H(I_{\QQ H}).
    \]
    Since $\mathcal{R}_{\QQ H}$ is a semisimple ring (observe that $H$ also satisfies the Weak Atiyah Conjecture) and $\dim_{\mathcal{R}_{\QQ H}}$ is discrete and faithful over projective modules, $\mathcal{R}_{\QQ H} \otimes_{\QQ H} P$ must be a finitely generated $\mathcal{R}_{\QQ H}$-module. Therefore, $P$ is a finitely generated $\QQ H$-module according to \cite[Lemma 4]{LinnellLuckSchick_Ore}. By \cite[Theorem 8]{Eckmann_StrongBass}, it holds that
    \[
        \beta_0^{\QQ}(P)=\dim_{\QQ}(\QQ \otimes_{\QQ H} P) = \beta_0^H(P).
    \]
    Thus,
    \[
        b_1(H) = d - \beta_0^{\QQ}(P) + b_2(H) = \beta_0^H(I_{\QQ H}) + b_2(H) \geq 1.
    \]
    Hence $H$ is indicable, and so $G$ is locally indicable.
\end{proof}

\begin{lemma} \label{lem: vFbyZ_li}
	Let $G$ be a torsion-free virtually free-by-cyclic group. Then $G$ is a locally indicable group of cohomological dimension at most $2$.
\end{lemma}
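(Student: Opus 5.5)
The plan is to apply \cref{prop: vanishing_b2_li}, so three properties of $G$ are needed. First, $\cd_{\QQ}(G)\le 2$. Second, $G$ satisfies the Weak Atiyah Conjecture over $\QQ$. Third, $b_2^{(2)}(G)=0$. As before, let $H\trianglelefteq G$ be a finite-index normal subgroup that is free-by-cyclic.

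For the cohomological dimension, I will first treat $H$. It is (free)-by-$\Z$ or free-by-finite-cyclic. In the free-by-$\Z$ case, $H=F\rtimes\Z$ with $F$ free, so $\cd H\le \cd F+1\le 2$. A free-by-(finite cyclic) group is virtually free. Since $G$ is torsion-free, this case reduces by Swan's theorem to $G$ free, so it is harmless. I will then pass from $H$ to $G$ using Serre's theorem: a torsion-free group containing a finite-index subgroup of finite cohomological dimension has the same cohomological dimension. This gives $\cd_{\ZZ}(G)=\cd_{\ZZ}(H)\le 2$, and in particular $\cd_{\QQ}(G)\le 2$.

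For the Weak Atiyah Conjecture, I will first obtain it for $H$. Free-by-cyclic groups are locally indicable: $F\rtimes\Z$ is an extension of a locally indicable group by $\Z$, and local indicability is closed under extensions. Locally indicable groups satisfy the Strong Atiyah Conjecture by Jaikin-Zapirain--L\'opez-\'Alvarez \cite{JaikinLopez2020}. As remarked in \cref{sec: l2_betti_nbrs}, a group that virtually satisfies the Weak Atiyah Conjecture satisfies it itself, so $G$ satisfies it. Concretely, $\beta_0^{\QQ G}(M)=\frac{1}{[G:H]}\beta_0^{\QQ H}(\operatorname{res} M)$, which lies in $\frac{1}{[G:H]}\ZZ$.

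For the vanishing of $b_2^{(2)}$, I will use $b_2^{(2)}(G)=b_2^{(2)}(H)/[G:H]$. For $H=F\rtimes\Z$, the $L^2$-Betti numbers of a group with an infinite amenable (here $\Z$) quotient whose kernel has finite $b_k^{(2)}$ vanish. Alternatively, Lück's theorem on mapping tori gives directly that all $L^2$-Betti numbers of $F\rtimes\Z$ vanish when $F$ is finitely generated. If $F$ is not finitely generated, I need a different argument, and this is the step I expect to be the obstacle. I would first try writing $H$ as an ascending union of finitely generated subgroups. Each of these is a finitely generated subgroup of a free-by-cyclic group, hence of cohomological dimension at most $2$. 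I would then argue that $b_2^{(2)}$ of such groups vanishes via \cite[Theorem 3.9]{JL23}, which the proof of \cref{prop: vanishing_b2_li} already uses in exactly this form: $b_2^{(2)}$ vanishes on subgroups. Combined with continuity of $b_2^{(2)}$ under directed unions in cohomological dimension $2$, this would give $b_2^{(2)}(H)=0$.

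If that route fails, I would use Lück's result that $b_k^{(2)}$ vanishes for groups $N\rtimes\Z$ whenever all $b_k^{(2)}(N)$ vanish, together with the fact that $b_2^{(2)}(F)=0$ for every free $F$, including infinitely generated ones, and then apply dimension-flatness. Once $\cd_{\QQ}(G)\le 2$, the Weak Atiyah Conjecture and $b_2^{(2)}(G)=0$ are in hand, \cref{prop: vanishing_b2_li} yields that $G$ is locally indicable, and $\cd(G)\le 2$ was already shown.
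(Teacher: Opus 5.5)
Your outline is the paper's: Serre's theorem gives $\cd_{\Z}(G)=\cd_{\Z}(H)\le 2$, the Atiyah conjecture for $H$ passes to $G$ as the Weak Atiyah Conjecture, $b_2^{(2)}(G)=b_2^{(2)}(H)/[G:H]=0$, and then \cref{prop: vanishing_b2_li} applies. The paper gets the Strong Atiyah Conjecture for $H$ from Linnell \cite{Linnell1993}; your route through local indicability of $H$ and \cite{JaikinLopez2020} works equally well.

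The one step you leave open is $b_2^{(2)}(H)=0$ when the free fibre $F$ is infinitely generated, and that obstacle is not real. The amenable-quotient theorem the paper cites, \cite[Th\'eor\`eme 6.6]{Gaboriau2002}, is a statement in a single degree: if $N\trianglelefteq\Gamma$, $\Gamma/N$ is infinite amenable and $b_n^{(2)}(N)<\infty$, then $b_n^{(2)}(\Gamma)=0$. Apply it with $n=2$ and $N=F$. Since $\cd F\le 1$, we have $b_2^{(2)}(F)=0$ for a free group of any rank, so $b_2^{(2)}(H)=0$ at once. This is the paper's entire argument for this step.

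You should drop both fallbacks.
\begin{itemize}
\item The first is circular. \cite[Theorem 3.9]{JL23} transfers vanishing of $b_2^{(2)}$ from a group of cohomological dimension at most $2$ to its subgroups. Applying it to finitely generated subgroups of $H$ therefore presupposes $b_2^{(2)}(H)=0$. Those subgroups are also again free-by-cyclic with possibly infinitely generated fibre, so the reduction gains nothing. Semicontinuity under directed unions is fine, but you have nothing to feed into it.
\item The second misplaces the difficulty. In the Lyndon--Hochschild--Serre spectral sequence for $F\rtimes\Z$, the only term that can contribute to $H_2(H;\UO(H))$ is $H_1(\Z;H_1(F;\UO(H)))$. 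That term is controlled by $b_1^{(2)}(F)=\infty$, not by $b_2^{(2)}(F)$. Any version of L\"uck's extension theorem that needs finiteness or vanishing of $b_p^{(2)}(F)$ for all $p\le 2$ fails here. The single-degree statement you actually need is exactly Gaboriau's.
\end{itemize}
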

\begin{proof}
	Since $G$ is torsion-free and $H$ is of finite index in $G$, according to \cite[Th\'eor\`eme 1, Section 1.7]{Serre71} $\cd_{\Z}(G) = \cd_{\Z}(H) \le 2$. Moreover, $H$ satisfies the Strong Atiyah Conjecture over $\CC$ by \cite{Linnell1993} and $b_2^{(2)}(H)$ vanishes (see, for instance, \cite[Th\'eor\`eme 6.6]{Gaboriau2002}). Hence, $G$ satisfies the Weak Atiyah Conjecture and $b_2^{(2)}(G) = 0$. Therefore, by \cref{prop: vanishing_b2_li}, $G$ is locally indicable.
\end{proof}

The existence of the Hughes-free division ring will follow as in \cite[Theorem 3.4]{SanchezPeralta_Knots}.

\begin{theorem} \label{thm: HF_vFbyZ}
	Let $G$ be a torsion-free virtually free-by-cyclic group. Then $G$ is Hughes-free embeddable. Moreover, if $H$ denotes a finite-index normal subgroup of $G$ that is free-by-cyclic, then for any crossed product $E * G$, where $E$ is a division ring, $\mathcal{D}_{E * G} = \mathcal{D}_{E * H} * G/H$.
\end{theorem}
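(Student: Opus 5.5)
We follow the strategy of \cite[Theorem 3.4]{SanchezPeralta_Knots}. Fix a division ring $E$, a crossed product $E*G$, and a finite-index normal subgroup $H\trianglelefteq G$ that is free-by-cyclic. The argument has two parts: first build a division $E*G$-ring from a Hughes-free division ring for $E*H$, and then check that it is Hughes-free for $E*G$.

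For the first part, note that $H$ is locally indicable, being free-by-cyclic. The crossed product $E*H\subseteq E*G$ has a Hughes-free division ring $\mathcal{D}_{E*H}$. For free-by-$\Z$ groups this is known, for instance from \cite{HL22}, where $\mathcal{D}_{E*H}$ is even shown to be universal; we take it as known in general. Each unit $u_g$ with $g\in G$ acts on $E*H$ by conjugation. This action restricts to a twisted $E*H$-ring structure on $\mathcal{D}_{E*H}$, and the twisted ring is again an epic Hughes-free division $E*H$-ring. By Hughes' uniqueness theorem \cite{HughesDivRings1970}, conjugation by $u_g$ extends uniquely to an automorphism of $\mathcal{D}_{E*H}$. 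Uniqueness also makes these automorphisms compatible with the twisting cocycle of $E*G$. Hence we can form the crossed product
\[
Q:=\mathcal{D}_{E*H}*G/H\supseteq (E*H)*G/H=E*G.
\]

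Next we show that $Q$ is a division ring. Since $G/H$ is finite, $Q$ is Artinian, being finite-dimensional over the division ring $\mathcal{D}_{E*H}$. It therefore suffices to show $Q$ is a domain. Any zero-divisor would produce, via Artinianity, a nontrivial idempotent, and so a finitely generated projective $Q$-module whose $\mathcal{D}_{E*H}$-dimension is not divisible by $|G/H|$. We rule this out using that $G$ is torsion-free and locally indicable (\cref{lem: vFbyZ_li}), following the argument of \cite{SanchezPeralta_Knots}. The cleanest route is to compare $Q$ with the Linnell-type division closure. Since $G$ satisfies the Weak Atiyah Conjecture and is locally indicable, one can instead appeal to \cite{JaikinLopez2020} for the case $E$ a field inside $\CC$. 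For general $E$, we follow the original argument, which uses local indicability of $G$ to apply a Hughes-type criterion directly. Once $Q$ is a division ring, it is generated as a division ring by $E*G$, since it is generated by $\mathcal{D}_{E*H}$ and the $u_g$. So $Q$ is epic.

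For the second part we verify the Hughes-free condition for $E*G$. Take a finitely generated $L\le G$ and $U\trianglelefteq L$ with $L/U\cong\Z$. We need that $\mathcal{D}_U\otimes_{E*U}E*L\to Q$ is injective, that is, that the powers of a lift $t$ of a generator of $L/U$ are left $\mathcal{D}_U$-linearly independent. Pass to $L_0=L\cap H$ and $U_0=U\cap H$; here $L_0/U_0$ is infinite cyclic or $L_0\le U$. Hughes-freeness of $\mathcal{D}_{E*H}$ gives the linear independence at the level of $H$. We lift it to $L$ by decomposing along cosets of $H$, using that $Q$ is free as a left $\mathcal{D}_{E*H}$-module on $\{u_g\}_{g\in G/H}$. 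Equivalently, we use Gräter's criterion that Hughes-free division rings are Linnell-free \cite{Grater20}, applied inside $\mathcal{D}_{E*H}$, together with the coset decomposition. The degenerate case $L_0\le U$ needs separate care. There $L/U$ is infinite but $L\cap H\subseteq U$, which forces $L/U$ to embed in the finite group $G/H$, a contradiction. So this case does not occur.

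The main obstacle is the domain step, showing $\mathcal{D}_{E*H}*G/H$ has no zero divisors. This is where torsion-freeness of $G$ must enter, and we would follow \cite[Theorem 3.4]{SanchezPeralta_Knots} closely there. Finally, uniqueness gives $\mathcal{D}_{E*G}=\mathcal{D}_{E*H}*G/H$.
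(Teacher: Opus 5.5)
You construct $Q=\mathcal{D}_{E*H}*G/H$ through Hughes' uniqueness theorem and reduce to showing $Q$ is a domain; both steps are fine. Your check of the Hughes-free condition once $Q$ is known to be a division ring is also fine: you pass to $L\cap H$ and $U\cap H$, note that $(L\cap H)U/U$ has finite index in $L/U\cong\Z$, and split along cosets of $H$. The genuine gap is the step you yourself flag: you never show that $Q$ has no zero divisors, and the mechanism you propose for general $E$ does not work. Local indicability of $G$ (\cref{lem: vFbyZ_li}) only makes the notion of a Hughes-free division ring meaningful. It is not known to produce one for arbitrary crossed products $E*G$; this is open in general, notably in positive characteristic, which is exactly the case $\FF_2 G$ the paper needs in \cref{sec: l2_rigid}. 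The appeal to \cite{JaikinLopez2020} covers only group algebras over subfields of $\CC$. Your remark that a zero divisor gives an idempotent, hence a projective $Q$-module whose $\mathcal{D}_{E*H}$-dimension is not divisible by $|G:H|$, only restates what has to be excluded.

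The missing idea is cohomological goodness. The paper's proof is the argument of \cite[Theorem 3.4]{SanchezPeralta_Knots}, with goodness of $3$-manifold groups replaced by \cref{prop: goodness_FbyZ}: finitely generated free-by-cyclic groups are good in the sense of Serre. This is proved in \cref{sec:good} via the $p$-goodness criterion \cref{prop: p-goodness}, using $\cd\le 2$, $b_2^{(2)}=0$ and virtual residual $p$-finiteness. Goodness is what justifies invoking \cite[Theorem 4.2]{FisherSanchezPeralta_KZDC} in that argument. So the division-ring property of $\mathcal{D}_{E*H}*G/H$ for arbitrary $E$ rests on goodness, not on local indicability. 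This is also where torsion-freeness of $G$ really enters, since goodness lets the absence of torsion in $G$ be seen in its finite quotients. For instance, a good group of finite cohomological dimension has torsion-free profinite completion. Without an input of this kind, you have no way to rule out zero divisors in the finite extension $\mathcal{D}_{E*H}*G/H$.
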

\begin{proof}
    The proof follows verbatim from that of \cite[Theorem 3.4]{SanchezPeralta_Knots}. More concretely, one only needs to replace goodness of fundamental groups of compact $3$-manifold groups in the proof of \cite[Theorem 3.4]{SanchezPeralta_Knots} by \cref{prop: goodness_FbyZ}, which ensures that all finitely generated free-by-cyclic groups are good (in the sense of Serre). Therefore the later application of \cite[Theorem 4.2]{FisherSanchezPeralta_KZDC} is justified in this broader context.
\end{proof}

Next, we examine the properties of the Hughes-free embedding. These properties not only help establish the universality of the Hughes-free division ring, but they are also of independent interest and play a central role in \cref{sec: l2_rigid}. Recall that over a ring $R$, an $R$-module $M$ is \emph{coherent} if every finitely generated $R$-submodule $N$ of $M$ is a finitely presented $R$-module.

\begin{proposition} \label{prop: properties_D}
    Let $E$ be a division ring, $G$ a torsion-free virtually free-by-cyclic group and $E * G$ a crossed product. Then the Hughes-free division $E * G$-ring $\mathcal{D}_{E * G}$ satisfies the following properties:
    \begin{enumerate}
        \item $\mathcal{D}_{E * G}$ is coherent.
        \item $\Tor_1^{E * G}(\mathcal{D}_{E * G}, \mathcal{D}_{E * G}) = \{0\}$.
        \item The $E*G$-module $\mathcal{D}_{E * G}$ is of weak dimension at most $1$.
        \item Every finitely generated $E * G$-submodule of $\mathcal{D}_{E * G}$ is of projective dimension at most $1$.
    \end{enumerate}
\end{proposition}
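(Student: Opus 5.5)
The plan is to first prove the four properties for the finite-index free-by-cyclic normal subgroup $H$ and then transfer them to $G$. For the transfer we use the decomposition $\mathcal{D}_{E*G} = \mathcal{D}_{E*H} * G/H$ from \cref{thm: HF_vFbyZ}. Write $R = E*G$, $S = E*H$, $\mathcal{D} = \mathcal{D}_{E*G}$ and $\mathcal{D}_H = \mathcal{D}_{E*H}$.

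Since $G/H$ is finite, $R$ is a free $S$-module of finite rank on both sides, and $\mathcal{D} \cong \mathcal{D}_H \otimes_S R$ as $(\mathcal{D}_H, R)$-bimodules. This bimodule is also free of finite rank over $\mathcal{D}_H$. Restriction from $R$ to $S$ therefore preserves finite generation, finite presentation and projectivity. Moreover, Shapiro-type isomorphisms give
\[
\Tor_n^{R}(\mathcal{D}, M) \cong \Tor_n^{S}(\mathcal{D}_H, M)
\]
for every left $R$-module $M$ (viewing $M$ by restriction as an $S$-module), and similarly
\[
\Tor_n^R(\mathcal{D},\mathcal{D}) \cong \bigoplus_{G/H} \Tor_n^S(\mathcal{D}_H, \mathcal{D}_H).
\]
This bookkeeping shows that (ii) and (iii) for $G$ follow from the corresponding statements for $H$. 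For (iv), a finitely generated $R$-submodule $M \subseteq \mathcal{D}$ is, as an $S$-module, a finitely generated submodule of $\mathcal{D} \cong \mathcal{D}_H^{[G:H]}$. If it has projective dimension at most $1$ over $S$, then, because $G$ is torsion-free of finite cohomological dimension (\cref{lem: vFbyZ_li}), a Serre-type argument lifts this to projective dimension at most $1$ over $R$. The lifting uses the finite projective dimension over $R$ together with the fact that projectivity can be tested on a finite-index subring, as in Serre's theorem or its module-theoretic version for crossed products. For (i), coherence, a finitely generated $R$-submodule of $\mathcal{D}$ is finitely generated over $S$; it is finitely presented over $S$ if submodules of $\mathcal{D}_H^{m}$ are, and finite presentation descends to $R$ because $R$ is finite over $S$.

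Next we reduce the free-by-cyclic case to the free-by-$\Z$ case. If $H \cong F \rtimes \Z$ with $F$ free, the statements for $E*H$ were obtained by Henneke and López-Álvarez \cite{HL22}, where (ii) and (iii) appear as \cite[Lemma 3.7]{HL22}. If $F$ is infinitely generated, we express $H$ as an ascending union of finitely generated free-by-cyclic subgroups, using \cite{Linton_FbyZembedding} or directly. Tor commutes with directed unions, the division closures are compatible, and any finitely generated submodule lives over some finitely generated stage. For finitely generated free-by-cyclic $H = F_n \rtimes_\varphi \Z$ we write $S = (E*F_n)[t^{\pm1};\varphi]$. Here $E*F_n$ is a fir (\cite{Cohn_Universality}) with universal division ring $\mathcal{D}_{E*F_n}$, and $\mathcal{D}_{E*H}$ is the Ore localization of the skew Laurent ring $\mathcal{D}_{E*F_n}[t^{\pm1}]$. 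Flatness of the Ore localization then reduces $\Tor_1^S(\mathcal{D}_H,\mathcal{D}_H)$ and $\Tor_2^S(\mathcal{D}_H, -)$ to a Mayer–Vietoris / HNN computation. The fir property kills the base terms, since $\Tor_1^{E*F_n}(\mathcal{D}_{E*F_n},-)$ vanishes on submodules of $\mathcal{D}_{E*F_n}$-modules, and the Laurent extension raises weak dimension by at most one.

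For coherence and (iv) for $S$ we work with a finitely generated submodule $M \subseteq \mathcal{D}_H^m$. By (iii) and \cref{lem: monotone_b1}, $\Tor_1^S(\mathcal{D}_H, M) = 0$. Take a presentation $0 \to J \to S^k \to M \to 0$. Since $S$ has global dimension at most $2$ (as $\cd H \le 2$), $J$ is projective exactly when $\Tor_2$ vanishes against all right modules, and that vanishing is given by (iii). So the remaining point is finite generation of $J$. For this we want to use an $L^2$/division-ring count in the spirit of \cite[Lemma 4]{LinnellLuckSchick_Ore}: $\mathcal{D}_H \otimes_S J$ embeds in $\mathcal{D}_H^k$, hence is finite-dimensional, and we then need a criterion that a projective $S$-module $P$ with $\mathcal{D}_H \otimes_S P$ finite-dimensional is finitely generated.

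The main obstacle is this finite generation step, and with it coherence. That criterion is clear for group rings via Linnell–Lück–Schick but less so for arbitrary crossed products $E*H$. The first thing to try is to run the argument over $E*F_n$ as a subring, where the fir property gives finite generation of submodules of free modules of finite rank, and then use that $S$ is an HNN-type extension over $E*F_n$ with $F_n$ finitely generated. Concretely, one filters $M$ by $t$-degree and applies the Bieri–Strebel/Feighn–Handel coherence argument \cite{FeighnHandel1999} in module form.
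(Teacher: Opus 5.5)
Your transfer step is sound, and it is the same reduction the paper uses (via $\mathcal{D}_{E*G}=\mathcal{D}_{E*H}*G/H$ from \cref{thm: HF_vFbyZ}, by analogy with \cite[Propositions 3.5 \& 3.6]{SanchezPeralta_Knots}). Shapiro's lemma gives (ii) and (iii). Finite presentation descends along the finite free extension $E*H\subseteq E*G$. For (iv), the coinduction argument works: if $n=\mathrm{pd}_{E*G}M$ is finite, then $\mathrm{Ext}^n_{E*G}(M,N)$ is a quotient of $\mathrm{Ext}^n_{E*G}(M,\mathrm{Hom}_{E*H}(E*G,N))\cong \mathrm{Ext}^n_{E*H}(M,N)$. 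Finiteness of $\mathrm{pd}_{E*G}M$ comes from $\cd G\le 2$. One small point: this needs (iv) for finitely generated submodules of $\mathcal{D}_{E*H}^m$, not just of $\mathcal{D}_{E*H}$. That follows from the case $m=1$ together with coherence, by projecting onto a coordinate and inducting.

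\textbf{The genuine gap is the base case.} For the free-by-cyclic subgroup you only obtain (ii) and (iii). Coherence (i) and the finite-generation part of (iv) are left as a heuristic, and they are the substantial content of the statement. Three specific problems:
\begin{enumerate}
\item The projectivity step is wrong as written. Vanishing of $\Tor_2^{S}(-,M)$ against all right modules only says that $M$ has flat dimension at most $1$, i.e.\ that $J$ is flat. A flat module is projective when it is finitely presented. So even granting that $J$ is finitely generated, you would still need $J$ to be finitely presented (for instance, coherence of $E*H$ itself) to conclude (iv).
\item The Linnell--L\"uck--Schick type criterion you want to invoke presupposes that $J$ is projective, which you have not established.
\item Your reduction for infinitely generated kernels does not land where you want. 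Finitely generated subgroups $\langle F_0,t\rangle$ of $F\rtimes\Z$ are free-by-cyclic, but usually with infinitely generated kernel; for example $F_2\cong F_\infty\rtimes \Z$. So you do not reach the case $F_n\rtimes\Z$.
\end{enumerate}

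The missing input is literature the paper imports rather than reproves. Write $E*H\cong (E*F)[t^{\pm1};\tau]$. This is a skew Laurent polynomial ring over $E*F$, which is a fir for a free group $F$ of any rank, so no reduction to finitely generated kernels is needed. Moreover, $\mathcal{D}_{E*H}$ is the Ore localization of $\mathcal{D}_{E*F}[t^{\pm1};\tau]$.

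For skew Laurent polynomial rings over firs, Henneke and L\'opez-\'Alvarez \cite{HL22} establish (ii) and (iii) (their Lemma 3.7). To show these rings are pseudo-Sylvester domains via \cref{thm: pseudoSyl_criterion}, they also need its hypothesis (2). That hypothesis says: for every finitely generated submodule $M$ of $\mathcal{D}$ and every sequence $0\to J\to (E*H)^n\to M\to 0$, the module $J$ is finitely generated stably free. This is exactly (i) and (iv) for $H$.

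Plugging this in, your transfer argument completes the proof without any degree-filtration or Feighn--Handel-type argument.
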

\begin{proof}
    The proof is analogous to that of \cite[Propositions 3.5 \& 3.6]{SanchezPeralta_Knots} using \cref{thm: HF_vFbyZ}.
\end{proof}

We will further need the following result on the structure of finitely generated projective $E * G$-modules.

\begin{theorem}\label{thm: FJ_vFbyZ}
    Let $E$ be a division ring, $G$ a torsion-free virtually free-by-cyclic group and $E * G$ a crossed product. Then the embedding $E \hookrightarrow E * G$ induces an isomorphism
    \[
        K_0(E) \xrightarrow{\cong} K_0(E \ast G).
    \]
    In particular, every finitely generated projective $E * G$-module is stably free.
\end{theorem}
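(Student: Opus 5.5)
The natural route is through the Farrell--Jones conjecture. Torsion-free virtually free-by-cyclic groups are CAT(0)-like, or at least satisfy the $K$-theoretic Farrell--Jones conjecture with coefficients in additive categories. Indeed, free-by-cyclic groups are known to satisfy it (Bestvina--Fujiwara--Wigglesworth for (finitely generated free)-by-$\Z$), and the class of groups satisfying Farrell--Jones with coefficients is closed under finite-index overgroups. Using coefficients in additive categories (or rings with $G$-action) lets us handle an arbitrary crossed product $E*G$ rather than only group rings.

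The plan has three steps.

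\textbf{Step 1: reduction to the trivial family.} Since $G$ is torsion-free, every virtually cyclic subgroup is trivial or infinite cyclic. For the infinite cyclic ones, $E*\Z$ is a twisted Laurent polynomial ring over the division ring $E$, which is regular. Hence the Nil-terms vanish, and the assembly map relative to the family of virtually cyclic subgroups reduces to the trivial family. The lower $K$-groups ($K_{-i}$, $i\geq 1$) of $E$ vanish because $E$ is regular.

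\textbf{Step 2: computing the equivariant homology.} The source of the assembly map becomes $H_0^G(\underline{E}G;\mathbf K_E)$, with twisted coefficients. Since $\cd(G)\le 2$ by \cref{lem: vFbyZ_li}, the Atiyah--Hirzebruch spectral sequence in total degree $0$ involves only $H_0(G;K_0(E))$, $H_1(G;K_{-1}(E))$ and $H_2(G;K_{-2}(E))$. The last two vanish by regularity. The first term is $K_0(E)$ coinvariants under the $G$-action, via the automorphisms of $E$ induced by the crossed product structure. Since $K_0(E)\cong\Z$ via dimension and automorphisms preserve rank, this is just $\Z=K_0(E)$. This yields $K_0(E)\cong K_0(E*G)$ via the inclusion.

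\textbf{Step 3: stable freeness.} If $P$ is finitely generated projective, then $[P]=n[E*G]$ in $K_0$, so $P\oplus (E*G)^m\cong (E*G)^{n+m}$. Here I am using that equality of classes in $K_0$ gives stable isomorphism.

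\textbf{Main obstacle.} The hard part is justifying the Farrell--Jones conjecture with coefficients for virtually free-by-cyclic groups, including the non-finitely-generated free-by-$\Z$ case. I would first use \cite[Theorem 1.2]{Linton_FbyZembedding}: finitely generated free-by-cyclic groups embed into (finitely generated free)-by-cyclic groups, and Farrell--Jones passes to subgroups. The finite-index extension is then handled by the wreath-product/induction inheritance property.

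An alternative, if Farrell--Jones is unavailable, is to use Waldhausen-type results or Moody's induction directly. For free-by-$\Z$, write $E*H=(E*F)*\Z$ and apply Bass--Heller--Swan/Farrell--Hsiang with Waldhausen's theorem for free products. The virtual case would still need Moody-style induction from the finite group $G/H$, which is why I prefer Farrell--Jones.
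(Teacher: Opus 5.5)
Your proposal is correct and follows the paper's route: Farrell--Jones with coefficients in additive categories for the free-by-cyclic subgroup via Bestvina--Fujiwara--Wigglesworth, passage to the finite-index overgroup $G$, then the regularity computation. That last step (vanishing twisted Nil-terms and negative $K$-groups of $E$, so the source collapses to $K_0(E)_G\cong\Z$) is not carried out in the paper, which quotes it as \cite[Proposition~4.2]{HL22}. Your use of Linton's embedding for infinitely generated fibres covers a point the paper leaves implicit; if $G$ need not be finitely generated you would also use that Farrell--Jones passes to directed unions, and it is the additive-category version, not coefficients in rings with $G$-action, that handles crossed products with a non-trivial twisting cocycle.
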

\begin{proof}
    By \cite[Theorem 1.1]{BestvinaFujiwaraWigglesworth_FJHbyZ} $H$ satisfies the $K$-theoretic Farrell--Jones Conjecture with coefficients in an additive category. Now, since the conjecture is preserved under finite-index   overgroups \cite[Section 2.3]{Wegner_FJInheritance}, the statement follows from \cite[Proposition 4.2]{HL22}.
\end{proof}

We conclude the section showing that $G$ is a Lewin group.

\begin{proof}[Proof of \cref{thm: univ_vFbyZ}]
    The same argument in the proof of \cite[Theorem 1.5]{SanchezPeralta_Knots} replacing \cite[Propositions 3.5 \& 3.6]{SanchezPeralta_Knots} by \cref{prop: properties_D}, and \cite[Theorem 3.7]{SanchezPeralta_Knots} by \cref{thm: FJ_vFbyZ}, shows that the Hughes-free division $E * G$-ring $\mathcal{D}_{E * G}$ given by \cref{thm: HF_vFbyZ} satisfies the conditions of \cref{thm: pseudoSyl_criterion}. Therefore, $E * G$ is a pseudo-Sylvester domain and $\mathcal{D}_{E * G}$ is also the universal division $E * G$-ring of fractions. In particular, $G$ is a Lewin group.
\end{proof}

\section{\texorpdfstring{$L^2$}{L2}-rigidity of torsion-free virtually free-by-cyclic groups} \label{sec: l2_rigid}

In this section we will show \cref{compressedL2}. We follow the strategy of \cite{Ja24} and examine the $\mathcal{D}$-torsion part of $k G$-modules over a given division $k G$-ring $\mathcal{D}$, where $k$ will stand for a field throughout the section. 

\subsection{The \texorpdfstring{$\UO$}{U}-torsion part}
    
    Let $\UO$ be a $k G$-ring and let $M$ be a $k G$-module. We are interested in understanding the kernel of the natural map
    \[
        M \to\mathcal U \otimes_{k G} M, \quad x \mapsto 1 \otimes x.
    \]
    We call this kernel the \emph{$\UO$-torsion part of $M$} and denote it by $\tor_{k G}(M;\mathcal U)$ or simply $\tor_{k G}(M)$ if the $k G$-ring $\UO$ is clear from the context. We say that $M$ is \emph{$\UO$-torsion-free} if
    \[
        \tor_{k G}(M) = \{0\}.
    \]
    Recall that for a given subgroup $H$ in $G$, we denote by $I_k(G,H)$ the $k G$-module $I_{k G}/^G I_{k H}$. We will be especially interested in the $\UO$-torsion part of $I_k(G,H)$. We say that $H$ is \emph{$\UO$-closable} in $G$ if there exists a subgroup $\widetilde{H}$ of $G$ such that 
    \[
        \tor_{k G}(I_k(G,H)) = {}^G I_k(\widetilde{H}, H).
    \]
    In that case, $\widetilde{H}$ is unique and contains $H$ (see \cite[Lemma 2.1]{Ja24}), and we call the subgroup $\widetilde{H}$ the \emph{$\UO$-closure} of $H$ in $G$. We say that $G$ is \emph{$\UO$-subgroup rigid} if every finitely generated subgroup is $\UO$-closable in $G$.

    Whenever we consider the $\QQ[G]$-ring $\UO(G)$, we will speak about $L^2$-closable, closure or subgroup rigidity instead of using the ring $\UO(G)$.

    Let $\mathcal{D}$ be a division $k G$-ring. We record how the properties of $\mathcal{D}$ reflect on the $\mathcal{D}$-torsion part. First, we consider Linnell-free division ring embeddings. 

    \begin{lemma} \label{extabs}  
        Let $\mathcal{D}$ be a Linnell-free division $kG$-ring and $H$ a subgroup of $G$. Denote by $\mathcal{D}_H$ the division closure of $kH$ in $\mathcal{D}$, and let $M$ be a $\mathcal D_{H}$-torsion-free $kH$-module. Then the $kG$-module $kG\otimes_{kH} M$ is $\mathcal D$-torsion-free.
    \end{lemma}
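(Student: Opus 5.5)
The plan is to reduce everything to the Linnell-free injectivity of the multiplication map $\mathcal{D}_H \otimes_{kH} kG \to \mathcal{D}$ (taking $U=H$ and the ambient group $G$ in the definition). The natural map $kG\otimes_{kH} M \to \mathcal{D}\otimes_{kG}(kG\otimes_{kH} M)\cong \mathcal{D}\otimes_{kH} M$ should factor as
\[
kG\otimes_{kH} M \longrightarrow kG\otimes_{kH}\bigl(\mathcal{D}_H\otimes_{kH} M\bigr)\longrightarrow \mathcal{D}\otimes_{kH} M,
\]
and the idea is to prove that each arrow is injective. Two side-conventions have to be handled: here $kG\otimes_{kH}-$ uses $kG$ as a $(kG,kH)$-bimodule, while the Linnell-free condition is stated for $\mathcal{D}_H\otimes_{kH}kG$, i.e.\ with $kG$ as a $(kH,kG)$-bimodule. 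I will use the left-handed version of Linnell-freeness, namely that $kG\otimes_{kH}\mathcal{D}_H\to\mathcal{D}$ is injective. This is equivalent to right linear independence of coset representatives over $\mathcal{D}_H$; alternatively, I can note that the whole setup is symmetric via the involution $g\mapsto g^{-1}$.

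For the first arrow, $kG$ is free as a right $kH$-module, with basis a set $\{g_i\}$ of left coset representatives. Hence $kG\otimes_{kH} M\cong\bigoplus_i g_i\otimes M$. The map becomes the direct sum of copies of $M\to\mathcal{D}_H\otimes_{kH}M$, and each of these is injective precisely because $M$ is $\mathcal{D}_H$-torsion-free.

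For the second arrow, $kG\otimes_{kH}(\mathcal{D}_H\otimes_{kH}M)\cong (kG\otimes_{kH}\mathcal{D}_H)\otimes_{\mathcal{D}_H}(\mathcal{D}_H\otimes_{kH}M)$. Now $\mathcal{D}_H\otimes_{kH}M$ is a left $\mathcal{D}_H$-vector space, so it is free, hence flat over the division ring $\mathcal{D}_H$. Tensoring the injective map $kG\otimes_{kH}\mathcal{D}_H\hookrightarrow\mathcal{D}$ (of right $\mathcal{D}_H$-modules) with this flat module therefore gives an injection into $\mathcal{D}\otimes_{\mathcal{D}_H}(\mathcal{D}_H\otimes_{kH}M)\cong\mathcal{D}\otimes_{kH}M$.

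Finally, I will check that the composite coincides with $x\mapsto 1\otimes x$ under the identification $\mathcal{D}\otimes_{kG}kG\otimes_{kH}M\cong\mathcal{D}\otimes_{kH}M$; this is routine on elementary tensors $g\otimes m\mapsto g\otimes m$. The main obstacle I expect is the left/right bookkeeping in the Linnell-free hypothesis, i.e.\ justifying injectivity of $kG\otimes_{kH}\mathcal{D}_H\to\mathcal{D}$ from the stated form. I would do this via the equivalence with linear independence of elements of distinct cosets over $\mathcal{D}_H$, applied to inverses of the coset representatives.
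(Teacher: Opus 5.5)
Your factorization is the argument the paper relies on (it defers to \cite[Proposition~3.3]{Ja24}). Two ingredients are fine: $kG$ is free as a right $kH$-module, and $\mathcal{D}_H\otimes_{kH}M$ is flat over the division ring $\mathcal{D}_H$. The third ingredient, injectivity of $kG\otimes_{kH}\mathcal{D}_H\to\mathcal{D}$, is also what the argument needs. Your first two arrows and the identification of the composite are correct.

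The gap is the step you flagged yourself, and neither proposed fix closes it. The paper's definition gives injectivity of $\mathcal{D}_H\otimes_{kH}kG\to\mathcal{D}$: if $\sum_j d_jh_j=0$ with $d_j\in\mathcal{D}_H$ and the $h_j$ in distinct right cosets $Hh_j$, then all $d_j=0$. You need instead that $\sum_i g_id_i=0$ forces $d_i=0$ when the $g_i$ lie in distinct left cosets $g_iH$.

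Replacing $g_i$ by $g_i^{-1}$ turns left cosets into right cosets. It does not turn the right combination $\sum_i g_id_i$ into a left combination $\sum_i d_i'g_i^{-1}$ with $d_i'\in\mathcal{D}_H$. For that you would need an anti-automorphism of $\mathcal{D}$ extending $g\mapsto g^{-1}$ and preserving $\mathcal{D}_H$, and an abstract division $kG$-ring has no such map. Likewise, the ``symmetry'' $g\mapsto g^{-1}$ only makes $\mathcal{D}^{\mathrm{op}}$ a $kG$-ring. It turns the hypothesis on $\mathcal{D}$ into a left-handed statement about $\mathcal{D}^{\mathrm{op}}$, not about $\mathcal{D}$. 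Rewriting $g_id_i=(g_id_ig_i^{-1})g_i$ only helps when $H$ is normal, since in general $g_id_ig_i^{-1}\in\mathcal{D}_{g_iHg_i^{-1}}$.

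This step cannot be bypassed. The map $kH\to\mathcal{D}_H$ is a ring epimorphism, so $\mathcal{D}_H\otimes_{kH}\mathcal{D}_H\cong\mathcal{D}_H$ and $\mathcal{D}\otimes_{kH}\mathcal{D}_H\cong\mathcal{D}$. Hence $M=\mathcal{D}_H$ is $\mathcal{D}_H$-torsion-free, and the lemma for this $M$ says exactly that $kG\otimes_{kH}\mathcal{D}_H\to\mathcal{D}$, $g\otimes x\mapsto gx$, is injective. So the lemma is equivalent to the left-handed form of Linnell-freeness, and that form must be assumed or verified.

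This is in fact how the paper uses the notion: Linnell's theorem is quoted as right $\mathcal{U}(H)$-linear independence of elements in distinct cosets. The left-handed form also holds in the cases the paper needs:
\begin{enumerate}
\item In $\mathcal{U}(G)$, with $k$ closed under complex conjugation, the involution $*$ preserves $kH$, hence $\mathcal{D}_H$, and exchanges the two conditions.
\item If $\mathcal{D}$ is Hughes-free for $E*G$, then $\mathcal{D}^{\mathrm{op}}$ is Hughes-free for the crossed product $(E*G)^{\mathrm{op}}\cong E^{\mathrm{op}}*G$. Indeed, Hughes-freeness only involves $N\triangleleft H$ with $H/N\cong\ZZ$, and there $t\mathcal{D}_Nt^{-1}=\mathcal{D}_N$ makes left and right independence of the powers of $t$ equivalent. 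Gr\"ater's theorem applied to $\mathcal{D}^{\mathrm{op}}$ then gives the left-handed condition for $\mathcal{D}$.
\end{enumerate}
With that input your argument is complete.
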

    \begin{proof}
        The argument is identical to the proof of \cite[Proposition 3.3]{Ja24}.
    \end{proof}

    \begin{lemma} \label{lem: ind_tors} 
        Let $\mathcal{D}$ be a Linnell-free division $kG$-ring and $H \leq U \leq L \leq G$ a chain of groups. Then the $kL$-module 
        \[
            \tor_{kL}\left( ^L I_k(U,H); \mathcal{D}\right)
        \]
        is equal to the $kL$-submodule of $^L I_k(U,H)$ generated by 
        \[
            \tor_{kU}\left(I_k(U,H); \mathcal{D}\right).
        \]
    \end{lemma}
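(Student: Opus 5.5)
The plan is to reduce the statement to \cref{extabs} by decomposing ${}^L I_k(U,H)$ as an induced module. Since induction is exact and $kL$ is free over $kU$, we have
\[
{}^L I_k(U,H) \cong kL\otimes_{kU} I_k(U,H).
\]
Put $T:=\tor_{kU}(I_k(U,H);\mathcal D)$ and consider the short exact sequence of $kU$-modules
\[
0\to T\to I_k(U,H)\to I_k(U,H)/T\to 0 .
\]
Inducing up to $L$ keeps it exact, because $kL$ is free as a right $kU$-module. This gives
\[
0\to kL\otimes_{kU}T\to {}^L I_k(U,H)\to kL\otimes_{kU}\bigl(I_k(U,H)/T\bigr)\to 0 ,
\]
and the image of the first term is precisely the $kL$-submodule generated by $T$.

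\emph{Step 1: the submodule generated by $T$ is torsion.} Every $x\in T$ satisfies $1\otimes x=0$ in $\mathcal D\otimes_{kU}I_k(U,H)$, where $\mathcal D$ is viewed as a $kU$-ring via restriction. Because $\mathcal D\otimes_{kL}({}^L M)\cong \mathcal D\otimes_{kU}M$, the image of $x$ in ${}^L I_k(U,H)$ is killed by $\mathcal D\otimes_{kL}-$. Hence it lies in $\tor_{kL}$, and since the torsion part is a $kL$-submodule, the $kL$-span of $T$ is contained in $\tor_{kL}({}^L I_k(U,H);\mathcal D)$. (One should double-check that the $\mathcal D$-torsion for $kU$ is meant with respect to the full $\mathcal D$, or equivalently $\mathcal D_U$. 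These agree because $\mathcal D$ is flat over $\mathcal D_U$, so $\mathcal D_U\otimes M\to\mathcal D\otimes M$ is injective.)

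\emph{Step 2: the quotient is torsion-free.} Apply \cref{extabs} with the pair $U\le L$. Note that $\mathcal D_L$ is a Linnell-free division $kL$-ring: the injectivity of $\mathcal D_{U'}\otimes_{kU'}kH'\to\mathcal D$ for subgroups $U'\le H'\le L$ is inherited, with the multiplication landing in $\mathcal D_L$. The module $I_k(U,H)/T$ is $\mathcal D_U$-torsion-free. Indeed, $\mathcal D_U\otimes -$ is right exact and $\mathcal D_U\otimes T\to \mathcal D_U\otimes I_k(U,H)$ is zero, so
\[
\mathcal D_U\otimes I_k(U,H)\cong \mathcal D_U\otimes (I_k(U,H)/T),
\]
and the kernel of $I_k(U,H)/T\to\mathcal D_U\otimes(I_k(U,H)/T)$ is $T/T=0$. \cref{extabs} then gives that $kL\otimes_{kU}(I_k(U,H)/T)$ is $\mathcal D_L$-torsion-free, hence $\mathcal D$-torsion-free by flatness of $\mathcal D$ over the division ring $\mathcal D_L$.

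\emph{Step 3: conclude.} Any element of $\tor_{kL}({}^L I_k(U,H))$ maps to a torsion element of the quotient, and that element is therefore zero. So the element lies in the $kL$-span of $T$, which combined with Step 1 gives equality.

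The main subtlety is the bookkeeping in Step 2. One must verify that the torsion notion over $kU$ can be taken relative to $\mathcal D_U$, and that the hypotheses of \cref{extabs} are satisfied for $L$ in place of $G$. Both reduce to the fact that modules over a division ring are flat.
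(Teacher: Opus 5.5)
Your proof is correct and follows the same route as the paper, which defers to Claim~3.7 of \cite{Ja24}: induce $0\to T\to I_k(U,H)\to I_k(U,H)/T\to 0$ up to $L$, note that the induced copy of $T$ is $\mathcal D$-torsion, and use \cref{extabs} to show that the induced quotient is $\mathcal D$-torsion-free. One small wording fix: the injectivity of $\mathcal D_U\otimes_{kU}M\to\mathcal D\otimes_{kU}M$ (and likewise with $\mathcal D_L$) does not come from flatness of $\mathcal D$ over $\mathcal D_U$; it holds because $\mathcal D_U$ is a direct summand of $\mathcal D$ as a right $\mathcal D_U$-vector space, or equivalently because $\mathcal D_U\otimes_{kU}M$ is free over $\mathcal D_U$.
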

    \begin{proof}
        The proof is analogous to that of \cite[Claim 3.7]{Ja24} using \cref{extabs}.
    \end{proof}

    Next, we consider homological properties of $\mathcal{D}$ as a $k G$-module.

    \begin{lemma} \label{lem: tf_vanishing_b1}
        Let $\mathcal{D}$ be a division $kG$-ring. Assume that $\mathcal{D}$ is of weak dimension at most $1$ as right $kG$-module and that $\Tor_1^{kG}(\mathcal{D}, \mathcal{D})$ vanishes. Then for every $\mathcal{D}$-torsion-free $kG$-module $M$ it holds that
        \[
            \beta_1^{\mathcal{D}}(M) = 0.
        \]
    \end{lemma}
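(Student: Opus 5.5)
The plan is to reduce to a direct-limit argument over finitely generated submodules and then use \cref{lem: monotone_b1}. A $\mathcal{D}$-torsion-free module should, at least on finitely generated pieces, look like a submodule of a free $\mathcal{D}$-module.

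First reduce to $M$ finitely generated. Write $M=\varinjlim M_i$ over its finitely generated submodules. Since $\Tor_1^{kG}(\mathcal{D},-)$ commutes with direct limits, we get $\Tor_1^{kG}(\mathcal{D},M)=\varinjlim \Tor_1^{kG}(\mathcal{D},M_i)$. Dimension over the division ring $\mathcal{D}$ is well behaved under direct limits, so it suffices that each term vanishes. Each $M_i$ is again $\mathcal{D}$-torsion-free: if $1\otimes x=0$ in $\mathcal{D}\otimes M_i$, then $1\otimes x=0$ in $\mathcal{D}\otimes M$, so $x=0$.

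Now let $M$ be finitely generated and $\mathcal{D}$-torsion-free. Since $\mathcal{D}\otimes_{kG}M$ is a finitely generated $\mathcal{D}$-module, it is isomorphic to $\mathcal{D}^n$. Torsion-freeness means that the natural map $M\to \mathcal{D}\otimes_{kG}M\cong\mathcal{D}^n$ is injective, so $M$ embeds as a $kG$-submodule of $\mathcal{D}^n$.

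By \cref{lem: monotone_b1}, which applies because $\mathcal{D}$ has weak dimension at most $1$, we obtain $\beta_1^{\mathcal{D}}(M)\le \beta_1^{\mathcal{D}}(\mathcal{D}^n)=n\,\dim_{\mathcal{D}}\Tor_1^{kG}(\mathcal{D},\mathcal{D})=0$, by the hypothesis that $\Tor_1^{kG}(\mathcal{D},\mathcal{D})$ vanishes. Equivalently, one can filter $M\subseteq\mathcal{D}^n$ via the coordinate projections to reduce to submodules of $\mathcal{D}$ and apply the last clause of \cref{lem: monotone_b1} together with weak dimension $\le 1$ for the extensions. Here $\mathcal{D}$ is viewed as a left $kG$-module and $\Tor$ is taken with $\mathcal{D}$ as a right module, matching the lemma's hypotheses.

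The only delicate point is the direct limit step. The cleanest option is to avoid \cref{lem: dim_direct_lim} altogether and simply note that a direct limit of zero vector spaces is zero. The embedding into $\mathcal{D}^n$ is immediate from the definitions.
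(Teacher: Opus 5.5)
Your proof is correct and follows the paper's argument: embed $M$ into $\mathcal{D}\otimes_{kG}M\cong\mathcal{D}^{\kappa}$, apply \cref{lem: monotone_b1}, and use that $\Tor_1^{kG}(\mathcal{D},-)$ commutes with direct sums, so the vanishing of $\Tor_1^{kG}(\mathcal{D},\mathcal{D})$ gives the result. Your reduction to finitely generated $M$ is valid but unnecessary: the paper allows $\kappa$ to be an arbitrary cardinal, because $\Tor$ commutes with arbitrary direct sums.
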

    \begin{proof}
        By assumption $M$ is a $kG$-submodule of $\mathcal{D} \otimes_{kG} M$. Thus, by \cref{lem: monotone_b1}, it suffices to show that $\beta_1^{kG}(\mathcal{D} \otimes_{kG} M) = 0$. Now, $\mathcal{D}$ is a division ring, and hence
        \[
            \mathcal{D} \otimes_{kG} M \cong \mathcal{D}^{\kappa}
        \]
        for some cardinal    $\kappa$. Therefore, we get
        \[
            \Tor_1^{kG}(\mathcal{D}, \mathcal{D} \otimes_{kG} M) \cong \bigoplus_{\kappa} \Tor_1^{kG}(\mathcal{D}, \mathcal{D}) = \{0\},
        \]
        which concludes the proof.
    \end{proof}

    \begin{lemma} \label{lem: properties_tors}
        Let $\mathcal{D}$ be a division $kG$-ring. Assume that $\mathcal{D}$ is of weak dimension at most $1$ as right $kG$-module and that $\Tor_1^{kG}(\mathcal{D}, \mathcal{D})$ vanishes. Then for every $kG$-module $M$ it holds that
        \[
            \beta_0^{\mathcal{D}}(\tor_{kG}(M)) = 0.
        \]
        Moreover, $\tor_{kG}(M)$ is the maximal $kG$-submodule of $M$ with this property and
        \[
            \beta_1^{\mathcal{D}}(\tor_{kG}(M)) = \beta_1^{\mathcal{D}}(M).
        \]
    \end{lemma}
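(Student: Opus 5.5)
We prove the three assertions of \cref{lem: properties_tors} in turn, using only the long exact $\Tor$-sequence, the weak dimension bound and \cref{lem: tf_vanishing_b1}. Write $T=\tor_{kG}(M)$ and consider the short exact sequence
\[
0 \to T \to M \to M/T \to 0 .
\]
The key intermediate claim is that $M/T$ is $\mathcal{D}$-torsion-free. Everything else will follow from this claim by comparing the induced long exact sequences.

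\textbf{Vanishing of $\beta_0^{\mathcal D}(T)$.} Applying $\mathcal{D}\otimes_{kG}-$ to the inclusion $T\hookrightarrow M$ gives a map $\mathcal D\otimes T\to \mathcal D\otimes M$. By definition every $x\in T$ satisfies $1\otimes x=0$ in $\mathcal D\otimes M$, so this map is zero. Hence $\mathcal D\otimes M\to\mathcal D\otimes M/T$ is injective, and it is surjective by right exactness, so it is an isomorphism. The long exact sequence reads
\[
\Tor_1(\mathcal D,M)\to\Tor_1(\mathcal D,M/T)\to \mathcal D\otimes T\xrightarrow{0}\mathcal D\otimes M\xrightarrow{\cong}\mathcal D\otimes M/T\to 0,
\]
so $\mathcal D\otimes T$ is a quotient of $\Tor_1(\mathcal D,M/T)$. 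Now we prove the claim. If $\bar m\in M/T$ maps to $0$ in $\mathcal D\otimes M/T$, then $1\otimes m=0$ in $\mathcal D\otimes M$ by the isomorphism above, so $m\in T$ and $\bar m=0$. Thus $M/T$ is $\mathcal D$-torsion-free. By \cref{lem: tf_vanishing_b1} we get $\Tor_1(\mathcal D,M/T)=0$, hence $\mathcal D\otimes T=0$, i.e.\ $\beta_0^{\mathcal D}(T)=0$.

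\textbf{Maximality.} Let $N\le M$ be a submodule with $\mathcal D\otimes N=0$. Then the image of $\mathcal D\otimes N\to\mathcal D\otimes M$ is zero, so $1\otimes x=0$ for every $x\in N$, which gives $N\subseteq T$. No hypotheses are needed for this step.

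\textbf{Equality of first Betti numbers.} The weak dimension bound gives $\Tor_2(\mathcal D,M/T)=0$, and we have shown $\Tor_1(\mathcal D,M/T)=0$. The long exact sequence therefore yields
\[
0=\Tor_2(\mathcal D,M/T)\to\Tor_1(\mathcal D,T)\to\Tor_1(\mathcal D,M)\to\Tor_1(\mathcal D,M/T)=0,
\]
so $\Tor_1(\mathcal D,T)\cong\Tor_1(\mathcal D,M)$ and $\beta_1^{\mathcal D}(T)=\beta_1^{\mathcal D}(M)$.

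The main point to get right is the claim that $M/T$ is torsion-free. It is elementary once we observe that $\mathcal D\otimes M\to\mathcal D\otimes M/T$ is an isomorphism, which holds because the map from $\mathcal D\otimes T$ is zero.
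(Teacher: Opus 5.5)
Your proof is correct and follows the paper's argument. The paper likewise passes to $\overline{M}=M/\tor_{kG}(M)$, viewed as the image of $M$ in $\mathcal{D}\otimes_{kG}M$, and notes that $\mathcal{D}\otimes_{kG}\overline{M}\cong\mathcal{D}\otimes_{kG}M$, so $\overline{M}$ is $\mathcal{D}$-torsion-free. It then applies \cref{lem: tf_vanishing_b1} to get $\beta_1^{\mathcal{D}}(\overline{M})=0$ and reads all three assertions off the long exact $\Tor$-sequence. Your version argues directly with exactness rather than by comparing dimensions, so it never has to subtract possibly infinite dimensions; that makes it marginally cleaner.
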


    \begin{proof}
        Denote by $\overline{M}$ the image of the natural map $M \to \mathcal{D} \otimes_{kG} M$. Observe that $\mathcal{D} \otimes_{kG} \overline{M} \cong \mathcal{D} \otimes_{kG} M$, and so, $\overline{M}$ is $\mathcal{D}$-torsion-free and $\beta_0^{\mathcal{D}}(\overline{M}) = \beta_0^{\mathcal{D}}(M)$. Then, by \cref{lem: tf_vanishing_b1}, $\beta_1^{\mathcal{D}}(\overline{M})$ vanishes. Thus, the statements follow from extending scalars from $kG$ to $\mathcal{D}$ on the exact sequence
        \[
            0 \to \tor_{kG}(M) \to M \to \overline{M} \to 0
        \]
        and taking dimensions by \cref{lem: monotone_b1}.
    \end{proof}

    We conclude the section looking at the coherence property.

    \begin{lemma} \label{lem: coh_tf_fg}
        Let $\mathcal{D}$ be a coherent division $kG$-ring and $M$ a finitely generated $kG$-module. Let $J$ be a $kG$-submodule of $M$ such that $M/J$ is $\mathcal D$-torsion-free. Then $J$ is finitely generated. 
    \end{lemma}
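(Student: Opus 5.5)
We pass through the map $M \to \mathcal{D}\otimes_{kG} M$. Write $\overline{M}$ for its image, so that $\overline{M}\cong M/\tor_{kG}(M)$. Since $M/J$ is $\mathcal{D}$-torsion-free, the composite $M \to M/J \to \mathcal{D}\otimes_{kG}(M/J)$ has kernel exactly $J$. The plan is to show that $J$ is the kernel of a map from $M$ into a finitely generated $kG$-submodule of a finite-dimensional $\mathcal{D}$-vector space, and then to use coherence.

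The key steps are as follows.

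First, $M$ is finitely generated, so $\mathcal{D}\otimes_{kG} M$ and $\mathcal{D}\otimes_{kG}(M/J)$ are finite-dimensional over $\mathcal{D}$. Choosing a basis, we get $\mathcal{D}\otimes_{kG}(M/J)\cong \mathcal{D}^n$, and we obtain a $kG$-homomorphism $f\colon M \to \mathcal{D}^n$, given by $x\mapsto 1\otimes \bar x$, with $\ker f = J$. The inclusion $\ker f \supseteq J$ is clear. For the reverse inclusion, if $f(x)=0$ then $\bar x$ lies in the $\mathcal{D}$-torsion part of $M/J$, which is trivial, so $x\in J$.

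Second, the image $f(M)$ is a finitely generated $kG$-submodule of $\mathcal{D}^n$. Coherence of $\mathcal{D}$ as a $kG$-module, in the sense defined just before the lemma, passes to the finite direct sum $\mathcal{D}^n$, since coherent modules are closed under finite direct sums. Hence $f(M)\cong M/J$ is finitely presented.

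Third, we have a short exact sequence $0\to J\to M\to M/J\to 0$ with $M$ finitely generated and $M/J$ finitely presented. By the standard lemma (Schanuel, or comparing with a finite presentation of $M/J$ and lifting generators), $J$ is finitely generated.

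The only point requiring care is the closure of coherence under finite direct sums. This is standard: a finitely generated submodule $N\le \mathcal{D}^n$ fits into $0\to N\cap \mathcal{D}^{n-1}\to N\to \pi(N)\to 0$, where $\pi$ is the last projection. Here $\pi(N)\le\mathcal{D}$ is finitely generated, hence finitely presented by coherence of $\mathcal{D}$, so the kernel $N\cap\mathcal{D}^{n-1}$ is finitely generated. By induction on $n$ it is then finitely presented, and an extension of finitely presented modules is finitely presented, so $N$ is finitely presented.
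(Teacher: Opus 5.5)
Your proof is correct and follows the paper's argument. Both embed $M/J$ into $\mathcal{D}\otimes_{kG}(M/J)\cong\mathcal{D}^n$ using torsion-freeness, deduce that $M/J$ is finitely presented by coherence, and conclude with the standard fact (the paper cites Bieri, Proposition 1.4) that $J$ is finitely generated. You also spell out why coherence passes from $\mathcal{D}$ to $\mathcal{D}^n$, a step the paper uses without comment.
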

    \begin{proof}
        Note that $M/J$ is finitely generated, and thus, finitely presented by coherence of $\mathcal{D}$ since $\mathcal{D} \otimes_{k G} M/J \cong \mathcal{D}^n$ for some   $n\ge 0$, because $\mathcal{D}$ a division ring. But then, \cite[Proposition 1.4]{Bieri1981} implies that $J$ is finitely generated.
    \end{proof}

    \begin{corollary}\label{cor: coh_tors_fg}
        Let $\mathcal{D}$ be a coherent division $kG$-ring and $M$ a finitely generated $kG$-module. Then $\tor_{kG}(M)$ is finitely generated. In particular, if $G$ is a finitely generated group and $\widetilde H$ is the $\mathcal D$-closure of a subgroup $H$ of $G$, then $\widetilde H$ is finitely generated. 
    \end{corollary}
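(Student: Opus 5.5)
The first assertion follows from \cref{lem: coh_tf_fg} applied with $J=\tor_{kG}(M)$. The second follows from the same lemma applied to the augmentation ideal $I_{kG}$ rather than to $I_k(G,H)$, so it does not require $H$ to be finitely generated.

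\emph{First assertion.} Let $\overline{M}$ denote the image of the natural map $M\to \mathcal D\otimes_{kG}M$, so that $\overline{M}\cong M/\tor_{kG}(M)$. The surjection $M\to\overline{M}$ induces a surjection $\mathcal D\otimes_{kG}M\to \mathcal D\otimes_{kG}\overline{M}$. The inclusion $\overline{M}\subseteq \mathcal D\otimes_{kG}M$ induces a map back, $\mathcal D\otimes_{kG}\overline{M}\to \mathcal D\otimes_{kG}M$, given by $d\otimes \bar m\mapsto d\bar m$. These two maps are mutually inverse, so the composite $\overline{M}\to \mathcal D\otimes_{kG}\overline{M}\cong \mathcal D\otimes_{kG}M$ is the inclusion. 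Hence $\overline{M}$ is $\mathcal D$-torsion-free. Since $M$ is finitely generated, \cref{lem: coh_tf_fg} with $J=\tor_{kG}(M)$ shows that $\tor_{kG}(M)$ is finitely generated.

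\emph{Second assertion.} Let $G$ be finitely generated and let $H\le G$ be an arbitrary subgroup with $\mathcal D$-closure $\widetilde H$, so that
\[
\tor_{kG}(I_k(G,H))={}^G I_k(\widetilde H,H)={}^G I_{k\widetilde H}/{}^G I_{kH}.
\]
Since $G$ is finitely generated, $I_{kG}$ is a finitely generated $kG$-module. We have
\[
I_{kG}/{}^G I_{k\widetilde H}\;\cong\; I_k(G,H)/{}^G I_k(\widetilde H,H)\;=\;I_k(G,H)/\tor_{kG}(I_k(G,H)),
\]
and the right-hand side is $\mathcal D$-torsion-free by the first step. Applying \cref{lem: coh_tf_fg} to $M=I_{kG}$ and $J={}^G I_{k\widetilde H}$ shows that ${}^G I_{k\widetilde H}=kG\otimes_{k\widetilde H}I_{k\widetilde H}$ is a finitely generated $kG$-module.

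\emph{Descending to $\widetilde H$.} Choose left coset representatives of $\widetilde H$ in $G$ with $1$ representing $\widetilde H$ itself. As a right $k\widetilde H$-module, $kG$ is free on these representatives. Hence
\[
kG\otimes_{k\widetilde H}I_{k\widetilde H}=\bigoplus_{g} g\otimes I_{k\widetilde H}
\]
as left $k\widetilde H$-modules, where $g$ runs over the coset representatives. Take finitely many $kG$-generators. Their components lie in finitely many summands, so translating them by the representatives yields a finite set $X\subseteq I_{k\widetilde H}$. The projection onto the summand indexed by the trivial coset is $k\widetilde H$-linear and surjective onto $I_{k\widetilde H}$, and $X$ generates $I_{k\widetilde H}$ as a $k\widetilde H$-module. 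This is the one step needing care, and it is routine bookkeeping.

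\emph{Conclusion.} Finally, use the standard fact that a group whose augmentation ideal is finitely generated is itself finitely generated. Let $L\le\widetilde H$ be the subgroup generated by the finitely many group elements appearing in the supports of the elements of $X$. Then $I_{k\widetilde H}=k\widetilde H\cdot I_{kL}={}^{\widetilde H}I_{kL}$. Since ${}^{\widetilde H}I_{kL}$ is the kernel of the augmentation $k\widetilde H\to k[\widetilde H/L]$, while $I_{k\widetilde H}$ is the kernel of the augmentation $k\widetilde H\to k$, these kernels can only coincide if $L=\widetilde H$. Therefore $\widetilde H$ is finitely generated.
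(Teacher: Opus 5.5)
Your proof is correct and is essentially the intended one: the paper gives no separate proof and treats the corollary as an immediate consequence of \cref{lem: coh_tf_fg}, applied with $J=\tor_{kG}(M)$ for the first part and with $M=I_{kG}$, $J={}^G I_{k\widetilde H}$ for the second (the same device as in the Claim inside the proof of \cref{thm:DF2Grigid}), so that $H$ need not be finitely generated. One small wording point: $\bigoplus_g g\otimes I_{k\widetilde H}$ is a decomposition of $k$-vector spaces, not of left $k\widetilde H$-modules, since $\widetilde H$ permutes the nontrivial summands; however, the projection onto the trivial summand is $k\widetilde H$-linear, and that is all your argument uses.
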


\subsection{\texorpdfstring{$L^2$}{L2}-compressed subgroups are \texorpdfstring{$L^2$}{L2}-independent} \label{subsec: F2_rigid}

In this section we show the next result on $L^2$-subgroup rigidity. Then, the proof of \cref{compressedL2} will follow.

\begin{theorem}\label{thm:L2rigid}
    Finitely generated torsion-free virtually free-by-cyclic groups are $L^2$-subgroup rigid.
\end{theorem}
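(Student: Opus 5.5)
We follow the strategy of \cite{Ja24} for free groups. Let $G$ be a finitely generated torsion-free virtually free-by-cyclic group and $H\le G$ finitely generated; we must produce $\widetilde H\le G$ with $\tor_{\QQ G}(I_\QQ(G,H);\UO(G))={}^G I_\QQ(\widetilde H,H)$. By \cref{lem: vFbyZ_li} and \cref{thm: HF_vFbyZ}, $G$ is locally indicable and Hughes-free embeddable, and $\mathcal D(\QQ G)=\mathcal D_{\QQ G}$ is a division ring (indeed $G$ satisfies the Strong Atiyah Conjecture, being locally indicable by \cite{JaikinLopez2020}). Since $\mathcal D(\QQ G)\subseteq \UO(G)$ is a division ring and $\UO(G)$ is faithfully flat over it, $\UO(G)$-torsion coincides with $\mathcal D:=\mathcal D_{\QQ G}$-torsion. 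So it suffices to work with $\mathcal D$, which by \cref{prop: properties_D} is coherent, has weak dimension at most $1$, satisfies $\Tor_1^{\QQ G}(\mathcal D,\mathcal D)=0$, and by Gr\"ater is Linnell-free. We may also pass between $\QQ$ and other fields: the proposal is to prove $\mathcal D$-closability with coefficients in $\FF_2$ (using the Hughes-free division ring $\mathcal D_{\FF_2 G}$, which enjoys the same properties by \cref{prop: properties_D}) and then transfer, as in \cite{Ja24}; alternatively argue directly over $\QQ$ if the $\FF_2$ step is not needed.

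The key steps, in order. First, set $T=\tor_{kG}(I_k(G,H))$; by \cref{cor: coh_tors_fg}, $T$ is finitely generated, and by \cref{lem: properties_tors} it is the maximal submodule with $\beta_0^{\mathcal D}=0$ and carries all of $\beta_1^{\mathcal D}$. Consider the family $\mathcal F$ of subgroups $L\ge H$ with ${}^G I_k(L,H)\subseteq T$; it is nonempty and, by \cref{lem: ind_tors}, membership can be tested inside $L$: ${}^L I_k(L,H)$ is $\mathcal D$-torsion. The union of a chain in $\mathcal F$ lies in $\mathcal F$, and the join of two members is again in $\mathcal F$ (a sum of torsion submodules is torsion), so there is a unique maximal $\widetilde H\in\mathcal F$; since $T$ is finitely generated, so is $\widetilde H$ (\cref{cor: coh_tors_fg}). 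Replacing $H$ by $\widetilde H$ and using \cref{lem: ind_tors}, we reduce to showing: if $T\neq 0$ then there exists $L\supsetneq H$ in $\mathcal F$.

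Second, the heart: given $T\ne0$, produce a proper overgroup. Here the $\FF_2$ trick enters. Take a nonzero finitely generated piece of $T$ and find a quotient of $I_{\FF_2 G}$ or of a finite-index subgroup's augmentation ideal isomorphic to $\FF_2$ with trivial action, then apply \cref{lem: index2_aug_ideal} to realize the relevant submodule as ${}^{G'}I_{\FF_2 U}$ for an index $2$ subgroup $U$. Concretely, following \cite{Ja24}, use residual properties of $G$ (virtually residually $2$-finite, as in \cref{prop: goodness_FbyZ}) and the vanishing of $\beta_0^{\mathcal D}(T)$ together with a Lück-type approximation over finite-index subgroups $G_i$ to find $g\notin H$ with $\langle H,g\rangle$ having torsion relative augmentation module. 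I expect this step to be the main obstacle: controlling torsion under passage to finite-index subgroups requires that the Hughes-free division rings of $G_i$ are compatible (\cref{thm: HF_vFbyZ}: $\mathcal D_{E*G}=\mathcal D_{E*G_i}*G/G_i$) and that $\beta_1^{\mathcal D}$ is monotone (\cref{lem: monotone_b1}).

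Finally, combining, $\widetilde H$ satisfies $\tor(I(G,H))={}^G I(\widetilde H,H)$, and the independence of $k$ (universality via \cref{thm: univ_vFbyZ}, ensuring the $\mathcal D$-dimensions agree with $L^2$-dimensions) transfers the conclusion to $\UO(G)$, giving $L^2$-subgroup rigidity.
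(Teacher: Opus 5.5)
\textbf{There is a genuine gap, and the proposal fails at its heart (your Step 2).}

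Your first step sets up the wrong extremal object. The maximal $\widetilde H$ with ${}^G I_k(\widetilde H,H)\subseteq T$ is simply $\{g\in G : (g-1)+{}^GI_{kH}\in T\}$. The statement you reduce to is: ``if $T\neq 0$ then $g-1$ is torsion modulo ${}^GI_{kH}$ for some $g\notin H$.'' That is rigidity itself, restated. (Also, \cref{cor: coh_tors_fg} does not give finite generation of this $\widetilde H$ unless you already know $T={}^GI_k(\widetilde H,H)$.)

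Your Step 2 does not prove this, and the tool you name cannot. \cref{lem: index2_aug_ideal} turns a surjection $I_{\FF_2 L}\to\FF_2$ onto the trivial module into an index-$2$ subgroup $U$. Such surjections can only be found on the torsion-\emph{free} part of $I_{\FF_2}(L,H)$. What they give you is an inclusion $T\subseteq {}^GI_{\FF_2}(U,H)$, i.e.\ an upper bound on the torsion, never a group element inside it. L\"uck approximation and residual $2$-finiteness play no role here; in the paper they enter only through goodness, to build the Hughes-free division ring.

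\textbf{The missing idea} is to reverse the extremal principle, as in \cref{thm:DF2Grigid}.
\begin{enumerate}
\item Over $\FF_2$, take a \emph{minimal} $L\ge H$ with $T\subseteq{}^GI_{\FF_2}(L,H)$. It exists by Zorn, because augmentation ideals commute with intersections of descending chains (\cref{lem: inter_aug_ideals}).
\item $L$ is finitely generated via \cref{lem: coh_tf_fg}, and $T={}^G\tor_{\FF_2L}(I_{\FF_2}(L,H))$ by \cref{lem: ind_tors,lem: properties_tors}.
\item Let $X$ be $I_{\FF_2}(L,H)$ modulo its torsion. If $X\neq 0$, it is finitely presented (coherence) and torsion-free, so $\beta_0^{\mathcal D_{\FF_2L}}(X)>0$.
\item Universality of $\mathcal D_{\FF_2L}$ (\cref{thm: univ_vFbyZ}), compared with the division ring $\FF_2$ via the augmentation, then gives $\FF_2\otimes_{\FF_2L}X\neq0$.
\item The kernel of the resulting map $I_{\FF_2L}\to\FF_2$ is ${}^LI_{\FF_2U}$ with $[L:U]=2$, $H\le U$ and $T\subseteq{}^GI_{\FF_2}(U,H)$. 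This contradicts minimality.
\end{enumerate}
This argument is intrinsically characteristic $2$. Over $\QQ$, $L$ acts on $I_\QQ(L,U)$ by a sign, so the kernel of a map onto the trivial module is never ${}^LI_{\QQ U}$. You therefore cannot ``argue directly over $\QQ$''.

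\textbf{The transfer to $\QQ$ is also missing.} Universality of $\mathcal D_{kG}$ over $kG$, for each $k$ separately, says nothing across characteristics, and $\mathcal D_{\FF_2G}$-dimensions are not $L^2$-dimensions. What is needed is the cross-characteristic inequality of \cref{prop: dim_QG_F2G}: $\beta_0^G(\QQ\otimes_\Z M)\le\beta_0^{\mathcal D_{\FF_2G}}(\FF_2\otimes_\Z M)$ for finitely presented $\Z G$-modules $M$. It rests on $\Z N$ being a pseudo-Sylvester domain with universal division ring $\mathcal D_{\QQ N}$, where $N$ is the free normal subgroup of the finite-index free-by-cyclic subgroup. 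With this inequality and $\mathcal D_{\FF_2G}$-rigidity, one runs the argument of \cite[Theorem 1.1]{Ja24} to compare the $\FF_2$- and $\QQ$-closures.
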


Firstly, we show that $G$ is $\mathcal{D}_{\mathbb{F}_2G}$-rigid, and, secondly, we deduce that $G$ is $\mathcal{D}_{\QQ G}$-rigid. Before going into the proofs, note that by \cref{thm: univ_vFbyZ} for every field $k$ there exists the Hughes-free universal division $k G$-ring of fractions $\mathcal{D}_{k G}$, it is a universal localization of $k G$, and moreover, by \cref{prop: properties_D} it is coherent and of weak dimension at most $1$ as $k G$-module. Thus, the $\mathcal{D}_{k G}$-torsion part of a $k G$-module $M$, which we will denote simply by $\tor_{k G}(M)$, enjoys all the properties we have studied in the previous section. Note also that by Hughes-freeness (see \cref{sec: div_ring_embed}), for any subgroup $H$ of $G$ and any $k H$-module $M$, $\tor_{k H}(M)$ identifies with $\tor_{k H}(M; \mathcal{D}_{k G})$.

\begin{theorem}\label{thm:DF2Grigid}
    A finitely generated torsion-free virtually free-by-cyclic group $G$ is  $\mathcal{D}_{\mathbb{F}_2G}$-subgroup rigid.
\end{theorem}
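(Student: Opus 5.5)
Fix a finitely generated subgroup $H$ of $G$ and write $\mathcal{D}=\mathcal{D}_{\FF_2 G}$ and $\tor = \tor_{\FF_2 G}(-;\mathcal{D})$. The goal is a subgroup $\widetilde H \geq H$ with $\tor(I_{\FF_2}(G,H)) = {}^G I_{\FF_2}(\widetilde H, H)$. I will follow the strategy of \cite{Ja24} for free groups, replacing freeness by the properties collected in \cref{sec:Lewin}: $\mathcal{D}$ is Hughes-free and hence Linnell-free (Gr\"ater), coherent, of weak dimension at most $1$, and satisfies $\Tor_1^{\FF_2G}(\mathcal{D},\mathcal{D})=0$ (\cref{prop: properties_D}).

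\textbf{Step 1: the candidate.} Consider the set $\mathcal{S}$ of subgroups $L$ with $H\le L\le G$ such that ${}^G I_{\FF_2}(L,H)\subseteq \tor(I_{\FF_2}(G,H))$. By \cref{lem: ind_tors}, this holds exactly when $I_{\FF_2}(L,H)$ is $\mathcal{D}_L$-torsion. By \cref{lem: properties_tors}, this in turn is equivalent to $\beta_0^{\mathcal D}(I_{\FF_2}(L,H))=0$. Since $\beta_0$ of the quotient is subadditive under sums of submodules, $\mathcal{S}$ is closed under taking the subgroup generated by two members. It is also closed under directed unions, because tensor products commute with direct limits. Let $\widetilde H$ be the largest element of $\mathcal S$. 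By \cref{cor: coh_tors_fg} the module $\tor(I_{\FF_2}(G,H))$ is finitely generated, so $\widetilde H$ can be taken finitely generated. It remains to prove equality, namely that $T:=\tor(I_{\FF_2}(G,\widetilde H))$ vanishes.

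\textbf{Step 2: the main obstacle, showing $T=0$.} Suppose $T\ne 0$. Here $\FF_2$ coefficients are useful, via \cref{lem: index2_aug_ideal}. The first task is to produce a finite-index subgroup $G_1\le G$ containing $\widetilde H$ such that the image of $T$ in $I_{\FF_2}(G,G_1)$, which is a finite-dimensional $\FF_2$-module, is nonzero. For this I plan to use $T$ being finitely generated together with residual finiteness of $G$, in the form of separability of finitely generated subgroups or at least a profinite approximation. If subgroup separability is not available, I would argue as in \cite{Ja24} with the descending chain lemma \cref{lem: inter_aug_ideals}, intersecting finite-index subgroups that contain $\widetilde H$.

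Next, pass to $G_1$. Inside $G_1$, \cref{lem: ind_tors} reduces the problem to $I_{\FF_2}(G_1,\widetilde H)$. A nonzero image of torsion in a finite $p$-quotient can be refined, via a composition series of the finite $\FF_2$-module, to a maximal submodule of $I_{\FF_2 G_2}$ containing ${}^{G_2}I_{\FF_2\widetilde H}$ with trivial quotient $\FF_2$. This step may require passing to a further finite-index subgroup $G_2$ (a $2$-group quotient acts unipotently). By \cref{lem: index2_aug_ideal} this maximal submodule equals ${}^{G_2}I_{\FF_2 U}$ for some index $2$ subgroup $U\ge \widetilde H$.

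\textbf{Step 3: the contradiction.} One must then show that the torsion meets $I_{\FF_2}(G_2,U)\cong\FF_2$ nontrivially. This forces $\beta_0^{\mathcal D}$ to drop by $1/[G:G_2]$-type amounts and produces a subgroup strictly larger than $\widetilde H$ lying in $\mathcal S$, contradicting maximality. The $\beta_1$ part of \cref{lem: properties_tors} is used to compare Euler characteristics of $I(G_2,\widetilde H)$ and $I(G_2,U)$. The key identity is $\beta_1^{\mathcal D}(\tor M)=\beta_1^{\mathcal D}(M)$, combined with the vanishing of $\beta_1$ on torsion-free modules (\cref{lem: tf_vanishing_b1}). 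I expect the precise bookkeeping here, making sure that the torsion element actually survives in the index-$2$ quotient, to be the hardest part.
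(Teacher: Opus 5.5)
\textbf{There is a genuine gap in Steps 2--3.} Step 1 is a workable framework. The subgroups $L\ge H$ with ${}^G I_{\FF_2}(L,H)\subseteq \tor_{\FF_2 G}(I_{\FF_2}(G,H))$ are closed under joins and directed unions, and $T=0$ would give the conclusion. But Steps 2--3 contain no mechanism that contradicts the maximality of $\widetilde H$.

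\begin{itemize}
\item To contradict maximality you must exhibit $L>\widetilde H$ whose relative augmentation module lies in the torsion. That means showing that a nonzero torsion submodule of $I_{\FF_2}(G,\widetilde H)$ comes from a subgroup, which is the content of the theorem itself.
\item The $U$ you construct has index $2$ in a finite-index $G_2$, hence finite index in $G$. It is in general not in your $\mathcal S$.
\item That $T$ has nonzero image in a trivial quotient $\FF_2$ is no contradiction. Already the trivial module $\FF_2$ satisfies $\mathcal D\otimes_{\FF_2 G}\FF_2=0$ when $G\neq 1$, so torsion modules map onto $\FF_2$ routinely.
\item Step 2 is itself unsupported. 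Detecting $T$ in some $I_{\FF_2}(G,G_1)$ with $G_1\ge \widetilde H$ of finite index requires $\widetilde H$ to be an intersection of finite-index subgroups (via \cref{lem: inter_aug_ideals}). Free-by-cyclic groups are not subgroup separable in general.
\item You never use the universality of $\mathcal D_{\FF_2 G}$ from \cref{thm: univ_vFbyZ}, which is the essential input.
\end{itemize}

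\textbf{The missing idea is to argue from above.} Reverse the inclusion and let $\mathcal S'$ be the set of $L\ge H$ with $\tor_{\FF_2 G}(I_{\FF_2}(G,H))\subseteq {}^G I_{\FF_2}(L,H)$.

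\begin{itemize}
\item Zorn's lemma together with \cref{lem: inter_aug_ideals} gives a minimal $L\in\mathcal S'$.
\item Coherence (\cref{lem: coh_tf_fg}) makes the preimage of the torsion in $I_{\FF_2 G}$ finitely generated, so $L$ is finitely generated.
\item By \cref{lem: ind_tors,lem: properties_tors}, $\tor_{\FF_2 G}(I_{\FF_2}(G,H))={}^G\tor_{\FF_2 L}(I_{\FF_2}(L,H))$.
\end{itemize}

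Suppose $I_{\FF_2}(L,H)$ were not all torsion.
\begin{itemize}
\item Its torsion-free quotient $M$ is a nonzero finitely presented $\FF_2L$-module (by coherence) with $\mathcal D\otimes_{\FF_2L}M\neq 0$.
\item Universality of $\mathcal D_{\FF_2 L}$, compared with the augmentation $\FF_2L\to\FF_2$, gives $\FF_2\otimes_{\FF_2L}M\neq 0$.
\item This yields a surjection $I_{\FF_2 L}\to\FF_2$ with trivial action that kills ${}^L I_{\FF_2 H}$ and the torsion.
\item By \cref{lem: index2_aug_ideal} its kernel is ${}^L I_{\FF_2 U}$ for some $U\le L$ of index $2$ containing $H$.
\item Then the torsion lies in ${}^G I_{\FF_2}(U,H)$, so $U\in\mathcal S'$, contradicting the minimality of $L$.
\end{itemize}
Hence $\tor_{\FF_2 G}(I_{\FF_2}(G,H))={}^G I_{\FF_2}(L,H)$. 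The index-$2$ trick is applied inside the finitely generated subgroup $L$ to shrink an upper bound, so no residual finiteness or separability is needed.
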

\begin{proof}
    Let $H$ be a subgroup of $G$. We define $\mathcal{S}$ as the set of all subgroups $L$ of $G$ containing $H$ and such that
    \[
        \tor_{\mathbb{F}_2 G}\left(I_{\FF_2}(G,H)\right) \subseteq {}^G I_{\FF_2}(L,H).
    \]
    For every $L \in \mathcal{S}$ it holds that
    \[
        \tor_{\mathbb{F}_2 G}\left(I_{\FF_2}(G,H)\right) = {}^G \tor_{\mathbb{F}_2 L}\left(I_{\FF_2}(L,H)\right).
    \]
    Indeed, by \cref{lem: ind_tors}
    \[
        \tor_{\mathbb{F}_2 G}\left({}^GI_{\FF_2}(L,H)\right) = {}^G \tor_{\mathbb{F}_2 L}\left(I_{\FF_2}(L,H)\right).
    \]
    Thus, by \cref{lem: properties_tors}, it is the largest $\FF_2 G$-submodule of ${}^G I_{\FF_2}(L,H)$ with vanishing dimension, so
    \[
        \tor_{\mathbb{F}_2 G}\left(I_{\FF_2}(G,H)\right) \subseteq {}^G \tor_{\mathbb{F}_2 L}\left(I_{\FF_2}(L,H)\right).
    \]
    But, again by \cref{lem: properties_tors}
    \[
        {}^G \tor_{\mathbb{F}_2 L}\left(I_{\FF_2}(L,H)\right) \subseteq \tor_{\mathbb{F}_2 G}\left(I_{\FF_2}(G,H)\right),
    \]
    and thus, the equality holds.
    \begin{claim}
        The set $\mathcal{S}$ has a minimal element. Moreover, every minimal element of $\mathcal{S}$ is finitely generated.    
    \end{claim}
    \begin{proof}
        Let $\{L_i\}_{i \in I} \subseteq \mathcal{S}$ be a linearly ordered subset and set $L = \bigcap_{i \in I} L_i$. Then, $H \leq L$ and, by \cref{lem: inter_aug_ideals}, $\cap_{i \in I} {}^{G} I_{\mathbb{F}_2 L_i} = {}^{G} I_{\mathbb{F}_2 L}$. In particular,
        \[
            \tor_{\mathbb{F}_2 G} \left(I_{\FF_2}(G,H)\right) \subseteq {}^G I_{\FF_2}(L,H),
        \]
        and so, $L \in \mathcal{S}$. Therefore, by Zorn's Lemma, $\mathcal{S}$ has a minimal element. 
        
        Now, let $L$ be a minimal element of $S$ and let $J$ be the $\FF_2 G$-submodule of $I_{\FF_2 G}$ such that
        \[
            \tor_{\mathbb{F}_2 G} \left(I_{\FF_2}(G,H)\right) = J / {}^G I_{\mathbb{F}_2H}.
        \]
        By \cref{lem: coh_tf_fg}, the $\FF_2 G$-module $J$ is finitely generated. Hence, there exists a finitely generated subgroup $L_0 \leq L$ such that 
        \[
            J \subseteq {}^{G} I_{\mathbb{F}_2 L_0}.
        \]
        By \cite[Lemma 2.1]{Ja24}, $H$ is a subgroup of $L_0$, and then the minimality of $L$ implies that $L = L_0$, showing that it is finitely generated.
    \end{proof}
    Let $L$ be a minimal element in $\mathcal S$. 
    \begin{claim} \label{subgroupindex2} 
        Assume that $\tor_{\mathbb F_2L}(I_{\FF_2}(L,H))\ne I_{\FF_2}(L,H)$. Then there exists a subgroup $U$ of index $2$ in $L$ such that 
        \[
            \tor_{\mathbb F_2L}(I_{\FF_2}(L,H))\subseteq {}^L I_{\FF_2}(U,H).
        \]
    \end{claim}
    \begin{proof}
        The proof is analogous to that of \cite[Claim 3.9]{Ja24} replacing \cite[Lemma 3.6]{Ja24} by \cref{lem: index2_aug_ideal} and using the universality of the division ring showed in \cref{thm: univ_vFbyZ}.
    \end{proof}
    Suppose for a contradiction that $\tor_{\mathbb{F}_2 L}(I_{\FF_2}(L,H)) \ne I_{\FF_2}(L,H)$ and let $U$ be the subgroup obtained in the previous claim. Then, by \cref{lem: ind_tors,lem: properties_tors}, we have
    \[
        \tor_{\mathbb{F}_2 G}\!\left(I_{\FF_2}(G,H)\right) = {}^G \tor_{\mathbb{F}_2L}(I_{\FF_2}(L,H)) = {}^G \tor_{\mathbb{F}_2L}\left({}^L I_{\FF_2}(U,H) \right) \subseteq {}^G I_{\FF_2}(U,H),
    \]
    which contradicts the minimality of $L$. Hence, $\tor_{\mathbb{F}_2 L}(I_{\FF_2}(L,H)) = I_{\FF_2}(L,H)$, and consequently,
    \[
        \tor_{\mathbb{F}_2 G}\!\left(I_{\FF_2}(G,H)\right) = {}^G I_{\FF_2}(L,H).
    \]
\end{proof}

To pass from the $\mathcal{D}_{\FF_2 G}$-closure to the $L^2$-closure we will need to compare the dimension of modules over $\Z G$.

\begin{proposition} \label{prop: dim_QG_F2G}
    Let $G$ be a torsion-free virtually free-by-cyclic group and let $M$ be a finitely presented $\Z G$-module. Then it holds that 
    \[
        \beta_0^G(\QQ \otimes_{\Z} M) \leq \beta_0^{\mathcal{D}_{\FF_2 G}}(\FF_2 \otimes_{\Z} M).
    \]
\end{proposition}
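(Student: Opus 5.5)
We compare both sides through a single universal object. Since $\mathcal{D}_{\ZZ G}$ need not exist, we instead exploit \cref{thm: univ_vFbyZ}: $\QQ G$ and $\FF_2 G$ are pseudo-Sylvester domains, and their Hughes-free division rings are universal localizations at the stably full matrices (\cref{thm: univ_pseudoSyl}). Moreover $\mathcal{D}(\QQ G)$ is the Hughes-free division ring of $\QQ G$, so $\beta_0^G$ is computed by $\mathcal{D}_{\QQ G}$.

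First we reduce to a matrix statement. Write $M=\ZZ G^m/\ZZ G^n A$ with $A\in \Mat_{n\times m}(\ZZ G)$. Then
\[
\beta_0^G(\QQ\otimes_\ZZ M)=m-\rk_{\mathcal{D}_{\QQ G}}(A)
\quad\text{and}\quad
\beta_0^{\mathcal{D}_{\FF_2 G}}(\FF_2\otimes_\ZZ M)=m-\rk_{\mathcal{D}_{\FF_2 G}}(\bar A),
\]
where $\bar A$ is the reduction of $A$ mod $2$. So we must show $\rk_{\mathcal{D}_{\FF_2 G}}(\bar A)\le \rk_{\mathcal{D}_{\QQ G}}(A)$. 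Since the ranks over the universal division rings equal the stable inner ranks $\rho^*$ over $\FF_2 G$ and $\QQ G$ respectively, it suffices to show $\rho^*_{\FF_2 G}(\bar A)\le \rho^*_{\QQ G}(A)$.

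The natural route is to pass through $\ZZ G$ (or $\ZZ_{(2)}G$). Suppose $A\oplus I_s=BC$ over $\QQ G$ with inner dimension $r+s$. Clearing denominators gives $N(A\oplus I_s)=BC$ over $\ZZ G$, and reduction mod $2$ then kills the left side whenever $2\mid N$. So a naive reduction does not work.

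The main obstacle is therefore to get a factorization over $\ZZ_{(2)}G$ itself. Here is what I would try first. Work with a $\ZZ_{(2)}$-valued rank, namely the Sylvester rank function on $\ZZ G$ obtained by pulling back from $\FF_2 G$, and compare it with the $L^2$ / Linnell rank. The key input is that the set of Sylvester rank functions on $\ZZ G$ dominated appropriately forms a structure in which the universal one over $\FF_2 G$ is minimal among all division $\FF_2G$-rings. Concretely, let $\mathcal{E}$ be a division $\FF_2 G$-ring. We want to show $\dim_{\mathcal{D}_{\FF_2 G}}(\FF_2\otimes M)\ge \beta_0^G(\QQ\otimes M)$. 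By universality, $\dim_{\mathcal{D}_{\FF_2G}}(\FF_2\otimes M)$ is the minimum over all $\SMRF$s on $\FF_2 G$ coming from division rings.

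So an alternative cleaner route is a semicontinuity argument. Take $\ZZ_{(2)} G$ and localize at matrices that are invertible over both $\mathcal{D}_{\QQ G}$ and $\mathcal{D}_{\FF_2 G}$. Then use the Hughes-free inductive structure: via \cref{thm: HF_vFbyZ}, reduce to the free-by-cyclic subgroup $H$ and then to the Ore localization of a twisted Laurent ring over $\mathcal{D}$ of the free fibre. Over free groups, Cohn's firs give $\rho^*=\rho$, and inner rank can only drop under reduction mod $2$ when the factorization is integral, so we need an integral factorization there.

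I expect this comparison to be the crux. Possibly the cleanest move is to show that $\rk_{\mathcal{D}_{\QQ G}}$ restricted to $\ZZ G$ is the limit of ranks over $\FF_p G$-type approximations. Failing that, I would use Lück-approximation-type inequalities available via goodness (\cref{prop: goodness_FbyZ}) and residual $2$-finiteness: $\beta_0^G$ is a limit of normalized $\QQ$-dimensions over finite quotients, which bound the corresponding $\FF_2$-dimensions from below, and these in turn converge to, or bound, $\beta_0^{\mathcal{D}_{\FF_2G}}$ by the universality of $\mathcal{D}_{\FF_2 G}$ among limits of such rank functions.
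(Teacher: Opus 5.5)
\textbf{There is a genuine gap.} Your reduction to the matrix inequality $\rk_{\mathcal{D}_{\FF_2 G}}(\bar A)\le \rk_{\mathcal{D}_{\QQ G}}(A)$ for $A$ over $\ZZ G$ is fine. You are also right that clearing denominators in a factorization over $\QQ G$ is useless modulo $2$. But the proposal stops exactly at the crux: none of your routes proves this inequality, and two of them point the wrong way.

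\emph{Universality over $\FF_2 G$.} Universality of $\mathcal{D}_{\FF_2 G}$ compares it only with division $\FF_2G$-rings, and $\mathcal{D}_{\QQ G}$ is not one. More generally, every Sylvester matrix rank function on $\FF_2 G$ is bounded above by $\rho^*_{\FF_2 G}$. So this input only yields \emph{upper} bounds for $\beta_0^{\mathcal{D}_{\FF_2 G}}(\FF_2\otimes_\ZZ M)$, whereas you need a lower bound.

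\emph{Approximation.} The approximation route fails for the same reason. Lück approximation together with $\dim_\QQ\le\dim_{\FF_2}$ on finite quotients gives $\beta_0^G(\QQ\otimes_\ZZ M)\le\liminf_i \dim_{\FF_2}(\FF_2[G/G_i]\otimes_{\ZZ G} M)/[G:G_i]$. But the normalized ranks over $\FF_2[G/G_i]$ are Sylvester rank functions on $\FF_2G$, hence $\le\rho^*_{\FF_2G}$. So that right-hand side is \emph{also} $\ge \beta_0^{\mathcal{D}_{\FF_2 G}}(\FF_2\otimes_\ZZ M)$, and nothing is compared. You would need a genuine mod-$2$ Lück approximation (an equality) for $G$, which you do not have.

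\emph{The missing idea} is where the integral factorization you ask for actually lives. After reducing to the finite-index free-by-cyclic subgroup $H$ (by \cref{thm: HF_vFbyZ} both sides scale by $1/|G:H|$), write $H=N\rtimes\ZZ$ with $N$ free. The key fact is that $\ZZ N$ itself is a pseudo-Sylvester domain \cite{DicksSontag_Syl}, with universal division ring $\mathcal{D}_{\QQ N}$. Since $\mathcal{D}_{\FF_2 N}$ is also a division $\ZZ N$-ring, universality over $\ZZ N$ gives $\rk_{\FF_2 N}(A')\le\rk_N(A')$ for every matrix $A'$ over $\ZZ N$. Concretely, $\rk_N(A')$ is the stable inner rank of $A'$ over $\ZZ N$, which is realized by factorizations over $\ZZ N$ that survive reduction mod $2$. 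Your appeal to firs does not supply this: $\ZZ N$ is not a fir, and the firness of $\QQ N$ and $\FF_2 N$ separately is irrelevant.

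\emph{Passing from $N$ to $H$.} You must still transport the inequality from $N$ to $H=N\rtimes\ZZ$. The paper uses that $\rk_H$ and $\rk_{\FF_2 H}$ are the natural extensions of $\rk_N$ and $\rk_{\FF_2 N}$ to $\ZZ N[t^{\pm1};\sigma]$ \cite[Proposition~2.5(1)]{Jaikin_UniversalityHF}. The natural extension is monotone in the rank function, so the inequality over $\ZZ N$ yields $\rk_H(A)\ge\rk_{\FF_2 H}(A)$ for $A$ over $\ZZ H$. Your mention of the twisted Laurent ring over the fibre points in this direction. However, it supplies neither the Sylvester property of $\ZZ N$ nor a mechanism by which the fibrewise inequality survives the Ore localization in $t$.
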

\begin{proof}
    Let $H$ be the finite-index normal subgroup of $G$ that is free-by-cyclic. By \cref{thm: HF_vFbyZ}, $\mathcal{D}_{\QQ G} = \mathcal{D}_{\QQ H} * G/H$. In particular, $\dim_{\mathcal{D}_{\QQ G}} = \frac{1}{\lvert G : H \rvert} \dim_{\mathcal{D}_{\QQ H}}$, and so
    \begin{align*}
        \beta_0^G(\QQ \otimes_{\Z} M) & = \frac{1}{\lvert G : H \rvert} \dim_{\mathcal{D}_{\QQ H}} ( (\mathcal{D}_{\QQ H} * G/H) \otimes_{\QQ G} (\QQ \otimes_{\Z} M)) \\
        & = \frac{1}{\lvert G : H \rvert} \dim_{\mathcal{D}_{\QQ H}} ( \mathcal{D}_{\QQ H} \otimes_{\QQ H} (\QQ \otimes_{\Z} M)) = \frac{1}{\lvert G : H \rvert} \beta_0^H(\QQ \otimes_{\Z} M).
    \end{align*}
    Similarly, $\beta_0^{\mathcal{D}_{\FF_2 G}}(\FF_2 \otimes_{\Z} M) = \frac{1}{\lvert G : H \rvert} \beta_0^{\mathcal{D}_{\FF_2 H}}(\FF_2 \otimes_{\Z} M)$. Note that $M$ is itself a finitely presented $\ZZ H$-module. Thus, it suffices to prove the result whenever $G$ is a free-by-cyclic group. Denote the free normal subgroup by $N$. By \cite[Proposition 2.5(1)]{Jaikin_UniversalityHF}, $\rk_G$ and $\rk_{\FF_2 G}$ are the natural extension of $\rk_N$ and $\rk_{\FF_2 N}$, respectively. Now, since $\Z N$ is a pseudo-Sylvester domain \cite{DicksSontag_Syl} and $\mathcal{D}_{\QQ[N]}$ is the universal division $\Z N$-ring, for every matrix $A'$ over $\Z N$ we have that
    \[
        \rk_N(A') \geq \rk_{\FF_2 N}(A').
    \]
    Thus, if $A$ is a matrix over $\Z G$, we get
    \[
        \rk_{G}(A) = \widetilde{\rk_N}(A) \geq \widetilde{\rk_{\FF_2 N}}(A) = \rk_{\FF_2 G}(A).
    \]
    By assumption $M$ is a finitely presented $\Z[G]$-module, and therefore
    \[
        \beta_0^G(\QQ \otimes_{\Z} M) \leq \beta_0^{\mathcal{D}_{\FF_2 G}}(\FF_2 \otimes_{\Z} M).
    \]
\end{proof}

We can now conclude our main results.

\begin{proof}[Proof of \cref{thm:L2rigid}]
    Using \cref{prop: dim_QG_F2G} and \cref{thm:DF2Grigid} we can repeat the arguments in the proof of \cite[Theorem 1.1]{Ja24}.
\end{proof}

\begin{proof}[Proof of \cref{compressedL2}]
    Assume that $H$ is $L^2$-compressed in $K$. By \cref{lem: monotone_b1}, it follows from the exact sequence
    \[
        0 \to \ ^K I_{\QQ H} \to I_{\QQ K} \to I_{\QQ}(K,H) \to 0
    \]
    that $H$ is $L^2$-independent in $K$ if and only if $\beta_1^K(I_{\QQ}(K,H))$ vanishes. Now, by \cref{thm:L2rigid}, consider $\widetilde{H}$ the $L^2$-closure of $H$ in $K$. According to \cref{lem: properties_tors}
    \[
        \beta_1^K(I_{\QQ}(K,H)) = \beta_1^K \left(^K I_{\QQ}(\widetilde{H}, H)\right) \quad \mbox{and} \quad \beta_0^K \left(^K I_{\QQ}(\widetilde{H}, H)\right) = 0.
    \]
    Hence, using again \cref{lem: monotone_b1}, it holds that
    \[
        \beta_1^K \left(^K I_{\QQ}(\widetilde{H}, H)\right) = \beta_0^K \left(^K I_{\QQ H} \right) - \beta_0^K \left(^K I_{\QQ \widetilde{H}}\right) = b_1^{(2)}(H) - b_1^{(2)}(\widetilde{H})
    \]
    where $b_1^{(2)}(\widetilde{H})$ is finite by \cref{cor: coh_tors_fg}. Thus, putting everything together we get that
    \[
        \beta_1^K(I_{\QQ}(K,H)) = b_1^{(2)}(H) - b_1^{(2)}(\widetilde{H}),
    \]
    which is at most, and hence equal to, $0$ since $H$ is $L^2$-compressed in $K$.
\end{proof}

\begin{proof}[Proof of \cref{goodsplittings}]
First consider torsion-free groups that are virtually (finitely generated free)-by-cyclic. These groups are finitely presented, have vanishing first $L^2$-Betti number \cite{Lueck1994}, and moreover, since they are locally indicable groups by \cref{lem: vFbyZ_li}, satisfy the Strong Atiyah Conjecture over $\CC$ \cite{JaikinLopez2020}. Hence, the statement follows immediately from \cref{approach} together with \cref{compressedL2}.

Now let $G=\pi_1(M)$ be the fundamental group of a $3$-manifold $M$ satisfying the stated hypotheses. These assumptions imply that $M$ is aspherical. For any finitely generated subgroup $K \leq \ker \phi$, by Scott's Core Theorem \cite{Scott1973}, there exists a compact $3$-manifold $N$ such that $K$ is isomorphic to $\pi_1(N)$. Since $K$ is of infinite index in $G$, $N$ has nonempty boundary and its interior is aspherical. Hence $K$  has cohomological dimension at most $2$. By \cite[Theorem~1.1]{KL24}, such a group $K$ is virtually free-by-cyclic. Since $G$ satisfies the Strong Atiyah Conjecture over $\CC$ \cite{KL24,FriedlLueck2019a}, $M$ is compact and $b_1^{(2)}(G)$ vanishes by \cite[Lemma 5.4(2)]{FriedlLueck2019a}, the conclusion follows again from \cref{approach} and \cref{compressedL2}.
\end{proof}

 \section{Additional applications} \label{sec:applications}

 \subsection{Computing the BNS invariant}

 Let $G$ be a finitely generated group. The \emph{Bieri--Neumann--Strebel (BNS) invariant} of $G$ is the set $\Sigma(G)$ of all characters $\phi \in \mathrm{Hom}(G, \mathbb{R}) \setminus \{0\}$ such that the monoid $G_{\phi}:=\{g\in G \mid \phi(g) \geq 0\}$ is of type $\FP_1(\Z)$, that is, the trivial $\ZZ G_{\phi}$-module $\Z$ is finitely presented. 

 The aim of this section is to give a proof of \cref{GardamKielak} which was announced by Gardam--Kielak in \cite{Oberwolfach}.

Let $\phi \colon G \to \mathbb{Z}$ be an epimorphism. For the purposes of this section, an admissible splitting dual to $\phi$ consists of the following data: an element $t \in G$ such that $\phi(t) = 1$; a finite subset $S_K \subseteq G$ such that $\phi(s) = 0$ for all $s \in S_K$; finite subsets $S_{H_1}$ and $S_{H_2}$   of elements of $\langle S_K \rangle$; and a bijection $\Phi \colon S_{H_1} \to S_{H_2}$ that induces an isomorphism $\Phi \colon \langle S_{H_1} \rangle \to \langle S_{H_2} \rangle$, such that $G = \langle S_K, t \rangle$   and $t^{-1}st = \Phi(s)$ for all $s \in S_{H_1}$. Note that $G$ splits as an HNN extension with base group $K:= \langle S_K \rangle$ and associated subgroups $ H_1:= \langle S_{H_1} \rangle$ and $H_2 := \langle S_{H_2} \rangle$, which we denote by $G = K \ast_{H_1, H_2, t}$.

\begin{lemma}\label{lem:admissible-split-list}
    Let $G$ be a group with a finite presentation $\mathcal{Q}$ and let $\phi \colon G \to \Z$ be an epimorphism. There exists an algorithm that lists the admissible splittings of $G$ dual to $\phi$. 
\end{lemma}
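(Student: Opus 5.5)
The plan is a dovetailed enumeration that combines two semi-decision procedures. One enumerates candidate data. The other enumerates consequences of the finite presentation $\mathcal{Q}=\langle X\mid R\rangle$, so that we can certify each required property of a candidate. The point to keep in mind is that "lists" means a procedure that eventually outputs every admissible splitting dual to $\phi$ and outputs nothing else. We do not need to decide whether an arbitrary tuple is admissible; semi-decidability of each condition is enough.

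\emph{Step 1: represent the data and enumerate candidates.} Elements of $G$ are represented by words in $X^{\pm1}$. Since $\phi$ is given by its values on $X$, $\phi(w)$ is computable for any word $w$. Enumerate all tuples $(t, S_K, S_{H_1}, S_{H_2}, \Phi)$ such that:
\begin{itemize}
\item $t$ is a word with $\phi(t)=1$;
\item $S_K$ is a finite set of words with $\phi=0$ on each;
\item $S_{H_1}, S_{H_2}$ are finite sets of words in the letters $S_K$, hence elements of $\langle S_K\rangle$;
\item $\Phi$ is a bijection $S_{H_1}\to S_{H_2}$.
\end{itemize}

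\emph{Step 2: certify the conditions that are consequences of the relators.} Three conditions fall into this class:
\begin{itemize}
\item the relations $t^{-1}st=\Phi(s)$ in $G$;
\item the fact that $\langle S_K,t\rangle=G$, certified by expressing each generator $x\in X$ as a word in $S_K\cup\{t\}$ and checking that word equal to $x$;
\item the HNN condition, namely that the natural map $K\ast_{H_1,H_2,t}\to G$ is an isomorphism and that $\Phi$ extends to an isomorphism $\langle S_{H_1}\rangle\to\langle S_{H_2}\rangle$.
\end{itemize}
Equalities of words in $G$ are recursively enumerable: enumerate products of conjugates of relators and free reductions. So the first two conditions can be certified by search.

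\emph{Step 3 (the main obstacle): certify the isomorphism condition.} The subtle part is injectivity, since $K$, $H_1$, $H_2$ are abstract subgroups of $G$ whose own presentations are not given. I would sidestep this by searching for an isomorphism between $G$ and an explicitly presented HNN extension. Concretely, search for:
\begin{itemize}
\item a finite presentation $\langle S_K\mid R_K\rangle$, with $R_K$ consisting of relators that hold in $G$ (this is certifiable);
\item the abstract group $\widetilde G=\langle S_K,t\mid R_K,\ t^{-1}st=\Phi(s)\ (s\in S_{H_1})\rangle$;
\item words in $S_K\cup\{t\}$ expressing each $x\in X$, thereby defining a map $G\to \widetilde G$;
\item certificates that each relator of $\mathcal{Q}$ becomes trivial in $\widetilde G$, and that the two composites are the identity on generators.
\end{itemize}
All of these certificates are consequences of finitely many relators, so they can be found by enumeration whenever they exist. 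Once found, $G\cong\widetilde G$ with $t\mapsto t$ and $S_K\mapsto S_K$. Hence $G$ splits with base $\langle S_K\rangle$ exactly as required, provided $\widetilde G$ is a genuine HNN extension, i.e.\ provided $\Phi$ induces an isomorphism between the subgroups of $\langle S_K\mid R_K\rangle$.

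\emph{Step 4: handle the subgroup isomorphism.} For the requirement that $\Phi$ induces an isomorphism $\langle S_{H_1}\rangle\to\langle S_{H_2}\rangle$, I would argue through Britton's lemma. In the abstract group $\widetilde G$, the base group embeds, but the associated subgroups are only identified via the relations. If $\Phi$ did not extend to an isomorphism, $\widetilde G$ would be a quotient HNN extension, and the certified isomorphism $\widetilde G\cong G$ restricted to the base would still give the required identities in $G$.

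So it suffices to search additionally for $R_K$ closed enough that $\langle S_K\mid R_K\rangle\to\langle S_K\rangle\le G$ is injective. Such an $R_K$ exists whenever the splitting is genuine. The reason is that the base group of an HNN decomposition of a finitely presented group, with finitely generated associated subgroups, is finitely presented (cf.\ the Bieri--Strebel argument cited in the introduction). Its presentation together with the HNN relations then yields a certified isomorphism as above, so every genuine splitting is eventually listed.

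The delicate point is therefore soundness: we must make sure that no non-splitting tuple is listed, which again reduces to verifying that the certified isomorphism with the abstract HNN group $\widetilde G$ is with an honest HNN extension. I expect this verification to be the main obstacle.
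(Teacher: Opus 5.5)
Your Steps 1--3 are the paper's proof in different form. The paper uses Tietze transformations of $\mathcal{Q}$ to enumerate the finite presentations $\langle S_K,t\mid R\cup R'\rangle$ of $G$ with $R\subseteq F(S_K)$ and $R'=\{t^{-1}u_itv_i^{-1}\}$. That is the same as your search for a finite $R_K$ together with a certified isomorphism $G\cong\widetilde G$.

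The gap is Step 4. There you leave soundness open, and the remedy you propose fails for two reasons.

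First, injectivity of $\langle S_K\mid R_K\rangle\to G$ cannot be certified by enumerating consequences of relators. Your appeal to Britton's lemma (``the base group embeds'') presupposes that $\Phi$ induces an isomorphism of subgroups of $\langle S_K\mid R_K\rangle$, which is exactly what is in question.

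Second, the fact you use for completeness is false: the base of an HNN splitting of a finitely presented group with finitely generated associated subgroups need not be finitely presented. Take $G=F_2\times F_2$ and $\phi$ sending all four free generators to $1$. Then $K=\ker\phi$ is finitely generated but not finitely presented, and $G=K\rtimes\mathbb{Z}=K\ast_{K}$ is an admissible splitting dual to $\phi$. No finite $R_K$ makes $\langle S_K\mid R_K\rangle\to K$ injective, so your strengthened search never lists this splitting.

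The missing observation is that no extra check is needed, because the splitting is over the image $K=\langle S_K\rangle_G$, not over $\langle S_K\mid R_K\rangle$. Put $H_i=\langle S_{H_i}\rangle_G\le K$, and let $\theta\colon H_1\to H_2$ be conjugation by $t$ in $G$. This is an isomorphism extending $\Phi$, so the abstract HNN extension $E=K\ast_{H_1,\theta}$ exists, with a natural map $p\colon E\to G$.

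Each $r\in R_K$ is trivial in $G$, hence in $K$, hence in $E$ since $K$ embeds in $E$. The relations $t^{-1}st=\Phi(s)$ also hold in $E$. So there is a surjection $q\colon\widetilde G\to E$ (identity on $S_K$, $t$ to the stable letter), and $p\circ q$ equals your certified isomorphism $\widetilde G\to G$. Thus $q$ and $p$ are isomorphisms and $G=K\ast_{H_1,H_2,t}$. Every tuple certified in Step 3 is therefore a genuine splitting, and Step 4 can be dropped.

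Completeness needs only that $G$ is finitely presented. A genuine splitting gives a presentation of $G$ on $S_K\cup\{t\}$, using all relations of $K$ in $F(S_K)$ together with the finitely many relations $t^{-1}st=\Phi(s)$. Some finite subset of these relators already presents $G$, and you may take it to contain all the relations $t^{-1}st=\Phi(s)$. That finite set gives an $R_K$ your Step 3 search will find.
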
 

\begin{proof}

By repeatedly applying Tietze transformations to $\mathcal{Q}$, we may list all the finite presentations of $G$ of the form 
\begin{equation}\label{eq:admissible-splitting-pres} \langle S_K, t \mid R \cup R' \rangle \end{equation}
where $\phi(t) = 1$, $\phi(s) = 0$ for each $s \in S_K$, and such that $R \subseteq F(S_K)$,  $R' = \{r_i \mid i \in I\} \subseteq F(S_K \cup\{t\})$, and each $r_i \in R'$ is of the form $r_i = t^{-1}u_itv_i^{-1}$ with $u_i, v_i \in F(S_K)$ for each $i \in I$.

Define $S_{H_1} := \{ u_i \mid i \in I \}$ and $S_{H_2} := \{v_i \mid i \in I \}$. Then, conjugation  by $t$ in $G$ induces an isomorphism between the subgroups they generate in $G$, namely $ \Phi \colon \langle S_{H_1} \rangle_G \to \langle S_{H_2} \rangle_G$ sends $\Phi(u_i) = v_i$ for all $i \in I$. Hence, the presentation \eqref{eq:admissible-splitting-pres} corresponds to an admissible splitting dual to $\phi$. Conversely, any admissible splitting of $G$ dual to $\phi$ gives rise to a presentation of the form \eqref{eq:admissible-splitting-pres}.
\end{proof}

\begin{lemma}[{\cite[Propositions~4.3 and 4.4]{BieriNeumannStrebel1987}}]\label{lem:BNS-alt}
    Let $G$ be a group of type $\mathrm{FP}_2(\mathbb{Q})$ and let $\phi \colon G \to \mathbb{Z}$ be an epimorphism. Then, the following are equivalent. 
    \begin{enumerate}
        \item $\phi \in \Sigma(G)$.
        \item There exists an admissible splitting $G = K \ast_{H_1, H_2, t}$ dual to $\phi$ such that $H_1 = K$.
        \item For any admissible splitting $G = K \ast_{H_1, H_2, t}$ dual to $\phi$, we have that $H_1 = K$.
    \end{enumerate}
\end{lemma}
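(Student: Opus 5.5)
The plan is to reduce everything to the classical Bieri--Neumann--Strebel characterization of $\Sigma(G)$ for integral characters: for an epimorphism $\phi\colon G\to\mathbb Z$ with $G$ finitely generated, $\phi\in\Sigma(G)$ if and only if $G$ is an ascending HNN extension over a finitely generated base $K\le\ker\phi$ with stable letter $t$, $\phi(t)=1$, and $t^{-1}Kt\subseteq K$. (Equivalently, $G_\phi$ is finitely generated as a monoid over $\ker\phi$.) I will prove the three implications (i)$\Rightarrow$(iii)$\Rightarrow$(ii)$\Rightarrow$(i), where (iii)$\Rightarrow$(ii) is immediate once we know that admissible splittings dual to $\phi$ exist. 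That existence is exactly \cite[Theorem A]{BieriStrebel1978}, which applies since $G$ is of type $\FP_2(\QQ)$.

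For (ii)$\Rightarrow$(i), suppose $G=K\ast_{H_1,H_2,t}$ with $H_1=K$. Then $t^{-1}Kt=H_2\le K$, so $G$ is an ascending HNN extension with finitely generated base $K$. Every element of $G_\phi$ can be written as $t^{-m}kt^{n}$ with $n-m\ge 0$. Using $t^{-1}Kt\subseteq K$, such an element can be rewritten in the monoid generated by $t$ and $K$, since $t^{-m}kt^{m}\in K$. Hence $G_\phi$ is finitely generated by $S_K\cup S_K^{-1}\cup\{t\}$, which gives $\FP_1$ and so $\phi\in\Sigma(G)$.

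The main step is (i)$\Rightarrow$(iii). Let $G=K\ast_{H_1,H_2,t}$ be any admissible splitting dual to $\phi$, and suppose $H_1\ne K$. I will use Bass--Serre theory: $\ker\phi$ acts on the Bass--Serre tree $T$, and $T/\ker\phi$ is a line whose vertices carry the conjugates $K^{t^i}$ and whose edges carry the conjugates of $H_1$ ($=$ those of $H_2$). So $\ker\phi$ is the fundamental group of the bi-infinite line of groups already used in \cref{exactNn}.

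By BNS \cite[Proposition~4.4]{BieriNeumannStrebel1987}, $\phi\in\Sigma(G)$ means that the monoid $G_\phi$ is finitely generated over $\ker\phi$. Equivalently, $\ker\phi=\bigcup_{n\ge0}t^{n}K't^{-n}$ is an ascending union of conjugates of some finitely generated $K'$ with $tK't^{-1}\supseteq K'$ (after adjusting the orientation of $t$).

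Choose $K'$ finitely generated. Then $K'$ lies in some finite segment $N_{[a,b]}$ of the line of groups. Its translates $t^nK't^{-n}$ therefore lie in $N_{[a+n',\,b+n']}$, shifted in one direction. But $\ker\phi$ also contains $K^{t^{i}}$ for arbitrarily negative $i$. So $K^{t^{a-1}}$ would have to lie in some shifted segment $N_{[a+n,\,b+n]}$ with $n\ge0$. A vertex group of a line of groups contained in the subgroup carried by a segment not containing that vertex must be contained in the adjacent edge group, by normal forms in amalgams. This forces $K^{t^{a-1}}$ to equal the edge group joining it to the next vertex, which is the conjugate of $H_1$ (rather than of $H_2$, given the orientation). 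Hence $H_1=K$, a contradiction.

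The obstacle I expect is bookkeeping: tracking which edge group ($H_1$ versus $H_2$) sits on which side, so that the correct one is forced to equal $K$. I would settle this by fixing the relation $t^{-1}ht=\Phi(h)$ and checking it with the $\phi\ge0$ monoid directly.
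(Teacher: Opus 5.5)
Your proof has a genuine error in (ii)$\Rightarrow$(i). The overall structure is sound, and it unpacks what the paper simply cites from \cite[Propositions~4.3 and 4.4]{BieriNeumannStrebel1987}: (iii)$\Rightarrow$(ii) from the existence result \cite[Theorem A]{BieriStrebel1978}, (ii)$\Rightarrow$(i) from the ascending-HNN criterion, and (i)$\Rightarrow$(iii) from the line-of-groups structure of $\ker\phi$.

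\textbf{The gap in (ii)$\Rightarrow$(i).}
\begin{enumerate}
\item With $t^{-1}Kt\subseteq K$ you can push $t$ to the left and $t^{-1}$ to the right past elements of $K$. So the normal form is $t^{m}kt^{-n}$ with $m,n\ge 0$, not $t^{-m}kt^{n}$, and $\ker\phi=\bigcup_{n\ge 0}t^{n}Kt^{-n}$.
\item Elements such as $tkt^{-1}\in G_\phi$ are therefore not in the monoid generated by $K\cup\{t\}$. For instance, in $BS(1,2)=\langle a,t\mid t^{-1}at=a^{2}\rangle$ the element $tat^{-1}$ corresponds to $1/2\in\ZZ[1/2]$.
\item Worse, $G_\phi$ is never a finitely generated monoid when $H_2\neq K$. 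Every generator has $\phi\ge 0$, so a product lying in $\ker\phi$ uses only generators from $\ker\phi$. That would make $\ker\phi$ finitely generated, whereas it is a strictly increasing union.
\item The underlying confusion is that type $\FP_1$ for the monoid $G_\phi$ only means $I_{\ZZ G_\phi}$ is finitely generated as a left $\ZZ G_\phi$-module. This is strictly weaker than finite generation of $G_\phi$ as a monoid.
\end{enumerate}

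\textbf{How to repair it.} Take $g=t^{m}kt^{-n}\in G_\phi$ and write $k$ as a word in $S_K^{\pm1}$. Telescope $g-1$ along the path $1,t,\dots,t^{m},\dots,t^{m}k,\,t^{m}kt^{-1},\dots,t^{m}kt^{-n}=g$. This path stays in $G_\phi$, since its $\phi$-values lie between $\phi(g)\ge 0$ and $m$. A step $x\mapsto xs$ contributes $x(s-1)$, and a step $x\mapsto xt^{-1}$ contributes $-(xt^{-1})(t-1)$; both coefficients lie in $G_\phi$. Hence $I_{\ZZ G_\phi}$ is generated by the finitely many elements $s-1$ with $s\in S_K^{\pm1}\cup\{t\}$. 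Alternatively, just invoke the ``if'' direction of the characterization you quote at the start, which is literally (ii).

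\textbf{On (i)$\Rightarrow$(iii).} This argument is correct in substance, and the orientation does resolve in your favour, but not with the shift as you wrote it.
\begin{itemize}
\item In the notation of \cref{exactNn}, where $K^{t^{k}}=t^{-k}Kt^{k}$, one has $t^{n}N_{[a,b]}t^{-n}=N_{[a-n,b-n]}$. So the translates of $K'$ move left, and $\ker\phi\subseteq N_{(-\infty,b]}$.
\item Consequently the trapped vertex group is $K^{t^{b+1}}$. The amalgam $\ker\phi=N_{(-\infty,b]}\ast N_{[b+1,\infty)}$ puts it inside $N_{(-\infty,b]}\cap N_{[b+1,\infty)}=t^{-(b+1)}H_1t^{b+1}$, which gives $K=H_1$ as desired.
\item Your rightward shift using $K^{t^{a-1}}$ is consistent only if $K^{t^{k}}$ is read as $t^{k}Kt^{-k}$. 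With the paper's convention, the edge group next to $K^{t^{a-1}}$ would be the conjugate of $H_2$, not $H_1$.
\end{itemize}
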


\begin{theorem}[Feighn--Handel \cite{FeighnHandel1999}]\label{thm:Feighn-Handel} 
There exists an algorithm which takes as input a finite presentation $\mathcal{Q}$ of a finitely generated free-by-cyclic group $G$; an epimorphism $\psi \colon G \to \mathbb{Z}$ with free kernel; an element $t \in \psi^{-1}(1)$; and a finite subset $S \subseteq G$ containing $t$ such that for $H := \langle S \rangle$, $b_1^{(2)}(H)$ is known.    Let $\theta \in \mathrm{Aut}(\ker \psi)$ be the automorphism induced by the conjugation action of $t$.

The output of the algorithm is a pair of finite subsets $A, B \subset \ker \psi$ which satisfy the following properties: 
\begin{enumerate}
    \item $H = \langle A, B, t \rangle $;
    \item $\theta(A) \subset \langle A, B \rangle$;
    \item the kernel of the natural epimorphism $F(A \cup B \cup \{t\}) \to H$ is normally generated by the set $\{t^{-1}at \theta(a)^{-1} \mid a \in A\}$. 
\end{enumerate}

\end{theorem}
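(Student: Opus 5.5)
The plan is to combine the existence part of Feighn--Handel's theorem with a naive enumeration, using the known value of \(b_1^{(2)}(H)\) as the stopping criterion that certifies the output.

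\emph{Setup.} Using Tietze transformations applied to \(\mathcal{Q}\) (as in \cref{lem:admissible-split-list}), we search until we find a presentation of \(G\) of the form
\[
\langle X, t \mid t^{-1}xt = w_x,\ x\in X\rangle ,
\]
where \(X\) is a free basis of \(\ker\psi\) and \(\{w_x\}\) is a basis of the free group \(F(X)\). Whether a given set is a basis of \(F(X)\) is decidable, for instance by Whitehead's algorithm or by Stallings folding. This presentation makes \(\theta\) explicit, and it lets us express every element of \(S\) in the form \(w t^{k}\) with \(w\in F(X)\).

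\emph{Enumeration.} We then enumerate all pairs of finite subsets \(A,B\subset F(X)\) and, for each pair, perform the following checks.

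\begin{enumerate}
\item Using Stallings graphs, check that \(A\cup B\) is a free basis of \(\langle A,B\rangle\) and that \(\theta(A)\subset\langle A,B\rangle\).
\item Check that \(\langle A,B,t\rangle = H\). The inclusion \(\langle A,B,t\rangle\subseteq H\) is certified by exhibiting each of \(a\in A\) and \(b\in B\) as a word in \(S\); such words can be found by enumeration. For the reverse inclusion, write each \(s\in S\) as \(w t^{k}\) and search for an expression of \(w\) in \(t^{m}\langle A,B\rangle t^{-m}\)-type words. This is semi-decidable, and it suffices because we run all candidate pairs in parallel by dovetailing.
\item Check the stopping condition \(|B| = b_1^{(2)}(H)\).
\end{enumerate}

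\emph{Why the stopping condition is the right one.} Let \(G'\) be the abstract HNN extension \(F(A\cup B)\ast_{F(A)}\) associated to the monomorphism \(a\mapsto\theta(a)\). An Euler characteristic computation for this graph of free groups gives
\[
b_1^{(2)}(G') = \bigl(|A|+|B|-1\bigr) - \bigl(|A|-1\bigr) = |B| .
\]
Feighn--Handel's structure theorem \cite{FeighnHandel1999} guarantees that some admissible pair \((A,B)\) exists for which the natural map \(G'\to H\) is an isomorphism. For that pair we have \(|B| = b_1^{(2)}(H)\), so the search terminates.

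\emph{Main obstacle: correctness of the output.} We must show that whenever the search stops at some pair \((A,B)\) satisfying all three checks, the epimorphism \(\pi\colon G'\to H\) is injective. This is property (iii) of the statement. The approach is as follows.

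\begin{enumerate}
\item \(G'\) is locally indicable; it is a subgroup-closed variant of free-by-cyclic, being an ascending-type graph of free groups of cohomological dimension at most \(2\). Hence \(G'\) satisfies the Strong Atiyah Conjecture and embeds in its Linnell division ring \(\mathcal{D}(\QQ G')\).
\item Suppose \(\ker\pi\neq 1\). By local indicability, \(\ker\pi\) contains an element whose image under a suitable character is nontrivial. This should force the \(L^2\)-Betti number to drop strictly, that is, \(b_1^{(2)}(H) < b_1^{(2)}(G')\). This drop is a Hopfian-type \(L^2\)-statement, analogous to the monotonicity arguments of \cref{lem: monotone_b1}.
\item The drop would contradict \(|B| = b_1^{(2)}(H)\), so \(\pi\) is injective.
\end{enumerate}

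I expect this strict-drop step to be the delicate point. It is plausible using the Hughes-free or universal division ring of \(\QQ G'\) and the fact that \(H\) also satisfies \(\cd\le 2\) with \(b_2^{(2)}(H)=0\), which makes the comparison of \(\chi^{(2)}\)-type invariants exact. If this argument fails, the fallback is to extract the required certificate directly from the constructive folding procedure in Feighn--Handel's proof: there, each refinement step either reduces \(|B|\) or stabilizes, and stabilization is detected precisely by equality with \(b_1^{(2)}(H)\).
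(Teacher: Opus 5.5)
\paragraph{The gap: correctness of the output.}
Your search and its termination agree with the paper. You enumerate candidate pairs $(A,B)$ and stop when $|B|=b_1^{(2)}(H)$. Termination holds because an exact Feighn--Handel presentation exists, and its aspherical presentation complex gives $|B|=b_1^{(2)}(H)$.

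The gap is the step that makes the output correct: that a pair passing your checks with $|B|=b_1^{(2)}(H)$ has $\pi\colon G'\to H$ injective. You assert a ``strict drop'' but give no argument for it. The mechanism you suggest is also not a valid principle: local indicability of $G'$ together with $\ker\pi\neq 1$ does not force $b_1^{(2)}$ to decrease. For instance, the map $\langle x,y,z\mid [z,[x,y]]\rangle\to F(x,y)$ with $z\mapsto 1$ is a non-injective surjection from a locally indicable group of cohomological dimension $2$, and both groups have $b_1^{(2)}=1$. Likewise, \cref{lem: monotone_b1} is about submodules over a fixed ring, not about passing to quotient groups. (A side remark: $G'$ is not ascending unless $B=\emptyset$; it is locally indicable because $\ker(G'\to\ZZ)$ is locally free.)

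In your setting, $\pi$ is injective on $F(A\cup B)$, so $N=\ker\pi$ acts freely on the Bass--Serre tree and is free. Tensoring
\[
0\to H_1(N;\QQ)\to \QQ H\otimes_{\QQ G'}I_{\QQ G'}\to I_{\QQ H}\to 0
\]
with $\mathcal{D}(\QQ H)$, and using $b_2^{(2)}(H)=0$, gives
\[
b_1^{(2)}(H)=|B|-\beta_0^{H}\bigl(H_1(N;\QQ)\bigr).
\]
So what you actually need is that $H_1(N;\QQ)$ is not $\mathcal{D}(\QQ H)$-torsion whenever $N\neq 1$. This depends on the specific HNN structure, and your outline leaves it open.

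\paragraph{How the paper closes it.}
The paper proves no such $L^2$ statement; it gets correctness from Feighn--Handel. The argument of their Main Proposition shows that a tight invariant labelled graph pair $(Z,X)$ of minimal reduced rank yields an exact presentation. That presentation complex is aspherical, so the minimal reduced rank equals $b_1^{(2)}(H)$. Hence every pair with $\rr(Z,X)=b_1^{(2)}(H)$ is minimal, and therefore exact.

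Your fallback points in this direction. But the phrase ``stabilization is detected precisely by equality with $b_1^{(2)}(H)$'' assumes exactly what must be shown; you need the two facts just stated. Note also that the minimality argument is formulated for tight invariant labelled graph pairs, meaning $X\subseteq Z$ as labelled graphs and $\langle Z^{\#}\rangle=\langle X^{\#},\theta(X^{\#})\rangle$. Your checks admit a broader class of pairs $(A,B)$. You should either restrict the enumeration to such pairs or justify the reduction for your class.
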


In particular, the output of the algorithm in \cref{thm:Feighn-Handel} is a finite presentation of $H$ corresponding to an HNN splitting with base $\langle A, B \rangle \cong F(A) \ast F(B)$ and associated subgroups $\langle A \rangle \cong F(A)$ and $\langle \theta(A) \rangle \cong F(\theta(A))$.  We call such a presentation a \emph{preferred presentation}. 

\begin{proof}[Sketch of \cref{thm:Feighn-Handel}]
    Let $R$ be the rose graph with a vertex $v_R$, and identify $\ker \psi = \pi_1(R, v_R)$. A \emph{labelled graph} $X$ is a connected graph with a basepoint $x_0$ and a map $f_X \colon (X, x_0) \to (R, v_R)$ that is injective on interiors of edges. We say that $X$ is \emph{tight} if $f_X$ is an immersion. 

A \emph{labelled graph pair} is a pair $(Z,X)$ of connected labelled graphs such that $X \subset Z$ and $X$ and $Z$ have a common basepoint $x_0$. It is \emph{tight} if $Z$ is tight. The \emph{reduced rank} of $(Z,X)$ is $\rr(Z,X) = \rank(Z) - \rank(X)$. We let $(f_{X})_{\#} \colon \pi_1(X, x_0) \to \pi_1(R, v_R)$ be the homomorphism induced by $f_{X}$, and write $W^{\#}$ to denote the subgroup $(f_{X})_{\#}(\pi_1(W, x_0)) \leq \ker \psi$, for any connected subgraph $W \subset X$ containing $x_0$.  
    
   For a finitely generated subgroup $H \leq G$, we say that $(Z,X)$ is an \emph{invariant labelled graph pair} for $H$ if $H = \langle t, X^{\#} \rangle$ and $ \langle Z^{\#} \rangle = \langle X^{\#}, \theta(X^{\#}) \rangle$. Observe that if $(Z,X)$ is an invariant labelled graph pair for $H$ then, by fixing a spanning tree for $Z$ that contains $x_0$, we may fix a basis $A$ for $X^{\#}$ and extend it to a basis $A \cup B$ of $Z^{\#}$. Since $\theta(X^{\#}) \subseteq \langle Z^{\#} \rangle$, we have that $\theta(a) \in \langle A, B \rangle$. Consider the natural quotient $\pi \colon F(A \cup B \cup \{t\}) \to H$. We have that   $\{t^{-1}at \theta(a)^{-1} \mid a \in A\} \subseteq \ker \pi$. The argument in the proof of Main Proposition in \cite[\S 6]{FeighnHandel1999}, shows that if $(Z,X)$ has minimal relative rank over all tight invariant labelled graph pairs for $H$, then the kernel of $\pi$ is normally generated by $\{t^{-1}at \theta(a)^{-1} \mid a \in A\}$. 
   
   Note that the presentation complex corresponding to the presentation 
    \[\langle A, B, t \mid \forall a \in A,  t^{-1}at\theta(a)^{-1}\rangle\]
    is aspherical. Hence, the first $L^2$-Betti number of the corresponding group is $|B| = \rr(Z,X)$. It follows that the minimal reduced rank $\rr(Z,X)$ of a tight invariant graph pair $(Z,X)$ for $H$ is equal to the first $L^2$-Betti number of $H$.

    We may now describe the algorithm for computing a preferred presentation for $H$, assuming that $b_1^{(2)}(H) =:k$ is known. First, list all tight invariant graph pairs for $H$. Equivalently, list all epimorphisms $\pi \colon F(A \cup B \cup \{t\}) \to H$ with $\pi(A), \pi(B) \subseteq \ker \psi$, $\psi(\pi(t)) = 1$, and such that $ \{t^{-1}at \theta(a)^{-1} \mid a \in A\} \subseteq \ker \pi$.
    
     Any tight invariant graph pair of relative rank $k$ (or, equivalently, any epimorphism $\pi$ with $|B| = k$), gives rise to an admissible presentation of $H$ as above. Since the tight invariant graph pairs $(Z,X)$ for $H$ with $\rr(Z,X) = k$ are exactly those of minimal relative rank, and since the set of relative ranks is a subset of non-negative integers, we are guaranteed to eventually find a tight invariant graph pair of relative rank equal to $k$ on our list.
\end{proof}

\begin{proposition}\label{prop:algo-single-epi}  
    There exists an algorithm which takes as input a finite presentation $\mathcal{Q}$ of a (finitely generated free)-by-cyclic group $G$, an epimorphism $\psi \colon G \to \Z$ with  free kernel, and an epimorphism $\phi \colon G \to \mathbb{Z}$, and outputs \texttt{yes} if $\phi \in \Sigma(G)$ and \texttt{no} otherwise. 
\end{proposition}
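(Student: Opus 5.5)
We decide $\phi \in \Sigma(G)$ by computing $b_1^{(2)}(\ker\phi)$ and then searching for an admissible splitting dual to $\phi$ whose edge group realises this value. Since $G$ is (finitely generated free)-by-cyclic, it is finitely presented, $b_1^{(2)}(G)=0$, and $G$ satisfies the Strong Atiyah Conjecture.

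\emph{Step 1: compute $b_1^{(2)}(\ker\phi)$.} By \cref{higher} with $k=1$, $b_1^{(2)}(\ker\phi)$ equals $\max_{x\in P}\phi(x)$ for a symmetric integral polytope $P$. By the proof of \cref{higher}, $P$ is determined by the Newton polytope class of $N=H_1(G;S)$ via \cref{existencepolytope}, so $b_1^{(2)}(\ker\phi)=\gr_{P(N)}(\phi)$. I would make this computable as follows. Take the Fox matrix of the presentation $\mathcal Q$, written over $\QQ\ker\psi\rtimes\langle t\rangle$. Because $\ker\psi$ is free, its Linnell ring is the Cohn universal skew field of fractions of $\QQ F_n$. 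Hence one can effectively compute a Dieudonné-determinant-type element representing $[N]\in K_0(\mathcal T_S)$; this is Kielak's construction of the polytope in \cite{Kielak2020polytopes}, and it is exactly the step Gardam--Kielak assume. Its thickness at $\phi$ is then an explicit integer.

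A simpler route avoids that algebra. By \cref{goodsplittings}, the minimal complexity of an admissible splitting dual to $\phi$ equals $b_1^{(2)}(\ker\phi)$. It therefore suffices to enumerate splittings and certify that one has minimal complexity. The certification still needs the number $c:=b_1^{(2)}(\ker\phi)$, so I would obtain $c$ via Step 1 after all.

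\emph{Step 2: search for a splitting and read off the answer.} Using \cref{lem:admissible-split-list}, enumerate the admissible splittings $G=K\ast_{H_1,H_2,t}$ dual to $\phi$. For each one, compute $b_1^{(2)}(H_1)$. The subgroup $H_1$ is given by finitely many generators in $G$. Apply \cref{thm:Feighn-Handel}, running in parallel over candidate values $k$, to produce a preferred presentation; its $|B|$ gives $b_1^{(2)}$. To avoid needing $b_1^{(2)}(H_1)$ as input, dovetail over all guesses $k$ and use the rank of the output preferred presentation, which is always an upper bound. By \cref{goodsplittings}, the search eventually finds a splitting with $b_1^{(2)}(H_1)=c$, and by \cref{propfpl2} we then also have $b_1^{(2)}(K)=c$.

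To decide membership, combine \cref{lem:BNS-alt} with the $L^2$-independence of $H_1$ in $K$, which holds for the optimal splitting by \cref{approach}(ii) and \cref{compressedL2}. If $c=0$, then $\ker\phi$ has $b_1^{(2)}=0$. For a free-by-cyclic group this happens exactly when $\ker\phi$ is finitely generated free; equivalently, by \cref{lem:BNS-alt}, one finds a splitting with $H_1=K$, and the output is \texttt{yes}. Conversely, I would test $H_1=K$ on the found optimal splitting, using the solvable membership problem in free-by-cyclic groups to check whether the generators of $K$ lie in $H_1$. By \cref{lem:BNS-alt}(iii), this single test decides the question.

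The main obstacle is Step 1, making $b_1^{(2)}(\ker\phi)$ effectively computable. My first attempt would be to use that $\ker\phi$ is finitely generated iff $\phi\in\Sigma$, and to run two semi-decision procedures in parallel. One searches for a splitting with $H_1=K$, which certifies \texttt{yes}. The other searches for a \texttt{no} certificate: a splitting whose edge group has $b_1^{(2)}(H_1)$ equal to the polytope value $c$ (computed from Kielak's polytope over the Linnell ring of the free kernel), with $H_1\neq K$. Such a splitting exists by \cref{goodsplittings}, and by \cref{lem:BNS-alt}(iii) any splitting with $H_1\neq K$ already certifies \texttt{no}. This last observation means the \texttt{no} side reduces to finding any splitting with $H_1\ne K$. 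Every splitting is listable, and $H_1\neq K$ is semi-decidable via solving membership. So the two parallel searches alone may suffice, and the remaining care is verifying that membership in finitely generated subgroups of free-by-cyclic groups is decidable.
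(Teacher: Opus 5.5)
\paragraph{Verdict.} There is a genuine gap at the decisive step: deciding whether $H_1=K$.

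Your procedure ends by invoking ``the solvable membership problem in free-by-cyclic groups''. In the parallel version, it instead relies on the claim that $H_1\neq K$ is semi-decidable. Neither is available. The paper does not establish membership, and you give no argument or reference for it. Membership in $H_1$ is only semi-decidable via the word problem. Certifying $H_1\neq K$ would need something like subgroup separability, which free-by-cyclic groups do not have in general.

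If membership were decidable, the whole $L^2$-part of your argument would be superfluous. By \cref{lem:BNS-alt}(iii) you could take the first splitting produced by \cref{lem:admissible-split-list} and test $H_1=K$ directly. Accordingly, in your write-up the certified value $c$ is computed but then plays no role in the decision.

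Separately, your claim that $c=0$ happens exactly when $\ker\phi$ is finitely generated free is false. We have $b_1^{(2)}(F_n)=n-1$, and $c=0$ also occurs for non-finitely generated kernels, e.g.\ for characters dual to splittings over (finitely generated free)-by-cyclic subgroups.

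\paragraph{The missing idea.} Use the optimal splitting to reduce everything to membership in \emph{free} groups.
\begin{enumerate}
\item The paper applies \cref{thm:Feighn-Handel} to the base group $K$ (not to $H_1$), in the case $K\not\le\ker\psi$, with input $k=c$.
\item Since \cref{goodsplittings} guarantees a splitting with $b_1^{(2)}(K)=c$, some run terminates. It yields a certified presentation of $K$ as an HNN extension of free groups.
\item That presentation is exactly what lets the Diao--Feighn algorithm \cite{DiaoFeighn2005} decide whether $K$ is free.
\item If $K$ is not free, then $\phi\notin\Sigma(G)$ by \cite[Theorem 2.1]{GeogheganMihalikSapirWise2001} and \cref{lem:BNS-alt}.
\item If $K$ is free (in particular if $K\le\ker\psi$ from the start), Marshall Hall's theorem decides whether $H_1=K$.
\end{enumerate}
This is the only place where knowing $b_1^{(2)}(K)$ exactly matters.

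\paragraph{Your Step 1.} Effective computation of the Newton polytope class over the Linnell ring is only asserted, not justified. The paper instead obtains $c$ from Chen's algorithm \cite{Chen2025} for the $\phi$-twisted $L^2$-Euler characteristic, using $\chi^{(2)}(G;\phi)=-b_1^{(2)}(\ker\phi)$.
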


\begin{proof}

We begin by computing the first $L^2$-Betti number $b_1^{(2)}(\ker \phi) \in \mathbb{Z}_{\geq0}$. By the main theorem in \cite{Chen2025}, since $G$ is residually finite, $L^2$-acyclic and satisfies the Strong Atiyah Conjecture, there is an algorithm that computes the \emph{$\phi$-twisted $L^2$-Euler characteristic} $\chi^{(2)}(G; \phi) \in \mathbb{Z}$. Since $\phi$ is surjective, we have that $\chi^{(2)}(G; \phi) = \chi^{(2)}(\ker \phi)$. Because $b_i^{(2)}(\ker \phi) = 0$ for all $i \neq 1$ by \cite[Th\'eor\`eme 6.6]{Gaboriau2002}, it follows that $\chi^{(2)}(G; \phi) = - b_1^{(2)}(\ker \phi)$. Hence, we may apply Chen's algorithm to compute $b_1^{(2)}(\ker \phi) \in \mathbb{Z}_{\geq 0}$.

Next, by \cref{lem:admissible-split-list}, we may list  all the possible admissible splittings $G \cong K \ast_{H_1, H_2}$ dual to $\phi$, and simultaneously run the following procedure for each splitting. 

First, by evaluating $\psi$ on generators of $K$, we check whether $K$ is a subgroup of $\ker \psi$. If so, it follows that $K$ is a finitely generated free group. Hence, by Marshall-Hall's theorem, $H_1$ is a separable subgroup of $K$. Thus, it is possible to decide whether or not $H_1 = K$.

Suppose now that $K$ is not a subgroup of $\ker \psi$. Then, $\psi$ restricts to a non-trivial homomorphism $\psi|_K \colon K \to \mathbb{Z}$. Without loss of generality, we may assume that $\psi|_K$ is onto. We run Feighn--Handel's algorithm (\cref{thm:Feighn-Handel}) with $k := b_1^{(2)}(\ker \phi)$. For any splitting $G \cong K \ast_{H_1, H_2}$ with $b_1^{(2)}(K) = k$, the algorithm terminates by outputting a preferred presentation for $K$, in particular verifying that $b_1^{(2)}(K) = k$. Since $\chi(G) = 0$, we have that $b_1^{(2)}(H_1) = b_1^{(2)}(K)$. 

By \cref{goodsplittings}, there exists an admissible dual splitting $G \cong K \ast_{H_1, H_2}$  with $b_1^{(2)}(K) = b_1^{(2)}(H_1) = b_1^{(2)}(\ker \phi)$. Hence, by applying the Feighn--Handel procedure simultaneously to all the admissible dual splittings, the algorithm will eventually output an admissible dual splitting $G \cong K \ast_{H_1, H_2}$ and a certification that $b_1^{(2)}(K) = b_1^{(2)}(\ker \phi)$.

Once such a splitting is found, we run the algorithm from \cite{DiaoFeighn2005} to compute the Grushko decomposition of $K$, and in particular determine whether it is free. If $K$ is not free, then by \cite[Theorem 2.1]{GeogheganMihalikSapirWise2001} and \cref{lem:BNS-alt}, we have that $\phi \not\in \Sigma(G)$. If $K$ is free then once again we apply Marshall-Hall's theorem to decide whether $H_1$ is a proper subgroup of $K$. Again, by \cref{lem:BNS-alt}, this determines whether or not $\phi$ is in $\Sigma(G)$.\end{proof}

\begin{remark}
Note that for any finitely presented residually finite (but not necessarily free-by-cyclic) group $G$, the problem of deciding whether an epimorphism $\phi \colon G \to \mathbb{Z}$ is an element of $\Sigma(G)$ is \emph{semi-decidable}; that is, there exists an algorithm that terminates with the output \texttt{yes} if $\phi \in \Sigma(G)$, and otherwise it does not terminate.

Indeed, by \cref{lem:BNS-alt} we have that $\phi \in \Sigma(G)$ if and only if there exists (equivalently, for every) admissible splitting $G \cong K \ast_{H_1, H_2, \theta}$ dual to $\phi$, we have that $H_1 = K$. Thus, we may pick an admissible splitting and attempt to verify that $H_1 = K$. Since $G$ is residually finite, its word problem is solvable. Consequently, one can systematically enumerate all possible words over generators of $H_1$ and test for equality against the generators of $K$, thus eventually verifying that $H_1 = K$, if this is indeed the case. 
\end{remark}

To prove \cref{GardamKielak}, we must introduce Friedl--L\"{u}ck's \emph{$L^2$-polytope} \cite{FriedlLueck2017}. Let $G$ be an $L^2$-acyclic group of type $\mathrm{FP}$ that satisfies the Strong Atiyah Conjecture and let $H$ denote its free abelianisation. The \emph{$L^2$-polytope of $G$} is defined to be the image of the determinant of the universal $L^2$-torsion $\rho^{(2)}_u(K(G,1)) \in \mathrm{Wh}^w(G)$ under the polytope homomorphism $\mathcal{D}(G)^{\times}_{\mathrm{ab}} \to \mathcal{P}(H)$, where $\mathcal{P}(H)$ is the Grothendieck group of integral polytopes in $H_1(H; \mathbb{R})$. Note that if $\phi \in H^1(G; \QQ)$ is not trivial then $\gr_{P}(\phi) = - \alpha_{\phi}\, \chi^{(2)}(\ker \phi)$ where $P$ is the $L^2$-polytope of $G$, and $\alpha_{\phi} \in \mathbb{Q}_{\geq 0}$ is such that $\phi(G) = \alpha_{\phi} \mathbb{Z}$. We refer the reader to \cite{FriedlLueck2017, FriedlLueckTillmann2019} for further details. 

\begin{proof}[Proof of \cref{GardamKielak}]

We begin with a couple of general observations about polytopes. Let $H$ be a free abelian group of finite rank and let $V = H_1(H; \RR)$. By \cite[Lemma~3.5]{Kielak2020polytopes}, if $P_0$ and $P_1$ are polytopes in $V$ and $F$ is a face of $P_0 + P_1$, then $F$ admits a unique decomposition of the form $F = F_0 + F_1$, where $F_i$ is a face of $P_i$ for $i \in \{0,1\}$. 

Fix $\phi \in H^1(H; \RR)$ and let $P$ be a polytope in $V$. We have that 
\[ F_{\phi}(P + \overline{P}) = F_{\phi}(P) + F_{\phi}(\overline{P}).\] It follows that if $v$ is a vertex of $P$ then 
\begin{equation}\label{eq:partition} D_v(P) = \bigsqcup_{F \text{ face of } \overline{P}} D_{v + F}(P + \overline{P}).\end{equation}

Suppose now that $G$ is (finitely generated free)-by-cyclic and $H$ is the free abelianisation of $G$. By the work of Kielak \cite[Theorem 5.29]{Kielak2020polytopes}, the $L^2$-polytope of $G$ is represented by an integral polytope $P$ in $V$ which admits a marking of its vertices, such that a character $\phi \in H^1(G; \mathbb{R})$ is an element of $\Sigma(G)$ if and only if $F_{\phi}(P) = \{v\}$ where $v$ is a marked vertex. Since for any non-trivial $\phi \in H^1(G; \mathbb{Q})$ we have that $\gr_{P}(\phi) = - \alpha_{\phi} \, \chi^{(2)}(\ker \phi)$, it follows that the function $\phi \mapsto - \alpha_{\phi} \chi^{(2)}(\ker \phi)$ extends to a seminorm on $H^1(G; \RR)$, which we denote by $\| \cdot \|_T$.

Let $B = \{\phi \in H^1(G; \RR) \mid \| \phi \|_T \leq 1\}$ and let $Q = B^{*}$. By definition, $Q$ is a symmetric polytope and  $\gr_{Q}(\phi) = 2\| \phi \|_T$ for every $\phi \in H^1(G; \RR)$. Note also that the polytope $P + \overline{P}$ is symmetric and $\gr_{P + \overline{P}}(\phi) = 2\| \phi \|_T = \gr_{Q}(\phi)$. It follows by \cref{thickfunction} that $ Q = P + \overline{P}$. 

As in the proof of \cref{prop:algo-single-epi}, by \cite{Chen2025} there is an algorithm to compute $- \chi^{(2)}(\ker \phi)$ for every epimorphism $\phi \colon G \to \ZZ$. Hence, there is an algorithm to compute $\gr_{Q}$, and thus, since $G$ is finitely generated and the polytope $Q$ is integral, to compute the polytope $Q$. Note that since $P$, and thus $P + \overline{P}$, is an integral polytope, it follows that for every proper face $F$ of $P + \overline{P}$, the dual region $D_F(P + \overline{P})$ contains an epimorphism $\phi \colon G \to \ZZ$.

We claim that if $P$ is not a point then the BNS invariant of $G$ is a disjoint union of dual regions of the form $D_{v+F}(P + \overline{P})$ where $v$ is a vertex of $P$ and $F$ is a proper face of $\overline{P}$ (recall that the zero character is excluded from the BNS invariant). For each such dual region, we may pick an epimorphism $\phi \colon G \to \Z$ contained in the region and apply the algorithm from \cref{prop:algo-single-epi} to check whether or not $\phi \in \Sigma(G)$. Since there are finitely many faces of $P + \overline{P}$, the algorithm will terminate in finite time. If $P$ is a point then it  must be the case that $G \cong \mathbb{Z} \rtimes \mathbb{Z}$ and the BNS invariant is $H^1(G; \mathbb{R}) \setminus \{0\}$.

Finally, to prove the claim observe that $D_u(P) \cap D_{v}(P) = \emptyset$ for any two distinct vertices $u$ and $v$ of $P$, and since the set of connected components of $\Sigma(G)$ coincides with the collection of dual regions $\{D_v(P) \mid v \text{ marked vertex of } P\}$, it follows by \eqref{eq:partition} that for any vertex $v$ of $P$ and face $F$ of $\overline{P}$, if $ D_{v + F}(P + \overline{P}) 
\cap \Sigma(G) \neq \emptyset$ then $D_{v+F}(P + \overline{P}) \subseteq \Sigma(G)$. Moreover, if $\phi \in H^1(G;\RR)$ is an element of the BNS invariant of $G$ then $\phi \in D_v(P)$ for some vertex $v$ of $P$, and thus again by \eqref{eq:partition} there exists a face of $P + \overline{P}$ of the form $v + F$, where $F$ is a face of $\overline{P}$, and such that $\phi \in D_{v+F}(P + \overline{P})$. This proves the claim. 
\end{proof}

\subsection{Connections to the isomorphism problem}

Let $G$ be a \fbyz group. Note that for $0 \neq \phi \in H^1(G; \mathbb{Z})$, if $\ker \phi$ is finitely generated then it is free. Let $P \subseteq H_1(G; \mathbb{R})$ be a representative of the $L^2$-polytope of $G$. As in the proof of \cref{GardamKielak}, there exists an algorithm to compute $Q:= P + \overline{P}$ and the marking of the faces of $Q$ that detect the BNS invariant. Since the polytope $Q$ detects the ranks of kernels of fibered characters, we obtain the following:

\begin{lemma}\label{lem:splitting-compute}
    If $G$ is (finitely generated free)-by-cyclic then there exists an algorithm to construct the set $\mathcal{X}_n(G)$ of epimorphisms $\phi \colon G \to \mathbb{Z}$ with $\ker \phi \cong \mathbb{F}_n$ where $\mathbb{F}_n$ denotes the free group of rank $n$.
\end{lemma}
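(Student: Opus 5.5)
The plan is to describe $\mathcal{X}_n(G)$ as the set of epimorphisms lying in finitely many explicitly computable open cones, cut out by the rank condition $\|\phi\|_T = n-1$. First recall the relevant invariants. If $\ker\phi \cong \mathbb{F}_m$, then $b_1^{(2)}(\ker\phi) = m-1$ and $\chi^{(2)}(\ker \phi) = 1-m$. Since $\phi$ is onto, $\alpha_\phi = 1$. By the identity $\gr_P(\phi) = -\alpha_\phi\,\chi^{(2)}(\ker\phi)$ recalled before the proof of \cref{GardamKielak}, together with $\gr_Q = 2\gr_P$ for $Q = P+\overline{P}$, an epimorphism $\phi$ with free kernel satisfies $\ker\phi\cong\mathbb{F}_n$ if and only if
\[
\gr_Q(\phi) = 2(n-1).
\]
Moreover, $\ker\phi$ is finitely generated (equivalently free, as noted above) exactly when both $\phi$ and $-\phi$ lie in $\Sigma(G)$, by the Bieri--Neumann--Strebel criterion. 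So
\[
\mathcal{X}_n(G) = \{\phi \text{ epimorphism} \mid \phi,-\phi\in\Sigma(G),\ \gr_Q(\phi)=2(n-1)\}.
\]

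Next, run the algorithm from the proof of \cref{GardamKielak}. It computes the integral polytope $Q$, computable via Chen's algorithm, together with the finite list of dual regions $D_{v+F}(Q)$ that make up $\Sigma(G)$. Because $Q$ is symmetric, the regions composing $-\Sigma(G)$ are also regions of $Q$. Hence $\Sigma(G)\cap -\Sigma(G)$ is a computable finite union $\mathcal{C}$ of dual regions $D_{F'}(Q)$, namely those regions $C$ for which both $C$ and $-C$ are marked. On each such region, $\gr_Q$ is linear: it equals $\phi(w)-\phi(w')$ for fixed points $w\in F'$ and $w'\in F_{-\phi}(Q)$, so it is given by an explicit integral linear form $\ell_{C}$. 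Therefore
\[
\mathcal{X}_n(G) = \bigsqcup_{C\in\mathcal{C}} \{\phi\in C\cap H^1(G;\Z) \text{ primitive} \mid \ell_C(\phi)=2(n-1)\}.
\]

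The step I expect to be the main obstacle is making sense of ``construct'' when this set is infinite. For instance, on a top-dimensional cone $\ell_C$ can vanish on a positive-dimensional subspace only if $Q$ is degenerate, and in the rank-one case the sets are finite. In general, $\ell_C(\phi)=2(n-1)$ together with $\phi$ in an open cone on which $\gr_Q$ is a norm restricted to $\mathcal{C}$ gives a bounded region. I would argue that $\gr_Q$ is positive on $\mathcal{C}$: a fibred $\phi$ has $\ker\phi\cong\mathbb{F}_m$ with $\gr_Q(\phi)=2(m-1)$, and $m\ge 1$, where $m=1$ forces $G\cong\Z\rtimes\Z$, which is handled separately. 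Then $\{\phi \mid \gr_Q(\phi)\le 2(n-1)\}\cap \mathcal{C}$ is bounded whenever $\gr_Q$ is a norm, and in that case one simply enumerates the finitely many integral points and tests primitivity and membership in each open polyhedral cone $C$, which is a finite linear check. If $\gr_Q$ is only a seminorm, I would instead output, for each $C$, the explicit description of the set as the integral primitive points of a rational polyhedron. The kernel of $\gr_Q$ is a rational subspace, so this is still an effective description.
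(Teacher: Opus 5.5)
Your proposal is correct and is essentially the paper's one-line argument spelled out. As in the paper, you compute $Q=P+\overline{P}$ and its marked dual regions as in the proof of \cref{GardamKielak}, keep the regions $C$ with both $C$ and $-C$ contained in $\Sigma(G)$, use that a fibred epimorphism with $\ker\phi\cong\mathbb{F}_m$ has $\gr_Q(\phi)=2(m-1)$, and get finiteness when $\gr_Q$ is a norm, which is what \cref{cor:conjugacy-implies-iso} uses. One small sign slip: for $w\in F_\phi(Q)$ and $w'\in F_{-\phi}(Q)$ one has $\gr_Q(\phi)=\phi(w')-\phi(w)=-2\phi(w)$, not $\phi(w)-\phi(w')$.
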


\begin{lemma}\label{lem:splitting-detect}
    There exists an algorithm to determine whether or not a (finitely generated free)-by-cyclic group splits as an HNN extension over a (finitely generated free)-by-cyclic subgroup, and compute the set of epimorphisms $\phi \colon G \to \mathbb{Z}$ corresponding to such splittings.
\end{lemma}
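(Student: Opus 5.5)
The plan is to reduce the existence of such a splitting to the vanishing of the Thurston norm. Since $Q=P+\overline{P}$ is computable, the set of characters with $\|\phi\|_T=0$ is a computable rational linear subspace of $H^1(G;\RR)$. Throughout, a splitting of $G$ "as an HNN extension over a \fbyz subgroup" is read as an admissible splitting $G=K\ast_{H,\theta}$ dual to some epimorphism $\phi\colon G\to\ZZ$ whose edge group $H$ is (finitely generated free)-by-cyclic. The claim to prove is that $\phi$ arises this way if and only if $b_1^{(2)}(\ker\phi)=0$, equivalently $\gr_Q(\phi)=0$.

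\textbf{Forward direction.} Suppose $G=K\ast_{H,\theta}$ is dual to $\phi$ and $H$ is \fbyz. Then $b_1^{(2)}(H)=0$ by L\"uck's theorem \cite{Lueck1994}. \cref{propfpl2}(ii) gives
\[
0\le b_1^{(2)}(\ker\phi)\le b_1^{(2)}(K)\le b_1^{(2)}(H)=0.
\]

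\textbf{Converse.} Suppose $b_1^{(2)}(\ker\phi)=0$. By \cref{goodsplittings} there is an admissible splitting $G=K\ast_{H}$ dual to $\phi$ with $b_1^{(2)}(H)=0$. It remains to show that $H$ is \fbyz. We know that $H$ is a finitely generated subgroup of $G$, so it is finitely presented by Feighn--Handel coherence \cite{FeighnHandel1999}, and it has $\cd(H)\le 2$. It is locally indicable by \cref{lem: vFbyZ_li} and satisfies the Strong Atiyah Conjecture. Moreover $b_1^{(2)}(H)=0$, and $b_2^{(2)}(H)=0$ by \cite[Th\'eor\`eme 6.6]{Gaboriau2002}.

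This is the step I expect to be the main obstacle. The aim is a fibring criterion: a locally indicable, finitely presented group of cohomological dimension $2$ with vanishing first and second $L^2$-Betti numbers is (finitely generated free)-by-cyclic. I would obtain this from the Kielak--Linton theorem \cite[Theorem~1.1]{KL24}, which gives virtually free-by-cyclic, and then upgrade to an honest fibring with finitely generated fibre. For the upgrade, the vanishing of $b_1^{(2)}(H)$ together with Kielak's fibring theorem gives a character of $H$ with finitely generated kernel, and that kernel is free by Bieri's theorem since $\cd(H)\le 2$. Degenerate cases such as $H\cong\ZZ$, where the fibre is trivial, fit into the same statement. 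If $H$ were finite then $\ker\phi$ would be finite, so that case is excluded.

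\textbf{The algorithm.} As in the proof of \cref{GardamKielak}, compute the symmetric integral polytope $Q$ with $\gr_Q=2\|\cdot\|_T$, using Chen's algorithm \cite{Chen2025}. A character satisfies $\gr_Q(\phi)=0$ exactly when $\phi$ is constant on $Q$, that is, when $\phi$ annihilates the rational subspace spanned by the differences $v-w$ of vertices of $Q$. By the characterization above:
\begin{enumerate}
\item $G$ admits such a splitting if and only if this annihilator subspace of $H^1(G;\QQ)$ is nonzero;
\item the required set of epimorphisms is exactly the set of primitive integral characters in this subspace, which is described explicitly by linear algebra over $\ZZ$.
\end{enumerate}
Where an explicit splitting is wanted for a given $\phi$ in this set, we can enumerate admissible splittings with \cref{lem:admissible-split-list} and certify $b_1^{(2)}(H)=0$ via the Feighn--Handel procedure (\cref{thm:Feighn-Handel}) with $k=0$. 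Termination of this search is guaranteed by \cref{goodsplittings}.
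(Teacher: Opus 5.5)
\emph{Verdict: genuine gap.} Your forward direction, via L\"uck's vanishing theorem for $H$ and \cref{propfpl2}(ii), is fine and agrees with the paper. So is the algorithmic part: compute $Q$ with Chen's algorithm, then take the primitive integral characters annihilating the span of its vertices. The gap is the step you flagged, namely deducing from $b_1^{(2)}(H)=0$ that the edge group $H$ is (finitely generated free)-by-cyclic.

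\emph{The fibring criterion you aim for is false as stated.} Two examples show this.
\begin{enumerate}
\item The group $BS(1,2)=\langle a,t\mid tat^{-1}=a^2\rangle$ is locally indicable, finitely presented, of cohomological dimension $2$, with $b_1^{(2)}=b_2^{(2)}=0$. But its abelianization is $\mathbb{Z}$, and the unique character up to sign has kernel $\mathbb{Z}[1/2]$, so it is not even virtually free-by-cyclic.
\item The group of a non-fibred hyperbolic knot such as $5_2$ satisfies the same hypotheses and is virtually (finitely generated free)-by-cyclic by Agol's theorem. By Stallings' theorem, however, it is not (finitely generated free)-by-cyclic.
\end{enumerate}
Neither of your two steps can therefore work from the properties you list. \cite[Theorem~1.1]{KL24} and Kielak's fibring theorem need RFRS or virtually special type input, which you never verify for $H$; the first example shows some such hypothesis is indispensable. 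Moreover, even when Kielak's theorem applies it only fibres a finite-index subgroup, so it gives no character of $H$ itself with finitely generated kernel. The knot example shows this upgrade genuinely fails. Bieri's theorem does not address the problem, because the issue is finite generation of the fibre, not its freeness.

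\emph{Missing idea: use the ambient structure $G=\ker\psi\rtimes\mathbb{Z}$ with $\ker\psi\cong F_n$.} The paper simply cites \cite[Theorem~3.8(4)]{FriedlLueck2019}: a finitely generated subgroup of a finitely generated free-by-cyclic group is (finitely generated free)-by-cyclic if and only if its first $L^2$-Betti number vanishes. You can also prove this directly.
\begin{enumerate}
\item If $\psi(H)=0$, then $H$ is a finitely generated free group with $b_1^{(2)}(H)=\operatorname{rank}(H)-1$. Hence $H$ is trivial or infinite cyclic.
\item Otherwise, rescale $\psi$ on $\psi^{-1}(\psi(H))$. The Feighn--Handel preferred presentation $\langle A,B,t\mid t^{-1}at=\theta(a),\ a\in A\rangle$ of \cref{thm:Feighn-Handel} is aspherical with $b_1^{(2)}(H)=|B|$. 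So $B=\emptyset$, and $L=\langle A\rangle\le\ker\psi$ satisfies $\theta(L)\subseteq L$, where $\theta$ is the automorphism of $\ker\psi$ given by conjugation by $t$.
\item The chain $L\subseteq\theta^{-1}(L)\subseteq\theta^{-2}(L)\subseteq\cdots$ consists of subgroups of the free group $\ker\psi$ of constant rank, so it stabilizes by Takahasi's theorem.
\item Applying a power of $\theta$ then gives $\theta(L)=L$, so $H=L\rtimes\langle t\rangle$.
\end{enumerate}
With this in place of your fibring criterion, the rest of your argument goes through as in the paper.
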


\begin{proof}
    We begin the proof by observing that a finitely generated subgroup $H$ of a finitely generated free-by-cyclic group $G$ is \fbyz if and only if $b_1^{(2)}(H) = 0$ (see, for instance, \cite[Theorem 3.8(4)]{FriedlLueck2019}). 

Suppose that $G$ admits an epimorphism $\phi \colon G \to \Z$ with $\gr_{Q}(\phi)= 0$. Then, $ - \chi^{(2)}(\ker \phi) = b_1^{(2)}(\ker \phi) = 0$ and thus by \cref{goodsplittings}, $G$ admits an admissible splitting dual to $\phi$ over a (finitely generated free)-by-cyclic subgroup. Conversely, any such splitting induces an epimorphism $\phi \colon G \to \Z$ with $\gr_{Q}(\phi)= 0$. Thus, $G$ splits over a \fbyz subgroup if and only if the $\mathbb{R}$-span of the vertices of the polytope $Q \subseteq H_1(G; \mathbb{R})$ is a proper subspace of $H_1(G; \mathbb{R})$. 

Arguing as in the proof of \cref{GardamKielak}, there exists an algorithm to compute $Q$ and thus to compute the set of epimorphisms $\phi \colon G \to \Z$ with $\gr_{Q}(\phi) = 0$.\end{proof}

\begin{corollary}\label{cor:conjugacy-implies-iso}
    Let $G= \mathbb{F}_n \rtimes \Z$ and suppose that $G$ does not split as an HNN extension over a \fbyz subgroup. If the conjugacy problem for $\mathrm{Out}(\mathbb{F}_n)$ has a solution then there exists an algorithm to determine whether or not a (finitely generated free)-by-cyclic group $H$ is isomorphic to $G$.
\end{corollary}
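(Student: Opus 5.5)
Assume the conjugacy problem in $\Out(\mathbb{F}_n)$ is solvable. Given a finite presentation of a \fbyz group $H$, we decide whether $H\cong G$ by reducing to finitely many conjugacy questions in $\Out(\mathbb{F}_n)$.

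\textbf{Step 1: reject obvious non-isomorphisms.} First compute $H_1(H;\Z)$ and compare its rank and torsion with those of $G$. Next apply \cref{lem:splitting-detect} to $H$. If $H$ splits as an HNN extension over a \fbyz subgroup, output ``not isomorphic''. This is correct because that property is an isomorphism invariant and fails for $G$ by hypothesis.

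\textbf{Step 2: make the fibrations finite up to sign.} Apply \cref{lem:splitting-compute} to both groups to obtain the sets $\mathcal{X}_n(G)$ and $\mathcal{X}_n(H)$. Any isomorphism $f\colon H\to G$ gives a bijection $\phi\mapsto\phi\circ f$ from $\mathcal{X}_n(G)$ to $\mathcal{X}_n(H)$. Both sets are nonempty if $H\cong G$. If $H$ does not fiber with kernel $\mathbb{F}_n$, output ``no''.

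The crucial point is that the non-splitting hypothesis makes $\mathcal{X}_n(G)$ finite up to sign. Fibered classes with kernel $\mathbb{F}_n$ satisfy $\gr_Q(\phi)=2n-2$ (via $-\chi^{(2)}(\ker\phi)=n-1$). By the proof of \cref{lem:splitting-detect}, non-splitting means $\gr_Q$ is a genuine norm, i.e.\ $Q$ spans $H_1(G;\RR)$. So the fibered classes with kernel $\mathbb{F}_n$ are integral points on a compact sphere of that norm, and there are only finitely many of them.

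\textbf{Step 3: translate into conjugacy questions.} Each $\phi\in\mathcal{X}_n(G)$, together with a choice of $t\in\phi^{-1}(1)$, determines a monodromy class $[\theta_\phi]\in\Out(\mathbb{F}_n)$. This class is well defined once we fix an isomorphism $\ker\phi\cong\mathbb{F}_n$; changing $t$ changes $\theta_\phi$ only by an inner automorphism. The monodromy can be computed algorithmically: find a free basis of $\ker\phi$ by listing presentations via Tietze moves until one of the form $\langle x_1,\dots,x_n,t\mid t^{-1}x_it=w_i\rangle$ appears. Such a presentation exists and can be recognised, since checking that the $w_i$ define an automorphism of the free group is decidable (Whitehead).

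We then use the standard fact that $\mathbb{F}_n\rtimes_\alpha\Z\cong\mathbb{F}_n\rtimes_\beta\Z$ by an isomorphism preserving the fibration, up to sign, if and only if $[\alpha]$ is conjugate to $[\beta]^{\pm1}$ in $\Out(\mathbb{F}_n)$. Hence $H\cong G$ if and only if there exist $\phi\in\mathcal{X}_n(G)$ and $\psi\in\mathcal{X}_n(H)$ with $[\theta_\psi]$ conjugate to $[\theta_\phi]^{\pm1}$. Indeed, an isomorphism pulls back some fibration of $G$ to a fibration of $H$, and conversely. It therefore suffices to fix one $\psi\in\mathcal{X}_n(H)$ and test it against the finitely many $\phi$ up to sign, using the assumed conjugacy algorithm.

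The main obstacle is Step 2. We must check rigorously that \cref{lem:splitting-compute} outputs a finite set under the non-splitting hypothesis, and that it does so effectively: listing the lattice points of a computed integral polytope norm ball, then using \cref{prop:algo-single-epi} to decide fiberedness together with the rank condition. Without the hypothesis, $\mathcal{X}_n(G)$ can be infinite, because characters in a degenerate direction of $Q$ can have the same norm. This is exactly why the corollary assumes $G$ does not split over a \fbyz subgroup.
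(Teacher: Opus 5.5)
Your proposal is correct and follows essentially the same route as the paper's proof. You first rule out $H$ splitting over a (finitely generated free)-by-cyclic subgroup via \cref{lem:splitting-detect}. You then use that $Q$ spans, so that $\gr_Q$ is a genuine norm and $\mathcal{X}_n(G)$, $\mathcal{X}_n(H)$ are finite and computable by \cref{lem:splitting-compute}, and finally compare monodromy classes in $\Out(\mathbb{F}_n)$ with the assumed conjugacy algorithm. Your extra homology check, and fixing a single $\psi\in\mathcal{X}_n(H)$ to test against $[\theta_\phi]^{\pm1}$, are harmless variants of the paper's pairwise comparison over $\mathcal{X}_n(G)\times\mathcal{X}_n(H)$.
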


\begin{proof}
We begin by applying the algorithm from \cref{lem:splitting-detect} to check whether or not $H$ splits over a (finitely generated free)-by-cyclic subgroup. If the output of the algorithm is \texttt{yes} then clearly $G$ is not isomorphic to $H$. Suppose then that $H$ does not split. 

By \cref{lem:splitting-compute}, we may compute the sets of epimorphisms of $G$ and $H$ onto $\Z$ with finitely generated kernels of rank $n$. As in the proof of \cref{lem:splitting-detect}, since $G$ does not split over a \fbyz subgroup, the $\mathbb{R}$-span of the vertices of $Q$ generates $H_1(G;\mathbb{R})$. Hence, the set $\mathcal{X}_n(G)$ is finite for all $n \in \mathbb{N}$. The same is true for $\mathcal{X}_n(H)$.

For each character $\phi \in \mathcal{X}_n(G)$, the procedure outlined in \cref{lem:BNS-alt} computes a splitting $G \cong \mathbb{F}_n \rtimes_{\alpha} \mathbb{Z}$ dual to $\phi$. While the monodromy $\alpha \in \mathrm{Aut}(\mathbb{F}_n)$ depends on the choice of generating set and the choice of the isomorphism $\ker \phi \cong \mathbb{F}_n$, it is not hard to see that any two such choices will give rise to elements in the same conjugacy class of outer automorphisms of $\mathbb{F}_n$. 

Finally, we observe that $G \cong H$ if and only if there exist epimorphisms $\phi_1 \in \mathcal{X}_n(G)$ and $\phi_2 \in \mathcal{X}_n(H)$, with corresponding splittings $G \cong \mathbb{F}_n \rtimes_{\alpha_1} \mathbb{Z}$ and $H \cong \mathbb{F}_n \rtimes_{\alpha_2} \mathbb{Z}$, such that the outer class $[\alpha_1] \in \mathrm{Out}(\mathbb{F}_n)$ is conjugate to $[\alpha_2] \in \mathrm{Out}(\mathbb{F}_n)$. Thus, assuming there is a solution to the conjugacy problem in $\mathrm{Out}(\mathbb{F}_n)$, it is possible to determine whether or not $G$ is isomorphic to $H$.
\end{proof}

For a general free-by-cyclic group $G$, the set $\mathcal{X}_n(G)$ may be infinite. However, we observe that if there exists a subgroup $\Gamma \leq \mathrm{Out}(G)$ such that $\mathcal{X}_n(G)$ admits finitely many orbits under the action of $\Gamma$, and there exists an algorithm to compute the orbits, then the same strategy as in the proof of \cref{cor:conjugacy-implies-iso} can be used to solve the isomorphism problem, assuming a solution to the conjugacy problem in $\mathrm{Out}(\mathbb{F}_n)$. Thus, we ask the following:

\begin{problem}\label{qn:orbit-problem}
  Let $G$ be free-by-cyclic and fix $n \in \mathbb{N}$. When is it the case that $\mathcal{X}_n(G)$ has finitely many $\mathrm{Out}(G)$-orbits? Does there exist an algorithm to compute the set of orbits?
\end{problem}
 
\bibliographystyle{alpha}
\bibliography{Bibliography}

\end{document}